\newcommand{\sign}{\mathop{\rm sign}}
\newcommand*{\mailto}[1]{\href{mailto:#1}{\nolinkurl{#1}}}
\DeclareMathOperator{\id}{Id}
\DeclareMathOperator{\meas}{meas}
\DeclareMathOperator{\supp}{supp}
\newcommand{\dott}{\, \cdot\,}
\newcommand{\epsi}{\varepsilon}
\newcommand{\quot}{{\F/\Gr}}
\newcommand{\Gr}{G}
\newcommand{\D}{\ensuremath{\mathcal{D}}}
\newcommand{\G}{\ensuremath{\mathcal{G}}}
\newcommand{\F}{\ensuremath{\mathcal{F}}}
\newcommand{\inv}{{^{-1}}}
\newcommand{\abs}[1]{\left\vert#1\right\vert}
\newcommand{\abss}[1]{\vert#1\vert}
\newcommand{\Real}{\mathbb R}
\newcommand{\norm}[1]{\left\Vert#1\right\Vert}
\newcommand{\Linf}{{L^\infty(\Real)}}
\newcommand{\muac}{\mu_{\text{\rm ac}}}
\DeclareMathOperator{\sgn}{sgn}
\newcommand{\nn}{\nonumber}
\newcommand{\PP}{\mathcal{P}}
\DeclareMathOperator{\Ima}{Im}
\newtheorem{theorem}{Theorem}[section]
\newtheorem{lemma}[theorem]{Lemma}
\newtheorem{definition}[theorem]{Definition}
\newtheorem{proposition}[theorem]{Proposition}
\newtheorem{remark}[theorem]{Remark}
\numberwithin{equation}{section}
\begin{document}

\title[Global dissipative solutions for the 2CH equation]{Global dissipative solutions of the
  two-component Camassa--Holm system for initial data with
  nonvanishing asymptotics}

\author[K. Grunert]{Katrin Grunert}
\address{Department of Mathematical Sciences\\ Norwegian University of Science and Technology\\ NO-7491 Trondheim\\ Norway}
\email{\mailto{katring@math.ntnu.no}}
\urladdr{\url{http://www.math.ntnu.no/~katring/}}

\author[H. Holden]{Helge Holden}
\address{Department of Mathematical Sciences\\
  Norwegian University of Science and Technology\\
  NO-7491 Trondheim\\ Norway\\
  {\rm and} 
  Centre of Mathematics for Applications\\
  University of Oslo\\ NO-0316 Oslo\\ Norway}
\email{\mailto{holden@math.ntnu.no}}
\urladdr{\url{http://www.math.ntnu.no/~holden/}}

\author[X. Raynaud]{Xavier Raynaud}
\address{Centre of Mathematics for Applications\\
  University of Oslo\\ NO-0316 Oslo\\ Norway}
\email{\mailto{xavierra@cma.uio.no}}
\urladdr{\url{http://folk.uio.no/xavierra/}}

\date{\today} 
\thanks{Research supported in part by the
  Research Council of Norway project NoPiMa, and by the Austrian Science Fund (FWF) under Grant No.~J3147.}  
\subjclass[2010]{Primary: 35Q53, 35B35; Secondary: 35Q20}
\keywords{Two-component Camassa--Holm system, dissipative solutions, nonvanishing asymptotics}

\begin{abstract}
  We show existence of a global weak dissipative solution of the
  Cauchy problem for the two-component Camassa--Holm  (2CH) system on the line with
  nonvanishing and distinct spatial asymptotics.  The influence from the second component in the 2CH system on the regularity of the solution, and, in particular, the consequences for wave breaking, is discussed. Furthermore, the interplay between dissipative and conservative solutions is treated.  
\end{abstract}
\maketitle

\section{Introduction}\label{sec:intro}

We show existence of a weak global dissipative solution of  the Cauchy problem for the two-component Camassa--Holm (2CH) system with arbitrary 
$\kappa\in\Real$ and $\eta\in(0,\infty)$, given by
\begin{subequations}
\label{eq:chsys2A}
\begin{align}
\label{eq:chsys21A}
u_t-u_{txx}+\kappa u_x+3uu_x-2u_xu_{xx}-uu_{xxx}+\eta\rho\rho_x&=0,\\ \label{eq:chsys22A}
\rho_t+(u\rho)_x&=0, 
\end{align}
\end{subequations}
with initial data $u|_{t=0}=u_0$ and $\rho|_{t=0}=\rho_0$.  The initial data may have nonvanishing
limits at infinity, that is, 
\begin{equation}
  \label{eq:nonvanlimA}
  \lim_{x\to\pm\infty} u_0(x)=u_{\pm\infty}\quad\text{ and }\quad \lim_{x\to\pm\infty} \rho_0(x)=\rho_{\infty}.
\end{equation}
The 2CH system was first analyzed by Constantin and Ivanov \cite{ConstantinIvanov:2008}. Global
existence, well-posedness and blow-up conditions have been further studied in a Sobolev setting in
\cite{ELY:2007, GuYi:2010} and in Besov spaces in \cite{GuLi:2011}. The scalar CH equation (i.e.,
with $\rho$ identically equal to zero), was introduced and studied by Camassa and Holm in the
fundamental paper \cite{CH:93}, see also \cite{CHH:94}, and its analysis has been pervasive.

The CH equation possesses many intriguing properties. Here we concentrate on global solutions for the Cauchy problem on the line. The challenge is that
the CH equation experiences blow-up in finite time, even for smooth initial data, in the sense that the $H^1_{\text{\rm loc}}$ norm of the solution remains finite while $u_x$ blows up. Continuation of the solution past blow-up is intricate. It has turned out to be two distinct ways to continue the solution past blow-up, denoted conservative and dissipative solutions, respectively. Conservative solutions are associated with preservation of the $H^1$ norm, while dissipative solutions are characterized by a sudden drop in $H^1$ norm at blow-up.  This dichotomy has consequences for the well-posedness of the initial value problem as the two solutions coincide prior to blow-up.  Here we focus on the dissipative case. 

Three features are novel in this paper: First of all we include distinct and nonvanishing asymptotics of the initial data, and hence of the solution, at infinity. Since prior work has been on solutions in $H^1$, this has considerable consequences for the analysis. Secondly, we extend previous results for the CH equation to the 2CH system. It is not at all clear a priori that the highly tailored construction for the CH equation extends to the 2CH system.  Finally, we greatly simplify the analysis of two of us \cite{HolRay:09} even in the scalar case of the CH equation with vanishing asymptotics.  One advantage of the present approach is that we can use the same change of variables as in the conservative case, in contrast to the approach chosen in \cite{HolRay:09}.  We reformulate the 2CH system in terms of Lagrangian coordinates, and in this respect it relates to \cite{BreCons:05,BreCons:05a,HolRay:07,HolRay:09, GHR2} for the CH equation. Previous work on the CH equation, covering also the periodic case, includes, e.g.,  \cite{cons:97,cons:98, cons_esc, cons_esc1, cons:00, xin_zhang,xin_zhang1,CHK1, CHK2, GHR1,HolRay:07a,GHR5}. See also \cite{GY}. 

The intricate problems regarding wave breaking can best be exemplified in the context of multipeakon solutions of the  CH equation with $\kappa=0$. For reasons of brevity, and since  this example has been discussed in detail in \cite{HolRay:09}, we omit the discussion here. For additional theory on multipeakons, see
 \cite{BealsSattingerSzm:99,BealsSattingerSzm:01, wahlen, HolRay:06b,HolRay:07B}.

The continuation of the solution past wave breaking has been studied both in the conservative  \cite{BreCons:05,HolRay:07} and dissipative \cite{BreCons:05a, HolRay:09} case. In both cases the approach has been to reformulate the partial differential equation as a system of Banach space-valued ordinary differential equations, and we follow that approach here.  A different approach, based on vanishing viscosity, has been advocated in \cite{xin_zhang,xin_zhang1}.

If we for a moment assume vanishing asymptotics, the dichotomy can be further understood if one considers the associated energy, that is, the $H^1$ norm of the solution $u$ for the CH equation. In the case of a symmetric antipeakon-peakon collision, the $H^1$ norm is constant prior to wave breaking. At collision time it vanishes, and remains zero for dissipative solutions, while returning to the previous value in the conservative case. Thus we need to keep the information about the energy in the conservative case and this is handled by augmenting the solution with the energy. More precisely, we consider as  solution the pair $(u,\mu)$ where $\mu$ is a Radon measure with absolute continuous part $\muac=u_x^2dx$. This allows for energy concentration in terms of Dirac masses, while keeping the information about the energy. On the other hand, in the dissipative case, energy is not preserved, rather it is strictly decreasing at wave breaking.  The extension from scalar CH equation to the two-component  2CH system follows by augmenting the Lagrangian reformulation by an additional variable.

Let us now turn to a more detailed description of the results in this paper. First we observe that we can put $\kappa=0$ and $\eta=1$ since if $(u,\rho)$ solves \eqref{eq:chsys2A}, then $(v, \tau)$ with $v(t,x)=u(t,x-\kappa t/2)+\kappa/2$ and $\tau(t,x)=\sqrt{\eta} \rho(t,x-\kappa t/2)$ will solve \eqref{eq:chsys2A} with $\kappa=0$ and $\eta=1$. Note that this only applies since we allow for non decaying initial data at infinity.  Furthermore, we assume that $u_{-\infty}=0$. We reformulate the 2CH system as
\begin{align*}
  u_t+uu_x +P_x&=0,\\ 
\rho_t+(u\rho)_x&=0,\\ 
 P-P_{xx}&=u^2+\frac12 u_x^2+\frac12\rho^2.
\end{align*}
Next we introduce the characteristics $y(t,\xi)$, that is, the solution of  $y_t(t,\xi)=u(t,y(t,\xi))$
for a given $u$ and initial data $y(0,\xi)$. The Lagrangian velocity is given by $U(t,\xi)=u(t,y(t,\xi))$. As long as  $y(t,\xi)$ remains strictly increasing as a function of $\xi$, the solution remains smooth, and, in particular, conservative and dissipative solutions coincide.  Thus we introduce the time for wave breaking, or energy dissipation, by
\begin{equation}\label{eq:taudefA}
 \tau(\xi)=\begin{cases} 0 , & \text{ if } y_\xi(0,\xi)=0,\\
\sup \{t\in \Real_+\mid y_\xi(t^\prime,\xi)>0 \text{ for all } 0\leq t^\prime <t\}, & \text{ otherwise}.
\end{cases}
\end{equation}
We can rewrite the full 2CH system as a system of ordinary differential equations.  First define
\begin{align*}
 h&=u_x^2\circ yy_\xi+\bar\rho^2\circ y y_\xi, \\
 \bar r&=\bar\rho\circ y y_\xi, \\
 U&=\bar U+c \chi(y),\\
r&=\bar r+ky_\xi,
\end{align*}
where  $\rho=\bar\rho+k$ with $\bar\rho\in L^2(\Real)$ and  $u=\bar u+c \chi$ with $\bar u\in H^1(\Real)$. In addition, $\bar U=\bar u\circ y$. The function $\chi$ is a smooth increasing function that vanishes for large negative arguments and equals one for large positive arguments. 

Next we find that the system obeys the following system of ordinary differential equations 
 \begin{align*}
  y_t& =U , \quad U_t=-Q(X),\\
  y_{t,\xi}&=\chi_{\{\tau(\xi)>t\}}U_\xi,\\
  U_{t,\xi}&= \chi_{\{\tau(\xi)>t\}}\left(\frac12 h+(U^2+\frac12 k^2-P(X))y_\xi+k\bar r\right),\\
  h_t& = \chi_{\{\tau(\xi)>t \}}2(U^2+\frac12 k^2-P(X))U_\xi, \\
  \bar r_t&=\chi_{\{\tau(\xi)>t\}}\left(-kU_\xi\right),\\
  c_t&=0,\\
  k_t&=0,
\end{align*}
where $P(X)-U^2-\frac12 k^2$ and $Q(X)$ are given by \eqref{eq:Plag1} and \eqref{eq:Qlag1}, respectively (observe the
subtle modifications in the dissipative case compared with \eqref{eq:Plag2} and \eqref{eq:Qlag2}, respectively).
Introduce $q$ and $w$ for $y_\xi$ and $U_\xi$, respectively.  We find that $X=(\bar U, c, q, w, h, \bar r, k)$
satisfies the system $X_t=\chi_{\{\tau(\xi)>t\}}F(X)$.  The function $X$ takes values in a specific Banach space
$\bar V$, see \eqref{eq:barV}.  This system poses two challenges: First of all, due to wave breaking, the right-hand
side is discontinuous, and thus existence and uniqueness of solutions cannot follow the standard path. This is the
key difficulty compared with the conservative case. Secondly, the system possesses a number of additional constraints
in order to be consistent with the original Eulerian formulation. For example, we need to make sure that $q=y_\xi$
and $w=U_\xi$ are satisfied for positive $t$.  We will also need $y_\xi h=U_\xi^2+\bar r^2$. This is secured by only
using initial data from a carefully selected set that makes sure that all additional requirements are preserved for
the solution. The set is denoted $\G$ and is given in Definition \ref{def:G}.  A key result is the proof of global
existence of a solution, $X(t)=S_t(X_0)$ with initial data $X_0$, in the Lagrangian variables,
Theorem~\ref{th:global}.  Next we have to analyze stability of the solution in Lagrangian coordinates.  The problem
here is to identify a metric that separates conservative solutions (say $X^c(t)$) and dissipative solutions (say
$X^d(t)$) near wave breaking. The key is the behavior of the derivative of the characteristics, $y_\xi$.  At wave
breaking $y_\xi$ vanishes. For dissipative solutions $y_\xi$ remains constant for all later times, while for the
conservative solutions it becomes positive immediately after.  For conservative solutions, the metric is induced by
the Euclidean norm of the Banach space $\bar V$. For dissipative solutions, we need a metric which in addition can
separate $X^d$ and $X^c$ after collision. Indeed, if we denote by $t_c$ the collision time, the distance
between $X^d(t_c+\epsilon,\xi)$ and $X^c(t_c+\epsilon,\xi)$ with respect with this metric has to be big, for any
$\epsi>0$.  This is taken care of by the introduction of a function $g$ in Definition~\ref{def:Omega}, and the
corresponding metric $d_\Real$ in Definition~\ref{def:metric1}.  The metric yields the flow Lipschitz continuous in
the sense that
\begin{equation}
 d_\Real(X(t),X(\bar t))\leq C_T(M)\abs{\bar t-t}, \quad t,\bar t \leq T,
\end{equation}
for initial data in the set $B_M$, given by \eqref{eq:defBMfix},  see Lemma  \ref{lem:Lip}.

Having obtained the solution in Lagrangian coordinates, the next task is to transfer the solution back to Eulerian
variables $(u,\rho)$.  Here we are confronted with relabeling issue; there are several distinct solutions in
Lagrangian variables corresponding to one and the same Eulerian functions $(u,\rho)$. This is the reminiscent of the
fact that there are many distinct ways to parametrize the graph of a given function. We identify the functions that
give the same Eulerian solution, and show that the semigroup in Lagrangian coordinates respects the relabeling in the
sense that $S_t(X\circ f)=S_t(X)\circ f$ where $S_t$ denotes the semigroup of solutions and $f$ denotes the label.
We define, for any $\xi$ such that $x=y(\xi)$ (cf.~Theorem \ref{th:umudef}),
\begin{align*}
u(x)&=U(\xi),\\
\mu&=y_\#(h(\xi)\,d\xi),\\
\bar \rho(x)\,dx&=y_\#(\bar r(\xi)\,d\xi),\\
\rho(x)&=k+\bar\rho(x).
\end{align*}
We measure the distance between two Eulerian solutions by their corresponding Lagrangian distance, see \eqref{eq:meter}.

The interplay between dissipative and conservative solutions is interesting. As shown in \cite{ConstantinIvanov:2008,GHR4} a positive density $\rho_0$ regularizes the function $u$. If $\rho_0$ is positive on the whole line, no wave breaking will take place, and conservative and dissipative solutions coincide. On the other hand, if $\rho_0$ is identically zero, then $u$ will satisfy the scalar CH equation, and we will have wave breaking generically.  A local version of this result is that if $\rho_0$ is positive on an interval, then the solution will remain regular, i.e., no wave breaking will take place in the  interval bounded by the corresponding characteristics. In \cite{GHR4} we have shown that one can obtain conservative solutions of the CH equation by considering solutions of the 2CH system with positive density $\rho$. If one lets the initial density approach zero appropriately, then the solution $u$ will converge to the conservative solution of the CH equation.  However, we do show  continuity results for dissipative solutions of the 2CH system, cf.~Lemmas~\ref{lemma:kont} and \ref{lemma:kont1}.
These results are discussed in Section~\ref{sec:stabil}.
\section{Eulerian setting}\label{sec:euler}

We consider the Cauchy problem for the two component Camassa--Holm system with arbitrary 
$\kappa\in\Real$ and $\eta\in(0,\infty)$, given by
\begin{subequations}
\label{eq:chsys2}
\begin{align}
\label{eq:chsys21}
u_t-u_{txx}+\kappa u_x+3uu_x-2u_xu_{xx}-uu_{xxx}+\eta\rho\rho_x&=0,\\ \label{eq:chsys22}
\rho_t+(u\rho)_x&=0, 
\end{align}
\end{subequations}
with initial data $u|_{t=0}=u_0$ and $\rho|_{t=0}=\rho_0$.
We are interested in global solutions for initial data $u_0$ with nonvanishing and possibly distinct
limits at infinity, that is, 
\begin{equation}
  \label{eq:nonvanlim}
  \lim_{x\to-\infty} u_0(x)=u_{-\infty}\quad\text{ and }\quad\lim_{x\to\infty} u_0(x)=u_{\infty}.
\end{equation}
Furthermore we assume that the initial density has equal asymptotics which need not to be zero, that is,
\begin{equation}\label{eq:nonvanlimR}
 \lim_{x\to\pm\infty} \rho_0(x)=\rho_{\infty}.
\end{equation}
More precisely, we introduce the spaces
\begin{equation}
H_{\infty}(\Real)=\{v\in H^1_{\rm loc}(\Real) \mid 
v(x)=\bar v(x)+v_{-\infty}\chi(-x)+v_\infty \chi(x), \, \bar v\in H^1(\Real), v_{\pm\infty}\in\Real\}, \label{eq:Hinf} 
\end{equation}
where $\chi$ denotes a smooth partition function 
with support in $[0,\infty)$ such that $\chi(x)=1$ for $x\geq 1$ and
$\chi'(x)\geq 0$ for $x\in\Real$, and
\begin{equation}
L^2_{\rm const}(\Real)= \{g\in L^1_{\rm loc}(\Real) \mid g(x)=g_\infty+\bar g(x), \, \bar g\in L^2(\Real),  g_\infty\in \Real\}.\label{eq:Lconst} 
\end{equation}
Subsequently, we will assume that
\begin{equation}
u_0\in H_{\infty}(\Real), \quad \rho_0\in L^2_{\rm const}(\Real).
\end{equation}

Introducing the
mapping $I_{\chi}$ from $H^1(\Real)\times\Real^2$
into $H_{\text{loc}}^1(\Real)$ given by
\begin{equation*}
  I_{\chi}(\bar u,c_-,c_+)(x)=\bar u(x)+c_-\chi(-x)+c_+\chi(x)
\end{equation*}
for any $(\bar u,c_-,c_+)\in
H^1(\Real)\times\Real^2$, yields that any initial
condition $u_0\in H_\infty(\Real)$ is defined by
an element in $H^1(\Real)\times \Real^2$ through
the mapping $I_\chi$.  Hence we see that
$H_{\infty}(\Real)$ is the image of
$H^1(\Real)\times\Real^2$ by $I_\chi$, that is,
$H_{\infty}(\Real)=I_{\chi}(H^1(\Real)\times\Real^2)$. The
linear mapping $I_\chi$ is injective. We equip
$H_\infty(\Real)$ with the norm
\begin{equation}
  \label{eq:normHinfty}
  \norm{u}_{H_\infty}=\norm{\bar u}_{H^1}+\abs{c_-}+\abs{c_+}
\end{equation}
where $u=I_\chi(\bar u,c_-,c_+)$.  Then
$H_\infty(\Real)$ is a Banach space. Given another
partition function $\tilde\chi$, we define the
mapping $(\tilde{\bar u},\tilde c_-,\tilde
c_+)=\Psi(\bar u,c_-,c_+)$ from $H^1(\Real)\times\Real^2$
to $H^1(\Real)\times\Real^2$ as $\tilde c_-=c_-$, $\tilde
c_+=c_+$ and
\begin{equation}
  \label{eq:defPsi}
  \tilde{\bar u}(x)=\bar u(x)+c_-(\chi(-x)-\tilde\chi(-x))+c_+(\chi(x)-\tilde\chi(x)).
\end{equation}
The linear mapping $\Psi$ is a continuous
bijection. Since
\begin{equation*}
  I_{\chi}=I_{\tilde\chi}\circ \Psi,
\end{equation*}
we can see that the definition of the Banach space
$H_\infty(\Real)$ does not depend on the choice of
the partition function $\chi$. The norm defined by
\eqref{eq:normHinfty} for different partition
functions $\chi$ are all equivalent.

Similarly, one can associate to any element $\rho\in L^2_{\rm const}(\Real)$ the unique pair $(\bar\rho, k)\in L^2(\Real)\times \Real$ through the mapping $J$ from $L^2(\Real)\times\Real$ to $L^2_{\rm const}(\Real)$ which is defined as 
\begin{equation}
 J(\bar\rho,k)=\bar\rho+k.
\end{equation}
In fact $J$ is bijective from $L^2(\Real)\times\Real$ to $L^2_{\rm const}(\Real)$, which allows us to equip $L^2_{\rm const}(\Real)$ with the norm 
\begin{equation}\label{normL2inf}
 \norm{\rho}_{L^2_{\rm const}}=\norm{\bar\rho}_{L^2}+\vert k\vert,
\end{equation}
where we decomposed $\rho$ according to $\rho=J(\bar\rho,k)$. Thus $L^2_{\rm const}(\Real)$ together with the norm defined in \eqref{normL2inf} is a Banach space.

Note that for smooth solutions, we have the
following conservation law
\begin{equation}
  \label{eq:conslaw}
  (u^2+u_x^2+\eta\rho^2)_t+(u(u^2+u_x^2+\eta\rho^2))_x=(u^3+\kappa u^2-2Pu)_x.
\end{equation}

Moreover, if $(u(t,x),\rho(t,x))$ is a solutions of the two-component Camassa--Holm system \eqref{eq:chsys2}, then, for any constant $\alpha\in\Real$ we easily find that 
\begin{equation}
 v(t,x)=u(t,x-\alpha t)+\alpha,\quad \text{ and }\quad \tau(t,x)=\sqrt{\eta}\rho(t,x-\alpha t), 
\end{equation}
solves the two-component Camassa--Holm system with
$\kappa$ replaced by $\kappa-2\alpha$ and
$\eta=1$. Therefore, without loss of generality,
we assume in what follows, that $\lim_{x\to
  -\infty}u_0(x)=0$ and $\eta=1$. In addition, we
only consider the case $\kappa=0$ as one can make
the same conclusions for $\kappa\not =0$ with
slight modifications.

\section{Lagrangian setting}\label{sec:lag}

In this section we will introduce the set of Lagrangian coordinates we want to work with and the corresponding Banach spaces. 

\subsection{Reformulation of the 2CH system in Lagrangian coordinates}

The 2CH system with $\kappa=0$ can be rewritten as the following system\footnote{For $\kappa$ nonzero \eqref{ch2} is simply 
replaced by $P-P_{xx}=u^2+\kappa u+\frac12 u_x^2+\frac12 \rho^2$.}

\begin{subequations}
\begin{align}\label{ch1}
  u_t+uu_x& +P_x=0,\\ \label{ch3}
\rho_t&+(u\rho)_x=0,\\ \label{ch2}
 P-P_{xx}&=u^2+\frac12 u_x^2+\frac12\rho^2,
\end{align}
\end{subequations}
where $P-u^2-\frac12k^2$ and $P_x$ are given by 
\begin{align}\label{rep:Peul}
 P(t,x)-u^2(t,x)-\frac12 k^2& =-2c\chi(x)\bar u(t,x)-\bar u^2(t,x)\\ \nn
& \quad+\frac12\int_\Real e^{-\vert x-z\vert}(2c\chi\bar u+\bar u^2+\frac12 u_x^2+\frac12 \bar \rho^2+k\bar\rho)(t,z)dz\\ \nn
& \quad +\frac12\int_\Real e^{-\vert x-z\vert }2c^2(\chi^{\prime 2}+\chi\chi'')(z)dz,
\end{align}
and
\begin{align}\label{eq:Pxeul}
 P_x(t,x)& =2c^2\chi\chi'(x)\\ \nn 
& \quad-\frac12\int_\Real \sgn{(x-z)}e^{-\vert x-z\vert}(2c\chi\bar u+\bar u^2+\frac12 u_x^2+\frac12 \bar\rho^2+k\bar\rho)(t,z) dz \\ \nn 
& \quad -\frac12 \int_\Real\sgn{(x-z)}e^{-\vert x-z\vert}2c^2(\chi^{\prime 2}+\chi\chi^{\prime\prime})(z) dz.
\end{align}

A close inspection of $P_x(t,x)$, like in \cite{GHR3}, reveals that the asymptotic behavior has to be preserved.  
Thus we write here and later 
\begin{equation}\label{eq:ubar}
 u(t,x)=\bar u(t,x)+c \chi(x), \quad \bar u\in H^1(\Real),
\end{equation}
and 
\begin{equation}\label{eq:rhobar}
 \rho(t,x)=\bar\rho(t,x)+k, \quad \rho\in L^2(\Real).
\end{equation}

Until wave breaking occurs for the first time every dissipative solution coincides with the conservative one, and hence they can be described in the same way. Therefore we summarize the derivation of the Lagrangian coordinates and the corresponding system of ordinary differential equations, which describe the conservative solutions here. For details we refer to \cite{GHR4}. Afterwards, in the next subsection, we will adapt the system describing the time evolution to the dissipative case.

Define the characteristics $y(t,\xi)$ as the solution of 
\begin{equation}
 y_t(t,\xi)=u(t,y(t,\xi))
\end{equation}
for a given $y(0,\xi)$. The Lagrangian velocity is given by $U(t,\xi)=u(t,y(t,\xi))$ and we find using \eqref{ch1} that 
\begin{equation}
 U_t(t,\xi)=-Q(t,\xi),
\end{equation}
where $Q(t,\xi)=P_x(t,\xi)$ is given by 
\begin{align}\label{eq:Qlag2}
Q(t,\xi)& =2c^2\chi(y(t,\xi))\chi^\prime (y(t,\xi))\\ \nn 
& \quad-\frac12 \int_\Real \sgn{(\xi-\eta)}e^{-\vert y(t,\xi)-y(t,\eta)\vert}(2c\chi\circ y \bar Uy_\xi+\bar U^2y_\xi+\frac12 h+k\bar r)(t,\eta)d\eta\\ \nn 
& \quad -\frac12 \int_\Real \sgn{(\xi-\eta)}e^{-\vert y(t,\xi)-y(t,\eta)\vert}2c^2(\chi^{\prime 2}+\chi\chi^{\prime\prime})(y(t,\eta))y_\xi(t,\eta) d\eta 
\end{align}
where
we have introduced 
\begin{align}
 h(t,\xi)&=u_x^2(t,y(t,\xi))y_\xi(t,\xi)+\bar\rho^2(t,y(t,\xi))y_\xi(t,\xi), \\
 r(t,\xi)&=\rho(t,y(t,\xi))y_\xi(t,\xi), \\
 U(t,\xi)&=\bar U(t,\xi)+c \chi(y(t,\xi)),\label{eq:Ubar}\\
\intertext{and} \label{eq:rbar}
r(t,\xi)&=\bar r(t,\xi)+ky_\xi(t,\xi).
\end{align}
The time evolution of $h(t,\xi)$ is given by 
\begin{equation}
 h_t(t,\xi)=2(U^2(t,\xi)+\frac12 k^2-P(t,\xi))U_\xi(t,\xi), 
\end{equation}
where, slightly abusing the notation, $P(t,\xi)-U^2(t,\xi)-\frac12 k^2=P(t,y(t,\xi))-U^2(t,\xi)-\frac12 k^2$ is given by 
\begin{equation}\label{eq:Plag2}
\begin{aligned}
 P(t,\xi)-U^2(t,\xi)&-\frac12 k^2\\
 &=-2c\chi(y(t,\xi))\bar U(t,\xi)-\bar U^2(t,\xi)\\ 
&\quad +\frac12 \int_\Real e^{-\vert y(t,\xi)-y(t,\eta)\vert}(2c\chi\circ y \bar U y_\xi+\bar U^2y_\xi+\frac12 h+k\bar r)(t,\eta)d\eta \\
 &\quad +\frac12\int_\Real e^{-\vert y(t,\xi)-y(t,\eta)\vert} 2c^2 (\chi^{\prime 2}+\chi\chi^{\prime\prime})(y(t,\eta))y_\xi(t,\eta)d\eta.
\end{aligned}
\end{equation}
Last but not least, according to \eqref{ch3}, $r(t,\xi)$ is preserved  with respect to time, i.e., $r_t=0$.

\subsection{Necessary adaptations for dissipative solutions}

Wave breaking for the 2CH system means that $u_x$
becomes unbounded, which is equivalent, in the Lagrangian setting, to saying that $y_\xi$ becomes zero.  Let
therefore $\tau(\xi)$ be the first time when
$y_\xi(t,\xi)$ vanishes, i.e.,
\begin{equation}\label{eq:taudef}
 \tau(\xi)=\begin{cases} 0 , & \text{ if } y_\xi(0,\xi)=0,\\
\sup \{t\in \Real_+\mid y_\xi(t^\prime,\xi)>0 \text{ for all } 0\leq t^\prime <t\}, & \text{ otherwise},
\end{cases}
\end{equation}
where $\Real_+=[0,\infty)$.
The dissipative solutions will then be described through the solutions of the following system of ordinary differential equations
\begin{subequations}\label{eq:sysdiss}
 \begin{align}
  y_t& =U , \quad U_t=-Q(X),\\
  y_{t,\xi}&=\chi_{\{\tau(\xi)>t\}}U_\xi,\\
  U_{t,\xi}&= \chi_{\{\tau(\xi)>t\}}\left(\frac12 h+(U^2+\frac12 k^2-P(X))y_\xi+k\bar r\right),\\
  h_t& = \chi_{\{\tau(\xi)>t \}}2(U^2+\frac12 k^2-P(X))U_\xi, \\
  \bar r_t&=\chi_{\{\tau(\xi)>t\}}\left(-kU_\xi\right),\\
  c_t&=0,\\
  k_t&=0,
\end{align}
\end{subequations}
where $P(X)-U^2-\frac12 k^2$ and $Q(X)$ are given by  (observe the subtle modifications in the dissipative case compared with 
\eqref{eq:Plag2} and \eqref{eq:Qlag2}, respectively)
\begin{equation}\label{eq:Plag1}
\begin{aligned}
  &P(t,\xi) -U^2(t,\xi)-\frac12 k^2\\ 
  &\qquad\qquad =-2c\chi(y(t,\xi))\bar U(t,\xi)-\bar U^2(t,\xi)\\ 
&\qquad\qquad\quad +\frac12 \int_{\tau(\eta)>t} e^{-\vert y(t,\xi)-y(t,\eta)\vert}(2c\chi\circ y \bar U y_\xi+\bar U^2y_\xi+\frac12 h+k\bar r)(t,\eta)d\eta \\ 
 &\qquad\qquad\quad +\frac12\int_\Real e^{-\vert y(t,\xi)-y(t,\eta)\vert} 2c^2 (\chi^{\prime 2}+\chi\chi^{\prime\prime})(y(t,\eta))y_\xi(t,\eta)d\eta
\end{aligned}
\end{equation}
and 
\begin{align}\label{eq:Qlag1}
Q(t,\xi)&=2c^2\chi(y(t,\xi))\chi^\prime(y(t,\xi))\\ \nn 
& \quad-\frac12 \int_{\tau(\eta)>t} \sgn{(\xi-\eta)}e^{-\vert y(t,\xi)-y(t,\eta)\vert}(2c\chi\circ y \bar Uy_\xi+\bar U^2y_\xi+\frac12 h+k\bar r)(t,\eta)d\eta\\ \nn
& \quad -\frac12 \int_\Real \sgn{(\xi-\eta)}e^{-\vert y(t,\xi)-y(t,\eta)\vert}2c^2(\chi^{\prime 2}+\chi\chi^{\prime\prime})(y(t,\eta))y_\xi(t,\eta) d\eta,
\end{align}
respectively. (The integrals over the real line in \eqref{eq:Plag1} and \eqref{eq:Qlag1} could be replaced by $\tau(\eta)>t$ since  $y_\xi(t,\eta)=0$ 
outside this domain.)

We introduce the following notation for the Banach spaces we will often use. Let 
\begin{equation*}
E= L^2(\Real)\cap L^\infty(\Real), 
\end{equation*}
together with the norm
\begin{equation*}
 \norm{f}_{E}=\norm{f}_{L^2}+\norm{f}_{L^\infty},
\end{equation*}
and 
\begin{align}
 W&= L^2(\Real)\times L^2(\Real)\times L^2(\Real)\times L^2(\Real), \notag\\
\bar W&= E\times E\times E\times E,\notag\\
V&= L^\infty(\Real) \times L^2(\Real)\times L^\infty(\Real)\times L^2(\Real)\times L^2(\Real)\times L^2(\Real)\times L^2(\Real)\times L^\infty(\Real),\notag\\
\bar V& = L^\infty(\Real)\times E\times L^\infty(\Real)\times E\times E\times E\times E \times L^\infty(\Real). \label{eq:barV}
\end{align}

For any function $f\in C([0,T], B)$ for $T\geq 0$ and $B$ a normed space, we denote 
\begin{equation*}
 \norm{f}_{L_T^1B}=\int_0^T\norm{f(t,\dott)}_B dt\quad \text{ and }\quad \norm{f}_{L_T^\infty B}=\sup_{t\in[0,T]}\norm{f(t,\dott)}_B.
\end{equation*}

\begin{definition}\label{def:Omega}
For $x=(x_1,x_2,x_3,x_4,x_5,x_6,x_7,x_8)\in \Real^8$, we define the functions $g_1,g_2,g\colon\Real^8\to \Real$ by
\begin{align*}
 g_1(x)&=\vert x_5\vert +2\vert x_7 x_8\vert +2x_4,\\
 g_2(x)&= x_4+x_6
\end{align*}
and 
\begin{equation}\label{eq:defg}
 g(x)=\begin{cases} 
g_1(x), &\quad \text{if } x\in \Omega_1,\\
 g_2(x), & \quad \text{otherwise},\\
 \end{cases}
\end{equation}
where  $\Omega_1$ is the set where $g_1\leq g_2$, $x_5$ is negative, and $x_7+x_8x_4=0$, thus
\begin{equation*}
 \Omega_1=\{ x\in \Real^8\mid \vert x_5\vert+2\vert x_7x_8\vert +2x_4\leq x_4+x_6 \text{, } x_5\leq 0, \text{ and } x_7+x_8x_4=0\}.
\end{equation*}
Furthermore,  we will split up $\Omega_1^c$ as follows.
$\Omega_2$ is the complement of $\Omega_1$ restricted to the set where $x_7+x_8x_4=0$, that is,  
\begin{equation*}
 \Omega_2=\Omega_1^c\cap \{x\in \Real^8\mid x_7+x_8x_4=0\},
\end{equation*}
and $\Omega_3$ is the set of all points such that $x_7+x_8x_4\not =0$, that is, 
\begin{equation*}
 \Omega_3=\{ x\in \Real^8\mid x_7+x_8x_4\not =0\}.
\end{equation*}
Note the following obvious relations:
\begin{equation}
\begin{aligned}
\Omega_1\cap\Omega_2&=\emptyset, &  \Omega_1\cap\Omega_3&=\emptyset, & \Omega_2\cap\Omega_3&=\emptyset,\\
& &   \Omega_1\cup\Omega_2\cup\Omega_3&=\Real^8. &&
\end{aligned}
\end{equation}
\end{definition}
See Figure \ref{fig:omega_omrl}.
As long as we are working in Lagrangian coordinates we will identify $x=(x_1,x_2,x_3,x_4,x_5,x_6,x_7,x_8)$ with $X=(y, \bar U, c, y_\xi, U_\xi, h, \bar r, k)$.

\begin{definition} \label{def:G}
 The set $\G$ consists of all $(\zeta, U, h,r)$ such that 
\begin{subequations}\label{eq:lagcoord}
\begin{align}
\label{eq:lagcoord1}
&X=(\zeta, \bar U, c, \zeta_\xi, U_\xi, h, \bar r, k)\in \bar V,\\
\label{eq:lagcoord2}
&g(y,\bar U,c, y_\xi,U_\xi,h, \bar r, k)-1\in E,\\
\label{eq:lagcoord3}
&y_\xi\geq 0, h\geq 0 \text{ almost everywhere}, \\
\label{eq:lagcoord4}
&\lim_{\xi\to -\infty} \zeta(\xi)=0,\\
\label{eq:lagcoord5}
&\frac{1}{y_\xi+h}\in L^\infty(\Real),\\ 
\label{eq:lagcoord6}
&y_\xi h=U_\xi^2+\bar r^2 \text{ almost everywhere},
\end{align}
\end{subequations}
where we denote $y(\xi)=\zeta(\xi)+\xi$.
\end{definition}
The condition \eqref{eq:lagcoord4} will be valid as long as the solutions exist since in that case we must have  $\lim_{\xi\to-\infty}U(t,\xi)=0$ by construction. In addition it should be noted that, due to the definition of $g(X)$,  \eqref{eq:lagcoord2} is valid for any $X$ that satisfies \eqref{eq:lagcoord1}.

Making the identifications $y_\xi=q$ and $w=U_\xi$, we obtain 
\begin{subequations}\label{ODEsys}
 \begin{align}
  y_t& =U , \quad U_t=-Q(X),\\
  q_{t}&=\chi_{\{\tau(\xi)>t\}}w,\\
  w_{t}&= \chi_{\{\tau(\xi)>t\}}\left(\frac12 h+(U^2+\frac12 k^2-P(X))q+k\bar r\right),\\
  h_t& = \chi_{\{\tau(\xi)>t\} }2(U^2+\frac12 k^2-P(X))w,\\
  \bar r_t&=\chi_{\{\tau(\xi)>t\}}\left(-kw\right),\\
  c_t&=0,\\
  k_t&=0,
\end{align}
\end{subequations}
where $P(X)-U^2-\frac12 k^2$ and $Q(X)$ are given by 
 \begin{align}
  P(t,\xi) -U^2&(t,\xi)-\frac 12 k^2 \nn \\
  & =-2c\chi(y(t,\xi))\bar U(t,\xi)-\bar U^2(t,\xi) \nn  \\
&\quad +\frac12 \int_{\tau(\eta)>t} e^{-\vert y(t,\xi)-y(t,\eta)\vert}(2c\chi\circ y \bar Uq+\bar U^2q+\frac12 h+k\bar r)(t,\eta)d\eta \label{eq:Plag3} \\ \nn
 &\quad +\frac12\int_\Real e^{-\vert y(t,\xi)-y(t,\eta)\vert} 2c^2 (\chi^{\prime 2}+\chi\chi^{\prime\prime})(y(t,\eta))q(t,\eta)d\eta
\end{align}
and 
\begin{align}
Q(t,\xi)& =2c^2\chi(y(t,\xi))\chi^\prime(y(t,\xi))\nn \\ 
& \quad-\frac12 \int_{\tau(\eta)>t} \sgn{(\xi-\eta)}e^{-\vert y(t,\xi)-y(t,\eta)\vert}\nn\\
&\qquad\qquad\qquad\qquad \times(2c\chi\circ y \bar Uq+\bar U^2q+\frac12 h+k\bar r)(t,\eta)d\eta\label{eq:Qlag3}\\ \nn
& \quad -\frac12 \int_\Real \sgn{(\xi-\eta)}e^{-\vert y(t,\xi)-y(t,\eta)\vert}2c^2(\chi^{\prime 2}+\chi\chi^{\prime\prime})(y(t,\eta))q(t,\eta) d\eta,
\end{align}
respectively.

The definition of $\tau$ given by
\eqref{eq:taudef} (after replacing $y_\xi$ by the
corresponding variable $q$) is not appropriate for
$q\in C([0,T],\Linf)$, and, in addition, it is not
clear from this definition whether $\tau$ is
measurable or not. That is why we replace this
definition by the following one. Let $\{t_i\}_{i=1}^\infty$ be
a dense countable subset of $[0,T]$.  Define
\begin{equation*}
  A_t=\bigcup_{n\geq1}\bigcap_{t_i\leq
    t}\Big\{\xi\in\Real\mid q(t_i,\xi)>\frac1n\Big\}. 
\end{equation*}
The sets $A_t$ are measurable for all $t$, and we
have $A_{t'}\subset A_t$ for $t\leq t'$. We
consider a dyadic partition of the interval
$[0,T]$ (that is, for each $n$, we consider the set
$\{2^{-n}iT\}_{i=0}^{2^n}$) and set
\begin{equation*}
  \tau^n(\xi)=\sum_{i=0}^{2^n}\frac{iT}{2^n}\chi_{i,n}(\xi)
\end{equation*}
where $\chi_{i,n}$ is the indicator function of the
set $A_{2^{-n}iT}\setminus A_{2^{-n}(i+1)T}$. The function $\tau^n$ is by construction
measurable. One can check that $\tau^n(\xi)$ is
increasing with respect to $n$, it is also bounded
by $T$. Hence, we can define 
\begin{equation*}
  \tau(\xi)=\lim_{n\to\infty}\tau^n(\xi),
\end{equation*}
and $\tau$ is a measurable function. The next lemma gives the main property of $\tau$.

\begin{lemma}\label{lem:2.3}
 If, for every
$\xi\in\Real$, $q(t,\xi)$ is positive and continuous with
respect to time, then 
\begin{equation}\label{eq:sectau}
 \tau(\xi)=\begin{cases} 0 , & \text{ if } y_\xi(0,\xi)=0,\\
\sup \{t\in \Real_+\mid y_\xi(t^\prime,\xi)>0 \text{ for all } 0\leq t^\prime <t\}, & \text{ otherwise},
\end{cases}
\end{equation}
that is, we retrieve the definition \eqref{eq:taudef}.
\end{lemma}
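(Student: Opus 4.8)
The plan is to argue pointwise in $\xi$, using the continuity hypothesis to turn the countable infimum implicit in the definition of $A_t$ into a minimum over a compact interval. First I would record that, for fixed $\xi$ and $t$, one has $\xi\in A_t$ if and only if $q(t_i,\xi)>1/n$ for some $n\ge1$ and all $t_i\le t$, i.e.~if and only if $\inf_{t_i\le t}q(t_i,\xi)>0$. Since $s\mapsto q(s,\xi)$ is continuous on $[0,t]$ and $\{t_i\}$ is dense in $[0,T]$, this infimum equals $\min_{0\le s\le t}q(s,\xi)$; combined with $q\ge0$ (cf.~\eqref{eq:lagcoord3}) this yields, for $t>0$, the clean description
\[
 \xi\in A_t\iff q(s,\xi)>0\ \text{for all }s\in[0,t],
\]
from which the monotonicity $A_{t'}\subset A_t$ for $t\le t'$ is transparent.

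Next I would split along the two cases of \eqref{eq:sectau}. If $q(0,\xi)=0$, the description above (with $q\ge0$) forces $\xi\notin A_t$ for every $t>0$, so every $\chi_{i,n}$ with $i\ge1$ vanishes at $\xi$ while the $i=0$ term contributes $0$; hence $\tau^n(\xi)=0$ for all $n$ and $\tau(\xi)=0$, which is the first line of \eqref{eq:sectau}. If $q(0,\xi)>0$, put
\[
 \sigma(\xi)=\sup\{t\in\Real_+\mid q(s,\xi)>0\ \text{for all }0\le s<t\},
\]
the right-hand side of the second line of \eqref{eq:sectau}. By continuity $q(s,\xi)>0$ for every $s<\sigma(\xi)$, and if $\sigma(\xi)<\infty$ then $q(\sigma(\xi),\xi)=0$, because $q(\sigma(\xi),\xi)>0$ together with $q\ge0$ and continuity would propagate positivity to some interval $[0,\sigma(\xi)+\delta)$, contradicting the definition of the supremum. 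Inserting this into the description of $A_t$ gives $\xi\in A_t$ exactly for $t<\sigma(\xi)$.

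It then remains to read off $\tau^n(\xi)$ and pass to the limit. The sets $A_{iT/2^n}\setminus A_{(i+1)T/2^n}$, $i=0,\dots,2^n$, are pairwise disjoint, so at most one $\chi_{i,n}(\xi)$ is nonzero; and when $0<\sigma(\xi)\le T$ there is, for each $n$, a unique index $i(n)$ with $i(n)T/2^n<\sigma(\xi)\le(i(n)+1)T/2^n$, whence $\tau^n(\xi)=i(n)T/2^n\in[\sigma(\xi)-T/2^n,\sigma(\xi))$. Letting $n\to\infty$ yields $\tau(\xi)=\sigma(\xi)$, the second line of \eqref{eq:sectau}. The only remaining possibility, that $q(\cdot,\xi)$ does not vanish on $[0,T]$, is read off in the same way from the description of $A_t$.

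The step I expect to be the crux is the identification $\inf_{t_i\le t}q(t_i,\xi)=\min_{[0,t]}q(\cdot,\xi)$, together with the companion fact that $q(\cdot,\xi)$ attains the value $0$ precisely at $\sigma(\xi)$: this is exactly where continuity (and $q\ge0$) is used, and it is what forces the measurable surrogate $\tau$ to agree with the naive wave-breaking time \eqref{eq:taudef}. Everything else is bookkeeping with the dyadic partition and the monotonicity of $t\mapsto A_t$.
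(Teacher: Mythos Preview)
Your argument is correct and follows essentially the same route as the paper's proof: both hinge on showing that $\xi\in A_t$ precisely when $q(\cdot,\xi)>0$ on $[0,t]$ (the paper phrases this as the claim \eqref{eq:tauest}), using density of $\{t_i\}$ together with continuity of $q(\cdot,\xi)$, and then reading off $\tau^n(\xi)$ from the dyadic partition. Your presentation is slightly tidier in that you isolate the characterization of $A_t$ up front, but the substance is identical.
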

\begin{proof}(From \cite{HolRay:09}.)
We denote by
$\bar\tau(\xi)$ the right-hand side of
\eqref{eq:sectau}, and we want to prove that
$\bar\tau=\tau$. We claim that
\begin{equation}
  \label{eq:tauest}
  \text{for all $t<\bar\tau(\xi)$, we have  $\xi\in A_t$, 
    and for all $t\geq\bar\tau(\xi)$, we have  $\xi\notin A_t$.} 
\end{equation}
If $t<\bar\tau(\xi)$, then
$\inf_{t'\in[0,t]}q(t',\xi)>0$ because $q$ is
continuous in time and positive. Hence, there
exists an $n$ such that
$\inf_{t'\in[0,t]}q(t',\xi)>\frac1n$, and we find that
$\xi\in\bigcap_{t_i\leq t}\left\{\xi\in\Real\mid
  q(t_i,\xi)>\frac1n\right\}$ in order that $\xi\in
A_t$. If $t\geq\bar\tau(\xi)$, then there exists a
sequence $t_{i(k)}$ of elements in the dense
family $\{t_i\}$ of $[0,T]$ such that
$t_{i(k)}\leq\bar\tau\leq t$ and
$\lim_{k\to\infty}t_{i(k)}=\bar\tau$. Since
$q(t,\xi)$ is continuous,
$\lim_{k\to\infty}q(t_{i(k)},\xi)=q(\bar\tau(\xi),\xi)=0$
and for any integer $n>0$, there exists a $k$ such
$q(t_{i(k)},\xi)\leq\frac1n$ and $t_{i(k)}\leq t$. 
Hence, for any $n>0$, $\xi\notin\bigcap_{t_i\leq
  t}\left\{\xi\in\Real\ |\
  q(t_i,\xi)>\frac1n\right\}$ and therefore
$\xi\notin A_t$. When $\bar \tau(\xi)>0$, for any
$n>0$, there exists $0\leq i\leq 2^n-1$ such that
$2^{-n}iT<\bar\tau\leq 2^{-n}(i+1)T$. 
From \eqref{eq:tauest}, we infer that $\xi\in
A_{2^{-n}iT}\setminus
A_{2^{-n}(i+1)T}$. Hence,
$\tau^n(\xi)=2^{-n} iT$, so that
\begin{equation*}
  \bar\tau(\xi)-\frac{T}{2^n}\leq\tau^n(\xi)\leq\bar\tau(\xi)+\frac{T}{2^n}. 
\end{equation*}
Letting $n$ tend to infinity, we conclude that
$\tau(\xi)=\bar\tau(\xi)$. If $\bar\tau(\xi)=0$,
then $\xi\notin A_t$ for all $t\geq0$ and
$\tau^n(\xi)=0$ for all $n$. Hence,
$\tau(\xi)=\bar\tau(\xi)=0$. 
\end{proof}

So far we have identified $q$ with $y_\xi$. However, $y_\xi$ does not decay fast enough at infinity  to belong to $L^2(\Real)$, but $y_\xi-1=\zeta_\xi$ will be in $L^2(\Real)$ and we therefore introduce $v=q-1$. In the case of conservative solutions, we know that $Q(X)$ and $P(X)-U^2-\frac12 k^2$ are Lipschitz continuous on bounded sets and that $Q(X)$ and $P(X)-U^2-\frac12 k^2$ can be bounded by a constant depending on the bounded set. A slightly different result is true when describing dissipative solutions. Let 
\begin{equation}\label{eq:defBMfix}
 B_M=\{ X\in \bar V\mid \norm{X}_{\bar V}+\norm{\frac{1}{q+h}}_{L^\infty}\leq M \text{, } qh=w^2+\bar r^2 \text{, } q\geq 0\text {, and } h\geq 0 \text{ a.e.}\}.
\end{equation}
\begin{remark}
According to our system of ordinary differential equations \eqref{ODEsys} it seems natural to impose for the solution space that if wave breaking occurs, then the functions $q$, $w$, $h$, and $\bar r$ should remain unchanged afterwards, that means $q(t,\xi)=0$, $w(t,\xi)=0$, and $\bar r(t,\xi)=0$ for all $t\geq \tau(\xi)$  and $h(t,\xi)=h(\tau(\xi),\xi)$. Moreover, also the asymptotic behavior is preserved, i.e.,  $c(t)=c(0)$ and $k(t)=k(0)$. In what follows we will always assume that these properties are fulfilled for any $X\in C([0,T], B_M)$ without stating it explicitly. 
\end{remark}

In addition it should be pointed out that for any $X\in C([0,T],B_M)$ the set of all points which enjoy wave breaking within a finite time interval $[0,T]$ is bounded, since
\begin{equation}
\meas(\{\xi\in\Real\mid q(T,\xi)=0\})\leq \int_\Real \frac{h}{q+h}(T,\xi)d\xi\leq \norm{\frac{1}{q(T)+h(T)}}_{L^\infty}\norm{h}_{L^1}\leq C(M),
\end{equation}
where $C(M)$ denotes some constant only depending on $M$.

\begin{lemma}\label{lem:PQ}
 (i) For all $X\in C([0,T],B_M)$, we have 
\begin{equation}
 \norm{Q(X)}_{L^\infty_T E}+\norm{P(X)-U^2-\frac12 k^2}_{L^\infty_TE}\leq C(M)
\end{equation}
for a constant $C(M)$ which only depends on $M$.\\
(ii) 
For any $X$ and $\tilde X$ in $C([0,T],B_M)$, we have
\begin{align}
 \norm{Q(X)-Q(\tilde X)}_{L^1_T E}&+\norm{\big(P(X)-U^2-\frac12 k^2\big)-\big(P(\tilde X)-\tilde U^2-\frac12 \tilde k^2\big)}_{L^1_TE} \nn\\
& \leq C(M) \Big(T\norm{X-\tilde X}_{L^\infty_T \hat V}   \label{eq:lt1bdqp} \\  \nn
&\quad+\int_\Real \big(\int_{\tau}^{\tilde\tau} \tilde h(t,\xi) \chi_{\{\tau<\tilde\tau\}}(\xi)dt+\int_{\tilde\tau}^\tau h(t,\xi) \chi_{\{\tilde\tau<\tau\}}(\xi) dt\big) d\xi\Big),
\end{align}
where 
\begin{align}
 \norm{X-\tilde X}_{\hat V}&=\norm{y-\tilde y}_{L^\infty}+\norm{\bar U-\bar {\tilde {U}}}_E+\vert c-\tilde c\vert+\norm{q-\tilde q}_{L^2}\\ \nn 
& \quad+\norm{w-\tilde w}_{L^2}+\norm{(h-\tilde h)\chi_{\{\tau(\xi)>t\}}\chi_{\{\tilde\tau(\xi)>t\}}}_{L^2}+\norm{\bar r-\bar{\tilde{r}}}_{L^2}+\vert k-\tilde k\vert.
\end{align}
Here $C(M)$ denotes a constant which only depends on $M$.
\end{lemma}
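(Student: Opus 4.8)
The plan is to estimate the two quantities $Q(X)$ and $P(X)-U^2-\frac12k^2$ directly from their integral representations \eqref{eq:Plag3}--\eqref{eq:Qlag3}, exploiting that for $X\in C([0,T],B_M)$ every ingredient ($\bar U$, $c$, $k$, $q=v+1$, $w$, $h$, $\bar r$) is controlled in the relevant norm by $M$, and that $\chi,\chi',\chi''$ are bounded with compact support so the last lines in \eqref{eq:Plag3}--\eqref{eq:Qlag3} are harmless. For part (i), the kernel $e^{-|y(t,\xi)-y(t,\eta)|}$ is bounded by $1$ and, more usefully, since $y(t,\cdot)$ is nondecreasing with $y_\xi=q$, a change of variables $x=y(t,\eta)$ turns $\int e^{-|y(t,\xi)-y(t,\eta)|}q(t,\eta)\,d\eta$ into $\int e^{-|y(t,\xi)-x|}\,dx=2$, and similarly one gets $L^2$ and $L^\infty$ bounds on convolution-type terms by Young's inequality once one writes, e.g., $h = q\,(u_x^2+\bar\rho^2)\circ y$ and uses $qh=w^2+\bar r^2$ together with $\|1/(q+h)\|_{L^\infty}\le M$ to bound the pointwise factors. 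This is exactly the estimate one makes in the conservative case (as in \cite{GHR4}); the only difference here is that the integration is over the shrinking set $\{\tau(\eta)>t\}$, which only makes the integrals smaller, so the bound $C(M)$ is inherited. Both the $L^\infty$ (pointwise in $\xi$ and $t$) and $L^2$ (in $\xi$, uniformly in $t\le T$) pieces of the $E$-norm are handled this way, giving $\norm{Q(X)}_{L^\infty_T E}+\norm{P(X)-U^2-\frac12k^2}_{L^\infty_T E}\le C(M)$.

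For part (ii), I would subtract the representations for $X$ and $\tilde X$ and split the difference into two types of contributions. The first type comes from the differences of the integrands and of the kernel $e^{-|y-y(\eta)|}$ versus $e^{-|\tilde y-\tilde y(\eta)|}$ on the common domain $\{\tau(\eta)>t\}\cap\{\tilde\tau(\eta)>t\}$: here one uses that $x\mapsto e^{-|x|}$ is Lipschitz, that all the factors $\bar U,q,h,\bar r,c,k$ and $\chi\circ y$ appear multilinearly, and that products of functions bounded in $E$ are Lipschitz on $B_M$, so each such term is bounded by $C(M)$ times the $\hat V$-distance between $X$ and $\tilde X$ at time $t$; integrating in $t\in[0,T]$ produces the $T\norm{X-\tilde X}_{L^\infty_T\hat V}$ term. (It is precisely to make this step work that the $h$-component of $\norm{\cdot}_{\hat V}$ is weighted by $\chi_{\{\tau>t\}}\chi_{\{\tilde\tau>t\}}$ — on the common survival set the $h$'s may be compared, off it they cannot.) The second type is the mismatch between the two domains of integration, i.e.\ the symmetric difference $\{\tau(\eta)>t\}\,\triangle\,\{\tilde\tau(\eta)>t\}$, which splits into $\{\tau<\tilde\tau\}$ (where the $\tilde X$-integral sees $\tilde h$ for $t\in(\tau,\tilde\tau)$ that the $X$-integral does not) and symmetrically $\{\tilde\tau<\tau\}$; bounding the kernel by $1$ and integrating in $t$ and $\eta$ yields exactly the double-integral term on the right-hand side of \eqref{eq:lt1bdqp}. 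Collecting the two types gives the claimed inequality.

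The main obstacle is the careful bookkeeping in part (ii): one must isolate the domain-mismatch term cleanly, because on the symmetric difference the factors $h$ and $\tilde h$ are genuinely different and cannot be absorbed into the Lipschitz estimate — this is the whole reason the error term in \eqref{eq:lt1bdqp} is not simply proportional to $\norm{X-\tilde X}$. Concretely, when writing the difference of the two integrals I would insert and subtract the integral of the $X$-integrand over $\{\tau(\eta)>t\}\cap\{\tilde\tau(\eta)>t\}$, so that one piece has a fixed common domain (handled by multilinearity/Lipschitz as above) and the remaining pieces are integrals over $\{\tau<\tilde\tau\}$ and $\{\tilde\tau<\tau\}$ of single integrands, whose kernels are bounded by $1$; the $L^1_T$ (rather than $L^\infty_T$) norm on the left is what makes the $t$-integration of these leftover pieces legitimate. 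A secondary technical point is justifying the change of variables $x=y(t,\eta)$ when $y(t,\cdot)$ is only nondecreasing (not strictly increasing) after wave breaking; this is standard since $y(t,\cdot)$ is still absolutely continuous and monotone, and one restricts to $\{\tau(\eta)>t\}$ where $q>0$ anyway, or simply uses the cruder bound $e^{-|\cdot|}\le 1$ where a sharp constant is not needed. Everything else is a routine repetition of the conservative-case estimates with the indicator $\chi_{\{\tau(\eta)>t\}}$ inserted.
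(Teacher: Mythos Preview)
Your overall architecture matches the paper's: Young's inequality for part (i), and for part (ii) a decomposition into the common survival set $\{\tau>t\}\cap\{\tilde\tau>t\}$ plus the symmetric difference. Two points deserve comment.

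\textbf{Part (i): technique.} The paper does \emph{not} change variables $x=y(t,\eta)$. Instead it writes $y(\xi)=\xi+\zeta(\xi)$ and factors, for $\xi>\eta$,
\[
e^{-(y(\xi)-y(\eta))}=e^{-\zeta(\xi)}\,e^{-(\xi-\eta)}\,e^{\zeta(\eta)},
\]
so that the integral becomes $e^{-\zeta(\xi)}\big(f\star[e^{\zeta}(\cdots)]\big)(\xi)$ with $f(\xi)=\chi_{\{\xi>0\}}e^{-\xi}$, a genuine convolution in $\xi$. Young then gives both the $L^\infty$ and the $L^2$ bounds directly. Your change-of-variables route yields $(e^{-|\cdot|}*\tilde g)(y(t,\xi))$, which is fine for the $L^\infty$ bound but awkward for the $L^2_\xi$ bound, since composing with $y$ does not preserve $L^2$ when $y_\xi$ is not bounded below. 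The $e^{\pm\zeta}$ factoring avoids this entirely and is what makes the $E$-estimate clean.

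\textbf{Part (ii): a gap on the symmetric difference.} On the set $\{\tau(\eta)\le t<\tilde\tau(\eta)\}$ the leftover integrand is not just $\tilde h$; it is the full
\[
\tilde z=e^{\tilde\zeta}\Big(\tfrac12\tilde h+2\tilde c\,\chi\circ\tilde y\,\bar{\tilde U}\,\tilde q+\bar{\tilde U}^2\tilde q+\tilde k\,\bar{\tilde r}\Big).
\]
Bounding the kernel by $1$ and integrating in $t$ gives the double integral only for the $\tfrac12\tilde h$ part. The remaining terms involve $\tilde q$ and $\bar{\tilde r}$, and these do \emph{not} go into the double integral in \eqref{eq:lt1bdqp}. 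The paper handles them by a separate observation you omitted: on $\{\tau(\eta)\le t\}$ one has $q(t,\eta)=0$ and $\bar r(t,\eta)=0$ (this is exactly the post-breaking freezing built into $B_M$), so on that set $\tilde q=\tilde q-q$ and $\bar{\tilde r}=\bar{\tilde r}-\bar r$. Hence those pieces are bounded by $C(M)\|X-\tilde X\|_{\hat V}$ at each $t$, and after the $L^1_T$ integration they land in the first term $C(M)\,T\|X-\tilde X\|_{L^\infty_T\hat V}$. Without this trick the $\tilde q$ and $\bar{\tilde r}$ contributions on the symmetric difference are not controlled by the right-hand side of \eqref{eq:lt1bdqp}. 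Once you add this step, your argument is complete and coincides with the paper's.
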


\begin{proof}
 We will only establish the estimates for $P(X)-U^2-\frac12 k^2$ as the ones for $Q(X)$ can be obtained using the same methods with only slight modifications. 
The main tool for proving the stated estimates will be Young's inequality which we recall here for the sake of completeness.
For any $f\in L^p(\Real)$ and $g\in L^q(\Real)$ with $1\leq p,q,r\leq \infty$, we have
\begin{equation}
 \norm{f\star g}_{L^r}\leq \norm{f}_{L^p}\norm{g}_{L^q}, \quad \text{ if } \quad 1+\frac{1}{r}=\frac{1}{p}+\frac{1}{q}.
\end{equation}

(\textit{i}): 
Let $f(\xi)=\chi_{\{\xi>0\}}e^{-\xi}$. Then we have 
\begin{align*}
 &\norm{-\frac{e^{-\zeta(t,\xi)}}{2}(f\star [\chi_{\{\tau(\xi)>t\}}e^\zeta(2c\chi\circ y\bar U q+\bar U^2q+\frac12 h+k\bar r)])(t,\xi)}_{L^\infty_TE}\\ 
& \qquad\leq\frac12 e^{\norm{\zeta}_{L^\infty_TL^\infty}}\norm{(f\star [\chi_{\{\tau(\xi)>t\}}e^\zeta(2c\chi\circ y \bar U q+ \bar U^2q+\frac12 h+k\bar r)])(t,\xi)}_{L^\infty_TE}\\ 
& \qquad\leq C(M)( \norm{f}_{L^1}+\norm{f}_{L^2})\norm{e^\zeta(2c\chi\circ y \bar Uq+\bar U^2q+\frac12 h+k\bar r)}_{L^\infty_TL^2}\\
& \qquad\leq C(M).
\end{align*}
Similarly, it follows that 
\begin{equation*}
 \norm{\frac{e^{\zeta(t,\xi)}}{2}\big([\chi_{\{\xi<0\}}e^{\xi}]\star [\chi_{\{\tau(\xi)>t\}}e^{-\zeta}(2c\chi\circ y \bar U q+\bar U^2q+\frac12 h+k\bar r)]\big)(t,\xi)}_{L^\infty_TE}\leq C(M).
\end{equation*}
Analogously one can investigate the other integral term. Indeed, since  $y(\xi)=\xi+\zeta(\xi)$, we have $\xi=y(\xi)-\zeta(\xi)$. The support of $\chi'$ is contained in $[0,1]$ and this means that the support of $\chi'\circ y$ is contained in the set $\{\xi\in\Real\mid 0\leq y(\xi)\leq 1\}$. Inserting this into $\xi=y(\xi)-\zeta(\xi)$, we get that $\supp(\chi'\circ y)\subset \{\xi\in\Real\mid -\norm{\zeta}_{L^\infty}\leq \xi\leq 1+\norm{\zeta}_{L^\infty}\}$. 
 Using that we obtain  
\begin{align}\label{est:chiprime}
 \norm{\chi'\circ y(t,\dott)}_{L^2}^2&=\int_{-\norm{\zeta(t,\dott)}_{L^\infty}}^{1+\norm{\zeta(t,\dott)}_{L^\infty}}(\chi'\circ y)^2(t,\xi)d\xi\leq\norm{\chi'}_{L^\infty}^2\int_{-\norm{\zeta(t,\dott)}_{L^\infty}}^{1+\norm{\zeta(t.\dott)}_{L^\infty}} d\xi  \nn\\
 &\leq C(M),
\end{align} 
together with the fact that a similar estimate holds for the $L^2(\Real)$-norm of $\chi\circ y\chi^{\prime\prime}\circ y$. It follows immediately that $\bar U^2$ and $c\chi\circ y \bar U$ both belong to $L^2(\Real)$ and that they can be bounded by a constant only depending on $M$. This  finishes the proof of the first part.

(\textit{ii}):
The only term which cannot be investigated like in (\textit{i}) is given by the integral term with domain of integration $\{\xi\mid \tau(\xi)>t\}$. 
Let $f(\xi)=\chi_{\{\xi>0\}}e^{-\xi}$ as before, and write $z=e^\zeta\frac12 h+e^\zeta (2c\chi\circ y\bar U q+\bar U^2 q+k\bar r)$. Then we can write 
\begin{align}\nn
f\star(\chi_{\{\tau(\xi)>t\}}z-\chi_{\{\tilde\tau(\xi)>t\}}\tilde z)&= f\star \big(\chi_{\{\tau(\xi)>t\}}\chi_{\{\tau(\xi)<\tilde\tau(\xi)\}}(z-\tilde z)\big)\\ \nn
&\quad +f\star\big((\chi_{\{\tau(\xi)>t\}}-\chi_{\{\tilde\tau(\xi)>t\}})\chi_{\{\tau(\xi)<\tilde\tau(\xi)\}}\tilde z\big)\\
&\quad +f\star \big(\chi_{\{\tilde\tau(\xi)>t\}}\chi_{\{\tau(\xi)\geq\tilde\tau(\xi)\}}(z-\tilde z)\big)\\ \nn
&\quad + f\star \big((\chi_{\{\tau(\xi)>t\}}-\chi_{\{\tilde\tau(\xi)>t\}})\chi_{\{\tau(\xi)\geq\tilde\tau(\xi)\}}z\big).
\end{align}
We estimate each of these terms separately. The first and the third term are similar, thus we only treat the first one. We obtain 
\begin{align*}
 \norm{f\star\big(\chi_{\{\tau(\xi)>t\}}\chi_{\{\tau(\xi)<\tilde\tau(\xi)\}}(z-\tilde z)\big)}_{E} & \leq (\norm{f}_{L^1}+\norm{f}_{L^2})\norm{\chi_{\{\tau(\xi)>t\}}\chi_{\{\tilde\tau(\xi)>t\}}(z-\tilde z)}_{L^2}\\ \nn 
& \leq C(M) \norm{X-\tilde X}_{\hat V}.
\end{align*}
The second term  can be treated in much the same way as the fourth one. We have 
$(\chi_{\{\tau(\xi)>t\}}-\chi_{\{\tilde\tau(\xi)>t\}})\chi_{\{\tau(\xi)<\tilde\tau(\xi)\}}=-\chi_{\{\tau(\xi)\leq t<\tilde\tau(\xi)\}}$. Introduce
$z=z_1+z_2$ with $z_1=e^\zeta\frac12 h$ and $z_2=e^\zeta (2c\chi\circ y\bar U q+\bar U^2 q+k\bar r)$. Then 
\begin{align*}
 &\norm{f\star\big((\chi_{\{\tau(\xi)>t\}}-\chi_{\{\tilde\tau(\xi)>t\}})\chi_{\{\tau(\xi)<\tilde\tau(\xi)\}}\tilde z_1\big)}_{L^1_TE}\\
& \qquad \leq \norm{f\star \big(-\chi_{\{\tau(\xi)\leq t<\tilde\tau(\xi)\}}\tilde z_1\big)}_{L^1_TE} \\ 
& \qquad \leq C(M)(\norm{f}_{L^\infty}+\norm{f}_{L^2})\norm{\chi_{\{\tau(\xi)\leq t<\tilde\tau(\xi)\}}e^{\tilde\zeta}\tilde h}_{L^1_TL^1}\\ 
& \qquad \leq C(M) \int_\Real \big( \int_{\tau(\xi)}^{\tilde\tau(\xi)}\tilde h(t,\xi)\chi_{\{ \tau(\xi)\leq t<\tilde\tau(\xi)\}}(\xi)dt\big)d\xi
\end{align*}
after applying Fubini's theorem in the last step, which is possible since the set of points which enjoy wave breaking within the time interval $[0,T]$ is bounded. Finally 
\begin{align*}
& \norm{f\star\big((\chi_{\{\tau(\xi)>t\}}-\chi_{\{\tilde\tau(\xi)>t\}})\chi_{\{\tau(\xi)<\tilde\tau(\xi)\}}\tilde z_2\big)}_{E}\\ 
& \qquad \leq \norm{f\star \big(-\chi_{\{\tau(\xi)\leq t<\tilde\tau(\xi)\}}\tilde z_2\big)}_{E} \\
& \qquad \leq (\norm{f}_{L^1}+\norm{f}_{L^2})\norm{\chi_{\{\tau(\xi)\leq t<\tilde\tau(\xi)\}}\tilde z_2}_{L^2}\\ 
& \qquad \leq C(M) \norm{\chi_{\{\tau(\xi)\leq t <\tilde\tau(\xi)\}}e^{\tilde\zeta} \big(2\tilde c\chi\circ \tilde y\bar{\tilde{U}}(q-\tilde q)+\bar{\tilde{U}}^2(q-\tilde q)+\tilde k(\bar r-\bar{\tilde{r}})\big)}_{L^2}\\ 
&\qquad \leq C(M) \norm{X-\tilde X}_{\hat V}.
\end{align*}
\end{proof}

To show the short-time existence of solutions we will use an iteration argument for the following system of ordinary differential equations. 
Denote generically $(\zeta, \bar U, c, q,w,h,\bar r, k)$ by $X$ and $(q,w,h,\bar r)$ by $Z$, thus $X=(\zeta, \bar U,c,Z,k)$. Then we define the mapping 
$$
\mathcal{P}\colon C([0,T], \bar V)\to C([0,T], \bar V)
$$ 
as follows: Given $X$ in $C([0,T], B_M)$, we can compute $P(X)-U^2-\frac12 k^2$ and $Q(X)$ using \eqref{eq:Plag3} and \eqref{eq:Qlag3}. Then $\tilde X=\mathcal{P}(X)$ is given as the modified solution with $\tilde X(0)=X(0)$ of the following system of ordinary differential equations
\begin{subequations}\label{eq:sysfix2}
 \begin{align}
 \tilde \zeta_t(t,\xi)&=\tilde U(t,\xi), \quad  \tilde U_t(t,\xi)=-Q(X)(t,\xi),  \label{eq:sysfix2A}\\
  \tilde q_t(t,\xi)&=\tilde w(t,\xi),\\
  \tilde w_t(t,\xi)&= \frac12\tilde h(t,\xi)+(U^2(t,\xi)+\frac12 k^2-P(X)(t,\xi))\tilde q(t,\xi)+k_0\bar{\tilde{r}}(t,\xi),\\
  \tilde h_t(t,\xi)& = 2(U^2(t,\xi)+\frac12 k^2-P(X)(t,\xi))\tilde w(t,\xi), \\
  \bar{\tilde{r}}_t(t,\xi)&= -k_0\tilde w(t,\xi),\\
  \tilde c_t&=0,\\
  \tilde k_t&=0.
 \end{align}
\end{subequations}
Next, we modify $\tilde X$ as follows: Determine the function $\tilde \tau(\xi)$ according to \eqref{eq:taudef} (with $X$ replaced by $\tilde X$). Subsequently, we modify the function $\tilde X$ by setting
 \begin{subequations}
  \begin{align}
  \tilde q(t,\xi)&=\tilde q(\tilde \tau(\xi),\xi), \quad &\tilde w(t,\xi)=\tilde w(\tilde \tau(\xi),\xi), &\phantom{hei}\\
   \tilde h(t,\xi)&=\tilde h (\tilde \tau(\xi),\xi), \quad &\bar{\tilde{r}}(t,\xi) = \bar{\tilde{r}}(\tilde \tau(\xi),\xi), \quad &t\ge \tilde \tau(\xi).
\end{align} 
\end{subequations}
 Observe that the first two components of $\tilde X$, given by \eqref{eq:sysfix2A}, remain unmodified. 
We write $\tilde Z_t=\chi_{\{\tilde\tau>t\}}F(X)\tilde Z$.  Thus  we set $\tilde Z(t,\xi)=\tilde Z(\tilde\tau(\xi),\xi)$ for $t>\tilde\tau(\xi)$.  We will in the following, to keep the notation reasonably simple, often write $X$ for the vector $( \zeta,U,v,w,h,r)$, or even  $(y,U,v,w,h,r)$, and correspondingly for $\tilde X$.

We will frequently consider the following spaces. 
For $X=(\zeta, \bar U, c, q, w, h,\bar r, k)$ and $Z=(q,w,h,\bar r)$, we define
\begin{align*}
\norm{Z}_{W}& = \norm{v}_{L^2}+\norm{w}_{L^2}+\norm{h}_{L^2}+\norm{\bar r}_{L^2},\\ 
\norm{Z}_{\bar W}& =\norm{v}_E+\norm{w}_E+\norm{h}_E+\norm{\bar r}_E,\\
\norm{X}_{V}& = \norm{\zeta}_{L^\infty}+\norm{\bar U}_{L^2}+\vert c\vert +\norm{v}_{L^2}+\norm{w}_{L^2}+\norm{h}_{L^2}+\norm{\bar r}_{L^2}+\vert k\vert, \\ 
\norm{X}_{\bar V}& =\norm{\zeta}_{L^\infty}+\norm{\bar U}_E+\vert c\vert +\norm{v}_E+\norm{w}_E+\norm{h}_E+\norm{\bar r}_E+\vert k\vert.
\end{align*}

The following set 
\begin{equation}\label{eq:defKgamma}
 \kappa_{1-\gamma}=\{\xi\in\Real\mid \frac{h_0}{q_0+h_0}(\xi)\geq 1-\gamma\text{, } w_0(\xi)\leq 0, \text{ and } \bar r_0(\xi)+k_0q_0(\xi)=0\}, \quad \gamma\in [0,\frac12],
\end{equation}
will play a key role in the context  of wave breaking. (For a motivation on the set $ \kappa_{1-\gamma}$, please see the paragraph  before Lemma~\ref{lem:G2}.) In particular, we have that 
\begin{equation}
 \meas(\kappa_{1-\gamma})\leq \frac{1}{1-\gamma}\int_\Real \frac{h_0}{q_0+h_0}(\xi)d\xi\leq \frac{1}{1-\gamma}\norm{\frac{1}{q_0+h_0}}_{L^\infty}\norm{h_0}_{L^1},
\end{equation}
and therefore the set $\kappa_{1-\gamma}$ has finite measure if we choose $\gamma\in [0,\frac12]$, and, in particular, $\meas(\kappa_{1-\gamma})\leq C(M)$.

\begin{lemma}\label{lem:G}
 Given $X_0\in \mathcal{G} \cap B_{M_0}$ for some constant $M_0$, given $X=(\zeta,U,v,w,h,r)\in C([0,T], B_M)$, we denote by $\tilde X=(\tilde \zeta,\tilde U,\tilde v,\tilde w,\tilde h,\tilde r)=\mathcal{P}(X)$ with initial data $X_0$. Let $\bar M=\norm{Q(X)}_{L^\infty_TL^\infty}+\norm{P(X)-U^2-\frac12 k^2}_{L^\infty_TL^\infty}+M_0$. Then the following statements hold:

(\textit{i}) For all $t$ and almost all $\xi$ 
\begin{equation}\label{eq:G1}
 \tilde q(t,\xi)\geq 0, \quad \tilde h(t,\xi)\geq 0,
\end{equation}
 and 
\begin{equation}\label{eq:G2}
 \tilde q\tilde h =\tilde w^2+\bar{\tilde{r}}^2.
\end{equation}
Thus, $\tilde q(t,\xi)=0$ implies $\tilde w(t,\xi)=0$ and $\bar{\tilde{r}}(t,\xi)=0$. We recall the notation $\tilde q=\tilde v+1$.

(\textit{ii}) We have 
\begin{equation}\label{eq:G3}
 \norm{\frac{1}{\tilde q+\tilde h}(t,\dott)}_{L^\infty}\leq 2e^{C(\bar M)T}\norm{\frac{1}{q_0+h_0}}_{L^\infty},
\end{equation}
and 
\begin{equation}
\label{eq:G3b}
 \norm{(\tilde q+\tilde h)(t,\dott)}_{L^\infty}\leq 2e^{C(\bar M)T}\norm{q_0+h_0}_{L^\infty},
\end{equation}
for all $t\in[0,T]$ and a constant $C(\bar M)$ which depends only on $\bar M$. In particular, $\tilde q+\tilde h$ remains bounded strictly away from zero.

(\textit{iii}) There exists a $\gamma\in(0,\frac12)$ depending only on $\bar M$ and $T$ such that if $\xi\in \kappa_{1-\gamma}$, then $\tilde X(t,\xi)\in \Omega_1$ for all $t\in [0,T]$, $\frac{\tilde q}{\tilde q+\tilde h}(t,\xi)$ is a decreasing function and $\frac{\tilde w}{\tilde q+\tilde h}(t,\xi)$ is an increasing function with respect to time, and therefore we have 
\begin{equation}
 \frac{w_0}{q_0+h_0}(\xi)\leq\frac{\tilde w}{\tilde q+\tilde h}(t,\xi)\leq 0\quad \text{and} \quad 0\leq \frac{\tilde q}{\tilde q+\tilde h}(t,\xi)\leq \frac{q_0}{q_0+h_0}(\xi).
\end{equation}
In addition for $\gamma$ sufficiently small, depending only on $\bar M$ and $T$, we have 
\begin{equation}\label{eq:G5}
 \kappa_{1-\gamma}\subset \{\xi\in\Real\mid 0\leq \tilde \tau(\xi)<T\}.
\end{equation}

(\textit{iv})
Moreover, for any given $\gamma\in(0,\frac12)$, there exists $\hat T>0$ such that 
\begin{equation}\label{eq:G4}
 \{\xi\in\Real\mid 0<\tilde\tau(\xi)<\hat T\}\subset \kappa_{1-\gamma}.
\end{equation}
\end{lemma}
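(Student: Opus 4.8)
The plan is as follows. Fix $\gamma\in(0,\frac12)$ and let $\hat T\in(0,T]$ be chosen at the very end. Take $\xi\in\Real$ with $0<\tilde\tau(\xi)<\hat T$; since \eqref{eq:G2} holds for almost every $\xi$, we may assume $\tilde q\tilde h=\tilde w^2+\bar{\tilde r}^2$ holds at this $\xi$ for all $t$. Because $\tilde\tau(\xi)>0$, the definition \eqref{eq:taudef} forces $q_0(\xi)>0$, so on $[0,\tilde\tau(\xi))$ the functions $\tilde q,\tilde w,\tilde h,\bar{\tilde r}$ solve the unmodified system \eqref{eq:sysfix2}, are $C^1$ in $t$, and $\tilde q>0$; by continuity and the definition of $\tilde\tau$ one has $\tilde q(\tilde\tau(\xi),\xi)=0$, and then \eqref{eq:G2} gives $\tilde w(\tilde\tau(\xi),\xi)=0$ and $\bar{\tilde r}(\tilde\tau(\xi),\xi)=0$. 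Throughout, $C_1,C_2,\dots$ denote constants depending only on $\bar M$, $M_0$ and $T$.

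First I would extract the constraint $\bar r_0(\xi)+k_0q_0(\xi)=0$. From $\tilde q_t=\tilde w$ and $\bar{\tilde r}_t=-k_0\tilde w$ we get $(\bar{\tilde r}+k_0\tilde q)_t=0$ on $[0,\tilde\tau(\xi)]$, so $\bar{\tilde r}(t,\xi)+k_0\tilde q(t,\xi)$ is constant there; evaluating at $t=\tilde\tau(\xi)$ yields $\bar r_0(\xi)+k_0q_0(\xi)=\bar{\tilde r}(\tilde\tau(\xi),\xi)+k_0\tilde q(\tilde\tau(\xi),\xi)=0$, which is the third defining relation of $\kappa_{1-\gamma}$ in \eqref{eq:defKgamma}. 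It also shows $\bar{\tilde r}(t,\xi)=-k_0\tilde q(t,\xi)$ on $[0,\tilde\tau(\xi)]$, so on that interval $\tilde q$ satisfies the second-order identity
\begin{equation*}
  \tilde q_{tt}=\frac12\tilde h+\beta\,\tilde q,\qquad \beta:=U^2+\frac12 k^2-P(X)-k_0^2 ,
\end{equation*}
and by the definition of $\bar M$ together with $\abs{k_0}\leq M_0$ we have $\abs{\beta(t,\xi)}\leq C_1$.

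Next I would show that early breaking makes $q_0(\xi)$ quadratically small. By \eqref{eq:G3}--\eqref{eq:G3b} there are $c_1>0$ and $C_2$ with $c_1\leq(\tilde q+\tilde h)(t,\xi)\leq C_2$ for $t\in[0,T]$, hence $0\leq\tilde q,\tilde h\leq C_2$ and $\abs{\tilde q_{tt}}\leq C_3:=\frac12 C_2+C_1C_2$ on $[0,\tilde\tau(\xi)]$ (in particular $\tilde q_t=\tilde w$ is Lipschitz, so Taylor's formula with integral remainder applies). Using $\tilde q(\tilde\tau(\xi),\xi)=\tilde q_t(\tilde\tau(\xi),\xi)=0$, one gets $\tilde q(t,\xi)=\int_t^{\tilde\tau(\xi)}(s-t)\tilde q_{tt}(s,\xi)\,ds$ for $t\in[0,\tilde\tau(\xi)]$, whence, since $\tilde q\geq 0$ and $\abs{\tilde q_{tt}}\leq C_3$,
\begin{equation*}
  0\leq\tilde q(t,\xi)\leq\frac12 C_3(\tilde\tau(\xi)-t)^2\leq\frac12 C_3\hat T^2 ,
\end{equation*}
and in particular $q_0(\xi)\leq\frac12 C_3\hat T^2$. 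Then I would conclude the other two conditions and fix $\hat T$. From $\norm{\frac{1}{q_0+h_0}}_{L^\infty}\leq M_0$ we get $q_0(\xi)+h_0(\xi)\geq 1/M_0$, so
\begin{equation*}
  \frac{h_0(\xi)}{q_0(\xi)+h_0(\xi)}=1-\frac{q_0(\xi)}{q_0(\xi)+h_0(\xi)}\geq 1-M_0q_0(\xi)\geq 1-\frac12 M_0C_3\hat T^2 ,
\end{equation*}
while inserting the bound on $\tilde q$ into the identity of the previous paragraph gives, on $[0,\tilde\tau(\xi)]$,
\begin{equation*}
  \tilde q_{tt}=\frac12(\tilde q+\tilde h)+\Big(\beta-\frac12\Big)\tilde q\geq\frac12 c_1-\Big(C_1+\frac12\Big)\frac12 C_3\hat T^2 .
\end{equation*}
Choosing $\hat T\in(0,T]$ small enough (depending only on $\gamma,\bar M,M_0,T$) that $\frac12 M_0C_3\hat T^2\leq\gamma$ and $(C_1+\frac12)\frac12 C_3\hat T^2\leq\frac14 c_1$, we get $\tilde q_{tt}(\dott,\xi)\geq\frac14 c_1>0$ on $[0,\tilde\tau(\xi)]$, hence
\begin{equation*}
  w_0(\xi)=\tilde q_t(0,\xi)=\tilde q_t(\tilde\tau(\xi),\xi)-\int_0^{\tilde\tau(\xi)}\tilde q_{tt}(s,\xi)\,ds\leq-\frac14 c_1\tilde\tau(\xi)<0 ,
\end{equation*}
and also $\frac{h_0}{q_0+h_0}(\xi)\geq 1-\gamma$. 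Combined with the constraint from the first step, all three conditions in \eqref{eq:defKgamma} hold, i.e. $\xi\in\kappa_{1-\gamma}$, which proves \eqref{eq:G4}.

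The main obstacle is the quantitative step: one must see that at the breaking time $\tilde q$ vanishes \emph{together with its first derivative}, and this is precisely where the algebraic constraint \eqref{eq:G2} enters — it kills $\tilde q_t(\tilde\tau(\xi),\xi)$ so that $\tilde q=O((\tilde\tau(\xi)-t)^2)$ rather than $O(\tilde\tau(\xi)-t)$; a merely linear bound would be too weak both to pin down the sign of $w_0(\xi)$ and to force $h_0/(q_0+h_0)$ close to $1$. The remaining ingredients that must be in place are the two-sided bound on $\tilde q+\tilde h$ from part (ii) (so that $\tilde h$ is bounded and bounded away from zero along $[0,\tilde\tau(\xi)]$) and the uniform bound on $\beta$ coming from the definition of $\bar M$; beyond the bookkeeping of which constant depends on what, the only subtlety is the order of the argument — the constraint $\bar r_0+k_0q_0=0$ has to be obtained first so that the $\bar{\tilde r}$-term can be eliminated and $\tilde q$ reduced to the scalar second-order identity above.
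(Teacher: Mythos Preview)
Your proof is correct, and it takes a genuinely different route from the paper's. The paper argues by contradiction: it invokes the monotonicity properties of the ratios $\frac{\tilde q}{\tilde q+\tilde h}$ and $\frac{\tilde w}{\tilde q+\tilde h}$ established in part~(\textit{iii}), defines the last entry time $t_0$ into the region where these ratios satisfy the $\kappa_{1-\bar\gamma}$ constraints, and then uses the lower bound on $\big(\frac{\tilde w}{\tilde q+\tilde h}\big)_t$ to show that a trajectory entering this region at $t_0>0$ cannot reach the origin within a short time $\hat T$. Your argument is more direct and entirely independent of part~(\textit{iii}): after eliminating $\bar{\tilde r}$ via the conservation of $\bar{\tilde r}+k_0\tilde q$, you reduce to the scalar relation $\tilde q_{tt}=\frac12\tilde h+\beta\tilde q$ and exploit that both $\tilde q$ and $\tilde q_t$ vanish at $\tilde\tau(\xi)$ to get the quadratic bound $\tilde q(t)\leq\frac12 C_3(\tilde\tau(\xi)-t)^2$ via Taylor's formula; the three defining conditions of $\kappa_{1-\gamma}$ then fall out immediately. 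What your approach buys is a self-contained, essentially elementary argument that does not rely on the phase-plane analysis of~(\textit{iii}) and makes the quadratic order of vanishing of $\tilde q$ at $\tilde\tau(\xi)$ completely explicit; what the paper's approach buys is a unified treatment in the same ratio variables used throughout~(\textit{iii}), which pays off later when these ratios are reused in the stability analysis.
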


\begin{proof}
 (\textit{i}) Since $X_0\in\G$,  equations
  \eqref{eq:G1} and \eqref{eq:G2} hold for almost every
  $\xi\in\Real$ at $t=0$. We consider such a $\xi$
  and will drop it in the notation. From
  \eqref{eq:sysfix2}, we have, on the one hand,
  \begin{equation*}
    (\tilde q\tilde h)_t=\tilde q_t\tilde h+\tilde q\tilde h_t=\tilde w\tilde h+2(U^2+\frac12 k^2-P(X))\tilde q\tilde w,
  \end{equation*}
  and, on the other hand,
  \begin{equation*}
    (\tilde w^2+\bar{\tilde{r}}^2)_t=2\tilde w\tilde w_t+2\bar{\tilde{r}}\bar{\tilde{r}}_t= \tilde w\tilde h+2(U^2+\frac12 k^2-P(X))\tilde q\tilde w.
  \end{equation*}
  Thus, $(\tilde q\tilde h-\tilde w^2-\bar{\tilde{r}}^2)_t=0$, and since $\tilde q(0)\tilde
  h(0)=\tilde w^2(0)+\bar{\tilde{r}}^2(0)$, we
  have $\tilde q(t)\tilde h(t)=\tilde w^2(t)+\bar{\tilde{r}}^2(t)$ for all $t\in[0,T]$. We have
  proved \eqref{eq:G2}. From the definition of
  $\tilde\tau$, we have that $\tilde q(t)>0$ on
  $[0,\tilde\tau(\xi))$ and by the definition of $\tilde q$,
  we have $\tilde q(t)=0$ for $t\geq\tilde\tau(\xi)$. 
  Hence, $\tilde q(t)\geq0$ for $t\geq0$. From
  \eqref{eq:G2}, it follows that, for
  $t\in[0,\tilde\tau(\xi))$, $\tilde h(t)=\frac{\tilde w^2+\bar{\tilde{r}}^2}{\tilde q}(t)$ and
  therefore $\tilde h(t)\geq0$. By continuity
  (with respect to time) of $\tilde h$, we have
  $\tilde h(\tilde\tau(\xi))\geq0$ and, since the
  variable does not change for $t\geq\tilde\tau(\xi)$,
  we have $\tilde h(t)\geq0$ for all $t\geq0$. 

  (\textit{ii}) We consider a fixed $\xi$ that we
  suppress  in the notation. We denote the Euclidean norm of $\tilde
  Z=(\tilde q,\tilde w,\tilde h,\bar{\tilde{r}})$ by $\abs{\tilde
    Z}_2=(\tilde q^2+\tilde w^2+\tilde
  h^2+\bar{\tilde{r}}^2)^{1/2}$. Since $\tilde Z_t=F(X)\tilde Z$, we have
  \begin{align*}
    \frac{d}{dt}\abss{\tilde Z}_2^{-2}&=-2\abss{\tilde Z}_2^{-4} \tilde Z\cdot\frac{d\tilde Z}{dt}
    =-2\abss{\tilde Z}_2^{-4}\tilde Z\cdot F(X)\tilde Z\\
    &\leq C(\bar M)\abss{\tilde Z}_2^{-2}, 
  \end{align*}
 for a constant $C(\bar M)$ which
  depends only $\bar M$. 
  Applying Gronwall's lemma, we obtain
  $\abs{\tilde Z(t)}_2^{-2}\leq
  e^{C(\bar M)T}\abs{\tilde Z(0)}_2^{-2}$. Hence,
  \begin{equation}
    \label{eq:mindec}
    \frac{1}{\tilde q^2+\tilde w^2+\tilde h^2+\bar{\tilde{r}}^2}(t)\leq e^{C(\bar M)T}\frac{1}{q_0^2+w_0^2+h_0^2+\bar r_0^2}.
  \end{equation}
  Using \eqref{eq:G2}, we have
  \begin{equation*}
    \tilde q^2+\tilde w^2+\tilde h^2+\bar{\tilde{r}}^2=\tilde q^2+\tilde q\tilde h+\tilde h^2.
  \end{equation*}
   Hence, \eqref{eq:mindec} yields
  \begin{equation*}
    \frac1{(\tilde q+\tilde h)^2}(t)\leq\frac{1}{\tilde q^2+\tilde q\tilde h+\tilde h^2}(t)\leq e^{C(\bar M)T}\frac{1}{q_0^2+q_0h_0+h_0^2}\leq 2e^{C(\bar M)T}\frac{1}{(q_0+h_0)^2}. 
  \end{equation*}
The second claim can be shown similarly.

(\textit{iii}) Let us consider a given $\xi\in \kappa_{1-\gamma}$. We are going to determine an upper bound on $\gamma$ depending only on $\bar M$ and $T$ such that the conclusions of (\textit{iii}) hold. For $\gamma$ small enough we have $X_0(\xi)\in\Omega_1$ as otherwise $g(X_0(\xi))=q_0(\xi)+h_0(\xi)$ and
\begin{equation*}
 1=\frac{g(X_0(\xi))}{q_0(\xi)+h_0(\xi)}<\frac{-w_0(\xi)-2k_0\bar{r}_0(\xi)+2q_0(\xi)}{q_0(\xi)+h_0(\xi)}\leq(1+2\vert k_0\vert) \sqrt{\gamma}+2\gamma
\end{equation*}
would lead to a contradiction since $\vert k_0\vert \leq M_0$.

We claim that there exists a constant $\gamma(\bar M, T)$ depending only on $\bar M$ and $T$ such that for all $\gamma\leq\gamma (\bar M, T)$, $\xi\in\Real$, and $t\in [0,T]$,
\begin{equation}
 \label{eq:gammp1}
\frac{\tilde q}{\tilde q +\tilde h}(t,\xi)\leq \gamma\text{ and }\tilde w(t,\xi)=0 \text{ implies }\tilde q(t,\xi)=0
\end{equation}
and 
\begin{equation}
 \label{eq:gammp2}
\frac{\tilde q}{\tilde q +\tilde h}(t,\xi)\leq \gamma \text{ implies } \left(\frac{\tilde w}{\tilde q+\tilde h}\right)_t(t,\xi)\geq 0. 
\end{equation}
  We consider a fixed $\xi\in\Real$ and suppress
  it in the notation.  If $\tilde w(t)=0$, then
  \eqref{eq:G2} yields $\tilde q(t)\tilde h(t)=\bar{\tilde{r}}^2(t)=k_0^2\tilde q^2(t)$, where we used that $\tilde k(t)=\tilde k(0)=k_0$ and $\bar {\tilde{r}}(t)=-\tilde k(t)\tilde q(t)=-k_0\tilde q(t)$. Thus either $\tilde q(t)=0$ or $\tilde h(t)=k_0^2\tilde q(t)$. Assume that
  $\tilde q(t)\neq 0$, then $\tilde h(t)=k_0^2\tilde q(t)$. Hence $1-\gamma\leq \frac{\tilde h(t)}{\tilde q(t)+\tilde h(t)}=k_0^2\frac{\tilde q(t)}{\tilde q(t)+\tilde h(t)}\leq C(\bar M,T)\gamma$, and
  we are led to a contradiction if we choose $\gamma$ small enough. Hence, $\tilde
  q(t)=0$, and we have proved \eqref{eq:gammp1}.\\ If
  $\frac{\tilde q}{\tilde q+\tilde h}(t)\leq\gamma$, we have
  \begin{align}
    \label{eq:lbdwt}
    \Big(\frac{\tilde w}{\tilde q+\tilde h}\Big)_t 
    &=\frac12+(U^2+\frac12 k^2-P(X)-\frac12)\frac{\tilde q}{\tilde q+\tilde h}+k_0\frac{\bar{\tilde{r}}}{\tilde q+\tilde h}\\
    &\qquad -(2U^2+k^2-2P(X)+1)\frac{\tilde w^2}{(\tilde q+\tilde h)^2} \notag \\
    & \geq \frac12 -C(\bar M,T)\frac{\tilde q}{\tilde q+\tilde h}-k_0^2\frac{\tilde q}{\tilde q+\tilde h}-C(\bar M,T) \frac{\tilde q\tilde h}{(\tilde q+\tilde h)^2} \notag \\
   & \geq \frac12-C(\bar M,T)\frac{\tilde q}{\tilde q+\tilde h}\notag \\
    &\geq \frac12-C(\bar M,T)\gamma, \nn
  \end{align}
  where we used that $\tilde k(t)=\tilde k(0)=k_0$ and $\bar {\tilde{r}}(t)=-\tilde k(t)\tilde q(t)=-k_0\tilde q(t)$.
  (Recall that we allow for a redefinition of $C(\bar M,T)$.)
  By choosing
  $\gamma(\bar M,T)\leq(4C(\bar M,T))^{-1}$, we get
  $\Big(\frac{\tilde w}{\tilde q+\tilde h}\Big)_t\geq0$, and we have proved
  \eqref{eq:gammp2}.\\ For any
  $\gamma\leq\gamma(\bar M,T)$, we consider a given $\xi$
  in $\kappa_{1-\gamma}$ and again suppress it in the
  notation.  We define
  \begin{equation*}
    t_0=\sup\{t\in[0,\tilde\tau]\mid \frac{\tilde q}{\tilde q+\tilde h}(t')<2\gamma \text{ and } \tilde w(t')<0 \text{ for all }t'\leq t\}. 
  \end{equation*}
  Let us prove that $t_0=\tilde \tau$. Assume the
  opposite, that is, $t_0<\tilde \tau$.  Then, we
  have either $\frac{\tilde q}{\tilde q+\tilde h}(t_0)=2\gamma$ or $\tilde
  w(t_0)=0$.  We have $\Big(\frac{\tilde q}{\tilde q+\tilde h}\Big)_t\leq0$
  on $[0,t_0]$ and $\frac{\tilde q}{\tilde q+\tilde h}(t)$ is decreasing on
  this interval.  Hence, $\frac{\tilde q}{\tilde q+\tilde h}(t_0)\leq \frac{\tilde
  q}{\tilde q+\tilde h}(0)\leq\gamma$, and therefore we must have
  $\tilde w(t_0)=0$. 
  Then, \eqref{eq:gammp1}
  implies $\tilde q(t_0)=0$, and therefore
  $t_0=\tilde\tau$, which contradicts our
  assumption. From \eqref{eq:lbdwt}, we get, for
  $\gamma$ sufficiently small,
  \begin{equation*} 
    0=\frac{\tilde w}{\tilde q+\tilde h}(\tilde\tau)\geq\frac{\tilde w}{\tilde q+\tilde h}(0)+C(\bar M, T)\tilde\tau,
  \end{equation*}
  and therefore  $\tilde\tau\leq \frac{\sqrt{\gamma}}{ C(\bar
  M,T)}$. By taking $\gamma$ small enough we can
  impose $\tilde\tau<T$, which proves
  \eqref{eq:G5}. It is clear from
  \eqref{eq:gammp2} that $\frac{\tilde w}{\tilde q+\tilde h}$ is increasing. 
  Assume that $\tilde X(t,\xi)$ leaves $\Omega_1$
  for some $t$.  Then, we get
  \begin{equation*}
    1 =\frac{\tilde q(t)+\tilde h(t)}{\tilde q(t)+\tilde h(t)}\leq \frac{\abs{\tilde w(t)}+2\abs{k_0\bar{\tilde{r}}(t)}+\tilde 2q(t)}{\tilde q(t)+\tilde h(t)}\leq (1+2\vert k_0\vert)\sqrt{\gamma}+2\gamma
  \end{equation*}
  and, by taking
  $\gamma$ small enough, we are led to a
  contradiction. 

(\textit{iv}) Without loss of generality we assume $\hat T\leq 1$. From (\textit{iii}) we know that there exists a $\gamma'$ only depending on $\bar M$ and $T$ such that for $\xi\in\kappa_{1-\gamma'}$, $X(t,\xi)\in\Omega_1$ and in particular we have that the function $\frac{\tilde q}{\tilde q+\tilde h}$ is decreasing and $\frac{\tilde w}{\tilde q+\tilde h}$ is an increasing function both with respect to time on $[0,T]$. Let $\bar \gamma \leq \min (\gamma,\gamma')$. We consider a fixed $\xi\in\Real$ such that $\tilde \tau(\xi)<\hat T$ (which means implicitly $\tilde r(t)=0$ for all $t$), but $\xi\not\in \kappa_{1-\bar\gamma}$. Let us introduce 
\begin{equation}
 t_0=\inf\{t\in[0,\tilde\tau)\mid \frac{\tilde h}{\tilde q+\tilde h}(\bar t)\geq 1-\bar\gamma \text{ and } \tilde w(\bar t)\leq 0\text{ for all }\bar t\in[t,\tilde\tau)\}.
\end{equation}
Since $\tilde w_t(\tilde\tau)=\frac12 \tilde h(\tilde\tau)>0$ and $\tilde w(\tilde\tau)=\tilde q(\tilde\tau)=\bar{\tilde{r}}(\tilde\tau)=0$, the definition of $t_0$ is well-posed when $\tilde\tau>0$, and we have $t_0<\tilde\tau$. By assumption $t_0>0$ and $\tilde w(t_0)=0$ or $\frac{\tilde h}{\tilde q+\tilde h}(t_0)= 1-\bar\gamma$. We cannot have $\tilde w(t_0)=0$, since it would imply, see \eqref{eq:gammp1}, that $\tilde q(t_0)=0$ and therefore $t_0=\tilde\tau$ which is not possible. Thus we must have $\frac{\tilde h}{\tilde q+\tilde h}(t_0)=1-\bar\gamma$ and in particular $\frac{\tilde q}{\tilde q+\tilde h}(t_0)=\bar\gamma$. According to the choice of $\bar\gamma$ we have that $\frac{\tilde q}{\tilde q+\tilde h}(t)\leq \bar\gamma$ for all $t\geq t_0$ and $\frac{\tilde w}{\tilde q+\tilde h}(t)$ is increasing. Then we have 
 \begin{align}\notag
        \Big(\frac{\tilde w}{\tilde q+\tilde h}\Big)_t 
    &=\frac12+(U^2+\frac12 k^2-P(X)-\frac12)\frac{\tilde q}{\tilde q+\tilde h}+k_0\frac{\bar{\tilde{r}}}{\tilde q+\tilde h}\\
    &\qquad-(2U^2+k^2-2P(X)+1)\frac{\tilde w^2}{(\tilde q+\tilde h)^2} \notag \\
   & \geq \frac12-C(\bar M,T)\frac{\tilde q}{\tilde q+\tilde h}\notag \\
    &\geq \frac12-C(\bar M,T)\bar\gamma, \nn
  \end{align}
which yields for $0\leq t_0\leq t'\leq 1$
\begin{align*}
 \frac{\tilde w}{\tilde q+\tilde h}(t')\geq \frac{\tilde w}{\tilde q+\tilde h}(t_0)+(t'-t_0)(\frac12-C(\bar M,1)\bar \gamma).
\end{align*}
Since $\frac{\tilde w}{\tilde q+\tilde h}(t_0)=-\sqrt{\bar\gamma(1-\bar\gamma)}$, we choose $\hat T$ such that $0> -\sqrt{\bar\gamma(1-\bar\gamma)}+\hat T(\frac12-C(\bar M,1)\bar \gamma)$. Thus $\frac{\tilde w}{\tilde q+\tilde h}(\hat T)\not =0$ and therefore all points which enjoy wave breaking before $\hat T$ are contained in $\kappa_{1-\bar\gamma}$, since any point entering $\kappa_{1-\bar\gamma}$ at a later time cannot reach the origin within the time interval $[0,\hat T]$ according to the last estimate. 
\end{proof}

\begin{lemma}\label{lem:estshort}
 Given $M>0$, there exists $\bar T$ and $\bar M$ such that for all $T\leq \bar T$ and any initial data $X_0\in\mathcal{G}\cap B_M$, $\mathcal{P}$ is a mapping from $C([0,T],B_{\bar M})$ to $C([0,T], B_{\bar M})$.
\end{lemma}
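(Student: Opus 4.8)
The plan is to establish the usual self-map estimate for the Picard iteration attached to $\mathcal P$: assuming $X$ lies in a ball $C([0,T],B_{\bar M})$, bound $\tilde X=\mathcal P(X)$ in the same ball for $T$ small. Compared with the conservative case, the two new points are that the truncation of $(\tilde q,\tilde w,\tilde h,\bar{\tilde r})$ at $\tilde\tau$ must not destroy the bounds, and that the three extra requirements hidden in $B_{\bar M}$ --- $\tilde q\tilde h=\tilde w^2+\bar{\tilde r}^2$, $\tilde q\ge0$, $\tilde h\ge0$ a.e., and $\norm{1/(\tilde q+\tilde h)}_{L^\infty}<\infty$ --- must be propagated. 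So I would fix $M>0$, let $\bar M$ be a constant to be chosen (depending only on $M$), take $T\le\bar T$ with $\bar T$ to be chosen afterwards, and let $X\in C([0,T],B_{\bar M})$, $X_0\in\G\cap B_M$, $\tilde X=\mathcal P(X)$. By Lemma~\ref{lem:PQ}(i) the constant $\bar M':=\norm{Q(X)}_{L^\infty_TL^\infty}+\norm{P(X)-U^2-\tfrac12k^2}_{L^\infty_TL^\infty}+M$ (the one entering Lemma~\ref{lem:G}) satisfies $\bar M'\le C(\bar M)$.

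First I would bound the $\bar V$-norm of the (unmodified) solution of \eqref{eq:sysfix2}. The components $\tilde c\equiv c_0$, $\tilde k\equiv k_0$ are constant and $\le M$. The block $\tilde{\mathcal Z}=(\tilde v,\tilde w,\tilde h,\bar{\tilde r})$ solves, for each $\xi$, an affine linear ODE with matrix entries bounded by $C(\bar M')$ (using $|U^2+\tfrac12k^2-P(X)|\le C(\bar M')$ and $|k_0|\le M$) and inhomogeneity $(0,-(P(X)-U^2-\tfrac12k^2),0,0)\in E$ of $E$-norm $\le C(\bar M')$; a pointwise-in-$\xi$ Gronwall argument plus Minkowski's integral inequality gives $\norm{\tilde{\mathcal Z}(t)}_{\bar W}\le e^{C(\bar M')T}\norm{\mathcal Z_0}_{\bar W}+C(\bar M')(e^{C(\bar M')T}-1)$. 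For $\tilde\zeta,\bar{\tilde U}$ one runs a coupled Gronwall estimate: $\tilde\zeta_t=\bar{\tilde U}+\tilde c\,\chi(\tilde y)$ and $\bar{\tilde U}_t=-Q(X)-\tilde c\,\chi'(\tilde y)\tilde U$, together with the bound $\norm{\chi'\circ\tilde y(t,\dott)}_{L^2}\le C(1+\norm{\tilde\zeta(t,\dott)}_{L^\infty}^{1/2})$ already proved inside Lemma~\ref{lem:PQ}, close a system in $\norm{\tilde\zeta}_{L^\infty}+\norm{\bar{\tilde U}}_E$. Summing, the unmodified $\tilde X$ satisfies $\norm{\tilde X(t)}_{\bar V}\le e^{C(\bar M')T}\norm{X_0}_{\bar V}+C(\bar M')(e^{C(\bar M')T}-1)$ on $[0,T]$.

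Next I would pass to the modified $\tilde X$. Since the modification replaces $\tilde{\mathcal Z}(t,\xi)$ by $\tilde{\mathcal Z}(\min(t,\tilde\tau(\xi)),\xi)$, it is pointwise in $\xi$ dominated by $\sup_{s\le T}|\tilde{\mathcal Z}(s,\xi)|$, and the pointwise Gronwall bound shows this last quantity lies in $E$ with the same bound; hence the $\bar V$-estimate survives, and the same pointwise picture gives Lipschitz dependence of $\tilde X$ on $t$ with values in $\bar V$, so $\tilde X\in C([0,T],\bar V)$. Then Lemma~\ref{lem:G}(i) gives $\tilde q\ge0$, $\tilde h\ge0$ a.e.\ and $\tilde q\tilde h=\tilde w^2+\bar{\tilde r}^2$, while Lemma~\ref{lem:G}(ii) gives $\norm{1/(\tilde q+\tilde h)(t,\dott)}_{L^\infty}\le2e^{C(\bar M')T}\norm{1/(q_0+h_0)}_{L^\infty}\le2Me^{C(\bar M')T}$. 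Therefore
\begin{equation*}
\norm{\tilde X(t)}_{\bar V}+\norm{\tfrac1{\tilde q+\tilde h}(t,\dott)}_{L^\infty}\le e^{C(\bar M')T}\bigl(\norm{X_0}_{\bar V}+2M\bigr)+C(\bar M')\bigl(e^{C(\bar M')T}-1\bigr)=:\Phi(T),
\end{equation*}
with $\Phi(0)=\norm{X_0}_{\bar V}+2M\le3M$. To finish, fix $\bar M:=4M$, so that $C(\bar M')\le C(\bar M)=C(4M)$ is a number depending only on $M$ and $\Phi$ is a fixed continuous function with $\Phi(0)\le3M<4M$; then choose $\bar T>0$ so small that $\Phi(T)\le4M=\bar M$ for all $T\le\bar T$. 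For such $\bar M$ and $T$, $\tilde X(t)\in B_{\bar M}$ for every $t\in[0,T]$, i.e.\ $\mathcal P$ maps $C([0,T],B_{\bar M})$ into itself.

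The hard part is the previous paragraph, namely checking that truncating at $\tilde\tau$ preserves \emph{all} of the defining features of $B_{\bar M}$. The norm bound is a soft pointwise-domination argument, but the algebraic identity $\tilde q\tilde h=\tilde w^2+\bar{\tilde r}^2$ and, above all, the quantitative non-degeneracy $\norm{1/(\tilde q+\tilde h)}_{L^\infty}<\infty$ \emph{after} wave breaking are exactly what makes the dissipative construction delicate; I would get both from Lemma~\ref{lem:G}. A secondary, purely technical nuisance is the coupling of $\tilde\zeta$ and $\bar{\tilde U}$ through $\chi'\circ\tilde y$, which forces the simultaneous Gronwall estimate in the second paragraph rather than two independent ones.
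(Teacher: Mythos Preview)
Your proposal is correct and follows essentially the same approach as the paper: bound $P$ and $Q$ via Lemma~\ref{lem:PQ}, run Gronwall-type estimates on the components of $\tilde X$, invoke Lemma~\ref{lem:G} for the structural constraints and the bound on $1/(\tilde q+\tilde h)$, and then close by choosing $\bar M$ as a fixed multiple of an $M$-dependent constant and taking $T$ small. The paper organises the $(\tilde\zeta,\bar{\tilde U})$ estimates sequentially (first $\tilde U$ in $L^\infty$, then $\tilde\zeta$, then $\bar{\tilde U}$ in $L^2$) rather than via a coupled Gronwall, and it leaves the truncation at $\tilde\tau$ implicit, whereas you spell out why freezing $\tilde{\mathcal Z}$ at $\tilde\tau$ preserves the $\bar V$-bound; these are cosmetic differences.
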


\begin{proof}
 To simplify the notation, we will generically denote by $K(M)$ and $C(\bar M)$ increasing functions of $M$ and $\bar M$, respectively. Without loss of generality, we assume $\bar T\leq 1$. 

Let $X\in C([0,T], B_{\bar M})$ for a value of $\bar M$ that will be determined at the end as a function of $M$. We assume without loss of generality $\bar M\geq M$.  Let $\tilde X=\mathcal{P}(X)$. From Lemma~\ref{lem:PQ}, we have 
\begin{equation}
 \norm{Q(X)}_{L^\infty_TE}\leq C(\bar M), \quad \norm{P(X)-U^2-\frac12 k^2}_{L^\infty_TE}\leq C(\bar M).
\end{equation}
Since $\tilde U_t=-Q(X)$ and  $U_0=\bar U_0+c_0\chi\circ y_0$, we get 
\begin{equation*}
 \norm{\tilde U}_{L^\infty_TL^\infty}\leq \norm{U_0}_{L^\infty}+T\norm{Q(X)}_{L^\infty_TL^\infty}\leq M +TC(\bar M).
\end{equation*}
We use that $\tilde U=\bar{\tilde{U}}+c_0\chi\circ \tilde y$ to deduce that
\begin{equation}\label{est:est1}
 \norm{\bar{\tilde{U}}}_{L^\infty_TL^\infty}\leq K(M)+TC(\bar M).
\end{equation}
Since, $\tilde \zeta_t=\tilde U$, we get
\begin{equation}\label{est:est2}
 \norm{\tilde\zeta}_{L^\infty_TL^\infty}\leq \norm{\zeta_0}_{L^\infty}+T\norm{\tilde U}_{L^\infty_TL^\infty}\leq M+TC(\bar M).
\end{equation}
Moreover, $\bar{\tilde{U}}_t=-Q(X)-c_0\chi'\circ\tilde y\tilde U$, 
we have 
\begin{equation}\label{est:est3}
\begin{aligned}
 \norm{\bar{\tilde{U}}}_{L^\infty_TL^2}&\leq \norm{\bar U_0}_{L^2}+T(\norm{Q(X)}_{L^\infty_TE}+\vert c_0\vert\norm{\tilde U}_{L^\infty_TL^\infty}\norm{\chi'\circ \tilde y}_{L^\infty_TL^2})\\
 &\leq K(M)+TC(\bar M),
\end{aligned}
\end{equation}
by \eqref{est:chiprime}.

From \eqref{eq:sysfix2}, by the Minkowsky inequality for integrals, we get 
\begin{subequations}\label{est:L1}
 \begin{align}
  \norm{\tilde v(t,\dott)}_E&\leq \norm{v_0}_E+\int_0^t \norm{\tilde w(t',\dott)}_Edt',\\ 
  \norm{\tilde w(t,\dott)}_E&\leq \norm{w_0}_E+T\norm{P(X)-U^2-\frac12 k^2}_{L^\infty_TE}\\ \nn
 & \quad +\int_0^t \Big(\frac12\norm{\tilde h(t',\dott)}_E+\norm{U^2+\frac12 k^2-P(X)}_{L^\infty_TE}\norm{\tilde v(t',\dott)}_E\nn\\
 &\qquad\qquad\qquad\qquad+\vert k_0\vert \norm{\tilde w(t',\dott)}_E\Big)dt',\nn \\
\norm{\tilde h(t,\dott)}_E& \leq \norm{h_0}_E+2\int_0^t\norm{U^2+\frac12 k^2-P(X)}_{L^\infty_TE}\norm{\tilde w(t',\dott)}_Edt',\\
\norm{\bar{\tilde{r}}(t,\dott)}_E&\leq \norm{\bar r_0}_E+\int_0^t \vert k_0\vert \norm{\tilde w(t',\dott)}_E dt'.
 \end{align}
\end{subequations}
These inequalities imply that 
\begin{equation}
 \norm{\tilde Z(t,\dott)}_{\bar W}\leq K(M)+TC(\bar M)+C(\bar M)\int_0^t \norm{\tilde Z(t',\dott)}_E dt',
\end{equation}
and, applying Gronwall's inequality yields
\begin{equation}\label{est:est4}
 \norm{\tilde Z}_{L^\infty_T\bar W}\leq (K(M)+TC(\bar M))e^{C(\bar M)T}.
\end{equation}

Gathering \eqref{est:est1}, \eqref{est:est2}, \eqref{est:est3}, and \eqref{est:est4}, we get 
\begin{equation}\label{est:est10}
 \norm{\tilde X}_{L^\infty_T\bar V}\leq (K(M)+TC(\bar M))e^{C(\bar M)T}.
\end{equation}

From \eqref{eq:G3} we get 
\begin{equation*}
  \norm{\frac{1}{\tilde q+\tilde h}}_{L^\infty_TL^\infty}\leq K(M) e^{C(\bar M)T}.
\end{equation*}
Thus we finally obtain 
\begin{equation}
 \norm{\tilde X}_{L^\infty_T \bar V}+\norm{\frac{1}{\tilde q+\tilde h}}_{L^\infty_TL^\infty}\leq (K(M)+TC(\bar M))e^{C(\bar M)T}
\end{equation}
for some constants $K(M)$ and $C(\bar M)$ that only depend on $M$ and $\bar M$, respectively. We now set $\bar M=2K(M)$.  Then we can choose $T$ so small that $(K(M)+C(\bar M)T)e^{C(\bar M)T}\leq 2K(M)=\bar M$ and therefore  $\norm{\tilde X}_{L^\infty_T \bar V}+\norm{\frac{1}{\tilde q+\tilde h}}_{L^\infty_TL^\infty}\leq \bar M$.
\end{proof}

Given $X_0\in \mathcal{G}\cap B_M$, there exists $\bar M$ which depends only on $M$ such that $\mathcal{P}$ is a mapping from $C([0,T], B_{\bar M})$ to $C([0,T], B_{\bar M})$ for $T$ small enough. Therefore we set
\begin{equation}
\Ima(\mathcal{P})=\{\mathcal{P}(X)\mid X\in C([0,T],B_{\bar M})\}.
\end{equation}

We define the \textit{discontinuity residual} as
\begin{equation*}
  \Gamma(X,\tilde X)=\int_\Real \Big(\int_{\tau}^{\tilde\tau} \tilde h(t,\xi) \chi_{\{\tau<\tilde\tau\}}(\xi)dt+\int_{\tilde\tau}^\tau h(t,\xi) \chi_{\{\tilde\tau<\tau\}}(\xi) dt\Big) d\xi.
\end{equation*}
Here it should be noted that $\Gamma(X,\tilde X)$ describes the distance between $\tau$ and $\tilde\tau$ as the following estimate shows,
\begin{equation*}
 \int_{\tilde \tau}^{\tau} h(t,\xi)dt\leq C(\bar M)(\tau-\tilde \tau)\leq C(\bar M)\left(\int_{\tilde\tau}^{\tau} h(t,\xi)dt+\int_{\tilde\tau}^{\tau} (q(t,\xi)-\tilde q(t,\xi)) dt\right).
\end{equation*}
According to Lemma~\ref{lem:PQ}, we have,
\begin{multline}\label{est:PQGamma}
  \norm{Q(X)-Q(\tilde X)}_{L^1_T
    E}+\norm{\big(P(X)-U^2-\frac12 k^2\big)-\big(P(\tilde X)-\tilde
    U^2-\frac12 \tilde k^2\big)}_{L^1_TE}\\\leq C(\bar M) \Big(T\norm{X-\tilde
    X}_{L^\infty_T \bar V}+\Gamma(X,\tilde
  X)\Big).
\end{multline}

In the next lemma we establish some estimates for
$\Gamma(X,\tilde X)$,
$\Gamma(\mathcal{P}(X),\mathcal{P}(\tilde X))$ and
a quasi-contraction property for $\mathcal{P}$.
\begin{lemma} \label{lem:contrgamma} Given $X$, $\tilde X\in \Ima(\mathcal{P})$ and $\gamma\in(0,\frac12)$ there exists $T>0$ depending on $\bar M$ such that the following inequalities hold\\
  (\textit{i})
  \begin{equation}
    \label{eq:contGamma}
    \Gamma(X,\tilde X)\leq C(\bar M)\norm{X-\tilde X}_{L^\infty_T\bar V},
  \end{equation}
  (\textit{ii}) 
  \begin{equation}
    \label{eq:contracGamma}
    \Gamma(\mathcal{P}(X),\mathcal{P}(\tilde X))\leq C(\bar M)\left(T(\norm{\mathcal{P}(X)-\mathcal{P}(\tilde X)}_{L^\infty_T\bar V}+\norm{X-\tilde X}_{L^\infty_T\bar V}) + \gamma\Gamma(X,\tilde X)\right),
  \end{equation}
  (\textit{iii}) 
  \begin{align}
    \label{eq:contracP}
    \norm{\mathcal{P}(X)-\mathcal{P}(\tilde X)}_{L^\infty_T\bar V}&\leq
    C(\bar M)\left(T\norm{X-\tilde X}_{L^\infty_T\bar V}+\Gamma(X,\tilde X)\right),
  \end{align}
where $C(\bar M)$ denotes some constant which only depends on $\bar M$.
\end{lemma}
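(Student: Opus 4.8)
The three estimates are coupled, so the plan is: first prove (i) on its own; then derive a preliminary form of (iii) that still contains $\Gamma(\mathcal P(X),\mathcal P(\tilde X))$; then prove (ii); and finally close the loop for $T$ small. \emph{Step 1 (proof of (i)).} Since $X,\tilde X$ lie in $B_{\bar M}$ at every time we have $h,\tilde h\leq\bar M$ pointwise, hence $\Gamma(X,\tilde X)\leq\bar M\int_\Real\abs{\tau(\xi)-\tilde\tau(\xi)}\,d\xi$, and the integrand vanishes off the set $\{\xi\mid\tau(\xi)<T\text{ or }\tilde\tau(\xi)<T\}$, which has measure $\leq C(\bar M)$ by the bound established just after \eqref{eq:defBMfix}. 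So it suffices to bound $\abs{\tau(\xi)-\tilde\tau(\xi)}$ pointwise. Fix $\xi$ with $\tilde\tau(\xi)<\tau(\xi)$; since $X$ and $\tilde X$ start from the same datum $X_0\in\G$ we must have $\tilde\tau(\xi)>0$, and for $T$ small Lemma~\ref{lem:G}(iv) applied to both iterates places $\xi$ in $\kappa_{1-\gamma}$ for $\gamma=\gamma(\bar M,T)$ as in Lemma~\ref{lem:G}(iii). By that lemma $t\mapsto\frac{w}{q+h}(t,\xi)$ is nondecreasing on $[0,\tau(\xi))$ with derivative $\geq\frac12-C(\bar M)\gamma\geq\frac14$ and vanishes at $t=\tau(\xi)$, so $\frac{w}{q+h}(t,\xi)\leq-\frac14(\tau(\xi)-t)$; inserting this into $q_t=\frac{w}{q+h}(q+h)$ and using $q+h\geq1/\bar M$ gives, after integrating from $t$ to $\tau(\xi)$, the quadratic lower bound $q(t,\xi)\geq\frac{1}{C(\bar M)}(\tau(\xi)-t)^2$ on $[0,\tau(\xi))$. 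On $[\tilde\tau(\xi),\tau(\xi))$ the frozen solution $\tilde X$ has $\tilde q\equiv0$, hence $\tilde w\equiv0$, so $q(\tilde\tau(\xi),\xi)=q(\tilde\tau(\xi),\xi)-\tilde q(\tilde\tau(\xi),\xi)=-\int_{\tilde\tau(\xi)}^{\tau(\xi)}(w-\tilde w)(t,\xi)\,dt$, whence $q(\tilde\tau(\xi),\xi)\leq(\tau(\xi)-\tilde\tau(\xi))\norm{w-\tilde w}_{L^\infty_TL^\infty}$. Comparing with the quadratic bound at $t=\tilde\tau(\xi)$ cancels one factor of $\tau-\tilde\tau$ and yields $\tau(\xi)-\tilde\tau(\xi)\leq C(\bar M)\norm{w-\tilde w}_{L^\infty_TL^\infty}\leq C(\bar M)\norm{X-\tilde X}_{L^\infty_T\bar V}$. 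Integrating over the bounded wave-breaking set gives \eqref{eq:contGamma}.

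\emph{Step 2 (preliminary bound for (iii)).} Write $Y=\mathcal P(X)$ and $\tilde Y=\mathcal P(\tilde X)$; both solve \eqref{eq:sysfix2} with common initial data $X_0$ and driving data $(Q(X),P(X))$ resp.\ $(Q(\tilde X),P(\tilde X))$. The components $\zeta$ and $\bar U$ are never frozen, so integrating their equations and invoking \eqref{est:PQGamma} for $\norm{Q(X)-Q(\tilde X)}_{L^1_TE}$ and $\norm{(P(X)-U^2-\frac12k^2)-(P(\tilde X)-\tilde U^2-\frac12\tilde k^2)}_{L^1_TE}$ bounds $\norm{\zeta^Y-\zeta^{\tilde Y}}_{L^\infty_TL^\infty}+\norm{\bar U^Y-\bar U^{\tilde Y}}_{L^\infty_TE}$ by $C(\bar M)(T\norm{X-\tilde X}_{L^\infty_T\bar V}+\Gamma(X,\tilde X))$. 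For $Z=(q,w,h,\bar r)$, on the common-survival region $\{t<\tau^Y(\xi)\}\cap\{t<\tau^{\tilde Y}(\xi)\}$ the difference $Z^Y-Z^{\tilde Y}$ satisfies a linear ODE with coefficients bounded by $C(\bar M)$ and inhomogeneity driven by $Q(X)-Q(\tilde X)$ and $P(X)-P(\tilde X)$, so Gronwall together with \eqref{est:PQGamma} controls this contribution the same way. On the mismatch region $\{\tau^Y(\xi)\leq t<\tau^{\tilde Y}(\xi)\}$ (and its mirror image) $Z^Y$ is frozen while $Z^{\tilde Y}$ still evolves; writing $Z^{\tilde Y}(t,\xi)=Z^{\tilde Y}(\tau^Y(\xi),\xi)+\int_{\tau^Y(\xi)}^t(\cdots)\,ds$ splits the difference into the common-survival estimate at $t=\tau^Y(\xi)$ plus a remainder of size $C(\bar M)(\tau^{\tilde Y}(\xi)-\tau^Y(\xi))$, which integrates in $\xi$ to a multiple of $\Gamma(\mathcal P(X),\mathcal P(\tilde X))$. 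Collecting everything, $\norm{\mathcal P(X)-\mathcal P(\tilde X)}_{L^\infty_T\bar V}\leq C(\bar M)\big(T\norm{X-\tilde X}_{L^\infty_T\bar V}+\Gamma(X,\tilde X)+\Gamma(\mathcal P(X),\mathcal P(\tilde X))\big)$.

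\emph{Step 3 (proof of (ii) and conclusion).} As in Step 1, $\Gamma(\mathcal P(X),\mathcal P(\tilde X))\leq C(\bar M)\int_\Real\abs{\tau^Y(\xi)-\tau^{\tilde Y}(\xi)}\,d\xi$ over the bounded wave-breaking set of the iterates, which for $T$ small lies in $\kappa_{1-\gamma}$ by Lemma~\ref{lem:G}(iv); for such $\xi$ the quadratic-vanishing argument of Step 1 applied to $Y,\tilde Y$ gives $\abs{\tau^Y(\xi)-\tau^{\tilde Y}(\xi)}\leq C(\bar M)\norm{w^Y-w^{\tilde Y}}_{L^\infty_TL^\infty}$. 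The factor $\gamma$ is extracted by tracking how $Q(X)-Q(\tilde X)$ and $P(X)-P(\tilde X)$ enter $w^Y-w^{\tilde Y}$: in \eqref{eq:sysfix2} the term $P(X)q$, and after differentiating the $h$-equation also $P(X)w$, are always multiplied by $q^Y$ resp.\ $w^Y$, and on $\kappa_{1-\gamma}$ one has $q^Y\leq\gamma(q^Y+h^Y)\leq C(\bar M)\gamma$ and $\abs{w^Y}\leq\sqrt{q^Yh^Y}\leq C(\bar M)\sqrt{\gamma}$ on $[0,\tau^Y(\xi))$; hence the part of $w^Y-w^{\tilde Y}$, and so of $\tau^Y-\tau^{\tilde Y}$, that traces back through \eqref{est:PQGamma} to $\Gamma(X,\tilde X)$ carries a factor $\gamma$, while the part traced back to $\norm{\mathcal P(X)-\mathcal P(\tilde X)}_{L^\infty_T\bar V}$ and to $\norm{X-\tilde X}_{L^\infty_T\bar V}$ carries only $T$. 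This yields \eqref{eq:contracGamma}. Finally, substituting \eqref{eq:contracGamma} into the inequality of Step 2 and choosing $T$ small, depending only on $\bar M$, so that $C(\bar M)T<\frac12$, lets us absorb the term $C(\bar M)T\norm{\mathcal P(X)-\mathcal P(\tilde X)}_{L^\infty_T\bar V}$ on the left, giving \eqref{eq:contracP}; re-inserting this into \eqref{eq:contracGamma} gives the stated form of (ii).

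I expect Step 3 to be the main obstacle: one has to follow carefully how the discrepancy between $Q(X),P(X)$ and $Q(\tilde X),P(\tilde X)$ — which by Lemma~\ref{lem:PQ} is itself only bounded by $\Gamma(X,\tilde X)$, not by $\norm{X-\tilde X}_{L^\infty_T\bar V}$ — propagates through the frozen system \eqref{eq:sysfix2} into the wave-breaking times of the iterates, and to verify that this propagation is damped by the smallness of $q/(q+h)$ near wave breaking rather than merely bounded, which is exactly what produces the contractive constant $\gamma$.
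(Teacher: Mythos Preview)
Your Step~1 argument for (i) is valid but takes a longer route than the paper. The paper observes that, for $\xi\in\kappa_{1-\gamma}$ with $\tau(\xi)<\tilde\tau(\xi)$, the sign $\tilde w(t,\xi)\leq 0$ (since $\tilde X(t,\xi)\in\Omega_1$) allows one to read off directly from the $\tilde w$-equation on $[\tau,\tilde\tau]$ that
\[
\tfrac12\int_\tau^{\tilde\tau}\tilde h\,dt\leq -\tilde w(\tau,\xi)+C(\bar M)\int_\tau^{\tilde\tau}(\tilde q+|\bar{\tilde r}|)\,dt = (w-\tilde w)(\tau,\xi)+C(\bar M)T\|X-\tilde X\|_{L^\infty_T\bar V},
\]
using $w(\tau,\xi)=q(t,\xi)=\bar r(t,\xi)=0$ for $t\geq\tau$. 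Integrating over $\kappa_{1-\gamma}$ gives \eqref{eq:contGamma}. Your quadratic lower bound on $q$ is correct but unnecessary.

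The genuine gap is in Step~2. Your claim that the mismatch-region remainder ``integrates in $\xi$ to a multiple of $\Gamma(\mathcal P(X),\mathcal P(\tilde X))$'' does not yield a bound in $\bar V$: the norm $\bar V$ includes $L^\infty$ in the $Z$-components, while $\Gamma$ is an $L^1_\xi$-type quantity and cannot control $\sup_\xi|\tau^{\tilde Y}(\xi)-\tau^Y(\xi)|$. If instead you bound the remainder pointwise by $C(\bar M)\|w^Y-w^{\tilde Y}\|_{L^\infty_TL^\infty}$ via your Step~1 argument applied to $Y,\tilde Y$, you get $C(\bar M)\|\mathcal P(X)-\mathcal P(\tilde X)\|_{L^\infty_T\bar V}$ \emph{without} a small prefactor, and the loop cannot be closed.

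The paper avoids this entirely: part (iii) is established \emph{directly}, with no $\Gamma(\mathcal P(X),\mathcal P(\tilde X))$ term. The key is the monotonicity from Lemma~\ref{lem:G}(iii): for $\xi\in\kappa_{1-\gamma}$ and $t\in[\tau_2(\xi),T]$, the quantities $\tilde q_2/(\tilde q_2+\tilde h_2)$, $|\tilde w_2|/(\tilde q_2+\tilde h_2)$ and $|\bar{\tilde r}_2|$ are nonincreasing, while $\tilde q_2+\tilde h_2$ is bounded above and below. Since $q_2(t,\xi)=w_2(t,\xi)=\bar r_2(t,\xi)=0$ for $t\geq\tau_2(\xi)$, this gives the \emph{pointwise} bounds
\[
|\tilde q_2(t)-q_2(t)|\leq C(\bar M)|\tilde q_2(\tau_2)-q_2(\tau_2)|,\qquad |\tilde w_2(t)-w_2(t)|\leq C(\bar M)|\tilde w_2(\tau_2)-w_2(\tau_2)|,
\]
and similarly for $\bar r_2$, while $|\tilde h_2(t)-h_2(t)|\leq|\tilde h_2(\tau_2)-h_2(\tau_2)|+C(\bar M)T|\tilde w_2(\tau_2)-w_2(\tau_2)|$ follows from integrating $\tilde h_{2,t}=2(\cdots)\tilde w_2$. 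All right-hand sides are controlled by the common-survival estimate on $[0,\tau_2(\xi)]$, which is bounded by $C(\bar M)(T\|X-\tilde X\|_{L^\infty_T\bar V}+\Gamma(X,\tilde X))$ via \eqref{est:PQGamma}. This gives \eqref{eq:contracP} cleanly, and (ii) is proved independently by the same $\tilde w_2\leq0$ device used for (i); the factor $\gamma$ appears exactly as you anticipated, because $q_2\leq C(\bar M)\gamma$ on $\kappa_{1-\gamma}$ multiplies the $P$-difference. No circular closing argument is needed.
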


\begin{proof}
Denote by $X_2=\mathcal{P}(X)$ and $\tilde X_2=\mathcal{P}(\tilde X)$. Given $\gamma>0$ we know from Lemma~\ref{lem:G} (\textit{iv}) that there exists $T$ small enough such that $\{\xi\in\Real\mid \tau_2(\xi)<T \text{ or } \tilde\tau_2(\xi)<T\}\subset \kappa_{1-\gamma}$ and we consider such $T$. Without loss of generality we can assume $T\leq 1$ and $\gamma\leq \gamma(\bar M,1)$.

(\textit{i}) Let us now consider $\xi\in\kappa_{1-\gamma}$ such that $\tau(\xi)\not =\tilde\tau(\xi)$. Without loss of generality we assume $\tau(\xi)<\tilde\tau(\xi)$. At time $t=0$, $X$ and $\tilde X$ coincide and therefore we cannot have $\tau(\xi)=0$ because it would imply $\tilde\tau(\xi)=0$. Hence $0<\tau(\xi)<\tilde\tau(\xi)\leq T$. Since $X(t,\xi)$ and $\tilde X(t,\xi)$ both belong to the $\Ima(\mathcal{P})$ and $\xi\in \kappa_{1-\gamma}$, we get that $X(t,\xi)$, $\tilde X(t,\xi)\in \Omega_1$ and especially $w(t,\xi)\leq 0$ and $\tilde w(t,\xi)\leq 0$. Thus we get from \eqref{eq:sysfix2}, if $\tilde X=\mathcal{P}(\hat X)$, that for $t\in [\tau(\xi),\tilde\tau(\xi)]$,
\begin{align}
 0\geq \tilde w(t,\xi)&=\tilde w(\tau(\xi),\xi)+\frac12 \int_{\tau}^{t} \tilde h(t',\xi)dt'\\ \nn 
& \quad+\int_{\tau}^{t} (\hat U^2+\frac12 \hat k^2-P(\hat X))\tilde q(t',\xi)dt'+\int_{\tau}^t k_0\bar{\tilde{r}}(t',\xi)dt'.
\end{align}
Thus, since $\norm{\hat U^2+\frac12 \hat k^2-P(\hat X)}_{L^\infty_TE}\leq C(\bar M)$ and $q(t,\xi)=w(t,\xi)=\bar r(t,\xi)=0$ for $t\geq \tau(\xi)$, we have 
\begin{align}\label{est:estesth}
  \frac12 &\int_{\tau}^{\tilde \tau} \tilde
  h(t,\xi)dt\\ \nn & \leq -\tilde
  w(\tau(\xi),\xi)+C(\bar M)\int_{\tau}^{\tilde
    \tau}(\tilde q(t,\xi)+\vert \bar{\tilde{r}}(t,\xi)\vert) dt\\ \nn & \leq
  w(\tau(\xi),\xi)-\tilde
  w(\tau(\xi),\xi)+TC(\bar M)(\norm{q-\tilde
    q}_{L^\infty_TL^\infty}+\norm{\bar r-\bar{\tilde{r}}}_{L^\infty_TL^\infty})\\ \nn &\leq
  C(\bar M)\norm{X-\tilde X}_{L^\infty_T\bar V}.
\end{align}
A similar inequality holds for $0<\tilde\tau(\xi)<\tau(\xi)\leq T$. Since $\meas(\kappa_{1-\gamma})\leq C(\bar M)$ and the only points that contribute to the integral are contained in $\kappa_{1-\gamma}$, we get 
\begin{align*}
 \Gamma(X,\tilde X)&=\int_\Real \big(\int_{\tau}^{\tilde\tau} \tilde h(t,\xi) \chi_{\{\tau<\tilde\tau\}}(\xi)dt+\int_{\tilde\tau}^\tau h(t,\xi) \chi_{\{\tilde\tau<\tau\}}(\xi) dt\big) d\xi\\ 
& \leq C(\bar M)\norm{X-\tilde X}_{L^\infty_T\bar V}.
\end{align*}
(\textit{ii}) We denote $X_2=\mathcal{P}(X)$ and
$\tilde X_2=\mathcal{P}(\tilde X)$. 
Let us now consider $\xi\in\kappa_{1-\gamma}$ such that $\tau_2(\xi)\not =\tilde\tau_2(\xi)$. Without loss of generality we assume $\tau_2(\xi)<\tilde\tau_2(\xi)$. At time $t=0$, $X_2$ and $\tilde X_2$ coincide and therefore we cannot have $\tau_2(\xi)=0$ because it would imply $\tilde\tau_2(\xi)=0$. Hence $0<\tau_2(\xi)<\tilde\tau_2(\xi)\leq T$. Since $X_2(t,\xi)$ and $\tilde X_2(t,\xi)$ both belong to the $\Ima(\mathcal{P})$ and $\xi\in \kappa_{1-\gamma}$, we get that $X_2(t,\xi)$, $\tilde X_2(t,\xi)\in \Omega_1$ and especially $w_2(t,\xi)\leq 0$ and $\tilde w_2(t,\xi)\leq 0$. Thus we get from \eqref{eq:sysfix2} that for $t\in [\tau_2(\xi),\tilde\tau_2(\xi)]$,
\begin{align}
 0\geq \tilde w_2(t,\xi)& =\tilde w_2(\tau_2(\xi),\xi)+\frac12 \int_{\tau_2}^{t} \tilde h_2(t',\xi)dt'\\ \nn
& \quad +\int_{\tau_2}^{t} (\tilde U^2+\frac12 k_0^2-P(\tilde X))\tilde q_2(t',\xi)dt' +\int_{\tau_2}^tk_0\bar{\tilde{r}}_2(t',\xi)dt',
\end{align}
since $\tilde k_2(t)=k_0=\tilde k(t)$ for all $t$.
From
  \eqref{eq:sysfix2}, we get following \eqref{est:estesth}
  \begin{align}\notag
    \int_{\tau_2}^{\tilde\tau_2}\tilde
    h_2(t,\xi)&\leq2( w_2(\tau_2,\xi)-\tilde
    w_2(\tau_2,\xi))+C(\bar M)\int_{\tau_2}^{\tilde\tau_2}(\vert q_2-\tilde
    q_2\vert(t,\xi)+\vert \bar r_2-\bar{\tilde{r}}_2\vert (t,\xi))dt\\
    \label{eq:inttautaut}
    &\leq 2\abs{w_2(\tau_2,\xi)-\tilde w_2
      (\tau_2,\xi)}+C(\bar M)T(\norm{q_2-\tilde
      q_2}_{L_T^\infty L^{\infty}}+\norm{\bar r_2-\bar{\tilde{r}}_2}_{L^\infty_TL^\infty}),
  \end{align}
  where we used that $w_2(\tau_2,\xi)=0$,
  $\tilde w_2(\tilde\tau_2,\xi)\leq 0$, and $q_2(t,\xi)=\bar r_2(t,\xi)=0$ for $t\in[\tau_2(\xi),\tilde\tau_2(\xi)]$. A
  corresponding inequality holds for the case
  $\tilde\tau_2(\xi)<\tau_2(\xi)$. We have, using again \eqref{eq:sysfix2} on the interval $[0,\tau_2(\xi)]$,
  \begin{equation}     \label{eq:bdUxitau}
  \begin{aligned}
     \vert w_2(\tau_2,\xi)-\tilde
     w_2(\tau_2,\xi)\vert &\leq \frac 12
     \int_0^{\tau_2} \vert h_2-\tilde h_2\vert
     (t,\xi)dt+\vert k_0\vert \int_0^{\tau_2}\vert \bar r_2-\bar{\tilde{r}}_2\vert (t,\xi)dt\\
          & \quad +\int_0^{\tau_2}
     \vert \big(U^2+\frac12 k_0^2-P(X)\big)-\big(\tilde U^2+\frac12 k_0^2-P(\tilde X)\big)\vert
     \vert q_2\vert (t,\xi)  dt \\
     &\quad +\int_0^{\tau_2} \vert \tilde
     U^2+\frac12 k_0^2-P(\tilde X)\vert \vert q_2-\tilde
     q_2\vert (t,\xi)dt\\ 
     & \leq \frac12
     \int_0^{\tau_2} \vert h_2-\tilde h_2\vert (t,\xi)dt \\
     &\quad+C(\bar M)\int_0^{\tau_2}(\vert q_2-\tilde q_2\vert (t,\xi)+\vert \bar r_2-\bar{\tilde{r}}_2\vert (t,\xi))dt \\
      & \quad + C(\bar M)\gamma\norm{\big(U^2+\frac12 k_0^2-P(X)\big)-\big(\tilde
       U^2+\frac12 k_0^2-P(\tilde X)\big)}_{L^1_TE} \\ &
     \leq C(\bar M)T(\norm{X_2-\tilde X_2}_{L^\infty_T\bar V}+\norm{X-\tilde X}_{L_T^\infty\bar
       V}) \\
       &\quad +C(\bar M)\gamma \Gamma(X,\tilde X), 
  \end{aligned}
  \end{equation}
where we used that $\frac{q_2}{q_2+h_2}\leq \gamma$ and $q_2=(q_2+h_2)\frac{q_2}{q_2+h_2}\leq C(\bar M)\gamma$.

  (\textit{iii}) First we estimate $\norm{Z_2-\tilde
    Z_2}_{L^\infty_T \bar
    W(\kappa_{1-\gamma}^c)}$.  For $\xi\in
  \kappa_{1-\gamma}^c$, we have
  $Z_{2,t}=F(X)Z_{2}$ and $\tilde
  Z_{2,t}=F(\tilde X)\tilde Z_2$ for all
  $t\in[0,T]$. Hence,
  \begin{multline}
    \label{eq:estdifzgc2}
    \norm{(Z_2-\tilde Z_2)(t,\dott)}_{\bar
      W(\kappa_{1-\gamma}^c)}\leq
    \int_0^t \norm{\left(F(X)-F(\tilde X)\right)Z_{2}(t',\dott)}_{\bar W(\kappa_{1-\gamma}^c)}\,dt'\\
    \ +\int_0^t\norm{F(\tilde X)(Z_2-\tilde
      Z_2)(t',\dott)}_{\bar
      W(\kappa_{1-\gamma}^c)}\,dt'.
  \end{multline}
  We have, since $k_2(t)=k(t)=k_0=\tilde k(t)=\tilde k_2(t)$ for all $t$,
  \begin{multline*}
    \left(F(X)-F(\tilde
      X)\right)Z_2=\Big(0,\big((U^2+\frac12 k_0^2-P(X))-(\tilde
    U^2+\frac12 k_0^2-P(\tilde
    X))\big)q_2,\\
    2\big((U^2+\frac12 k_0^2-P(X))-(\tilde
    U^2+\frac12 k_0^2-P(\tilde X))\big)w_2,0\Big),
  \end{multline*}
  and therefore
  \begin{equation}
    \label{eq:Fbd2}
    \norm{(F(X)-F(\tilde X))Z_2}_{L_T^1\bar W}
    \leq C(\bar M)\norm{\big(U^2+\frac12 k_0^2-P(X)\big)-\big(\tilde U^2+\frac12 k_0^2-
      P(\tilde X)\big)}_{L_T^1E}. 
  \end{equation}
  Applying Gronwall's lemma to
  \eqref{eq:estdifzgc2}, as $\norm{F( \tilde
    X)}_{L_T^\infty L^\infty}\leq C(\bar
  M)$, we get
  \begin{equation}
    \label{eq:gronapp2}
    \norm{Z_2-\tilde Z_2}_{\bar W(\kappa_{1-\gamma}^c)}\leq
    C(\bar M)\norm{(F(X)-F(\tilde X))Z_2}_{L_T^1\bar W}. 
  \end{equation}
  Hence, we get by \eqref{eq:Fbd2} that
  \begin{equation}
    \label{eq:ltl2est2}
    \norm{Z_2-\tilde Z_2}_{L_T^\infty
      \bar W(\kappa_{1-\gamma}^c)}\leq C(\bar M)\norm{\big(P(X)-U^2-\frac12 k_0^2\big)- \big(P(\tilde X)-\tilde U^2-\frac12 k_0^2\big)}_{L_T^1 E}. 
  \end{equation}
  Thus, we have by \eqref{est:PQGamma} that 
  \begin{equation}
    \label{eq:pliptau22}
    \norm{Z_2-\tilde Z_2}_{L_T^\infty \bar W(\kappa_{1-\gamma}^c)}\leq C(\bar M)\big(T\norm{X-\tilde X}_{L^\infty_T\bar V}+\Gamma(X,\tilde X)\big).
  \end{equation}
  To estimate $\norm{Z_2-\tilde Z_2}_{L^\infty_T
    \bar W(\kappa_{1-\gamma})}$, we fix $\xi\in
  \kappa_{1-\gamma}$ and assume without loss of
  generality that $0<\tau_2(\xi)<\tilde
  \tau_2(\xi)\leq T$. From Lemma \ref{lem:G}, we
  have that $\frac{\tilde q_2}{\tilde q_2+\tilde
    h_2}$ is positive decreasing and $\frac{\tilde
    w_2}{\tilde q_2+\tilde h_2}$ is negative
  decreasing so that
  \begin{equation}
    \label{eq:estforqwe02}
    \abs{\tilde q_2(t,\xi)}\leq C(\bar M)\abs{
      \tilde q_2(\tau_2,\xi)} \text{ and }\abs{
      \tilde w_2(t,\xi)}\leq C(\bar M)\abs{\tilde w_2(\tau_2,\xi)}
  \end{equation}
  for $t\in[\tau_2(\xi),T]$, and therefore
  \begin{equation}
    \label{eq:estforqwe2}
    \abs{\tilde q_2(t,\xi)-q_2(t,\xi)}\leq C(\bar M)\abs{
      \tilde q_2(\tau_2,\xi)-
      q_2(\tau_2,\xi)} 
  \end{equation}
  and
  \begin{equation}\label{eq:estforqwe2b}
    \abs{
      \tilde w_2(t,\xi)-
      w_2(t,\xi)}\leq C(\bar M)\abs{\tilde w_2(\tau_2,\xi)-w_2(\tau_2,\xi)}
  \end{equation}
  for $t\in[\tau_2(\xi),T]$ because $ q_2(t,\xi)=
  w_2(t,\xi)=0$ for $t\in[\tau_2(\xi),T]$. Since $\bar{\tilde{r}}_2(t,\xi)+k_0\tilde q_2(t,\xi)=0$, we know that $\sign(\bar{\tilde{r}}_2(t,\xi))=-\sign(k_0)$, and therefore $\bar{\tilde{r}}_{2,t}(t,\xi)=-k_0\tilde w_2(t,\xi)$ implies that  $\vert \bar{\tilde{r}}_{2}(t,\xi)\vert$ decreases on $[\tau_2(\xi),T]$. Thus 
  \begin{equation}\label{eq:estdifr2}
   \vert \bar{\tilde{r}}_2(t,\xi)\vert \leq \vert \bar{\tilde{r}}_2(\tau_2,\xi)\vert \quad\text{and}\quad \vert \bar{\tilde{r}}_2(t,\xi)-\bar r_2(t,\xi)\vert \leq \vert \bar{\tilde{r}}_2(\tau_2,\xi)-\bar r_2(\tau_2,\xi)\vert
  \end{equation}
for all $t\in [\tau_2(\xi),T]$  since $\bar r_2(t,\xi)=0$ for all $t\in [\tau_2(\xi),T]$.
  For
  $t\in[\tau_2(\xi),T]$, we have $\tilde
  h_{2,t}=2(\tilde U^2+\frac12 k_0^2-P(\tilde X))\tilde w_2$
  and $h_{2,t}=0$.  Hence,
  \begin{equation}
    \label{eq:estdifh2}
    \abs{(\tilde h_2-h_2)(t,\xi)}\leq\abs{(\tilde h_2-h_2)(\tau_2,\xi)}+C(\bar M)T\abs{(\tilde w_2-w_2)(\tau_2,\xi)},
  \end{equation}
  from \eqref{eq:estforqwe2b}. For
  $t\in[0,\tau_2(\xi)]$, we have $Z_{2,t}=F(X)Z_2$
  and $ \tilde Z_{2,t}=F(\tilde X)\tilde
  Z_2$. We proceed as in the previous step and in
  the same way as we obtained \eqref{eq:gronapp2},
  we now obtain
  \begin{equation*}
    \abss{(\tilde Z_2-Z_2)(\tau_2,\xi)}\leq
    C(\bar M)\norm{(F(X)-F(\tilde X))Z_{2}}_{L_T^1L^\infty},
  \end{equation*}
  and, after using \eqref{eq:Fbd2} together with \eqref{est:PQGamma}, we get
  \begin{equation}
    \label{eq:edifZ2}
    \abss{(Z_2-\tilde Z_2)(\tau_2,\xi)}\leq C(\bar M)\big(T\norm{X-\tilde X}_{L^\infty_T\bar V}+\Gamma(X,\tilde X)\big). 
  \end{equation}
  Combining \eqref{eq:estforqwe2},
  \eqref{eq:estforqwe2b}, \eqref{eq:estdifr2}, \eqref{eq:estdifh2} and
  \eqref{eq:edifZ2}, we get
  \begin{equation}
    \label{eq:edifZ22}
    \abss{(Z_2-\tilde Z_2)(t,\xi)}\leq C(\bar M)\big(T\norm{X-\tilde X}_{L^\infty_T\bar V}+\Gamma(X,\tilde X)\big)
  \end{equation}
  for all $t\in[0,T]$. Since $\meas(\kappa_{1-\gamma})\leq C(\bar M)$,
  \eqref{eq:edifZ22} implies
  \begin{equation}
    \label{eq:edifZfin2}
    \norm{Z_2-\tilde Z_2}_{L_T^\infty \bar W(\kappa_{1-\gamma})}\leq C(\bar M)\big(T\norm{X-\tilde X}_{L^\infty_T\bar V}+\Gamma(X,\tilde X)\big).
  \end{equation}
  Combining \eqref{eq:pliptau22} and
  \eqref{eq:edifZfin2}, we get
  \begin{equation}
    \label{eq:estdiffZ2}
    \norm{Z_2-\tilde Z_2}_{L_T^\infty
      \bar W}\leq C(\bar M)\big(T\norm{X-\tilde X}_{L^\infty_T\bar V}+\Gamma(X,\tilde X)\big).
  \end{equation}
  From \eqref{eq:sysfix2A}, we obtain
  \begin{equation} \label{eq:estdiffU2}
   \norm{U_2-\tilde U_2}_{L_T^\infty L^\infty}\leq \norm{Q(X)- Q(\tilde X)}_{L_T^1 E}
   \leq C(\bar
    M)\big(T\norm{X-\tilde X}_{L^\infty_T\bar V}+\Gamma(X,\tilde X)\big),
  \end{equation}
  and 
  \begin{equation}
    \label{eq:estdiffzet2}
    \norm{\zeta_2-\tilde \zeta_2}_{L_T^\infty
      L^\infty}\leq T\norm{U_2-\tilde U_2}_{L_T^\infty L^\infty}\leq C(\bar M)\big(T\norm{X-\tilde X}_{L^\infty_T\bar V}+\Gamma(X,\tilde X)\big).
  \end{equation}
Combining the last two inequalities yields
\begin{equation} \label{eq:estdiffU22}
    \norm{\bar U_2-\bar{\tilde {U}}_2}_{L_T^\infty
      L^\infty} \leq C(\bar
    M)\big(T\norm{X-\tilde X}_{L^\infty_T\bar
      V}+\Gamma(X,\tilde X)\big).
  \end{equation}
Finally from \eqref{eq:sysfix2A} we get 
\begin{equation}\label{eq:estdiffU32}
 \norm{\bar U_2-\bar{\tilde {U}}_2}_{L_T^\infty
      L^2} \leq C(\bar
    M)\big(T\norm{X-\tilde X}_{L^\infty_T\bar
      V}+\Gamma(X,\tilde X)\big).
  \end{equation}
 Thus adding up
  \eqref{eq:estdiffZ2}, \eqref{eq:estdiffzet2},
  \eqref{eq:estdiffU22}, and \eqref{eq:estdiffU32}
  we have that
  \begin{equation}\label{est:dc1}
    \norm{X_2-\tilde X_2}_{L^\infty_T\bar V}\leq C(\bar M)\big(T\norm{X-\tilde X}_{L^\infty_T\bar V}+\Gamma(X,\tilde X)\big).
  \end{equation}
\end{proof}

\begin{theorem}[Short time solution]
  \label{th:short0} 
  For any initial data $X_0=(y_0,U_0,h_0,r_0)\in\G$,
  there exists a time $T>0$ such that there exists
  a unique solution $X=(y,U,h,r)\in C([0,T],\bar
  V)$ of \eqref{eq:sysdiss} with $X(0)=X_0$. 
  Moreover $X(t)\in\G$ for all $t\in[0,T]$. 
\end{theorem}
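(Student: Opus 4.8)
The plan is to realize the solution as a fixed point of the map $\mathcal{P}$ introduced above. First, observe that a function $X\in C([0,T],\bar V)$ solving \eqref{eq:sysdiss} with $X(0)=X_0$ is exactly a fixed point of $\mathcal{P}$ with initial data $X_0$: at such a fixed point the coefficients $P(X)$ and $Q(X)$ driving \eqref{eq:sysfix2} are computed from the solution itself, and the freezing of $(q,w,h,\bar r)$ after $\tilde\tau$ that is built into $\mathcal{P}$ reproduces the indicators $\chi_{\{\tau(\xi)>t\}}$ of \eqref{eq:sysdiss} (with $\tilde\tau=\tau$, by Lemma~\ref{lem:2.3}, once $q$ is known to be continuous in time and positive before breaking). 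By Lemma~\ref{lem:estshort} there are $\bar M=\bar M(M)\geq M$ and $\bar T>0$ with $\mathcal{P}\colon C([0,T],B_{\bar M})\to C([0,T],B_{\bar M})$ for all $T\leq\bar T$, and since the constraints defining $B_{\bar M}$ in \eqref{eq:defBMfix} pass to limits in $\bar V$, the space $C([0,T],B_{\bar M})$ is complete for $\norm{\dott}_{L^\infty_T\bar V}$. The map $\mathcal{P}$ itself is not a contraction: by \eqref{eq:contracP} the residual $\Gamma(X,\tilde X)$ enters $\norm{\mathcal{P}(X)-\mathcal{P}(\tilde X)}_{L^\infty_T\bar V}$ without a small prefactor; this is precisely what \eqref{eq:contracGamma} repairs, since one application gains a factor $\gamma$ in front of $\Gamma$. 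Accordingly, with $C(\bar M)$ the (fixed) constant from Lemma~\ref{lem:contrgamma}, fix $\gamma\in(0,\tfrac12)$ with $C(\bar M)\gamma$ small, and then, by \eqref{eq:G4} in Lemma~\ref{lem:G}(iv), a $\hat T>0$ such that every wave-breaking point (within $[0,\hat T]$) of any element of $\Ima(\mathcal{P})$ lies in $\kappa_{1-\gamma}$, a set of measure at most $C(\bar M)$; we then work with $T\leq\min(\bar T,\hat T,1)$, shrunk once more below.

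Put $\Phi(X,\tilde X)=\norm{X-\tilde X}_{L^\infty_T\bar V}+\Gamma(X,\tilde X)$, which by \eqref{eq:contGamma} is equivalent to $\norm{X-\tilde X}_{L^\infty_T\bar V}$ on $\Ima(\mathcal{P})$. Chaining \eqref{eq:contracP} (used twice), \eqref{eq:contracGamma} and \eqref{eq:contGamma} gives, for $X,\tilde X\in\Ima(\mathcal{P})$,
\begin{equation*}
 \Phi\big(\mathcal{P}^2(X),\mathcal{P}^2(\tilde X)\big)\leq\big(C(\bar M)\gamma+C(\bar M)T\big)\,\Phi(X,\tilde X),
\end{equation*}
so for $\gamma$ and then $T$ small enough (everything depending only on $M$) the map $\mathcal{P}^2$ is a contraction on $\Ima(\mathcal{P})$. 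Starting from $X^0\equiv X_0$ and $X^{n+1}=\mathcal{P}(X^n)$, all $X^n$ with $n\geq1$ lie in $\Ima(\mathcal{P})$, and the $\mathcal{P}^2$-contraction forces the differences $\norm{X^{n+1}-X^n}_{L^\infty_T\bar V}$ to be summable (geometric decay along even and odd indices, the first few terms being finite by \eqref{eq:contracP}--\eqref{eq:contGamma}); hence $X^n\to X^*$ in $C([0,T],B_{\bar M})$. To see that $\mathcal{P}(X^*)=X^*$ one passes to the limit in the integral form of \eqref{eq:sysfix2} for $X^{n+1}=\mathcal{P}(X^n)$; by Lemma~\ref{lem:PQ}(ii) the only nontrivial ingredient is $Q(X^n)\to Q(X^*)$ and $(P(X^n)-(U^n)^2-\tfrac12k^2)\to(P(X^*)-(U^*)^2-\tfrac12k^2)$ in $L^1_TE$, which in turn reduces to $\Gamma(X^n,X^*)\to0$; the latter follows, as in the proof of Lemma~\ref{lem:contrgamma}(i), from $q^n\to q^*$ in $C([0,T],L^\infty)$ together with the transversality on $\kappa_{1-\gamma}$ supplied by Lemma~\ref{lem:G}(iii) (for $T\leq\hat T$ all the relevant breaking occurs in that fixed, bounded set, where $(\tilde w/(\tilde q+\tilde h))_t$ is bounded below by a positive constant, so the breaking times converge). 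Thus $X^*\in C([0,T],\bar V)$ solves \eqref{eq:sysdiss} with $X^*(0)=X_0$.

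For uniqueness, if $X,\hat X\in C([0,T],\bar V)$ both solve \eqref{eq:sysdiss} with $X(0)=\hat X(0)=X_0$, the conservation of $qh-w^2-\bar r^2$ and the Gronwall estimates of Lemma~\ref{lem:G}(i)--(ii) keep both in $C([0,T],B_{\bar M})$ (after enlarging $\bar M=\bar M(M)$ and shrinking $T$), so both are fixed points of $\mathcal{P}$ in $\Ima(\mathcal{P})$, and the $\mathcal{P}^2$-contraction yields $X=\hat X$. Finally $X(t)\in\G$: condition \eqref{eq:lagcoord1} is $X\in\bar V$, \eqref{eq:lagcoord2} then follows from the definition of $g$ (as noted after Definition~\ref{def:G}), \eqref{eq:lagcoord3} and \eqref{eq:lagcoord6} are \eqref{eq:G1}--\eqref{eq:G2}, \eqref{eq:lagcoord5} is \eqref{eq:G3}, and \eqref{eq:lagcoord4} holds since $\zeta_t=U$ with $U(t,\cdot)\to0$ and $\zeta_0(\cdot)\to0$ at $-\infty$ (by construction; see the remark after Definition~\ref{def:G}). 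The main obstacle throughout is the discontinuity in time of the right-hand side of \eqref{eq:sysdiss} through the indicators $\chi_{\{\tau(\xi)>t\}}$ with $\tau$ depending on the unknown: it is why the naive contraction fails, why one works with $\mathcal{P}$, $\Gamma$ and $\kappa_{1-\gamma}$ and contracts $\mathcal{P}^2$ rather than $\mathcal{P}$ (exploiting the gain in \eqref{eq:contracGamma}), and why the passage to the limit hinges on the transversality estimate of Lemma~\ref{lem:G}(iii) to control $\Gamma(X^n,X^*)$.
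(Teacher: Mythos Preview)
Your overall strategy coincides with the paper's: construct the iteration $X^{n+1}=\mathcal{P}(X^n)$, use the three inequalities of Lemma~\ref{lem:contrgamma} to get a Cauchy sequence, and pass to the limit. Your packaging as a $\mathcal{P}^2$-contraction with respect to $\Phi=\norm{\dott}_{L^\infty_T\bar V}+\Gamma$ is equivalent to the paper's two-step recursion $\norm{X_{n+1}-X_n}\leq\tfrac14(\norm{X_n-X_{n-1}}+\norm{X_{n-1}-X_{n-2}})$; both express that one application of $\mathcal{P}$ carries the residual $\Gamma$ without a small factor, while two applications gain the factor $\gamma$ from \eqref{eq:contracGamma}. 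The paper's passage to the limit is slightly more direct than yours: from \eqref{eq:contGamma} and \eqref{eq:contracP} one reads off that $\mathcal{P}$ is Lipschitz on $\Ima(\mathcal{P})$, so continuity is immediate and no separate transversality argument for $\Gamma(X^n,X^*)\to0$ is needed.

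There is, however, a genuine gap. You assert at the outset that a fixed point of $\mathcal{P}$ is \emph{exactly} a solution of \eqref{eq:sysdiss}, and later that \eqref{eq:lagcoord3}, \eqref{eq:lagcoord6} follow from \eqref{eq:G1}--\eqref{eq:G2}. But the map $\mathcal{P}$ acts on vectors $X=(\zeta,\bar U,c,q,w,h,\bar r,k)$ in which $q$ and $w$ are \emph{independent} of $\zeta_\xi$ and $U_\xi$: the system \eqref{eq:sysfix2} evolves $\zeta$ by $\zeta_t=U$ and $q$ by $q_t=w$, and nothing in the fixed-point argument forces $q=y_\xi$ or $w=U_\xi$. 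The conclusions of Lemma~\ref{lem:G} are stated for $(\tilde q,\tilde w,\tilde h,\bar{\tilde r})$, not for $(y_\xi,U_\xi,h,\bar r)$, so they do not directly yield \eqref{eq:lagcoord3} or \eqref{eq:lagcoord6}; nor does the fixed point automatically solve \eqref{eq:sysdiss}, which is written in terms of $y_\xi$ and $U_\xi$. The paper closes this gap explicitly: it computes $Q_\xi(X)$, derives the linear system
\[
(q-y_\xi)_t=w-U_\xi,\qquad (w-U_\xi)_t=\big(\cdots\big)(y_\xi-q),
\]
and applies Gronwall with initial data $q_0=y_{0,\xi}$, $w_0=U_{0,\xi}$ to conclude $q\equiv y_\xi$, $w\equiv U_\xi$. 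Only then do \eqref{eq:G1}--\eqref{eq:G2} transfer to \eqref{eq:lagcoord3}, \eqref{eq:lagcoord6}, and only then is the fixed point a solution of \eqref{eq:sysdiss}. You need to insert this step.
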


\begin{proof}
  In order to prove the existence and uniqueness
  of the solution we use an iteration
  argument. Therefore we set
  $X_{n+1}=\mathcal{P}(X_n)$ and
 $X_n(0)=X_0$   for all $n\in\mathbb N$. This
  implies that $X_n$ for $n=1,2,\dots$ belongs to
  $\Ima(\mathcal{P})$.  We have
  \begin{align*}
    \norm{X_{n+1}-X_n}_{L^\infty_T\bar V}&\leq
    C(\bar M)\big(T\norm{X_n-X_{n-1}}_{L^\infty_T\bar V}+\Gamma(X_n,X_{n-1})\big)\\
    &\leq C(\bar M)\Big(T\big(\norm{X_n-X_{n-1}}_{L^\infty_T\bar V}+\norm{X_{n-1}-X_{n-2}}_{L^\infty_T\bar V}\big) \\
    &\qquad\qquad+\gamma\Gamma(X_{n-1},X_{n-2})\Big)\\
    &\leq C(\bar M)(T+\gamma)\big(\norm{X_n-X_{n-1}}_{L^\infty_T\bar V}+\norm{X_{n-1}-X_{n-2}}_{L^\infty_T\bar V}\big)
  \end{align*}
where we used Lemma~\ref{lem:contrgamma}.
  Hence, for $T$
  and $\gamma$ small enough, we have
  \begin{equation*}
    \norm{X_{n+1}-X_n}_{L^\infty_T\bar V}
    \leq \frac14 (\norm{X_n-X_{n-1}}_{L^\infty_T\bar V}+\norm{X_{n-1}-X_{n-2}}_{L^\infty_T\bar V}) \quad \text{ for } n\geq 2.
  \end{equation*}
  Summation over all $n\geq 2$ on the left-hand
  side then yields
  \begin{equation*}
    \sum_{n=2}^N   \norm{X_{n+1}-X_n}_{L^\infty_T\bar V}\leq\frac14(\sum_{n=1}^{N-1}\norm{X_{n+1}-X_n}_{L^\infty_T\bar V}+\sum_{n=0}^{N-2}\norm{X_{n+1}-X_n}_{L^\infty_T\bar V})
  \end{equation*}
  and
  \begin{equation*}
    \frac12\sum_{n=0}^N   \norm{X_{n+1}-X_n}_{L^\infty_T\bar V}\leq \norm{X_1-X_0}_{L^\infty_T\bar V}+\norm{X_2-X_1}_{L^\infty_T\bar V}
  \end{equation*}
  independently on $N$. Since
  $\norm{X_{n+1}-X_n}_{L^\infty_T\bar V}\geq 0$ for all
  $n\in\mathbb{N}$, the series
  $\sum_{n=0}^{\infty} \norm{X_{n+1}-X_n}_{L^\infty_T\bar V}$ is
  increasing, bounded from above and hence
  convergent. In particular,
  \begin{equation*}
    \norm{X_{m}-X_n}_{L^\infty_T\bar V}\leq \sum_{i=n}^{m-1} \norm{X_{i+1}-X_i}_{L^\infty_T\bar V}\leq \sum_{i=n}^{\infty} \norm{X_{i+1}-X_i}_{L^\infty_T\bar V}, 
  \end{equation*}
  and therefore $\{X_n\}_{n=1}^\infty$ is a Cauchy
  sequence and tends to a unique limit $X(t)$. The
  continuity of $\mathcal{P}$ in $L^\infty_T\bar V$
  follows from \eqref{eq:contGamma} and
  \eqref{eq:contracP}. Hence, we obtain that
  $X(t)$ is not only unique but also a fix point
  of the mapping $\mathcal{P}$ to the initial
  condition $X_0$.

It is left to prove that
  $U_\xi=w$ and $y_\xi=q$. Recall that $Q(X)$ is defined via \eqref{eq:Qlag3}
  and $Q(X)$ is differentiable if and only if $y$ is differentiable. A
  formal computation gives us that
  \begin{align}
    \label{eq:derQxi}
   Q_\xi(X) &=\chi_{\{\tau(\xi)>t\}}(-\frac12 h-(U^2+\frac12 k^2-P(X))q-k\bar r)\\ \nn
   &\quad+\left(2c^2(\chi^{\prime 2}+\chi\chi^{\prime\prime})(y)+2c\chi\circ y\bar U+\bar U^2-U^2-\frac12 k^2+P(X)\right)(y_\xi-q),
  \end{align}
  and $Q_\xi\in L_{\text{loc}}^1([0,1]\times\Real)$ if $\zeta_\xi=y_\xi-1\in L^2(\Real)\cap L^\infty(\Real)$. 
  In addition, as
  $q(t,\xi)=\chi_{\{\tau(\xi)>t\}}(\xi)q(t,\xi)$ and $w(t,\xi)=\chi_{\{\tau(\xi)>t\}}(\xi)w(t,\xi)$,
  we have 
  \begin{subequations}
    \label{eq:qQxi}
  \begin{align}
   (q-y_\xi)_t&=(w-U_\xi),\\
   (w-U_\xi)_t&= \big(2c^2(\chi^{\prime 2}+\chi\chi^{\prime\prime})(y)\\ \nn
   &\quad +2c\chi\circ y \bar U+\bar U^2-U^2-\frac12 k^2+P(X)\big)(y_\xi-q).
  \end{align}
\end{subequations}
This means in particular if $q_0=y_{0,\xi}$ and $w_0=U_{0,\xi}$, that 
\begin{multline*}
 \norm{(q-y_\xi)(t,\dott)}_{E}+\norm{(w-U_\xi)(t,\dott)}_{E}\\
 \leq C(M) \int_0^t (\norm{(q-y_\xi)(t',\dott)}_{E}+\norm{(w-U_\xi)(t',\dott)}_{E})dt'
\end{multline*}
and thus using Gronwall's inequality yields that $y_\xi=q$ and $U_\xi=w$.
  
 Let us
  prove that $X(t)\in\G$ for all $t$. From
  \eqref{eq:G1} and \eqref{eq:G2}, we get
  $q(t,\xi)\geq0$, $h(t,\xi)\geq0$ and
  $qh=w^2+\bar r^2$ for all $t$ and almost all $\xi$
  and therefore, since $U_\xi=w$ and $y_\xi=q$,
  the conditions \eqref{eq:lagcoord3} and
  \eqref{eq:lagcoord6} are fulfilled. Since
  $\zeta(t,\xi)=\zeta(0,\xi)+\int_0^t
  U(t,\xi)\,dt$, we obtain by the Lebesgue dominated
  convergence theorem that 
  $\lim_{\xi\to-\infty}\zeta(t,\xi)=0$ because
  $\bar U(t,\xi)=U(t,\xi)$ for $\xi\leq -\norm{\zeta}_{L^\infty_TL^\infty}$ and $\bar U\in H^1(\Real)$. Hence, since in addition
  $X(t)\in B_{\bar M}$, $X(t)$ fulfills all the
  conditions listed in \eqref{eq:lagcoord} and $X(t)\in\G$. 
\end{proof}

\begin{remark}\label{rem:dertime}
The set $\G\cap B_M$ is closed with
respect to the topology of $\bar V$. We have
\begin{align*}
 y_{\xi,t}&=U_{\xi}, \\
  h_t&=2(U^2+\frac12 k_0^2-P(X))U_\xi, \\
\intertext{and}\\ 
 \bar r_t&=-k_0U_\xi,
\end{align*}
for all $\xi\in\Real$ and $t\in\Real_+$, 
since $U_\xi(t,\xi)=0$ for
$t\geq\tau(\xi)$. This means in particular, that $y_\xi$,  $\bar r$, and $h$ are differentiable almost everywhere with respect to time (in the classical sense).
\end{remark}

 We have that
$(\zeta,U,\zeta_\xi,U_\xi,h,r)$ is a fixed point of
$\PP$, and the results of Lemma~\ref{lem:G}
hold for $X=\tilde X=(\zeta,U,\zeta_\xi,U_\xi,h,r)$. 
Since this lemma is going to be used extensively
we rewrite it for the fixed point solution $X$. For
this purpose, we redefine $B_M$ and
$\kappa_{1-\gamma}$, see \eqref{eq:defBMfix} and \eqref{eq:defKgamma}, as
\begin{equation*}
 B_M=\{ X\in \bar V\mid \norm{X}_{\bar V}+\norm{\frac{1}{y_\xi+h}}_{L^\infty}\leq M\},
\end{equation*}
with $X=(\zeta,U,\zeta_\xi,U_\xi,h,r)$,
\begin{equation}
  \label{eq:defKgamma2}
  \kappa_{1-\gamma}=\{\xi\in\Real\mid \frac{h_0}{y_{0,\xi}+h_0}(\xi)\geq 1-\gamma\text{, } U_{0,\xi}(\xi)\leq 0, \text{ and } r_0(\xi)=0\}, \quad \gamma\in[0,\frac12].
\end{equation}
Note that every condition imposed on points $\xi\in \kappa_{1-\gamma}$ is motivated by what is known about wave breaking. If wave breaking occurs at some time $t_b$ energy is concentrated on sets of measure zero in Eulerian coordinates, which correspond to the sets where $\frac{h}{y_\xi+h}(t_b,\xi)=1$ in Lagrangian coordinates. Furthermore, it is well- known that wave breaking in the context of the 2CH system means that the spatial derivative becomes unbounded from below and hence $U_\xi(t,\xi)\leq 0$ for $t_b-\delta\leq t\leq t_b$ for such  points, see \cite{ConstantinIvanov:2008, GY}. Finally, it has been shown in \cite[Theorem 6.1]{GHR4} that wave breaking within finite time can only occur at points $\xi$ where $r_0(\xi)=0$.

Recall that $g(X)$ denotes
$g(y,\bar U,c,y_\xi,U_\xi,h,\bar r,k)$. Lemma \ref{lem:G}
rewrites as follows.

\begin{lemma}\label{lem:G2}
Let  $M_0$ be a constant, and consider initial data $X_0\in \mathcal{G}\cap B_{M_0}$. Denote  the solution of \eqref{eq:sysdiss} with initial data $X_0$ by $X=(\zeta, U, \zeta_\xi, U_\xi, h,r)\in C([0,T], B_M)$. Introduce $\bar M=\norm{Q(X)}_{L^\infty_TL^\infty}+\norm{P(X)+\frac12k^2-U^2}_{L^\infty_TL^\infty}+M_0$. 
Then the following statements hold:

(\textit{i}) We have 
\begin{equation}\label{eq:G3sol}
 \norm{\frac{1}{y_\xi+h}(t,\dott)}_{L^\infty}\leq 2e^{C(\bar M)T}\norm{\frac{1}{y_{0,\xi}+h_0}}_{L^\infty},
\end{equation}
and 
\begin{equation}
\label{eq:G3bsol}
 \norm{(y_\xi+h)(t,\dott)}_{L^\infty}\leq 2e^{C(\bar M)T}\norm{y_{0,\xi}+h_0}_{L^\infty}
\end{equation}
for all $t\in[0,T]$ and a constant $C(\bar M)$ which depends on $\bar M$. 

(\textit{iii}) There exists a $\gamma\in(0,\frac12)$ depending only on $\bar M$ and $T$ such that if $\xi\in \kappa_{1-\gamma}$, then $X(t,\xi)\in \Omega_1$ for all $t\in [0,T]$, $\frac{y_\xi}{y_\xi+h}(t,\xi)$ is a decreasing function and $\frac{U_\xi}{y_\xi+h}(t,\xi)$ is an increasing function, both with respect to time, and therefore we have 
\begin{equation}
 \frac{U_{0,\xi}}{y_{0,\xi}+h_0}(\xi)\leq\frac{U_\xi}{y_\xi+h}(t,\xi)\leq 0\quad \text{and} \quad 0\leq \frac{y_\xi}{y_\xi+h}(t,\xi)\leq \frac{y_{0,\xi}}{y_{0,\xi}+h_0}(\xi).
\end{equation}
In addition, for $\gamma$ sufficiently small, depending only on $\bar M$ and $T$, we have 
\begin{equation}\label{eq:G5sol}
 \kappa_{1-\gamma}\subset \{\xi\in\Real\mid 0\leq \tau(\xi)<T\}.
\end{equation}

(\textit{iv})
Moreover, for any given $\gamma\in(0,\frac12)$, there exists $\hat T>0$ such that 
\begin{equation}\label{eq:G4sol}
 \{\xi\in\Real\mid 0<\tau(\xi)<\hat T\}\subset \kappa_{1-\gamma}.
\end{equation}
\end{lemma}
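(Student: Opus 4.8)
The plan is to obtain this lemma as a direct reformulation of Lemma~\ref{lem:G} applied to the special case in which the input equals the output, that is, to $X=\tilde X=(\zeta,U,\zeta_\xi,U_\xi,h,r)$, the fixed point of $\PP$ furnished by Theorem~\ref{th:short0}. Recall from that theorem that this $X$ satisfies $X\in\Ima(\PP)$, $X(t)\in\G$ for all $t\in[0,T]$, and in particular $q=y_\xi$, $w=U_\xi$, $y_\xi h=U_\xi^2+\bar r^2$ and $y_\xi,h\ge 0$ almost everywhere. The whole argument is then a matter of matching notation, so I do not expect any real obstacle: all the analytic content is already contained in Lemma~\ref{lem:G} and Theorem~\ref{th:short0}, and the only effort is the bookkeeping of the two sets of definitions.

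First I would verify that the redefined objects coincide with those used in Lemma~\ref{lem:G}. Since $q=y_\xi$ and the constraints $qh=w^2+\bar r^2$, $q,h\ge0$ hold automatically on the fixed point, the set $B_M$ as redefined above (for tuples of the form $(\zeta,U,\zeta_\xi,U_\xi,h,r)$) is exactly the intersection of \eqref{eq:defBMfix} with the graph of these identities, so the hypothesis $X\in C([0,T],B_M)$ here is the hypothesis $X=\tilde X\in C([0,T],B_M)$ of Lemma~\ref{lem:G}. Similarly, using $r_0=\bar r_0+k_0y_{0,\xi}$ and $w_0=U_{0,\xi}$, the set $\kappa_{1-\gamma}$ in \eqref{eq:defKgamma2} is identical to the set $\kappa_{1-\gamma}$ in \eqref{eq:defKgamma}, and $\frac{h_0}{y_{0,\xi}+h_0}\ge 1-\gamma$ is equivalent to $\frac{y_{0,\xi}}{y_{0,\xi}+h_0}\le\gamma$. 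Finally, because $k=k_0$ is constant with $\abs{k_0}\le M_0$, the quantity $\bar M$ in the present statement and the $\bar M$ of Lemma~\ref{lem:G} differ by at most $k_0^2\le M_0^2$, hence are comparable, so the $\bar M$-dependent constants can be taken equal after the usual harmless redefinition of $C(\bar M)$.

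With these identifications in place, part~(i) will be exactly \eqref{eq:G3}--\eqref{eq:G3b} of Lemma~\ref{lem:G}(ii) with $\tilde q=y_\xi$ and $\tilde h=h$; part~(iii) will be Lemma~\ref{lem:G}(iii) written for $\tilde X=X$, combined with \eqref{eq:G5} to yield \eqref{eq:G5sol}; and part~(iv) will be \eqref{eq:G4}, which gives \eqref{eq:G4sol}. The one point I would address explicitly is that the assertions \eqref{eq:G5sol} and \eqref{eq:G4sol} concern the measurably constructed function $\tau$: on the fixed point $q(\cdot,\xi)$ is continuous in time and positive on $[0,\tau(\xi))$, so Lemma~\ref{lem:2.3} guarantees that this $\tau$ coincides with the first-vanishing-time definition \eqref{eq:taudef} used throughout Lemma~\ref{lem:G}. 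This last verification is the only nonroutine ingredient, and it is immediate from the continuity in time of the fixed point.
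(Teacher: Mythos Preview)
Your proposal is correct and matches the paper's own treatment exactly: the paper does not give a separate proof of this lemma, but simply observes that since $(\zeta,U,\zeta_\xi,U_\xi,h,r)$ is a fixed point of $\PP$, Lemma~\ref{lem:G} applies with $X=\tilde X$, and then explicitly states that ``Lemma~\ref{lem:G} rewrites as follows'' after redefining $B_M$ and $\kappa_{1-\gamma}$. Your bookkeeping of the identifications (in particular the remark on the slight discrepancy in the definition of $\bar M$ and the appeal to Lemma~\ref{lem:2.3} for the equivalence of the two definitions of $\tau$) is more careful than what the paper spells out, but the approach is identical.
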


To prove global existence of the solution we will
use the estimate contained in the following lemma.

\begin{lemma} 
  \label{lem:globest}
  Given $M_0>0$ and $T_0>0$, there exists a
  constant $M$ which only depends on $M_0$ and
  $T_0$ such that, for any $X_0=(y_0,U_0,h_0,r_0)\in B_{M_0}$, we have $X(t)\in B_{M}$ for all $t\in[0,T]$, 
  where $X(t)$ denotes the short time solution on
  $[0,T]$ with $T\leq T_0$ given by Theorem
  \ref{th:short0} for initial data $X_0$. 
\end{lemma}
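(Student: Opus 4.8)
The plan is to establish an \emph{a priori} bound on
\begin{equation*}
  N(t):=\norm{X(t)}_{\bar V}+\bnorm{\tfrac{1}{y_\xi+h}(t)}_{L^\infty}
\end{equation*}
that holds on any interval $[0,T]$ with $T\le T_0$ on which the short time solution of Theorem~\ref{th:short0} is defined, with a constant depending only on $M_0$ and $T_0$. A naive Gronwall estimate directly on $N$ cannot work, because the constant $C(M)$ in Lemma~\ref{lem:PQ} grows like $e^{\norm{\zeta}_{L^\infty}}$ in the size of $X$, so the comparison ODE would blow up in finite time. The point is therefore to first close a \emph{linear} differential inequality for the energy alone, and only afterwards bootstrap the remaining components.

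\textbf{Step 1 (trivial invariants and the energy).} Since $c_t=k_t=0$ we have $\abs{c(t)}=\abs{c_0}\le M_0$ and $\abs{k(t)}=\abs{k_0}\le M_0$ for all $t$. Put
\begin{equation*}
  \mathcal{E}(t)=\int_\Real\bar U^2 y_\xi\,d\xi+\int_\Real h\,d\xi\qquad\bigl(=\norm{\bar u(t)}_{L^2}^2+\norm{u_x(t)}_{L^2}^2+\norm{\bar\rho(t)}_{L^2}^2\bigr).
\end{equation*}
Splitting $\Real$ into $\{y_\xi\le\tfrac12\}$ (of measure $\le 4\norm{v}_{L^2}^2$ by Chebyshev, on which $h\le\norm{h}_{L^\infty}$) and $\{y_\xi>\tfrac12\}$ (on which $h=\tfrac{w^2+\bar r^2}{y_\xi}\le 2(w^2+\bar r^2)$), and using $X_0\in B_{M_0}$, one gets $\mathcal{E}(0)\le C(M_0)$. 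Next I would bound $P$ (not $Q$) in $L^\infty$ \emph{linearly by the energy}: using the Eulerian representation \eqref{rep:Peul}, Young's inequality, the facts that $\norm{e^{-\abs{\dott}}}_{L^1}$, $\norm{e^{-\abs{\dott}}}_{L^2}$, $\norm{\chi'}_{L^2}$ are finite universal constants, $\norm{e^{-\abs{\dott}}\star u_x^2}_{L^\infty}\le\norm{u_x}_{L^2}^2$, and $\norm{\bar u}_{L^\infty}^2\le C\norm{\bar u}_{L^2}\norm{\bar u_x}_{L^2}$, one obtains
\begin{equation*}
  \norm{P(t,\dott)-u^2(t,\dott)-\tfrac12 k^2}_{L^\infty}\le C(M_0)\bigl(1+\mathcal{E}(t)\bigr),
\end{equation*}
with $C(M_0)$ depending on the data only through $\abs{c_0},\abs{k_0}$ --- in particular \emph{not} on $\norm{\zeta}_{L^\infty}$ nor on $\norm{u_x}_{L^\infty}$. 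Differentiating $\mathcal{E}$ along \eqref{eq:sysdiss} --- equivalently, integrating the conservation law \eqref{eq:conslaw} rewritten in terms of $\bar u,\bar\rho$ --- the boundary terms at $\pm\infty$ collapse into expressions in $c,k$ bounded by $C(M_0)$, the $\chi'$-localized terms are estimated by the $L^\infty$ bound just stated, and the dissipative correction at wave breaking contributes with a favourable sign, so that
\begin{equation*}
  \mathcal{E}'(t)\le C(M_0)\bigl(1+\mathcal{E}(t)\bigr),
\end{equation*}
whence $\mathcal{E}(t)\le C_1(M_0,T_0)$ on $[0,T]$ by Gronwall. Integrating $\zeta_t=U=\bar U+c\chi(y)$ and using $\norm{\bar U}_{L^\infty}^2\le C(M_0)(1+\mathcal{E})$ then gives $\norm{\zeta}_{L^\infty_TL^\infty}+\norm{\bar U}_{L^\infty_TE}\le C_2(M_0,T_0)$.

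\textbf{Step 2 (bootstrap).} Now $\norm{\zeta}_{L^\infty_TL^\infty}$, $\mathcal{E}$, $\abs{c}$, $\abs{k}$ are all bounded in terms of $M_0,T_0$ only, so the same kernel estimates as above (now allowing the harmless factors $e^{\norm{\zeta}_{L^\infty}}$ and $\meas\{0\le y\le 1\}\le 1+2\norm{\zeta}_{L^\infty}$, cf.\ \eqref{est:chiprime}) give $\norm{Q(X)}_{L^\infty_TL^\infty}+\norm{P(X)+\tfrac12 k^2-U^2}_{L^\infty_TL^\infty}\le C_3(M_0,T_0)$, hence $\bar M\le C_4(M_0,T_0)$ with $\bar M$ as in Lemma~\ref{lem:G2}. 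Lemma~\ref{lem:G2}(i) then yields $\norm{\tfrac{1}{y_\xi+h}(t)}_{L^\infty}\le 2e^{C(C_4)T_0}\norm{\tfrac{1}{y_{0,\xi}+h_0}}_{L^\infty}\le C_5(M_0,T_0)$ and the matching bound for $\norm{(y_\xi+h)(t)}_{L^\infty}$. Finally, writing $\Phi(t)=\norm{v(t)}_E+\norm{w(t)}_E+\norm{h(t)}_E+\norm{\bar r(t)}_E$ and integrating the $v,w,h,\bar r$ equations of \eqref{eq:sysdiss} (via $y_\xi=v+1$ and the Minkowski inequality), the only nontrivial term is $(U^2+\tfrac12 k^2-P(X))y_\xi$, estimated by $\norm{P(X)-U^2-\tfrac12 k^2}_{L^\infty}\norm{v}_E+\norm{P(X)-U^2-\tfrac12 k^2}_E$; and since $\norm{\zeta}_{L^\infty_TL^\infty}$, $\norm{\bar U}_{L^\infty_TE}$ are already fixed, the Lemma~\ref{lem:PQ}-type estimates give $\norm{P(X)(t)-U^2-\tfrac12 k^2}_E\le C(M_0,T_0)(1+\Phi(t))$ with a constant no longer depending on $\Phi$. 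Thus $\Phi'(t)\le C(M_0,T_0)(1+\Phi(t))$, and Gronwall gives $\Phi(t)\le C_6(M_0,T_0)$. Adding the bounds on $\norm{\zeta}_{L^\infty}$, $\norm{\bar U}_E$, $\abs{c}$, $\abs{k}$, $\Phi(t)$ and $\norm{\tfrac{1}{y_\xi+h}(t)}_{L^\infty}$ produces a constant $M=M(M_0,T_0)$ with $X(t)\in B_M$ for all $t\in[0,T]$.

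\textbf{Main obstacle.} The hard part will be Step 1: one must show that $P$ is controlled in $L^\infty$ \emph{linearly by the energy}, with no $e^{\norm{\zeta}_{L^\infty}}$ and no $\norm{u_x}_{L^\infty}$ dependence --- achieved by keeping $u_x^2$ inside the convolution and using $\norm{e^{-\abs{\dott}}\star u_x^2}_{L^\infty}\le\norm{u_x}_{L^2}^2$ --- so that the Gronwall inequality for $\mathcal{E}$ is genuinely linear and hence valid on all of $[0,T_0]$; if any superlinear dependence leaked into this first inequality the comparison ODE would blow up in finite time. A secondary technical point is justifying the energy identity in the dissipative (discontinuous) setting, i.e.\ checking in Lagrangian coordinates that the cutoffs $\chi_{\{\tau(\xi)>t\}}$ contribute, if anything, only to decrease $\mathcal{E}$, and that the change of variables $x=y(t,\xi)$ used to pass between the Eulerian and Lagrangian forms is legitimate on $\{\tau(\eta)>t\}$.
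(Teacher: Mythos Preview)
Your proposal is correct and follows essentially the same strategy as the paper: introduce the energy $\Sigma=\mathcal{E}=\int_\Real\bar U^2 y_\xi\,d\xi+\norm{h}_{L^1}$, show $\norm{\bar U}_{L^\infty}^2$ and $\norm{P}_{L^\infty}$ are controlled linearly by $\Sigma$ with constants depending only on $c_0,k_0,\chi$ (hence independent of $\norm{\zeta}_{L^\infty}$), close a linear Gronwall inequality for $\Sigma$, and then bootstrap $\norm{\zeta}_{L^\infty}$, the $E$-norms of $(v,w,h,\bar r)$, and $\norm{(y_\xi+h)^{-1}}_{L^\infty}$ via \eqref{eq:G3sol}. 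The only minor differences are presentational: the paper stays in Lagrangian coordinates throughout (bounding $P$ via \eqref{eq:Plag2} using $e^{-|y(\xi)-y(\eta)|}\le 1$, and obtaining $\norm{\bar U}_{L^\infty}^2\le 2\Sigma+C$ by a direct integration using $y_\xi h\ge U_\xi^2$), whereas you invoke the Eulerian picture and a Sobolev interpolation; and the paper handles the dissipative cutoffs simply by noting (Remark~\ref{rem:dertime}) that $y_{\xi,t}=U_\xi$ and $h_t=2(U^2+\tfrac12 k^2-P)U_\xi$ hold for \emph{all} $(t,\xi)$ since $U_\xi=0$ after breaking, so no separate ``favourable sign'' argument is needed.
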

\begin{proof} 
To simplify the notation we will generically denote by $C$ constants which only depend on $\chi$, $c_0$, and $k_0$, and by $C(M_0,T_0)$ constants which in addition depend on $M_0$ and $T_0$.

Let us introduce
 \begin{equation*}
    \Sigma=\int_{\Real} \bar U^2y_\xi\,d\xi+\norm{h}_{L^1}.
  \end{equation*}
  Since $h\geq
  0$ and $h=U_\xi^2+\bar r^2-h\zeta_\xi$, we have $\norm{h}_{L^1}=\int_\Real
  h\,d\xi<\infty$. 
  We can estimate the $\norm{\bar U}_{L^\infty}^2$
  as follows: 
  \begin{subequations}
    \label{eq:derivULinf}
    \begin{align*}
      \bar U^2(\xi)&=2\int_{-\infty}^{\xi}\bar U\bar U_\xi\,d\eta\\
      &=2\int_{-\infty}^{\xi}\bar
      UU_\xi\,d\eta-2\int_{-\infty}^{\xi}c\bar
      U\chi'\circ y
      y_\xi\,d\eta\\
      &\leq \int_{\{\eta\mid  y_\xi(\eta)>0\}}\Big(\bar
      U^2y_\xi+\frac{U_\xi^2}{y_\xi}\Big)\,d\eta+2\int_{\Real}\vert c\bar
      U\vert \chi'\circ y
      y_\xi\,d\eta\\
      &\leq \int_{\{\eta\mid  y_\xi(\eta)>0\}}(\bar{U}^2y_\xi+h)\,d\eta+2C\norm{\bar U}_{L^\infty}\\
      &\leq \Sigma+2C\norm{\bar U}_{L^\infty},
    \end{align*}
  \end{subequations}
where we used that $y_\xi(\eta)=0$ implies $U_\xi(\eta)=0$ and therefore in the first integral in the second line the integrand is zero whenever $y_\xi(\xi)=0$. Thus it suffices to integrate over $\{\eta\in\Real\mid y_\xi(\eta)>0\} \cap \{\eta\leq\xi\}$ which justifies the subsequent estimate.
  Finally, inserting that $\norm{\bar U}_{L^\infty}\leq
  \frac{1}{4C}\norm{\bar U}_{L^\infty}^2+C$, we
  get
  \begin{equation}\label{est:Uinf}
    \norm{\bar U}_{L^\infty}^2\leq 2\Sigma+C.
  \end{equation}
From \eqref{eq:Plag2}, we get 
\begin{equation}\label{est:Pinf}
 \norm{P(X)}_{L^\infty}\leq C(1+\Sigma+\norm{U}_{L^\infty}^2)\leq C(1+\Sigma). 
\end{equation}
Similarly one obtains 
\begin{equation}\label{est:Qinf}
 \norm{Q(X)}_{L^\infty}\leq C(1+\Sigma).
\end{equation}
We
can now compute the derivative of $\Sigma$. From
\eqref{eq:sysdiss} we get
\begin{align*}
  \frac{d\Sigma}{dt}&=\int_{\Real}2\bar U\bar
  U_ty_\xi\,d\xi+\int_{\Real}\bar U^2y_{\xi
    t}\,d\xi+\int_\Real h_t\,d\xi\\
  &=\int_\Real2\bar U(-Q(X)-cU\chi'\circ y)y_\xi\,d\xi+\int_\Real \bar
  U^2U_\xi\,d\xi+\int_{\Real}2(U^2+\frac12 k^2-P(X))U_\xi\,d\xi\\
  &= A_1+A_2+A_3.
\end{align*}
Note that we can put the time derivative under the integral, since $y_\xi$ and $h$ are differentiable almost everywhere with respect to time (cf. Remark~\ref{rem:dertime}). 
We estimate each of these  integrals separately.
Thus
\begin{align*}
 A_1& = -2\int_\Real Q(X)\bar Uy_\xi d\xi-2\int_\Real \left(c\bar U^2\chi'\circ yy_\xi+c^2\bar U\chi\circ y\chi'\circ yy_\xi\right) d\xi\\ 
& \leq -2\int_\Real P(X)_\xi\bar Ud\xi+C\Sigma+C\norm{\bar U}_{L^\infty}\\
& \leq 2\int_\Real P(X)\bar U_\xi d\xi+C\Sigma+C\norm{\bar U}_{L^\infty},
\end{align*}
after integration by parts in the last step, since $P(X)_\xi=Q(X)y_\xi$.
Thus
\begin{align*}
  A_2&=\int_\Real \bar U^2\bar
  U_\xi\,d\xi+\int_\Real c\bar U^2(\chi'\circ
  y)y_\xi d\xi\\
  &=\int_\Real c\bar U^2(\chi'\circ y)y_\xi d\xi\leq
  C\Sigma.
\end{align*}
Furthermore
\begin{align*}
 A_3& = 2\int_\Real U^2 U_\xi d\xi+\int_\Real k^2U_\xi d\xi-2\int_\Real P(X)\bar U_\xi d\xi-2c\int_\Real P(X)\chi'\circ yy_\xi d\xi\\
& = -2\int_\Real P(X)\bar U_\xi d\xi -2c\int_\Real P(X)\chi'\circ yy_\xi d\xi+\frac{2}{3}c^3+k^2c\\ 
& \leq -2\int_\Real P(X)\bar U_\xi d\xi +C\norm{P(X)}_{L^\infty}+C.
\end{align*}
Finally, by adding these estimates, we get
\begin{align*}
 \frac{d\Sigma}{dt}& \leq C\Sigma+C+C\norm{\bar U}_{L^\infty}+C\norm{P(X)}_{L^\infty}\\
& \leq C\Sigma +C+C\norm{\bar U}_{L^\infty}^2+C\norm{P(X)}_{L^\infty}\\
& \leq C\Sigma+C,
\end{align*}
by \eqref{est:Uinf} and \eqref{est:Pinf}.  Hence,
Gronwall's lemma implies that
$\max_{t\in[0,T_0]}\Sigma(t)$ can be bounded by some constant only depending on $M_0$ and $T_0$. Using now \eqref{est:Uinf}, \eqref{est:Pinf}, and \eqref{est:Qinf}, we immediately obtain that the same is true for $\norm{\bar U(t,\dott)}_{L^\infty}$, $\norm{P(t,\dott)}_{L^\infty}$, and $\norm{Q(t,\dott)}_{L^\infty}$ with $t\in [0,T_0]$.
From \eqref{eq:sysdiss}, we obtain that 
\begin{equation}
 \vert \zeta(t,\xi)\vert \leq \vert \zeta(0,\xi)\vert +\int_0^t\abs{U(t',\xi)}dt', 
\end{equation}
and hence also $\norm{\zeta(t,\dott)}_{L^\infty}$ can be bounded  on $[0,T_0]$ by a constant only depending on $M_0$ and $T_0$. 

Applying Young's inequality to \eqref{eq:Plag1} and \eqref{eq:Qlag1} and following the proof of Lemma~\ref{lem:PQ}
we get 
\begin{equation}
\begin{aligned}
&\norm{(P(X)-U^2-\frac12 k^2)(t,\dott)}_{L^2}+\norm{Q(X)(t,\dott)}_{L^2}\\
& \quad \leq C(M_0,T_0)\\
&\qquad\quad+C(M_0,T_0)\big(\norm{\bar U(t,\dott)}_{L^2}+\norm{ \zeta_\xi(t,\dott)}_{L^2}+\norm{h(t,\dott)}_{L^2}+\norm{\bar r(t,\dott)}_{L^2}\big).
\end{aligned}
\end{equation}
Let 
$$
\alpha(t)=\norm{\bar U(t,\dott)}_{L^2}+\norm{\zeta_\xi(t,\dott)}_{L^2}+\norm{U_\xi(t,\dott)}_{L^2}+\norm{h(t,\dott)}_{L^2}+\norm{\bar r(t,\dott)}_{L^2},
$$
then
\begin{equation}
 \alpha(t)\leq \alpha(0)+C(M_0,T_0)+C(M_0,T_0)\int_0^t \alpha(t') dt'.
\end{equation}
Hence Gronwall's lemma gives us $\alpha(t)\leq C(M_0,T_0)$.

Similarly, one can show that 
$$
\norm{\zeta_\xi(t,\dott)}_{L^\infty}+\norm{U_\xi(t,\dott)}_{L^\infty}+\norm{h(t,\dott)}_{L^\infty}+\norm{\bar r(t,\dott)}_{L^\infty}\leq C(M_0,T_0).
$$

It remains to prove that $\norm{\frac{1}{y_\xi+h}}_{L^\infty_TL^\infty}$ can be bounded by some constant depending on $M_0$ and $T_0$, but this follows immediately form \eqref{eq:G3sol}. This completes the proof. 
\end{proof}

We can now prove  global existence of solutions. 
\begin{theorem}[Global solution] 
  \label{th:global}
  For any initial data $X_0=(y_0,U_0,h_0,r_0)\in\G$,
  there exists a unique global solution
  $X=(y,U,h,r)\in C(\Real_+,\G)$ of \eqref{eq:sysdiss} with $X(0)=X_0$. 
\end{theorem}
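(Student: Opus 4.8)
The plan is to extend the short-time solution of Theorem~\ref{th:short0} to all of $\Real_+$ by a standard continuation argument, using the a priori bound of Lemma~\ref{lem:globest} to prevent the local existence time from collapsing to zero. Fix $X_0\in\G$, so that $X_0\in\G\cap B_{M_0}$ for some $M_0$, and fix an arbitrary $T_0>0$.

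Two ingredients are needed. First, by Lemma~\ref{lem:globest} there is a constant $M=M(M_0,T_0)$ such that every short-time solution of \eqref{eq:sysdiss} defined on an interval $[0,T]$ with $T\le T_0$ and initial data in $B_{M_0}$ remains in $B_M$ throughout that interval; applying the same lemma with $M_0$ replaced by $M$ shows that solutions starting in $B_M$ also stay in $B_M$ on $[0,T_0]$. Second, the short-time existence time produced by Theorem~\ref{th:short0} (which rests on Lemmas~\ref{lem:estshort} and \ref{lem:contrgamma}) depends on the initial data only through a bound on $\norm{X_0}_{\bar V}+\norm{1/(y_{0,\xi}+h_0)}_{L^\infty}$. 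Hence there is a fixed $\delta=\delta(M)>0$ such that for any initial data in $\G\cap B_M$ a solution exists on $[0,\delta]$ and, by Theorem~\ref{th:short0}, lies in $\G$ there.

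Now iterate. Set $t_0=0$, $X(0)=X_0$; note $X_0\in\G\cap B_M$ since $M\ge M_0$. Having constructed the solution on $[0,t_n]$ with $X(t_n)\in\G\cap B_M$, apply Theorem~\ref{th:short0} with initial data $X(t_n)$ to extend it to $[t_n,t_{n+1}]$ with $t_{n+1}=\min(t_n+\delta,T_0)$. The glued trajectory solves \eqref{eq:sysdiss} on all of $[0,t_{n+1}]$ because restarting at $t_n$ is consistent with the definition of $\tau$: the points $\xi$ with $\tau(\xi)\le t_n$ are precisely those with $y_\xi(t_n,\xi)=0$, and for $X(t_n)\in B_M$ the variables $q,w,\bar r$ already vanish and $h$ is frozen at such $\xi$ (see the Remark after \eqref{eq:defBMfix}), so the restarted flow leaves them unchanged, matching the original $\tau$. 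By Lemma~\ref{lem:globest} again $X(t)\in B_M$ on $[0,t_{n+1}]$, and by Theorem~\ref{th:short0} $X(t_{n+1})\in\G$, so the induction continues. Each step advances time by $\delta$ unless $T_0$ has been reached, so after at most $\lceil T_0/\delta\rceil$ steps we obtain a solution in $C([0,T_0],\G)$; since $T_0$ was arbitrary, this gives $X\in C(\Real_+,\G)$. Uniqueness is immediate: two solutions with the same initial data agree on a set that is closed by continuity and, by the local uniqueness of Theorem~\ref{th:short0} applied at any common time, also relatively open in $\Real_+$, hence all of $\Real_+$.

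The only real obstacle is the possibility that the local existence time shrinks to zero along the iteration, which would stall the continuation at a finite time; this is exactly what Lemma~\ref{lem:globest} rules out, since its bound $M$ depends only on $M_0$ and $T_0$ and therefore keeps the restart data in a fixed ball $B_M$, giving a uniform lower bound $\delta$ on the step size. The secondary point that needs care is verifying that the glued trajectory genuinely solves \eqref{eq:sysdiss} --- i.e. that the nonlocal quantity $\tau$ computed for the glued solution coincides with the one used in each restart --- which works precisely because of the structure of $B_M$ recalled above.
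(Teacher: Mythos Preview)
Your argument is correct and follows precisely the paper's approach: short-time existence (Theorem~\ref{th:short0}) combined with the a priori bound of Lemma~\ref{lem:globest} to guarantee a uniform lower bound on the restart time, then iterate. You supply more detail than the paper --- in particular the gluing consistency of $\tau$ and the open--closed uniqueness argument --- but the strategy is identical.

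One small inaccuracy: the sentence ``applying the same lemma with $M_0$ replaced by $M$ shows that solutions starting in $B_M$ also stay in $B_M$'' is not what Lemma~\ref{lem:globest} gives (it only yields containment in some larger $B_{M'}$). Fortunately you never actually use this; in the induction step you correctly invoke Lemma~\ref{lem:globest} on the \emph{glued} solution, whose initial datum is $X_0\in B_{M_0}$, and that is what keeps $X(t_{n+1})\in B_M$. Simply delete the superfluous sentence.
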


\begin{proof}
By assumption $X_0\in\G$, and therefore there exists a constant $M_0$ such that $X_0\in B_{M_0}$. By Theorem~\ref{th:short0} there exists a $T>0$ such that we can find a unique short time solution $X(t)\in \G$ on $[0,T]$. Moreover, according to Lemma~\ref{lem:estshort}, the length of the time interval for which the solution exists and is unique, is linked to $M_0$. Thus we can only find a unique global solution if $\norm{X(t)}_{\bar V}+\norm{\frac{1}{y_\xi+h}}_{L^\infty}$ does not blow up within a finite time interval, but this follows from Lemma~\ref{lem:globest}.
\end{proof}

\section{Stability of solutions} \label{sec:stability}

\begin{definition} \label{def:metric1}
The mapping $d_\Real \colon \G\times \G \to \Real_+$ 
\begin{equation}
 d_\Real(X,\tilde X)= \norm{X-\tilde X}_V+\norm{g(X)-g(\tilde X)}_{L^2(\Real)}+\kappa(X,\tilde X),
\end{equation}
 for $X$, $\tilde X\in \G$ defines a metric on $\G$. The function $\kappa(X,\tilde X)$ is defined as follows 
\begin{equation}\label{eq:defk}
 \kappa(X,\tilde X)=\begin{cases} 
1, &\quad \text{if } \meas(\{\xi\in\Real\mid (r(\xi)=0\text{ and }\tilde r(\xi)\not =0) \\
 & \phantom{\quad \text{if } \meas(\{\xi\in\Real\mid (r(\xi)=0} \text{ or }(r(\xi)\not =0\text{ and }\tilde r(\xi)=0)\})>0,\\
 0, & \quad \text{otherwise}.\\
 \end{cases}
\end{equation}
\end{definition}

In what follows we will denote
$$
d\colon\Real^8\times\Real^8\to\Real_+, \quad d(X, \tilde X)=\vert Z-\tilde Z\vert + \vert g(X)-g(\tilde X)\vert+\iota(X,\tilde X),
$$
with 
\begin{equation}\label{eq:i}
 \iota(X,\tilde X)=\begin{cases} 
1, &\quad \text{if } (r=0\text{ and }\tilde r\not =0) \text{ or }(r\not =0\text{ and }\tilde r=0),\\
 0, & \quad \text{otherwise}.\\
 \end{cases}
\end{equation}

Here it should be noted that for any two solutions $X$ and $\tilde X$ of the 2CH system, $\kappa(X,\tilde X)$ and $\iota(X,\tilde X)$ are independent of time, since $r_t=0$ and $\tilde r_t=0$. 

\subsection{Necessary estimates}

Denote 
\begin{equation} \label{eq:omega+}
\begin{aligned}
\Omega_-&=\{ x\in\Real^8 \mid x_5\leq 0 \text{ and } x_7+x_8x_4=0\}, \\
\Omega_+&=\{x\in\Real^8\mid x_5\geq 0 \text{ and }x_7+x_8x_4=0\}.
\end{aligned}
\end{equation}
Note that $\Omega_-\cup\Omega_+=\Omega_1\cup\Omega_2=\{x\in\Real^8\mid x_7+x_8x_4=0\}$ (cf.~Definition~\ref{def:Omega}). See Figure \ref{fig:omega_omrl}.
\begin{figure}
  \includegraphics[width=8cm]{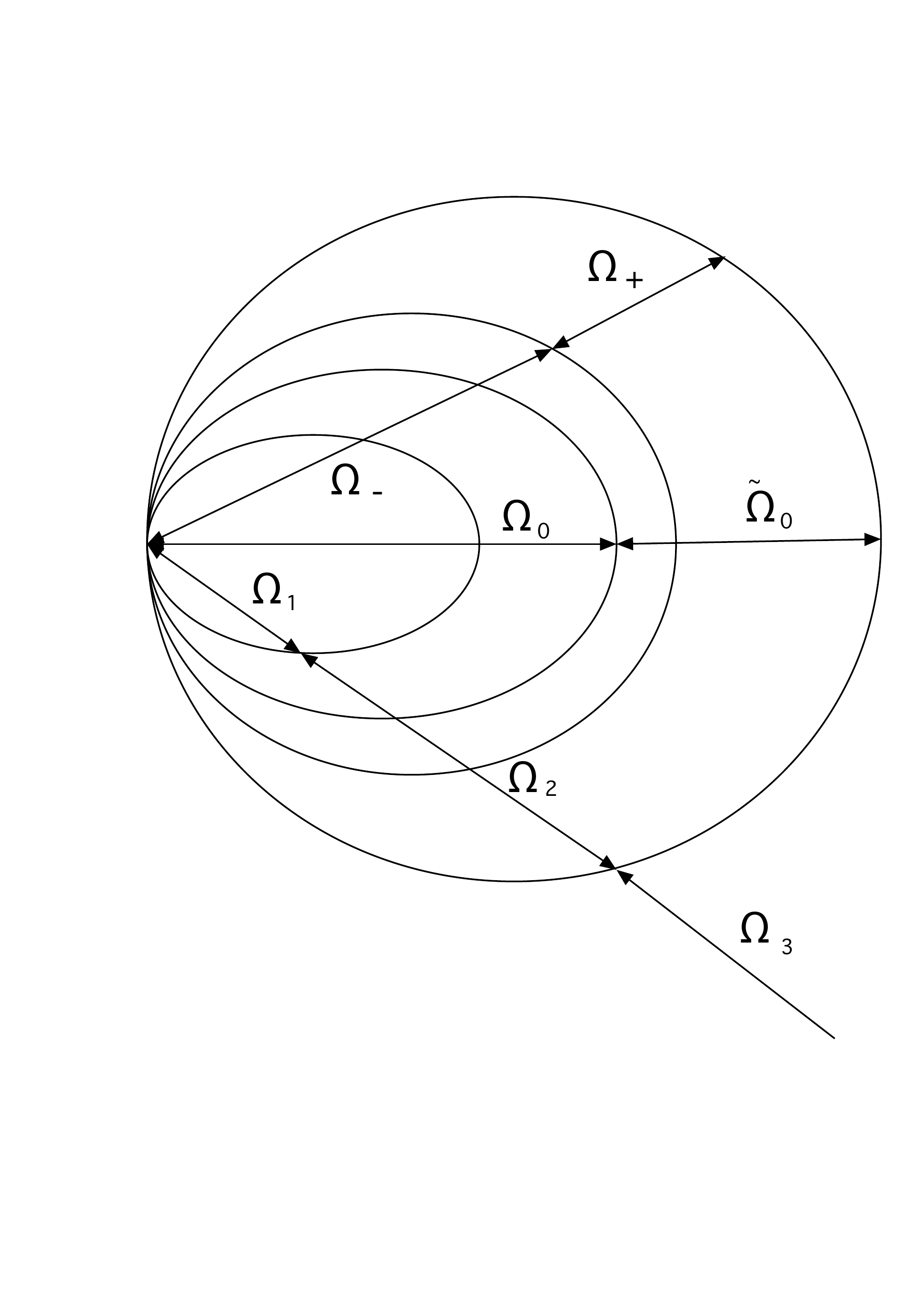}
  \label{fig:omega_omrl}
  \caption{The various regions, cf.~Definition~\ref{def:Omega} and \eqref{eq:omega+}.}
\end{figure}

\begin{lemma}\label{lem:g1}
The restrictions of $g$ to $\Omega_2$, $\Omega_3$, and $\Omega_-$ are Lipschitz on bounded sets. More precisely,
\begin{equation}
 \vert  g(X)-g(\tilde X)\vert \leq C(\bar M)\left(\vert Z-\tilde Z\vert +(\vert \bar r\vert +\vert\bar{\tilde{r}}\vert)\vert k-\tilde k\vert\right) ,
\end{equation}
for any $X$, $\tilde X$ in $\Omega_2\cap B_{\bar M}$, $X$, $\tilde X$ in $\Omega_3\cap B_{\bar M}$, or $X$, $\tilde X$ in $\Omega_-\cap B_{\bar M}$.
\end{lemma}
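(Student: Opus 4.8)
The plan is to treat the three regions one at a time, and in each case to exploit the fact that on that region $g$ is given by a single, explicit, polynomial formula so that Lipschitz-on-bounded-sets is just an algebraic exercise. Recall that $g$ equals $g_1(X)=|x_5|+2|x_7x_8|+2x_4$ on $\Omega_1$ and $g_2(X)=x_4+x_6$ elsewhere, while $\Omega_2\subset\Omega_1^c$ and $\Omega_3\subset\Omega_1^c$; hence on both $\Omega_2$ and $\Omega_3$ we simply have $g=g_2=x_4+x_6$. For these two regions the estimate is immediate: $|g(X)-g(\tilde X)|=|(x_4+x_6)-(\tilde x_4+\tilde x_6)|\le |x_4-\tilde x_4|+|x_6-\tilde x_6|\le |Z-\tilde Z|$, with no dependence on $k$ at all, so the claimed bound holds with $C(\bar M)$ absorbing the (trivial) constant; here I use that $x_4=q$ and $x_6=h$ are components of $Z$.

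The only region requiring actual work is $\Omega_-$, where the constraint $x_7+x_8x_4=0$ holds (i.e. $\bar r=-kq$) and $x_5\le 0$, but where a point need not lie in $\Omega_1$, so $g$ may be $g_1$ on part of $\Omega_-$ and $g_2$ on the rest. First I would rewrite $g_1$ on $\Omega_-$: since $x_5\le 0$ we have $|x_5|=-x_5$, and since $x_7=-x_8x_4$ we have $2|x_7x_8|=2|x_8|^2|x_4|=2x_8^2 x_4$ (using $x_4=q\ge 0$ on $B_{\bar M}$), so that $g_1(X)=-x_5+2x_8^2x_4+2x_4=-U_\xi+(2k^2+2)q$ on $\Omega_-\cap B_{\bar M}$. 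This is now a smooth polynomial in the coordinates, and its difference between two points of $B_{\bar M}$ is controlled by $C(\bar M)(|U_\xi-\tilde U_\xi|+|q-\tilde q|+(|\bar r|+|\bar{\tilde r}|)|k-\tilde k|)$ — the mixed term $|k-\tilde k|$ arises precisely from comparing $2k^2q$ with $2\tilde k^2\tilde q$ after writing $k^2 q=|kq|^2/q=\bar r^2/q$ on $\Omega_-$ (or more simply $k^2q-\tilde k^2\tilde q=(k^2-\tilde k^2)q+\tilde k^2(q-\tilde q)$ and bounding $q(k+\tilde k)$ via $|kq|=|\bar r|$), which gives a factor comparable to $(|\bar r|+|\bar{\tilde r}|)$ in front of $|k-\tilde k|$. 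The term $|g_2(X)-g_2(\tilde X)|\le |Z-\tilde Z|$ is again trivial. The remaining point is the matching of the two branches on $\Omega_-$: if $X\in\Omega_1$ and $\tilde X\in\Omega_-\setminus\Omega_1$ (or vice versa), one inserts an intermediate comparison using that on the boundary between $\Omega_1$ and its complement $g_1=g_2$, i.e. the two formulas agree on $\partial\Omega_1$, together with the Lipschitz bounds for $g_1$ and $g_2$ separately on $\Omega_-\cap B_{\bar M}$; this is the standard "piecewise-Lipschitz with continuous gluing on a convex domain" argument, and $\Omega_-$ is cut out by linear (in)equalities so convexity of the relevant pieces is available.

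The main obstacle I anticipate is precisely this gluing step on $\Omega_-$: one must be careful that the segment joining $X$ and $\tilde X$ crosses from $\{g_1\le g_2\}$ to $\{g_1>g_2\}$ at a point where $g_1=g_2$, and that both endpoints stay in $B_{\bar M}$ and on the affine set $\{x_7+x_8x_4=0\}$ — the latter is not convex because of the bilinear term $x_8x_4$, so a literal straight-line interpolation need not remain in $\Omega_-$. The clean fix is to avoid interpolation altogether: show directly that $g=\max(g_1,g_2)$ fails but rather use that on $\Omega_-\cap B_{\bar M}$ one has $g=g_1$ when $g_1\le g_2$ and $g=g_2$ otherwise, and estimate $|g(X)-g(\tilde X)|$ by splitting into the four cases according to which branch each of $X,\tilde X$ falls in, using in the mixed cases the inequality $|g(X)-g(\tilde X)|\le |g_1(X)-g_1(\tilde X)|+|g_2(X)-g_2(\tilde X)|$ valid once one orders the two values correctly (e.g. if $g(X)=g_1(X)\le g_2(X)$ and $g(\tilde X)=g_2(\tilde X)< g_1(\tilde X)$ then $g_1(X)\le g_2(X)$ and $g_2(\tilde X)\le g_1(\tilde X)$, so $g(X)-g(\tilde X)=g_1(X)-g_2(\tilde X)$ lies between $g_1(X)-g_1(\tilde X)$ and $g_2(X)-g_2(\tilde X)$). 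Combining these pieces with the explicit Lipschitz bounds for $g_1$ and $g_2$ derived above yields the stated inequality, with $C(\bar M)$ depending on $\bar M$ only through the bounds $q\le C(\bar M)$, $|k|\le C(\bar M)$ available on $B_{\bar M}$.
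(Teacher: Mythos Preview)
Your proposal is correct and follows essentially the same route as the paper: both arguments dispose of $\Omega_2$ and $\Omega_3$ trivially via $g=g_2$, and on $\Omega_-$ both handle the mixed case $X\in\Omega_1$, $\tilde X\in\Omega_2\cap\Omega_-$ by the sandwich observation that $g_1(X)\le g_2(X)$ and $g_2(\tilde X)\le g_1(\tilde X)$ force $g(X)-g(\tilde X)$ to lie between $g_1(X)-g_1(\tilde X)$ and $g_2(X)-g_2(\tilde X)$. The only cosmetic difference is that the paper keeps $g_1$ in the form $-U_\xi-2k\bar r+2y_\xi$ rather than substituting $\bar r=-kq$; this makes the factor $(\vert\bar r\vert+\vert\bar{\tilde r}\vert)\vert k-\tilde k\vert$ appear immediately from $2(k\bar r-\tilde k\bar{\tilde r})=2(k-\tilde k)\bar r+2\tilde k(\bar r-\bar{\tilde r})$, without the extra detour through $\vert kq\vert=\vert\bar r\vert$ that your version needs.
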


\begin{proof}
  The cases when both $X$, $\tilde X\in \Omega_1$,
  $X$, $\tilde X\in\Omega_2$, and $X$, $\tilde
  X\in\Omega_3$ are straightforward. Let us
  consider the case when $X\in\Omega_1$ and
  $\tilde X\in \Omega_2\cap\Omega_-$, that is,
  $-U_\xi-2k\bar r+2y_\xi\leq y_\xi+h$ and $\tilde
  y_\xi+\tilde h\leq -\tilde U_\xi-2\tilde
  k\bar{\tilde{r}}+2\tilde y_\xi$,
  respectively. We have
  \begin{align}
   \vert g(X)-g(\tilde X)\vert & = \vert -U_\xi-2k\bar r+2y_\xi-\tilde y_\xi-\tilde h\vert\\ \nn 
& \leq \vert y_\xi-\tilde y_\xi\vert +\vert h-\tilde h\vert + h+U_\xi+2k\bar r-y_\xi\\ \nn 
& \leq \vert y_\xi-\tilde y_\xi\vert +\vert h-\tilde h\vert +h+U_\xi+2k\bar r-y_\xi-\tilde h-\tilde U_\xi-2\tilde k\bar{\tilde{r}}+\tilde y_\xi\\ \nn 
& \leq C(\bar M)(\vert Z-\tilde Z\vert +(\vert \bar r\vert +\vert\bar{\tilde{r}}\vert) \vert k-\tilde k\vert).
  \end{align}
\end{proof}

\begin{lemma}
\label{lem:estg}
 Given $M>0$, there exist $\bar M>0$, $T>0$ and $\delta>0$ which depend only on $M$ such that for any $\xi\in\Real$ satisfying $d(X_0(\xi),\tilde X_0(\xi))< \delta$, 
we have 
\begin{equation}\label{eq:g1}
 \vert g(X(t,\xi))-g(\tilde X(t,\xi))\vert \leq C(\bar M)\left(\vert Z(t,\xi)-\tilde Z(t,\xi)\vert+(\vert \bar r(t,\xi)\vert +\vert\bar{\tilde{r}}(t,\xi)\vert)\vert k-\tilde k\vert\right),
\end{equation}
for all $t\in[0,T]$ where $X(t)$ and $\tilde X(t)$ are the solutions to \eqref{eq:sysdiss} with initial data $X_0$ and $\tilde X_0$ belonging to $B_M$.
\end{lemma}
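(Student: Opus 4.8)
\emph{Proof plan.} I would argue by contradiction, fixing constants $\bar M$, $T$, $\delta$ (all depending only on $M$) along the way, and showing that if \eqref{eq:g1} fails at some $t^\ast\in[0,T]$ then — after disposing of the easy configurations via Lemma~\ref{lem:g1} — the pair $(X(t^\ast,\xi),\tilde X(t^\ast,\xi))$ must sit in one particular ``straddling'' position near wave breaking, which I then exclude using the metric $d$ and the constraints. Throughout, $X(t,\xi)$, $\tilde X(t,\xi)$ denote the solutions with data $X_0,\tilde X_0\in\G\cap B_M$, identified with points of $\Real^8$ as in Definition~\ref{def:Omega}, and by Lemma~\ref{lem:globest} (with $T_0=1$) they stay in $B_{\bar M}$ for $t\le T\le1$ with $\bar M=\bar M(M)$.

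\emph{Step 1 (reduction to $r=0$).} Since $\delta<1$, the hypothesis $d(X_0(\xi),\tilde X_0(\xi))<\delta$ forces $\iota(X_0(\xi),\tilde X_0(\xi))=0$, i.e. $r_0(\xi)=0\iff\tilde r_0(\xi)=0$; as $r_t=\tilde r_t=0$ this persists in time. If $r_0(\xi)\ne0$, then $X(t,\xi),\tilde X(t,\xi)\in\Omega_3$ for all $t$, where $g(x)=g_2(x)=x_4+x_6$, and \eqref{eq:g1} follows at once from $|g(X)-g(\tilde X)|\le|Z-\tilde Z|$. So I may assume $r_0(\xi)=\tilde r_0(\xi)=0$, whence for all $t$ both points lie in $\Omega_1\cup\Omega_2=\Omega_-\cup\Omega_+$ and satisfy $\bar r=-ky_\xi$, $\bar{\tilde r}=-\tilde k\tilde y_\xi$.

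\emph{Step 2 (structure of $g$, and reduction to one bad configuration).} On the relevant constraint set $\{y_\xi h=U_\xi^2+\bar r^2,\ \bar r=-ky_\xi,\ y_\xi,h\ge0\}$ one computes that $g$ restricted to $\Omega_-$ coincides with $\min(g_1,g_2)$ — hence is Lipschitz on bounded sets, which is Lemma~\ref{lem:g1} — while $g$ restricted to $\Omega_+$ equals $g_2$ except at the wave-breaking configuration $y_\xi=U_\xi=\bar r=0$, $h>0$. Consequently, by Lemma~\ref{lem:g1}, \eqref{eq:g1} holds whenever $X(t,\xi)$ and $\tilde X(t,\xi)$ are both in $\Omega_-$, or both in $\Omega_2$; the only uncovered situation — hence the situation at $t^\ast$ — is that, after renaming, $X(t^\ast,\xi)\in\Omega_1$ and $\tilde U_\xi(t^\ast,\xi)>0$. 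I also record that $g(X(\cdot,\xi))$ is Lipschitz in $t$ with constant $C(\bar M)$, since $g_1$ and $g_2$ agree where the defining formula for $g$ switches and the right-hand side of \eqref{eq:sysdiss} is bounded on $B_{\bar M}$.

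\emph{Step 3 (excluding the bad configuration).} In $\Omega_1$ one gets, from $g_1\le g_2$ and $y_\xi h=U_\xi^2+\bar r^2$, that $\tfrac{y_\xi}{y_\xi+h}(t^\ast,\xi)\le\beta_0$ for a universal $\beta_0<\tfrac12$ and, when $y_\xi>0$, that $|U_\xi(t^\ast,\xi)|\ge\lambda\,y_\xi(t^\ast,\xi)$ with $\lambda=\tfrac{1+\sqrt5}{2}$. Fix a threshold $\gamma'=\gamma'(M)\in(0,\beta_0)$. In the ``shallow'' case $\tfrac{y_\xi}{y_\xi+h}(t^\ast,\xi)>\gamma'$ one has $y_\xi(t^\ast,\xi)>\gamma'/\bar M$, hence $|U_\xi(t^\ast,\xi)|>\lambda\gamma'/\bar M$; since $U_\xi(t^\ast,\xi)<0<\tilde U_\xi(t^\ast,\xi)$ this gives $|Z(t^\ast,\xi)-\tilde Z(t^\ast,\xi)|>\lambda\gamma'/\bar M$, and as $g$ is bounded by $C(\bar M)$ on $B_{\bar M}$ it follows that $|g(X(t^\ast,\xi))-g(\tilde X(t^\ast,\xi))|\le C(M)\,|Z(t^\ast,\xi)-\tilde Z(t^\ast,\xi)|$, i.e. \eqref{eq:g1} holds at $t^\ast$ — contradiction. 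In the ``deep'' case $\tfrac{y_\xi}{y_\xi+h}(t^\ast,\xi)\le\gamma'$ one estimates, using $g_1\le\sqrt{y_\xi h}+2(1+k^2)y_\xi$ and $y_\xi+h\le C(\bar M)$, that $g(X(t^\ast,\xi))=g_1(X(t^\ast,\xi))\le C(\bar M)\sqrt{\gamma'}$, whereas $g(\tilde X(t^\ast,\xi))=(\tilde y_\xi+\tilde h)(t^\ast,\xi)\ge1/\bar M$; so for $\gamma'$ small $|g(X(t^\ast,\xi))-g(\tilde X(t^\ast,\xi))|\ge\tfrac12/\bar M$, and propagating this back to $t=0$ along both trajectories at rate $C(\bar M)$ yields $|g(X_0(\xi))-g(\tilde X_0(\xi))|\ge\tfrac12/\bar M-2C(\bar M)T\ge\tfrac14/\bar M$ for $T$ small, contradicting $d(X_0(\xi),\tilde X_0(\xi))<\delta$ as soon as $\delta<\tfrac14/\bar M$. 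Either way the failure of \eqref{eq:g1} at $t^\ast$ is contradicted, so \eqref{eq:g1} holds on $[0,T]$.

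\emph{Main obstacle.} The crux is Step~2: pinning down exactly where $g$ on $\G$ fails to be Lipschitz — namely only at the wave-breaking configuration, and only when it is approached from $\Omega_+$ — and thereby reducing the whole estimate to excluding the single straddling configuration. The metric term $|g(X)-g(\tilde X)|$ in $d$ is indispensable here, since it is precisely what forbids the initial data at $\xi$ from lying on opposite sides of this discontinuity. The rest is the bookkeeping of Step~3: choosing $\gamma'$, then $T$, then $\delta$ as functions of $M$, and verifying the two elementary constraint-set inequalities for $g_1$; the ``shallow'' case turns out harmless essentially because in $\Omega_1$ a non-negligible $y_\xi$ compels a non-negligible negative $U_\xi$, which no point with $U_\xi>0$ can be close to.
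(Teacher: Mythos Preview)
Your argument is correct and follows a genuinely different route from the paper's. The paper tracks the pair $(X(t,\xi),\tilde X(t,\xi))$ forward in time, introducing an auxiliary decomposition $\Omega_0=\{2|x_7x_8|+x_4\le x_6,\ x_5\le0,\ r=0\}$ and $\tilde\Omega_0=\Omega_2\setminus\Omega_0$, and shows by direct ODE estimates on the scalar $z(t)=y_\xi-2k\bar r-h$ that, for $T$ and $\delta$ small, both trajectories stay simultaneously in $\Omega_-$, in $\Omega_2$, or in $\Omega_3$ throughout $[0,T]$; Lemma~\ref{lem:g1} then applies at every $t$. You instead argue at a putative failure time $t^\ast$: having identified (correctly) that the only configuration not covered by Lemma~\ref{lem:g1} is $X\in\Omega_1$ with $\tilde U_\xi>0$, you split by whether $\tfrac{y_\xi}{y_\xi+h}(t^\ast)$ exceeds a threshold $\gamma'$. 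In the shallow case the constraint $y_\xi h=U_\xi^2+\bar r^2$ forces $|U_\xi|\ge\lambda y_\xi$, so $|Z-\tilde Z|$ is bounded below and \eqref{eq:g1} holds trivially with a $\gamma'$-dependent constant; in the deep case $g_1$ is small while $g_2\ge1/\bar M$, and you propagate the resulting gap in $|g(X)-g(\tilde X)|$ back to $t=0$ via the time-Lipschitz property of $t\mapsto g(X(t,\xi))$ (this is the content of Lemma~\ref{lem:Lip}, whose proof is independent of the present lemma).

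What each approach buys: the paper's forward tracking yields the stronger conclusion that the pair remains in one of the good regions for all $t\in[0,T]$, which is precisely what makes the extension Lemma~\ref{lem:est:diffg} to arbitrary starting times $\bar t$ immediate --- one simply notes that $\delta$ enters only through case~(ii) and depends on $\bar M$ alone. Your argument is more economical (no auxiliary $z$, no case-by-case dynamics) but uses the time-Lipschitz estimate for $g$ as a black box; to recover Lemma~\ref{lem:est:diffg} you would rerun the same contradiction with $X(\bar t,\xi)$ in place of $X_0(\xi)$, which works since your $T$ and $\delta$ depend only on $\bar M$. One presentational point: because the shallow case is disposed of by \emph{establishing} \eqref{eq:g1} rather than by contradicting the $\delta$-hypothesis, you should fix $\gamma'$ --- and with it the final constant $C(\bar M)$ in \eqref{eq:g1} --- \emph{before} launching the contradiction argument, so that the sentence ``\eqref{eq:g1} fails at $t^\ast$'' has a definite meaning.
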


\begin{proof}
Without loss of generality we assume $T\leq 1$. We already know that there exists $\bar M$ only depending on $M$ such that $X(t)$, $\tilde X(t)\in B_{\bar M}$ for all $t\in[0,1]$.
We consider $\xi\in\Real$ such that $d(X_0(\xi),\tilde X_0(\xi))<\delta$ for a $\delta$ that we are going to determine. For simplicity we drop $\xi $ in the notation from now on. Since $X(t)$ and $\tilde X(t)$ are solutions of \eqref{eq:sysdiss}, we see that due to Lemma~\ref{lem:g1} the estimate \eqref{eq:g1} will be proved if we can show that either $X(t)$ and $\tilde X(t)$ belong to $\Omega_-$, $X(t)$ and $\tilde X(t)$ belong to $\Omega_2$, or $X(t)$ and $\tilde X(t)$ belong to $\Omega_3$. 

If $X_0\in \Omega_3$ and $\tilde X_0\in \Omega_3^c=\Omega_1\cup\Omega_2$, by \eqref{eq:i}, we have $d(X_0,\tilde X_0)\geq1$. Thus by choosing $\delta<\frac{1}2$, we impose that either $X_0,\tilde X_0\in\Omega_3$ or  $X_0,\tilde X_0\in\Omega_3^c$. In addition, since $r(t)=r(0)$ and $\tilde r(t)=\tilde r(0)$, we can conclude that points which initially lie inside $\Omega_3$ will remain in $\Omega_3$ for all times and points starting outside $\Omega_3$ can never enter $\Omega_3$. 

Hence in order to verify the claim it is left to show that either $X(t)$ and $\tilde X(t)$ belong to $\Omega_2$ or $X(t)$ and $\tilde X(t)$ belong to $\Omega_-$.
To this end denote by $\Omega_0$ and $\tilde\Omega_0$ the following sets  (cf.~Figure \ref{fig:omega_omrl})
\begin{equation}
\begin{aligned}
 \Omega_0&=\{x\in\Real^8\mid 2\vert x_7x_8\vert +x_4\leq x_6 \text{, }x_5\leq 0 \text{ and } x_7+x_8x_4=0\}, \\ 
 \tilde\Omega_0&=\Omega_2\setminus\Omega_0.
\end{aligned}
\end{equation}
We will distinguish three cases: 

(\textit{i}) $X_0$ and $\tilde X_0$ in $\Omega_0$: Since $\Omega_0\subset
\Omega_-$, we infer that $X_0$ and $\tilde X_0$ belong to $\Omega_-$. Hence $X(t)$ and $\tilde
X(t)$ will satisfy \eqref{eq:g1} as long as both $X(t)$ and $\tilde X(t)$ belong
to $\Omega_-$. Let us prove that $X(t)$ (and, in the same way, $\tilde X(t)$) for $T$ small enough remains in $\Omega_-$. Denote by $t_0$ the first time when $X(t)$ leaves $\Omega_-$. By
continuity we must have $U_\xi(t_0)=0$. Since $r(t)=r(0)=0$ for all $t$ in $[0,T]$, it implies
$y_\xi(t_0)h(t_0)=\bar r^2(t_0)=k_0^2y_\xi^2(t_0)$, by \eqref{eq:lagcoord6} and the definition \eqref{eq:rbar} of $\bar r(t)$. Hence, as the origin belongs to 
$\Omega_-$, either 
$y_\xi(t_0)=0$ or $h(t_0)=k_0^2y_\xi(t_0)$. Points reaching the origin remain
there, that is, if $y_\xi(t)=U_\xi(t)= \bar r(t)=0$ for $t=t_0$ then it remains
true for $t\geq t_0$. Hence we infer that, $y_\xi(t_0)\not =0$ and
$h(t_0)=k_0^2y_\xi(t_0)$. Let $z(t)=y_\xi(t)-2k_0\bar r(t)-h(t)$. We have, by
assumption, $z(0)\leq 0$, because $X_0\in \Omega_0$ and $\bar
r(0)=-k_0y_{\xi}(0)$. Using that $\bar r(t_0)=-k_0y_\xi(t_0)$ and $h(t_0)=k_0^2y_\xi(t_0)$, $z(t_0)=y_\xi(t_0)-2k_0\bar
r(t_0)-h(t_0)=y_\xi(t_0)+k_0^2y_{\xi}(t_0)= y_\xi(t_0)+h(t_0)\geq \frac{1}{\bar
  M}$. On the other hand we can compute $z_t$ and we obtain $z_t\leq C_1(\bar
M)$ for some constant $C_1(\bar M)$ only depending on $\bar M$ and therefore on
$M$. Thus $z(t)\leq z(0)+C_1(\bar M)T$. Hence if we choose $T$ small enough, that
means $T<(\bar MC(\bar M))^{-1}$, we obtain $z(t_0)<\frac{1}{\bar M}$, which is a
contradiction and we have proved that $X(t)$ remains in $\Omega_-$. Similarly
one proves that $\tilde X(t)$ remains in $\Omega_-$.

(\textit{ii}) $X_0\in\Omega_0$ and $\tilde X_0\in \tilde\Omega_0$: 
First of all we want to make sure that \eqref{eq:g1} holds at time $t=0$. According to Lemma~\ref{lem:g1}, this will be the case if we can prove that $d(X_0,\tilde X_0)<\delta$   implies either $X_0$, $\tilde X_0\in \Omega_-$ or $X_0$, $\tilde X_0\in \Omega_2$. If $X_0\in\Omega_2$, then \eqref{eq:g1} holds since $\tilde X_0\in \tilde\Omega_0\subseteq \Omega_2$. 

If $X_0\notin\Omega_2$, we have $X_0\in\Omega_1$ so that $X_0\in\Omega_-$. Assume that $\tilde X_0\notin\Omega_-$. Then $\vert U_{0,\xi}-\tilde U_{0,\xi}\vert \leq \delta$ implies
$\vert \tilde U_{0,\xi}\vert \leq\delta$ and $\vert U_{0,\xi}\vert \leq \delta$,
as $U_{0,\xi}$ and $\tilde U_{0,\xi}$ have opposite signs. Since
$X_0\in\Omega_0$, we have $y_{0,\xi}-2k_0\bar r_0\leq h_0$ which implies using
\eqref{eq:lagcoord6} that
\begin{equation*}
y_{0,\xi}^2-2k_0\bar r_0y_{0,\xi}= y_{0,\xi}^2+2\bar r_0^2\leq U_{0,\xi}^2+\bar r_0^2. 
\end{equation*}
Thus $y_{0,\xi}\leq\delta$, $\abs{\bar r_0}\leq\delta$ and we have 
\begin{equation}\label{est:deltadelta}
 \delta\geq g(\tilde X_0)-g(X_0)\geq \tilde y_{0,\xi} +\tilde h_0-\vert U_{0,\xi}\vert -2\vert k_0\bar r_0\vert-2y_{0,\xi}\geq \frac{1}{\bar M}-C(\bar M)\delta.
\end{equation}
Taking $\delta$ sufficiently small, we are led to a contradiction. Hence $\tilde X_0\in\Omega_-$.

We have
already seen in (\textit{i}) that we can choose $T$ so small that $X(t)$ remains
in $\Omega_-$ for $t\in[0,T]$. Let us denote $\tilde z(t)=\tilde
y_\xi(t)-2\tilde k_0\bar{\tilde {r}}(t)-\tilde h(t)$. For $z$ as defined in (\textit{i}),
we have $z(0)\leq 0$. Hence $\tilde z(0)\leq z(0)+\vert \tilde
z(0)-z(0)\vert\leq \vert y_\xi(0)-\tilde y_\xi(0)\vert +\vert h(0)-\tilde
h(0)\vert +\vert \bar r(0)\vert \vert k_0-\tilde k_0\vert +\vert \tilde k_0\vert\vert
\bar r(0)-\bar{\tilde{r}}(0)\vert $ and therefore $\tilde z(0)<C(\bar
M)\delta$. 
Let us now consider the first time $t_0$ when $\tilde X(t_0)$ leaves $\Omega_-$. Again, as in (\textit{i}), we obtain that $\tilde h(t_0)=k_0^2\tilde y_\xi(t_0)$ and $\tilde z(t_0)\geq \frac{1}{\bar M}$. In addition we know $\tilde z(t_0)\leq \tilde z_0+C(\bar M)T\leq C(\bar M)(\delta+T)$, which leads to a contradiction if we choose $T$ and $\delta$ small enough.

(\textit{iii}) $X_0$ and $\tilde X_0$ in
$\tilde\Omega_0$: In this case, since
$\tilde\Omega_0\subset \Omega_2$, $X_0$ and
$\tilde X_0$ belong to $\Omega_2$. We have
$z(0)\geq0$. Let us prove that $X(t)$ (and, in the
same way, $\tilde X(t)$) for $T$ small enough remains in
$\Omega_2$. Note that because the origin is a
repulsive point, solutions cannot reach the origin
from within $\Omega_2$. This means in particular
that $X(t)$ can only leave $\Omega_2$ if it starts
in $\Omega_-\cap \Omega_2$ or after entering
$\Omega_-\cap\Omega_2$. Therefore we assume
without loss of generality $X_0\in\Omega_-\cap
\Omega_2$. Denote by $t_0$ the first time when
$X(t)$ leaves $\tilde\Omega_2$. Then we must have
$\vert U_\xi(t_0)\vert+2\vert k_0\bar r\vert(t_0)
+2y_\xi(t_0)=y_\xi(t_0)+h(t_0)$ which gives
$U_\xi(t_0)=y_\xi(t_0)-2k_0\bar
r(t_0)-h(t_0)=z(t_0)$. Hence we obtain after some
computations using the latter equation together
with \eqref{eq:lagcoord6} and $\bar
r(t_0)+ky_\xi(t_0)=0$ that
$-z(t_0)=\frac{y_\xi(t_0)+h(t_0)}{\sqrt{5+4k_0^2}}\geq
\frac{1}{\bar M\sqrt{5+4\bar M^2}}$. Moreover,
$-z(t_0)\leq -z(0)+C_1(\bar M)T$ which implies
$\frac{1}{\bar M\sqrt{5+4\bar M^2}}\leq
\frac{y_\xi(t_0)+h(t_0)}{\sqrt{5+4k_0^2}}\leq
C_1(\bar M)T$, which leads to a contradiction if
we choose $T$ small enough. We have thus proved
that $X(t)$ remains in $\tilde\Omega_2$ and the same result
holds for $\tilde X(t)$.
\end{proof}

A close look at the proof of the last lemma shows that $\delta$ is only turning up in (\textit{ii}) and can be bounded by a constant only depending on $\bar M$. Hence, Lemma \ref{lem:estg} can be extended to the following lemma.

\begin{lemma}\label{lem:est:diffg}
 Given $M>0$ and $T>0$, then the solutions $X(t)$ and $\tilde X(t)$ with initial data $X_0$ and $\tilde X_0$ in $B_M$, respectively, belong to $B_{\bar M}$ for 
 $t\in[0,T]$.  For all $\bar t\in[0,T]$ there exist $\bar T\in[0,T]$ and $\delta$ positive depending on $\bar M$ and independent of $\bar t$, such that for any $\xi\in\Real$ satisfying $d(X(\bar t,\xi),\tilde X(\bar t,\xi))< \delta$, we have 
\begin{equation}
 \vert g(X(t,\xi))-g(\tilde X(t,\xi))\vert \leq C(\bar M)(\vert Z(t,\xi)-\tilde Z(t,\xi)\vert+(\vert\bar r\vert +\vert \bar{\tilde{r}}\vert)\vert k-\tilde k\vert)
\end{equation}
for all $t\in [\bar t,\bar t+\bar T]\cap [0,T]$.
\end{lemma}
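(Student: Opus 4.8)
The statement is a time-translated version of Lemma~\ref{lem:estg}: instead of comparing the solutions from the initial time, we want to compare them starting from an arbitrary time $\bar t\in[0,T]$, and we need the threshold $\delta$ and the length $\bar T$ of the comparison window to be uniform in $\bar t$. The plan is to reduce to Lemma~\ref{lem:estg} by a shift of the time origin, taking care that all constants produced along the way depend only on $\bar M$ (and hence on $M$ and $T$), not on the base point $\bar t$.

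First I would record that, since $X_0,\tilde X_0\in B_M$ and $t\mapsto X(t)$, $t\mapsto\tilde X(t)$ are the solutions of \eqref{eq:sysdiss} on $[0,T]$, Lemma~\ref{lem:globest} (applied with $M_0=M$ and $T_0=T$) gives a constant $\bar M$ depending only on $M$ and $T$ with $X(t),\tilde X(t)\in B_{\bar M}$ for all $t\in[0,T]$. In particular, for every $\bar t\in[0,T]$ the pair $(X(\bar t,\dott),\tilde X(\bar t,\dott))$ lies in $B_{\bar M}\times B_{\bar M}$. Now apply Lemma~\ref{lem:estg} with $M$ replaced by $\bar M$: this produces $\bar M'>0$, $\bar T>0$ and $\delta>0$ depending only on $\bar M$, hence only on $M$ and $T$, such that whenever two solutions of \eqref{eq:sysdiss} start at time $0$ from data in $B_{\bar M}$ at distance $<\delta$ in the pointwise metric $d$, the estimate \eqref{eq:g1} holds for $t\in[0,\bar T]$. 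The point is that $\delta$ and $\bar T$ emerging from Lemma~\ref{lem:estg} are intrinsic to the class $B_{\bar M}$ and carry no memory of how we arrived at $B_{\bar M}$.

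Next, fix $\bar t\in[0,T]$ and set $Y(s)=X(\bar t+s)$, $\tilde Y(s)=\tilde X(\bar t+s)$ for $s\in[0,(T-\bar t)]$. By the semigroup property of the flow of \eqref{eq:sysdiss} — established through Theorem~\ref{th:global}, where the solution map $S_t$ is shown to be well defined on $\G$ — the functions $Y,\tilde Y$ are exactly the solutions of \eqref{eq:sysdiss} with initial data $Y(0)=X(\bar t)$, $\tilde Y(0)=\tilde X(\bar t)$, both lying in $\G\cap B_{\bar M}$; one should note here that the characteristic function $\chi_{\{\tau(\xi)>t\}}$ appearing in \eqref{eq:sysdiss} is consistent with this shift, since after a point has broken at some earlier time all the relevant variables stay frozen and $\tau$ simply re-reads as the first breaking time of $Y$. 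For a $\xi\in\Real$ with $d(X(\bar t,\xi),\tilde X(\bar t,\xi))<\delta$, i.e.\ $d(Y(0,\xi),\tilde Y(0,\xi))<\delta$, Lemma~\ref{lem:estg} applied to $Y,\tilde Y$ yields
\[
 \vert g(Y(s,\xi))-g(\tilde Y(s,\xi))\vert \leq C(\bar M)\bigl(\vert Z_Y(s,\xi)-Z_{\tilde Y}(s,\xi)\vert+(\vert\bar r\vert+\vert\bar{\tilde r}\vert)\vert k-\tilde k\vert\bigr)
\]
for all $s\in[0,\bar T]\cap[0,T-\bar t]$. Translating back, $s=t-\bar t$, this is precisely the claimed inequality for all $t\in[\bar t,\bar t+\bar T]\cap[0,T]$, with the same $\bar M$ on the right-hand side since $Y(s),\tilde Y(s)\in B_{\bar M}$.

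**Main obstacle.** The only genuine subtlety — and the step I would spend care on — is verifying that the time-shifted pair $(Y,\tilde Y)$ is legitimately ``a pair of solutions with initial data in $\G\cap B_{\bar M}$'' to which Lemma~\ref{lem:estg} applies verbatim: one must check that $X(\bar t),\tilde X(\bar t)\in\G$ (this is the content of Theorem~\ref{th:short0}/\ref{th:global}, so it is available) and that the definition of $\tau$ for $Y$ agrees with the restriction of the original $\tau$, shifted by $\bar t$, so that the frozen variables remain frozen and no spurious re-breaking occurs. Everything else is bookkeeping: the constants $\bar M$, $\delta$, $\bar T$ are fixed once and for all by $M$ and $T$ before $\bar t$ is chosen, which is exactly the uniformity in $\bar t$ that the lemma demands, as anticipated by the remark preceding the statement.
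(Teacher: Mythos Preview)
Your proposal is correct and amounts to the same idea as the paper's. The paper's own justification is a single sentence preceding the lemma: a close look at the proof of Lemma~\ref{lem:estg} shows that $\delta$ only enters in case~(\textit{ii}) (via the inequality \eqref{est:deltadelta}) and is determined solely by $\bar M$; likewise the time window in cases~(\textit{i})--(\textit{iii}) is fixed by inequalities involving only $\bar M$. Your route packages this as a time-shift plus black-box application of Lemma~\ref{lem:estg} through the semigroup property, which is a legitimate rephrasing; the extra check you flag---that the breaking time $\tau$ for the shifted solution $Y$ is exactly $\max(\tau-\bar t,0)$, so frozen variables stay frozen---is straightforward from the definition \eqref{eq:taudef} and the uniqueness in Theorem~\ref{th:global}.
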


The next lemma shows why the function $g$ is so important and why we choose exactly the metric $d_\Real$ instead of the norm $\norm{\dott}_V$ we used in the last section. 

\begin{lemma}\label{lem:tauttau}
 Given $M>0$ and $T>0$, then the solutions $X(t)$ and $\tilde X(t)$ with initial data $X_0$ and $\tilde X_0$ in $B_M$, respectively,  belong to $B_{\bar M}$ and we have for any $\xi\in\Real$ the following estimates:\\
\noindent 
(i) If $\tau(\xi)\leq t_1<t_2\leq \tilde \tau(\xi)$
\begin{align}
 \int_{t_1}^{t_2} \tilde h(t,\xi) dt &
 \leq \vert U_\xi(t_1,\xi)-\tilde U_\xi(t_1,\xi)\vert\\ \nn
 &\quad +C(\bar M)\int_{t_1}^{t_2} \big(\vert Z(t,\xi)-\tilde Z(t,\xi)\vert +\vert g(X(t,\xi))- g(\tilde X(t,\xi))\vert \big)dt.
\end{align}
(ii) If $\tilde\tau(\xi)\leq t_1<t_2\leq\tau(\xi)$
\begin{align}
 \int_{t_1}^{t_2} h(t,\xi) dt &
 \leq \vert U_\xi(t_1,\xi)-\tilde U_\xi(t_1,\xi)\vert\\ \nn
 &\quad +C(\bar M)\int_{t_1}^{t_2} \big(\vert Z(t,\xi)-\tilde Z(t,\xi)\vert +\vert g(X(t,\xi))- g(\tilde X(t,\xi))\vert \big)dt, 
\end{align}
for a constant $C(\bar M)$ depending only on $\bar M$.
\end{lemma}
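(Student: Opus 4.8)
The plan is to integrate the $\tilde U_\xi$-equation of the Lagrangian system~\eqref{eq:sysdiss} (the $U_\xi$-equation in~(ii)) over $[t_1,t_2]$ and to rewrite the resulting boundary terms in terms of the quantities on the right-hand side, exploiting that one of the two solutions has already undergone wave breaking at $\xi$. I treat~(i); so $\tau(\xi)\le t_1<t_2\le\tilde\tau(\xi)$. Since $\tau(\xi)\le t_1<\infty$, wave breaking occurs at $\xi$ for $X$, hence $r(\xi)=0$, and for all $t\ge\tau(\xi)$ one has $q(t,\xi)=w(t,\xi)=\bar r(t,\xi)=0$, $h(t,\xi)=h(\tau(\xi),\xi)$ constant, $X(t,\xi)\in\Omega_1$, and therefore $g(X(t,\xi))=g_1(X(t,\xi))=0$ (recall also $w=U_\xi$, $q=y_\xi$). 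If $\tilde\tau(\xi)=\infty$ and $\tilde r(\xi)\neq0$, then $\tilde X(t,\xi)\in\Omega_3$ for all $t$, so $g(\tilde X(t,\xi))=g_2(\tilde X(t,\xi))=\tilde y_\xi(t,\xi)+\tilde h(t,\xi)\ge\tilde h(t,\xi)$ and the estimate is immediate; hence we may assume $\tilde r(\xi)=0$, i.e.\ $\bar{\tilde{r}}(t,\xi)=-\tilde k\,\tilde y_\xi(t,\xi)$ by~\eqref{eq:rbar}. For $t\in[t_1,t_2]\subset[\tau(\xi),\tilde\tau(\xi))$ the evolution of $\tilde X(\cdot,\xi)$ is not yet frozen, so~\eqref{eq:sysdiss} yields
\begin{equation*}
\tfrac12\int_{t_1}^{t_2}\tilde h(t,\xi)\,dt=\tilde w(t_2,\xi)-\tilde w(t_1,\xi)-\int_{t_1}^{t_2}\big((\tilde U^2+\tfrac12\tilde k^2-P(\tilde X))\tilde q+\tilde k\bar{\tilde{r}}\big)(t,\xi)\,dt.
\end{equation*}
Here $-\tilde w(t_1,\xi)=w(t_1,\xi)-\tilde w(t_1,\xi)=U_\xi(t_1,\xi)-\tilde U_\xi(t_1,\xi)$ is bounded by $\abs{U_\xi(t_1,\xi)-\tilde U_\xi(t_1,\xi)}$, and, by Lemma~\ref{lem:PQ} together with $q(t,\xi)=\bar r(t,\xi)=0$, the last integral is bounded by $C(\bar M)\int_{t_1}^{t_2}\abs{Z(t,\xi)-\tilde Z(t,\xi)}\,dt$. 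It remains to control the single term $\tilde w(t_2,\xi)$.

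To this end I would split $[t_1,t_2]$ according to whether $\tilde X(t,\xi)\in\Omega_1$ or not. On $\Omega_1^c$ one has $g(\tilde X(t,\xi))=g_2(\tilde X(t,\xi))\ge\tilde h(t,\xi)$, hence $\tilde h(t,\xi)\le\abs{g(X(t,\xi))-g(\tilde X(t,\xi))}$ there; in particular, if $\tilde X(t_2,\xi)\notin\Omega_1$ we may replace $t_2$ by the left endpoint of the maximal $\Omega_1^c$-interval ending at $t_2$, where $\tilde X(\cdot,\xi)$ lies on $\partial\Omega_1$. On a maximal interval $[a,b]\subset[t_1,t_2]$ with $\tilde X(\cdot,\xi)\in\Omega_1$ we have $\tilde U_\xi\le0$, so $\tilde w(b,\xi)\le0$, and repeating the integration above on $[a,b]$ yields $\tfrac12\int_a^b\tilde h\le\abs{\tilde w(a,\xi)}+C(\bar M)\int_a^b\abs{Z-\tilde Z}$. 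For $a=t_1$ the term $\abs{\tilde w(t_1,\xi)}$ equals $\abs{U_\xi(t_1,\xi)-\tilde U_\xi(t_1,\xi)}$; for an interior left endpoint $a$, $\tilde X(a,\xi)\in\partial\Omega_1$, so either $\tilde U_\xi(a,\xi)=0$, and the term vanishes, or $g_1(\tilde X(a,\xi))=g_2(\tilde X(a,\xi))$, which forces $\tilde h(a,\xi)=\abs{\tilde U_\xi(a,\xi)}+2\abs{\bar{\tilde{r}}\tilde k}(a,\xi)+\tilde y_\xi(a,\xi)\ge\abs{\tilde w(a,\xi)}$; since on the $\Omega_1^c$-interval immediately preceding $a$ one has $\tilde h\le g(\tilde X(\cdot,\xi))=\abs{g(X(\cdot,\xi))-g(\tilde X(\cdot,\xi))}$ and $\abs{\tilde h_t}\le C(\bar M)\abs{\tilde w}=C(\bar M)\abs{\tilde w-w}$, the pointwise value $\tilde h(a,\xi)$, and hence $\abs{\tilde w(a,\xi)}$, is absorbed into the contributions already accounted for on that adjacent interval. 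Summing over the (at most countably many) such intervals and adding the $\Omega_1^c$-contribution gives~(i).

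Case~(ii) is entirely symmetric: $\tilde\tau(\xi)\le t_1<\infty$ forces wave breaking of $\tilde X$ at $\xi$, so $\tilde r(\xi)=0$ and $\tilde q(t,\xi)=\tilde w(t,\xi)=\bar{\tilde{r}}(t,\xi)=0$, $\tilde h(t,\xi)$ constant, $\tilde X(t,\xi)\in\Omega_1$, $g(\tilde X(t,\xi))=0$ for $t\ge\tilde\tau(\xi)$ (and if $\tau(\xi)=\infty$ with $r(\xi)\neq0$ the claim is again immediate), while on $[t_1,t_2]\subset[\tilde\tau(\xi),\tau(\xi))$ the equation $w_t=\tfrac12h+(U^2+\tfrac12k^2-P(X))q+k\bar r$ of~\eqref{eq:sysdiss} holds; one then argues exactly as above with the roles of $X$ and $\tilde X$ interchanged. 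The main obstacle is the endpoint term $\tilde w(t_2,\xi)$ — and, after the decomposition, the interior boundary terms $\tilde w(a,\xi)$: unlike $-\tilde w(t_1,\xi)$, which is directly a difference of the two solutions at $t_1$, these are pointwise quantities of a single solution at interior or terminal times, and it is precisely in converting them into $\abs{g(X(t,\xi))-g(\tilde X(t,\xi))}$ and $\abs{Z(t,\xi)-\tilde Z(t,\xi)}$ after integration in $t$ that the function $g$ enters — via the sign condition $\tilde U_\xi\le0$ on $\Omega_1$, the coincidence $g_1=g_2$ on the relevant part of $\partial\Omega_1$, and the constraint~\eqref{eq:lagcoord6}. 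I expect the bookkeeping of the transitions of $\tilde X(\cdot,\xi)$ between $\Omega_1$ and $\Omega_1^c$ to be the only genuinely delicate point; the rest reduces to routine uses of Lemma~\ref{lem:PQ} and direct integration in time.
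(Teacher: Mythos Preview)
Your overall plan---integrate the $\tilde U_\xi$-equation, rewrite $-\tilde w(t_1)$ as $U_\xi(t_1)-\tilde U_\xi(t_1)$ using that $X$ is already frozen, and dispose of the terminal term $\tilde w(t_2)$ via the sign of $\tilde U_\xi$ and the inequality $g(\tilde X)\ge\tilde h$ on $\Omega_1^c$---is exactly the paper's strategy. The $\Omega_3$ case and the bound $\tilde h\le g(\tilde X)=|g(X)-g(\tilde X)|$ on $\Omega_1^c$ are also handled as in the paper.

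The gap is in your ``summing over intervals'' step. Each interior left endpoint $a$ of a maximal $\Omega_1$-interval contributes a \emph{pointwise} term $|\tilde w(a,\xi)|$, and your proposed absorption---bound it by $\tilde h(a,\xi)$ via $g_1=g_2$, then control $\tilde h(a,\xi)$ by the integrated $|g(X)-g(\tilde X)|$ on the preceding $\Omega_1^c$-interval using $|\tilde h_t|\le C|\tilde w-w|$---does not convert a pointwise value into an integral without picking up a factor depending on the (possibly very short) length of that interval. If there are many oscillations between $\Omega_1$ and $\Omega_2$, the accumulated boundary contributions are simply not controlled by the right-hand side.

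The paper sidesteps this with a \emph{single} split, and in fact you already set it up: once you have replaced $t_2$ by the left endpoint $t_2'$ of the last maximal $\Omega_1^c$-interval, you have $\tilde X(t_2',\xi)\in\Omega_1$, hence $\tilde U_\xi(t_2',\xi)\le 0$. Now integrate the $\tilde U_\xi$-equation over the \emph{whole} of $[t_1,t_2']$: the terminal term $\tilde w(t_2',\xi)\le 0$ may be dropped, and you are done---no further decomposition is needed. (The paper phrases this with $\Omega_\pm$ rather than $\Omega_1/\Omega_1^c$: it defines $t_3=\inf\{t\in[t_1,t_2]\mid \tilde X(t',\xi)\in\Omega_+\text{ for all }t'\ge t\}$, uses $\tilde h\le g(\tilde X)$ on $[t_3,t_2]\subset\Omega_+\subset\Omega_2$, and integrates over $[t_1,t_3]$ where $\tilde U_\xi(t_3,\xi)\le 0$. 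Your $\Omega_1/\Omega_2$ split achieves the same thing.)
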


\begin{proof}
We assume without loss of generality $\tau(\xi)<\tilde\tau(\xi)$. We  distinguish three cases.\\
(\textit{i}) If $\tilde X(t,\xi)\in \Omega_3$ for $t\in[t_1,t_2]$, we know $g(\tilde X)=\tilde y_\xi+\tilde h$ and $g(X)=0$ on $[t_1,t_2]$ because $y_\xi(t)=U_\xi(t)=\bar r(t)=0$ so that $X(t,\xi)\in\Omega_1$. Thus we get
\begin{equation}
 \int_{t_1}^{t_2}\tilde h(t,\xi)dt\leq \int_{t_1}^{t_2} \vert g(\tilde X(t,\xi))-g(X(t,\xi))\vert dt.
\end{equation}
As already pointed out several times before, $\tilde X(t)$ remains in $\Omega_3$ for all times if it starts in $\Omega_3$.

It is left to show what happens if $\tilde X(t,\xi)\in\Omega_3^c$ for all times. The result will follow from the following two estimates. \\
(\textit{ii}) If $\tilde X(t,\xi)\in \Omega_+$ for $t\in [t_1,t_2]$, we know $g(\tilde X)=\tilde y_\xi+\tilde h$ and $g(X)=0$ on $[t_1,t_2]$.  Hence we have 
\begin{equation}
 \int_{t_1}^{t_2}\tilde h(t,\xi)dt\leq \int_{t_1}^{t_2} \vert g(\tilde X(t,\xi))-g(X(t,\xi))\vert dt.
\end{equation}
(\textit{iii})  If $\tilde X(t_2,\xi)\in \Omega_-$, we know that $\tilde U_\xi(t_2,\xi)\leq 0$. Thus \eqref{eq:sysdiss} implies that 
\begin{equation*}
\begin{aligned}
 \frac12 \int_{t_1}^{t_2}\tilde h(t,\xi)dt
& =\tilde U_\xi(t_2,\xi)-\tilde U_\xi(t_1,\xi)\\
&\quad-\int_{t_1}^{t_2}(\tilde U^2+\frac12 \tilde k^2-P(\tilde X))\tilde y_\xi(t,\xi) dt-\int_{t_1}^{t_2}\tilde k\bar{\tilde{r}}(t,\xi)dt. 
\end{aligned}
\end{equation*}
Since $\tilde U_\xi(t_2,\xi)\leq 0$ and $U_\xi(t,\xi)=y_\xi(t,\xi)=\bar r(t,\xi)=0$ for all $t\in[t_1,t_2]$, we get
\begin{equation*}
\begin{aligned}
 \frac12 \int_{t_1}^{t_2} \tilde h(t,\xi)dt &\leq \vert U_\xi(t_1,\xi)-\tilde U_\xi(t_1,\xi)\vert\\ \nn
& \quad +C(\bar M)\int_{t_1}^{t_2}\vert y_\xi(t,\xi)-\tilde y_\xi(t,\xi)\vert dt+C(\bar M)\int_{t_1}^{t_2}\vert \bar r(t,\xi)-\bar{\tilde{r}}(t,\xi)\vert dt.
\end{aligned}
\end{equation*}

In general $\tilde X(t_2,\xi)\not\in\Omega_-$, does not imply that $\tilde X(t,\xi)\in \Omega_+$ for all $t\in [t_1,t_2]$.   
Therefore we define $t_3$ as 
\begin{equation*}
  t_3=\inf\{t\in[t_1,t_2] \mid X(t',\xi) \in\Omega_+ \text{ for all } t'\geq t\}. 
\end{equation*}
Then, $\tilde X(t,\xi)\in \Omega_+$ for all
$t\in[t_3,t_2]$ and $U_\xi(t_3,\xi)\leq 0$ so that
the general case is proved by combining the cases
(\textit{ii}) and (\textit{iii}).
\end{proof}

\subsection{Stability results}

\begin{theorem}\label{thm:shortstab}
 Given $M>0$ there exist constants $\bar T\leq 1$ and $K$ depending only on $M$ such that for any initial data $X_0$ and $\tilde X_0$ in $B_M$
\begin{equation}
 \sup_{t\in[0,\bar T]}d_\Real(X(t), \tilde X(t))\leq Kd_\Real(X_0,\tilde X_0).
\end{equation}
\end{theorem}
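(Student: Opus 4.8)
The plan is a Gronwall estimate for $e(t):=\norm{X(t)-\tilde X(t)}_V+\norm{g(X(t))-g(\tilde X(t))}_{L^2}$ coupled with a bound on the discontinuity residual $\Gamma(X,\tilde X)$. Since $r_t=\tilde r_t=0$ the term $\kappa(X,\tilde X)$ of $d_\Real$ is constant in time, so it suffices to control $e(t)$ by $e(0)+\kappa(X_0,\tilde X_0)$. Fix $\bar M=\bar M(M)$ so that $X(t),\tilde X(t)\in B_{\bar M}$ on $[0,1]$ (Lemma~\ref{lem:globest}); choose $\gamma$ small, depending only on $\bar M$, so that Lemma~\ref{lem:G2}(iii) applies and, by Lemma~\ref{lem:G2}(iv), every point breaking before some $\hat T$ lies in $\kappa_{1-\gamma}$, respectively in the analogous set $\tilde\kappa_{1-\gamma}$ attached to $\tilde X$; take $\delta>0$ (possibly further decreased) and a horizon $T_1$ from Lemma~\ref{lem:est:diffg}. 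Then $\bar T\le\min(1,\hat T,T_1)$ will be shrunk finitely many times below. One may also assume $d_\Real(X_0,\tilde X_0)<\epsi_0$ for a fixed $\epsi_0(M)$, since otherwise $\sup_td_\Real(X(t),\tilde X(t))\le C(\bar M)\le C(\bar M)\epsi_0^{-1}d_\Real(X_0,\tilde X_0)$; in particular $\kappa(X_0,\tilde X_0)=0$.

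Write $\Real=B\sqcup B^c$ with $B=\{\xi\mid d(X_0(\xi),\tilde X_0(\xi))\ge\delta\}$. Because $\kappa(X_0,\tilde X_0)=0$ forces $\iota(X_0(\xi),\tilde X_0(\xi))=0$ for a.e.\ $\xi$, Chebyshev's inequality gives $\meas(B)\le\delta^{-2}\int_Bd(X_0,\tilde X_0)^2\,d\xi\le C(M)\,d_\Real(X_0,\tilde X_0)^2$. On $B^c$ Lemma~\ref{lem:est:diffg} gives the pointwise bound $\abs{g(X(t,\xi))-g(\tilde X(t,\xi))}\le C(\bar M)\big(\abs{Z-\tilde Z}(t,\xi)+(\abs{\bar r}+\abs{\bar{\tilde r}})\abs{k-\tilde k}\big)$ for $t\le\bar T$, while on $B$ one uses $\abs{g(X)-g(\tilde X)}\le C(\bar M)$ together with $\meas(B)^{1/2}\le C(M)\,d_\Real(X_0,\tilde X_0)$. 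Consequently $\norm{g(X(t))-g(\tilde X(t))}_{L^2}\le C(\bar M)\norm{X(t)-\tilde X(t)}_V+C(M)\,d_\Real(X_0,\tilde X_0)$, and the problem reduces to estimating $\sup_{[0,\bar T]}\norm{X(t)-\tilde X(t)}_V$.

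For the $V$-norm, $c,k$ are constant, the components $y,U,\bar U$ are handled by integrating $y_t=U$, $U_t=-Q(X)$, $\bar U_t=-Q(X)-c\chi'\circ y\,U$ and using $\norm{Q(X)-Q(\tilde X)}_{L^1_{\bar T}E}\le C(\bar M)\big(\bar T\norm{X-\tilde X}_{L^\infty_{\bar T}\hat V}+\Gamma(X,\tilde X)\big)$ (Lemma~\ref{lem:PQ}, applicable since $X(t),\tilde X(t)\in\G\cap B_{\bar M}$), and similarly for $P(X)-U^2-\tfrac12k^2$. The components $Z=(v,w,h,\bar r)$ are split: on $(\kappa_{1-\gamma}\cup\tilde\kappa_{1-\gamma})^c$ both solve the smooth system $Z_t=F(X)Z$ and a Gronwall comparison, together with the bound on $(F(X)-F(\tilde X))Z$ from Lemma~\ref{lem:PQ}, bounds $\norm{Z-\tilde Z}$ on this set by $C(\bar M)(e(0)+\bar T\sup e+\Gamma(X,\tilde X))$; on $(\kappa_{1-\gamma}\cup\tilde\kappa_{1-\gamma})\cap B^c$, Lemma~\ref{lem:estg} keeps both $X(\cdot,\xi)$ and $\tilde X(\cdot,\xi)$ in $\Omega_-$, whence $q,\tilde q$ are non-increasing as long as they are positive, so $q,\tilde q\le C(\bar M)\gamma$ throughout, and the difference is propagated from time $\tau\wedge\tilde\tau$ using the monotonicity of $\tfrac{q}{q+h}$ and $\tfrac{w}{q+h}$ (Lemma~\ref{lem:G2}(iii)), exactly as in the proof of Lemma~\ref{lem:contrgamma}(iii); on $(\kappa_{1-\gamma}\cup\tilde\kappa_{1-\gamma})\cap B$, of measure $\le C(M)d_\Real(X_0,\tilde X_0)^2$, the crude bound $\abs{Z-\tilde Z}\le C(\bar M)$ suffices and produces a contribution $\le C(M)d_\Real(X_0,\tilde X_0)$. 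The last ingredient is $\Gamma(X,\tilde X)$ itself: every $\xi$ contributing to $\Gamma$ over $[0,\bar T]$ satisfies $\min(\tau(\xi),\tilde\tau(\xi))<\bar T$, hence (say $\tau<\tilde\tau$) lies in $\kappa_{1-\gamma}$, and Lemma~\ref{lem:tauttau}(i) with $t_1=\tau(\xi)$, $t_2=\tilde\tau(\xi)\wedge\bar T$ bounds $\int_\tau^{t_2}\tilde h(t,\xi)\,dt$ by $\abs{\tilde U_\xi(\tau(\xi),\xi)}+C(\bar M)\int_\tau^{t_2}(\abs{Z-\tilde Z}+\abs{g(X)-g(\tilde X)})(t,\xi)\,dt$; as $U_\xi(\tau(\xi),\xi)=0$ and both solutions are smooth on $[0,\tau(\xi)]$, the boundary term is $\le\abs{w_0-\tilde w_0}(\xi)+\int_0^{\tau(\xi)}\abs{w_t-\tilde w_t}(t,\xi)\,dt$. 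Integrating over $\kappa_{1-\gamma}$, using $q\le C(\bar M)\gamma$ to extract a factor $\gamma$ from the $P(X)-P(\tilde X)$ contribution (Lemma~\ref{lem:PQ}) and splitting the $g$-integral over $B$ and $B^c$ as before, one obtains
\[
\Gamma(X,\tilde X)\le C(\bar M)e(0)+C(\bar M)(\bar T+\gamma)\sup_{[0,\bar T]}e+C(\bar M)\gamma\,\Gamma(X,\tilde X)+C(M)\,\bar T\,d_\Real(X_0,\tilde X_0);
\]
for $\gamma$ small the self-referential term is absorbed. Substituting back, reinserting the $g$-estimate of the second paragraph, and choosing $\bar T$ small enough that the $C(\bar M)\bar T\sup e$ contributions are absorbed, Gronwall's inequality yields $\sup_{[0,\bar T]}\norm{X(t)-\tilde X(t)}_V\le C(M)\,d_\Real(X_0,\tilde X_0)$, and hence $\sup_{[0,\bar T]}d_\Real(X(t),\tilde X(t))\le K\,d_\Real(X_0,\tilde X_0)$ with $K=K(M)$.

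The main obstacle is the discontinuity residual $\Gamma(X,\tilde X)$: it cannot be controlled by $\norm{X-\tilde X}_V$, which degenerates at wave breaking, and the role of $g$ and of the metric $d_\Real$ is precisely that Lemma~\ref{lem:tauttau} expresses $\int_\tau^{\tilde\tau}\tilde h$ through $g$-differences. The delicate points are to confine all $\xi$ contributing to $\Gamma$ on the short interval to the finite-measure set $\kappa_{1-\gamma}\cup\tilde\kappa_{1-\gamma}$, to know that on it $q$ and $\tilde q$ remain $\le C(\bar M)\gamma$ (which uses that the bad set $B$ has measure $O(d_\Real^2)$ and that Lemma~\ref{lem:estg} keeps both solutions in $\Omega_-$ on the good set), and to keep the self-referential $\Gamma$-term with coefficient $C(\bar M)\gamma<1$.
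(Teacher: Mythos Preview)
Your proof is essentially correct and follows the same strategy as the paper's: set up a bootstrap of the form $\sup_t e(t)\le C\big(e(0)+\bar T\sup_t e(t)\big)$, control the discontinuity residual $\Gamma(X,\tilde X)$ via Lemma~\ref{lem:tauttau}, extract a factor $\gamma$ from $q\le C(\bar M)\gamma$ on $\kappa_{1-\gamma}$ so that the self-referential $\gamma\,\Gamma$ can be absorbed, and close by taking $\bar T$ small. Your reduction to $\kappa(X_0,\tilde X_0)=0$ at the outset is a clean shortcut the paper does not take; the paper instead treats the set $\{\iota=1\}$ explicitly in the final $g$-estimate.

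Two places where your writeup is imprecise and where the paper's route is slightly different:

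\textbf{(1)} On $(\kappa_{1-\gamma}\cup\tilde\kappa_{1-\gamma})\cap B^c$ you invoke the monotonicity of $\tilde q/(\tilde q+\tilde h)$ and $\tilde w/(\tilde q+\tilde h)$ from Lemma~\ref{lem:G2}(iii) and then argue ``exactly as in the proof of Lemma~\ref{lem:contrgamma}(iii)''. But Lemma~\ref{lem:G2}(iii) applied to $\tilde X$ requires $\xi\in\tilde\kappa_{1-\gamma}$, which is not what you assumed. The gap is recoverable: for $\xi\in\kappa_{1-\gamma}\cap B^c$ with $\gamma,\delta$ small one has $X_0(\xi)\in\Omega_1$, hence $g(X_0)\le C(\bar M)\gamma+\delta$; if $\tilde w_0(\xi)>0$ then $\tilde X_0\in\Omega_2$ and $g(\tilde X_0)=\tilde q_0+\tilde h_0\ge 1/\bar M$, contradicting $|g(X_0)-g(\tilde X_0)|<\delta$. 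Thus $\tilde w_0(\xi)\le0$ and $\xi\in\tilde\kappa_{1-C\gamma}$, so Lemma~\ref{lem:G2}(iii) does apply. The paper sidesteps this entirely: on $\mathcal N=\kappa_{1-\gamma}\cup\tilde\kappa_{1-\gamma}$ it runs a direct Gronwall on $[\tau,\tilde\tau]$ with the frozen $Z(\tau,\xi)$ as source (so that $F(\tilde X)Z=\big(0,\tfrac12 h(\tau),0,0\big)$), and then bounds $\int_\tau^{\tilde\tau}h(\tau)\,dt$ by writing $h=h-\tilde h+\tilde h$ and invoking Lemma~\ref{lem:tauttau}. No monotonicity for $\tilde X$ is needed.

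\textbf{(2)} The \emph{statement} of Lemma~\ref{lem:estg} only yields the $g$-inequality; that both trajectories stay in $\Omega_-$ is established inside its \emph{proof} (cases (i) and (ii)), so you should cite the proof rather than the lemma when you use this.
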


\begin{proof}
 We assume without loss of generality $T\leq 1$. We already know that there exists $\bar M$ only depending on $M$ (since $T\leq 1$) such that $X(t)$ and $\tilde X(t)$ belong to $B_{\bar M}$  for all $t\in[0,T]$. 
Let us introduce for the moment the following metric
\begin{equation}\label{eq:metric1}
\begin{aligned}
 \tilde d_\Real (X,\tilde X)& =\norm{y-\tilde y}_{L^\infty_T L^\infty}+ \norm{\bar U-\bar{\tilde{U}}}_{L^\infty_T L^2}+\vert c-\tilde c\vert \\ 
&\quad + \norm{Z-\tilde Z}_{L^\infty_T W}+\vert k-\tilde k\vert+\norm{g(X)-g(\tilde X)}_{L^\infty_T L^2}+\kappa(X,\tilde X).
\end{aligned}
\end{equation}
Then 
\begin{equation}
 d_\Real(X(t),\tilde X(t))\leq \tilde d_\Real(X,\tilde X), \quad t\in [0,T].
\end{equation}
This means in particular that the theorem is proved once we show that 
\begin{equation}\label{eq:claimstab2}
 \tilde d_\Real(X,\tilde X)\leq Kd_\Real(X_0,\tilde X_0).
\end{equation}

We estimate each of the terms in \eqref{eq:metric1}.  First, we want to show that
\begin{align}\label{est:QPstabAA}
 \norm{Q(X)-Q(\tilde X)}_{L^1_T E}&+\norm{\big(P(X)-U^2-\frac12 k^2\big)-\big(P(\tilde X)-\tilde U^2-\frac12 \tilde k^2\big)}_{L^1_TE}\\ \nn
& \leq C(\bar M)(d_\Real (X_0,\tilde X_0)+T\tilde d_\Real(X,\tilde X)).
\end{align}
To this end we first observe that by following closely the proof of Lemma~\ref{lem:PQ} one can show that 
\begin{align} \label{eq:4.19}
& \norm{Q(X)-Q(\tilde X)}_{L^1_T E}+\norm{\big(P(X)-U^2-\frac12 k^2\big)-\big(P(\tilde X)-\tilde U^2-\frac12 \tilde k^2\big)}_{L^1_TE}\\ \nn
&\qquad \leq C(\bar M)\Big(T\tilde d_\Real(X,\tilde X)\\ \nn
&\qquad\qquad+\int_\Real\big(\int_\tau^{\tilde\tau} \tilde h(t,\xi)\chi_{\{\tilde \tau(\xi)>\tau(\xi)\}}dt+\int_{\tilde\tau}^\tau  h(t,\xi)\chi_{\{\tau(\xi)>\tilde\tau(\xi)\}} dt \big)d \xi\Big). 
\end{align}
The first step towards proving the claim is to estimate properly the integral term on the right-hand side. Applying Lemma~\ref{lem:tauttau} yields 
\begin{align}\label{pluginest}
\int_\Real& \left(\int_\tau^{\tilde\tau} \tilde h(t,\xi)\chi_{\{\tilde \tau(\xi)>\tau(\xi)\}}dt+\int_{\tilde\tau}^\tau  h(t,\xi)\chi_{\{\tau(\xi)>\tilde\tau(\xi)\}} dt \right)d\xi\\ \nn 
& \leq \int_\Real (\vert U_\xi(\tau,\xi)-\tilde U_\xi(\tau,\xi)\vert \chi_{\{ \tau<\tilde\tau\}}+\vert  U_\xi(\tilde\tau,\xi)-\tilde U_\xi(\tilde\tau,\xi)\vert \chi_{\{\tilde\tau<\tau\}})d\xi \\ \nn 
& \qquad +C(\bar M) \int_\Real\int_0^T \big(\vert Z(t,\xi)-\tilde Z(t,\xi)\vert \\ \nn
&\qquad\qquad\qquad\qquad+\vert g(X(t,\xi))-g(\tilde X(t,\xi))\vert) (\chi_{\{\tau<\tilde\tau\}}+\chi_{\{\tilde\tau<\tau\}}\big)dt d\xi\\ \nn 
& \leq \int_\Real (\vert U_\xi(\tau,\xi)-\tilde U_\xi(\tau,\xi)\vert \chi_{\{ \tau<\tilde\tau\}}+\vert  U_\xi(\tilde\tau,\xi)-\tilde U_\xi(\tilde\tau,\xi)\vert \chi_{\{\tilde\tau<\tau\}})d\xi \\ \nn 
& \qquad +C(\bar M)T\big((\meas (\kappa_{1-\gamma}))^{1/2}+(\meas (\tilde\kappa_{1-\gamma}))^{1/2}\big) \tilde d_\Real(X,\tilde X), 
\end{align}
where we  in the last step used that $T$ is chosen so small that all points such that $\tau(\xi)<T$ or $\tilde\tau(\xi)<T$ belong to $\kappa_{1-\gamma}\cup\tilde\kappa_{1-\gamma}$. This is possible according to Lemma~\ref{lem:G2} (\textit{iv}). 

Fix $\xi\in\Real$ such that $0<\tau(\xi)<\tilde\tau(\xi)\leq T$.  Then $U_{\xi,t}=\frac12 h+(U^2+\frac12 k^2-P(X))y_\xi+k\bar r$ and $\tilde U_{\xi,t}=\frac12 \tilde h+(\tilde U^2+\frac12 \tilde k^2-P(\tilde X))\tilde y_\xi+\tilde k\bar{\tilde{r}}$ for $t\in[0,\tau(\xi)]$. Hence 
\begin{align} \label{eq:4.23}
 \vert U_{\xi}(\tau(\xi),\xi)&-\tilde U_{\xi}(\tau(\xi),\xi)\vert \\ \nn
 &  \leq \vert U_{\xi}(0,\xi)-\tilde U_\xi(0,\xi)\vert +\frac 12 \int_0^{\tau(\xi)} \vert h-\tilde h\vert (t,\xi)dt\\ \nn
& \quad +\int_0^{\tau(\xi)} \vert \big(U^2+\frac12 k^2-P(X)\big)-\big(\tilde U^2+\frac12 \tilde k^2-P(\tilde X)\big)\vert \vert y_\xi\vert (t,\xi) dt \\ \nn 
& \quad +\int_0^{\tau(\xi)} \vert \tilde U^2+\frac12 \tilde k^2-P(\tilde X)\vert \vert y_\xi-\tilde y_\xi\vert (t,\xi)dt\\ \nn
& \quad + \int_0^{\tau(\xi)} \vert k\vert\vert \bar r-\bar{\tilde{r}}\vert (t,\xi)dt+\int_0^{\tau(\xi)} \vert k-\tilde k\vert\vert \bar{\tilde{r}}\vert(t,\xi) dt \\ \nn
& \leq \vert U_\xi(0,\xi)-\tilde U_\xi(0,\xi)\vert \\ \nn
&\quad +C(\bar M) \int_0^{\tau(\xi)} \vert Z-\tilde Z\vert (t,\xi)dt +C(\bar M)T\vert k-\tilde k\vert\\ \nn 
& \quad + \gamma C(\bar M)\norm{\big(U^2+\frac12 k^2-P(X)\big)-\big(\tilde U^2+\frac12 \tilde k^2-P(\tilde X)\big)}_{L^1_TE}\\ \nn 
& \leq \vert U_\xi(0,\xi)-\tilde U_\xi(0,\xi)\vert\\ \nn
&\quad+ C(\bar M)\int_0^{\tau(\xi)} \vert Z-\tilde Z\vert(t,\xi)dt+C(\bar M)T\vert k-\tilde k\vert\\ \nn 
& \quad +\gamma C(\bar M)\Big(T\tilde d_\Real(X,\tilde X)\\ \nn
&\qquad\qquad\qquad\qquad+\int_\Real\big(\int_{\tau(\xi)}^{\tilde\tau(\xi)} \tilde h(t,\xi)\chi_{\{\tilde \tau(\xi)>\tau(\xi)\}}dt\\ \nn
&\qquad\qquad\qquad\qquad\qquad+\int_{\tilde\tau(\xi)}^{\tau(\xi)}  h(t,\xi)\chi_{\{\tau(\xi)>\tilde\tau(\xi)\}} dt \big)d\xi\Big).
\end{align}
Here we used that for $\xi\in\kappa_{1-\gamma}$, we have $y_\xi(t,\xi)=(y_\xi(t,\xi)+h(t,\xi))\frac{y_\xi(t,\xi)}{y_\xi(t,\xi)+h(t,\xi)}\leq \bar M\gamma$. In the last estimate we applied \eqref{eq:4.19}. For the case $0<\tilde\tau(\xi)<\tau(\xi)\leq T$, a similar treatment as in \eqref{eq:4.23} yields an estimate of the same form with every $\tau$ replaced by 
$\tilde\tau$ and vice versa.
Inserting this  estimate and the estimate \eqref{eq:4.23}  into \eqref{pluginest} implies, since 
$\meas(\tilde\kappa_{1-\gamma}), \meas(\kappa_{1-\gamma})\leq C(\bar M)$, that 
\begin{align}
 (1-\gamma C(\bar M))&\int_\Real \Big(\int_\tau^{\tilde\tau} \tilde h(t,\xi)\chi_{\{\tilde \tau(\xi)>\tau(\xi)\}}dt+\int_{\tilde\tau}^\tau  h(t,\xi)\chi_{\{\tau(\xi)>\tilde\tau(\xi)\}} dt \Big) d\xi\\ \nn 
&\qquad\qquad \leq C(\bar M)(d_\Real (X_0,\tilde X_0)+T\tilde d_\Real(X,\tilde X)).
 \end{align}
Choosing $\gamma$ small enough we find that 
\begin{align}
 &\int_\Real \Big(\int_\tau^{\tilde\tau} \tilde h(t,\xi)\chi_{\{\tilde \tau(\xi)>\tau(\xi)\}}dt+\int_{\tilde\tau}^\tau  h(t,\xi)\chi_{\{\tau(\xi)>\tilde\tau(\xi)\}} dt \Big)d\xi \\ \nn 
& \phantom{blablablablablabla} \leq C(\bar M)(d_\Real (X_0,\tilde X_0)+T\tilde d_\Real(X,\tilde X)),
\end{align}
 and especially 
\begin{align}\label{est:QPstab}
 \norm{Q(X)-Q(\tilde X)}_{L^1_T E}&+\norm{\big(P(X)-U^2-\frac12 k^2\big)-\big(P(\tilde X)-\tilde U^2-\frac12 \tilde k^2\big)}_{L^1_TE}\\ \nn
& \leq C(\bar M)(d_\Real (X_0,\tilde X_0)+T\tilde d_\Real(X,\tilde X)),
\end{align}
which is \eqref{est:QPstabAA}.

We now return to the proof of  \eqref{eq:claimstab2}, where we carefully investigate all terms in $\tilde d_\Real(X,\tilde X)$ separately using \eqref{est:QPstab}.
The equations $U_t-\tilde U_t=Q(\tilde X)-Q(X)$ and $U_0=\bar U_0+ c\chi\circ y_0$ imply 
\begin{align}\label{est:stab1}
 \norm{U(t,\dott)-\tilde U(t,\dott)}_{L^\infty}&\leq \norm{U_0-\tilde U_0}_{L^\infty}+\norm{Q(X)-Q(\tilde X)}_{L^1_T E}\\ \nn
& \leq \norm{U_0-\tilde U_0}_{L^\infty}+C(\bar M)(d_\Real(X_0,\tilde X_0)+T\tilde d_\Real(X,\tilde X))\\ \nn 
& \leq C(\bar M)(d_\Real(X_0,\tilde X_0)+T\tilde d_\Real(X,\tilde X)),
\end{align}
and $y_t-\tilde y_t=U-\tilde U$ yields 
\begin{align}\label{est:stab2}
 \norm{y(t,\dott)-\tilde y(t,\dott)}_{L^\infty}& \leq \norm{y_0-\tilde y_0}_{L^\infty}+T\norm{U-\tilde U}_{L^\infty_TL^\infty}\\ \nn 
& \leq C(\bar M)(d_\Real(X_0,\tilde X_0)+Td_\Real(X,\tilde X)).
\end{align}
Since $\bar U_t=-Q(X)-c\chi'\circ y U$ and
$\bar{\tilde{U}}_t=-Q(\tilde X)-\tilde c\chi'\circ
\tilde y\tilde U$, after combining
\eqref{est:QPstab}, \eqref{est:stab1} and
\eqref{est:stab2}, we obtain
\begin{equation}\label{est:stab3}
 \norm{\bar U(t,\dott)-\bar{\tilde{U}}(t,\dott)}_{L^2}\leq 
 C(\bar M)\big(d_\Real(X_0,\tilde X_0)+T\tilde d_\Real(X,\tilde X)\big).
\end{equation}
To estimate $\norm{Z-\tilde Z}_{L^\infty_TW}$ we split $\Real$ into two sets. Let us introduce $\mathcal{N}=\kappa_{1-\gamma}\cup \tilde\kappa_{1-\gamma}$. For $\xi\in\mathcal{N}^c$, we have $Z_t=F(X)Z$ and $\tilde Z_t=F(\tilde X)\tilde Z$ for all $t\in[0,T]$. Thus $Z_t-\tilde Z_t= (F(X)-F(\tilde X))Z+F(\tilde X)(Z-\tilde Z)$, and 
\begin{align*}
 \norm{(Z-\tilde Z)(t,\dott)}_{W(\mathcal{N}^c)}&
 \leq \norm{Z_0-\tilde Z_0}_{W}\\
 &\quad+\int_0^t\Big(\norm{(F(X)-F(\tilde X))Z(t',\dott)}_{W(\mathcal{N}^c)}\\
 &\qquad\qquad\qquad+\norm{F(\tilde X)(Z-\tilde Z)(t',\dott)}_{W(\mathcal{N}^c)}\Big)dt'.
\end{align*}
We get after applying Gronwall's lemma
\begin{multline}
  \norm{(Z-\tilde Z)(t,\dott)}_{W(\mathcal{N}^c)}\\
  \leq C(\bar M) \left(\norm{Z_0-\tilde Z_0}_{W}+\int_0^t\norm{(F(X)-F(\tilde X))Z(t',\dott)}_{W(\mathcal{N}^c)}dt'\right)
\end{multline}
for $t\in[0,T]$.  By definition, 
\begin{equation*}
\begin{aligned}
&(F(X)-F(\tilde X))Z \\
&\quad=\Big(0, (U^2+\frac12 k^2-P(X)-\tilde U^2-\frac12 \tilde k^2+P(\tilde X))y_\xi+(k-\tilde k)\bar r, \\
&\qquad\qquad2(U^2+\frac12 k^2-P(X)-\tilde U^2-\frac12 \tilde k^2+P(\tilde X))U_\xi,-(k-\tilde k)U_\xi\Big).
\end{aligned}
\end{equation*}
 Moreover, $y_\xi$, $U_\xi$, and $\bar r$ are bounded by some constants only depending on $\bar M$ and $\norm{\bar r}_{L^\infty_TL^2}+\norm{U_\xi}_{L^\infty_TL^2}\leq C(\bar M)$, so that,
\begin{align*}
 \norm{(F(X)-F(\tilde X))Z}_{L^1_TE}&\leq C(\bar M)\norm{\big(U^2+\frac12 k^2-P(X)\big)-\big(\tilde U^2+\frac12 \tilde k^2-P(\tilde X)\big)}_{L^1_TE}\\ 
& \quad +C(\bar M)T\vert k-\tilde k\vert,
\end{align*}
and hence
\begin{align}\label{est:104}
 \norm{(Z-\tilde Z)(t,\dott)}_{W(\mathcal{N}^c)}&\leq C(\bar M)(\norm{Z_0-\tilde Z_0}_{W}+T\vert k-\tilde k\vert)\\ \nn
& \quad +C(\bar M)\norm{\big(U^2+\frac12 k^2-P(X)\big)-\big(\tilde U^2+\frac12 \tilde k^2-P(\tilde X)\big)}_{L^1_TE}\\ \nn
& \leq C(\bar M) (d_\Real(X_0,\tilde X_0)+T\tilde d_\Real(X,\tilde X)).
\end{align}

For $\xi\in\mathcal{N}$, we assume without loss of generality $0\leq \tau(\xi)\leq\tilde\tau(\xi)\leq T$. Recall that $\meas(\kappa_{1-\gamma})\leq C(\bar M)$ and $\meas(\tilde\kappa_{1-\gamma})\leq C(\bar M)$ and hence $\meas(\mathcal{N})\leq C(\bar M)$. For $t\in[\tau,\tilde\tau]$, we have $Z(t,\xi)=Z(\tau,\xi)$ and $\tilde Z_t=F(\tilde X)\tilde Z$. Thus
\begin{equation}
 \frac{d}{dt}(\tilde Z-Z)=F(\tilde X)\tilde Z=F(\tilde X)(\tilde Z-Z)+F(\tilde X) Z,
\end{equation}
and after applying Gronwall's lemma we obtain 
\begin{align}
 \vert \tilde Z(t,\xi)-Z(t,\xi)\vert & \leq C(\bar M) \big(\vert Z(\tau,\xi)-\tilde Z(\tau,\xi)\vert +\int_\tau^{\tilde\tau}\vert F(\tilde X)Z(t',\xi)\vert dt'\big)\\ \nn 
& \leq C(\bar M) \big(\vert Z(\tau,\xi)-\tilde Z(\tau,\xi)\vert +\frac12\int_\tau^{\tilde\tau} h(\tau,\xi)dt'\big)
\end{align}
for $t\in[\tau,\tilde\tau]$. In particular,
\begin{align}
 \int_{\tau(\xi)}^{\tilde\tau(\xi)} h(\tau(\xi),\xi)dt&=\int_{\tau(\xi)}^{\tilde\tau(\xi)} h(t,\xi)dt=\int_{\tau(\xi)}^{\tilde\tau(\xi)} (h(t,\xi)-\tilde h(t,\xi))dt +\int_{\tau(\xi)}^{\tilde\tau(\xi)} \tilde h(t,\xi)dt\\\ \nn 
& \leq C(\bar M)\Big(\vert U_\xi(\tau(\xi),\xi)-\tilde U_\xi(\tau(\xi),\xi)\vert \\ \nn
&\quad+\int_{\tau(\xi)}^{\tilde\tau(\xi)}(\vert Z(t,\xi)-\tilde Z(t,\xi)\vert +\vert g(X(t,\xi))-g(\tilde X(t,\xi))\vert) dt\Big),
\end{align}
where we used Lemma~\ref{lem:tauttau} in the last step, and we get 
\begin{equation}\label{est:100}
\begin{aligned}
 \vert \tilde Z(t,\xi)-Z(t,\xi)\vert &\leq C(\bar M)\Big(\vert Z(\tau(\xi),\xi)-\tilde Z(\tau(\xi),\xi)\vert\\
 &\quad+\int_{\tau(\xi)}^{\tilde\tau(\xi)}(\vert Z(t',\xi)-\tilde Z(t',\xi)\vert +\vert g(X(t',\xi))-g(\tilde X(t',\xi))\vert) dt'\Big)
\end{aligned}
\end{equation}
when $t\in[\tau,\tilde\tau]$. For $t\leq \tau(\xi)$, we have $Z_t=F(X)Z$ and $\tilde Z_t=F(\tilde X)\tilde Z$, so that we obtain after applying Gronwall's lemma once more 
\begin{align}\label{est:101}
 \vert Z(\tau(\xi),\xi)-\tilde Z(\tau(\xi),\xi)\vert & \leq C(\bar M)(\vert Z(0,\xi)-\tilde Z(0,\xi)\vert +\norm{(F(X)-F(\tilde X))Z}_{L^1_TE})\\ \nn
& \leq C(\bar M)\big(\vert Z(0,\xi)-\tilde Z(0,\xi)\vert +d_\Real(X_0,\tilde X_0)+T\tilde d_\Real(X,\tilde X)\big).
\end{align}
Finally, combining \eqref{est:100} and \eqref{est:101} we end up with 
\begin{align}\label{est:102}
 \vert Z(t,\xi)-\tilde Z(t,\xi)\vert & \leq C(\bar M)\Big(\vert Z(0,\xi)-\tilde Z(0,\xi)\vert \\ \nn
 &\quad+\int_{\tau(\xi)}^{\tilde\tau(\xi)}(\vert Z(t',\xi)-\tilde Z(t',\xi)\vert +\vert g(X(t',\xi))-g(\tilde X(t',\xi))\vert ) dt'\\ \nn 
& \quad + d_\Real (X_0,\tilde X_0)+T\tilde d_\Real(X,\tilde X)\Big), \quad t\in[0,\tilde\tau].
\end{align}
Integrating \eqref{est:102} over the bounded domain $\mathcal{N}$ and applying Minkowski's inequality for integrals, then yields
\begin{equation}\label{est:103}
 \norm{Z(t,\dott)-\tilde Z(t,\dott)}_{W(\mathcal{N})}\leq C(\bar M)(d_\Real(X_0,\tilde X_0)+T\tilde d_\Real(X,\tilde X)).
\end{equation}
 Adding up \eqref{est:104} and \eqref{est:103}, we have 
\begin{equation}\label{est:105}
\norm{Z-\tilde Z}_{L^\infty_TW}\leq C(\bar M)(d_\Real(X_0,\tilde X_0)+T\tilde d_\Real(X,\tilde X)).
\end{equation}
Finally, it is left to estimate $\norm{g(X(t,\dott))-g(\tilde X(t,\dott))}_{L^2}$.
We have to distinguish several cases and therefore we introduce the set $$I=\{\xi\in\Real\mid \iota(X_0(\xi), \tilde X_0(\xi))=0\}.$$ 
(\textit{i}) If $\xi\in I$, we can choose $\delta\leq \frac12$ depending on $T$ and $M$ as in Lemma~\ref{lem:est:diffg}. If $\xi$ is such that $d(X_0(\xi),\tilde X_0(\xi))<\delta$, then $\vert g(X(t,\xi))-g(\tilde X(t,\xi))\vert \leq C(\bar M)(\vert Z(t,\xi)-\tilde Z(t,\xi)\vert+(\vert \bar r(t,\xi)\vert +\vert \bar{\tilde{r}}(t,\xi)\vert )\vert k-\tilde k\vert)$. On the other hand, if $\xi$ is such that $d(X_0(\xi),\tilde X_0(\xi))\geq \delta$, we have $\vert g(X(t,\xi))-g(\tilde X(t,\xi))\vert \leq C(\bar M)\frac{d(X_0(\xi),\tilde X_0(\xi))}{\delta}$ since $\vert g(X(t,\xi))\vert$ and $\vert g(\tilde X(t,\xi))\vert$ can be bounded by a constant only depending on $\bar M$. Thus we get that, since $\delta$ only depends on $\bar M$,
\begin{multline}
 \vert g(X(t,\xi))-g(\tilde X(t,\xi))\vert \\
 \leq C(\bar M)\big(\vert Z(t,\xi)-\tilde Z(t,\xi)\vert +(\vert \bar r\vert +\vert \bar{\tilde{r}}\vert)\vert k-\tilde k\vert +d(X_0(\xi),\tilde X_0(\xi))\big).
\end{multline} 
Note that since $\iota(X_0(\xi),\tilde X_0(\xi))=0$ by assumption, $d(X_0(\xi),\tilde X_0(\xi))$ is square integrable on $I$. Hence
\begin{align}\label{est:106}
 \norm{g(X(t,\dott))-g(\tilde X(t,\dott))}_{L^2(I)}& \leq C(\bar M)\big(\norm{Z(t,\dott)-\tilde Z(t,\dott)}_{W}+d_\Real(X_0,\tilde X_0)\big) \\ \nn
& \leq C(\bar M)(d_\Real(X_0,\tilde X_0)+T\tilde d_\Real(X,\tilde X)).
\end{align}

(\textit{ii}) If $\iota(X_0(\xi),\tilde X_0(\xi))=1$, either $X(t,\xi)\in \Omega_2$ or $X(t,\xi)\in\Omega_1$ and $\tilde X(t,\xi)\in\Omega_3$ (or the symmetric case where either $\tilde X(t,\xi)\in \Omega_2$ or $\tilde X(t,\xi)\in\Omega_1$ and $X(t,\xi)\in\Omega_3$). If $X(t,\xi)\in\Omega_2$, then $g(X(t,\xi))=y_\xi(t,\xi)+h(t,\xi)$ and $g(\tilde X(t,\xi))=\tilde y_\xi(t,\xi)+\tilde h(t,\xi)$, and we have 
$$\vert g(X(t,\xi))-g(\tilde X(t,\xi))\vert \leq \vert y_\xi(t,\xi)-\tilde y_\xi(t,\xi)\vert +\vert h(t,\xi)-\tilde h(t,\xi)\vert.$$
If $X(t,\xi)\in\Omega_1$, we know that $d(X_0(\xi),\tilde X_0(\xi))\geq 1$ and hence 
$$\vert g(X(t,\xi))-g(\tilde X(t,\xi))\vert \leq C(\bar M)d(X_0(\xi),\tilde X_0(\xi)).$$ 
Since the set $\Omega_1$ has finite measure, we get 
\begin{align}\label{est:116}
 \norm{g(X(t,\dott))-g(\tilde X(t,\dott))}_{L^2(I^c)}& \leq C(\bar M)\big(\norm{Z(t,\dott)-\tilde Z(t,\dott)}_{W}+d_\Real(X_0,\tilde X_0)\big) \\ \nn
& \leq C(\bar M)(d_\Real(X_0,\tilde X_0)+T\tilde d_\Real(X,\tilde X)).
\end{align}

Combining \eqref{est:stab2}, \eqref{est:stab3}, \eqref{est:105}, \eqref{est:106}, and \eqref{est:116} yields
\begin{equation}
 \tilde d_\Real(X,\tilde X)\leq C(\bar M)(d_\Real(X_0,\tilde X_0)+T\tilde d_\Real(X,\tilde X)), 
\end{equation}
which implies, if we choose $T$ small enough, that
\begin{equation}
 \tilde d_\Real(X,\tilde X)\leq C(\bar M)d_\Real(X_0,\tilde X_0).
\end{equation}
This finishes the proof.
\end{proof}

\begin{theorem}\label{thm:shortstab2}
 Given $M>0$ and $T>0$, then the solutions $X(t)$ and $\tilde X(t)$ with initial data $X_0$ and $\tilde X_0$, respectively, in $B_M$, belong to $B_{\bar M}$ for 
 $t\in [0,T]$.  For any given $\tilde t\in[0,T]$, there exist $K$ and $\tilde T$ depending on $\bar M$ and independent of $\tilde t$, such that 
\begin{equation}
 \sup_{t\in[\tilde t,\tilde t+\tilde T]\cap[0,T]} d_\Real (X(t),\tilde X(t))\leq Kd_\Real(X(\tilde t),\tilde X(\tilde t)).
\end{equation}
\end{theorem}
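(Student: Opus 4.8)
The plan is to deduce Theorem~\ref{thm:shortstab2} from the short-time estimate Theorem~\ref{thm:shortstab} by restarting the flow at time $\tilde t$. First, the assertion that $X(t)$ and $\tilde X(t)$ stay in $B_{\bar M}$ on $[0,T]$ is precisely Lemma~\ref{lem:globest} applied with $M_0=M$ and $T_0=T$, which at the same time fixes the constant $\bar M=\bar M(M,T)$. By Theorem~\ref{th:global} both solutions remain in $\G$ for all times, so in particular $X(\tilde t),\tilde X(\tilde t)\in\G\cap B_{\bar M}$ for every $\tilde t\in[0,T]$.

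The key observation is the semigroup property of \eqref{eq:sysdiss}: for $s\ge0$ the curve $s\mapsto X(\tilde t+s)$ is the solution of \eqref{eq:sysdiss} with initial datum $X(\tilde t)$. Indeed, restarting at time $\tilde t$ replaces the breaking time $\tau(\xi)$ by $\max(\tau(\xi)-\tilde t,0)$, and this is consistent with the original trajectory because, by the conventions built into the solution space, $q$, $w$, $\bar r$ vanish and $h$ is frozen for $t\ge\tau(\xi)$; combined with the uniqueness part of Theorem~\ref{th:global} this identifies $X(\tilde t+\cdot)$ with the solution issued from $X(\tilde t)$, and likewise for $\tilde X$. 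Applying Theorem~\ref{thm:shortstab} with $M$ replaced by $\bar M$ and with $X(\tilde t)$, $\tilde X(\tilde t)$ as initial data then produces constants $\tilde T\le1$ and $K$, both depending only on $\bar M$, such that $\sup_{s\in[0,\tilde T]}d_\Real(X(\tilde t+s),\tilde X(\tilde t+s))\le K\,d_\Real(X(\tilde t),\tilde X(\tilde t))$; intersecting the interval with $[0,T]$ (to cover the case $\tilde t+\tilde T>T$) gives exactly the assertion. Since $\tilde T$ and $K$ depend only on $\bar M$, hence on $M$ and $T$, they are independent of the base point $\tilde t$, as required.

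Alternatively, and this is essentially what underlies the previous paragraph, one can re-run the proof of Theorem~\ref{thm:shortstab} verbatim with $t=0$ replaced by $t=\tilde t$: introduce the time-translated metric $\tilde d_\Real$ on $[\tilde t,\tilde t+\tilde T]$, use the $\tilde t$-shifted analogue of Lemma~\ref{lem:G2}~(\textit{iv}) — valid because the lower bound $\bigl(\tilde w/(\tilde q+\tilde h)\bigr)_t\ge\tfrac12-C(\bar M)\gamma$ behind it is time-translation invariant — to confine all points breaking in $[\tilde t,\tilde t+\tilde T]$ to $\kappa_{1-\gamma}\cup\tilde\kappa_{1-\gamma}$, control $|g(X)-g(\tilde X)|$ by Lemma~\ref{lem:est:diffg} (whose $\bar T$ and $\delta$ are already $\tilde t$-independent) both on the set where $d(X(\tilde t,\xi),\tilde X(\tilde t,\xi))<\delta$ and by $C(\bar M)\delta^{-1}d(X(\tilde t,\xi),\tilde X(\tilde t,\xi))$ elsewhere, control the discontinuity residual $\int_\Real\bigl(\int_\tau^{\tilde\tau}\tilde h+\int_{\tilde\tau}^\tau h\bigr)\,d\xi$ by Lemma~\ref{lem:tauttau} exactly as in the derivation of \eqref{est:QPstab}, and close the estimate with Lemma~\ref{lem:PQ} and Gronwall's inequality on each component of $\tilde d_\Real$, obtaining $\tilde d_\Real\le C(\bar M)\bigl(d_\Real(X(\tilde t),\tilde X(\tilde t))+\tilde T\tilde d_\Real\bigr)$ and hence the claim for $\tilde T$ small depending only on $\bar M$. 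The main obstacle is purely one of bookkeeping: making precise the restart property for the discontinuous, trajectory-dependent system \eqref{eq:sysdiss}, and checking that none of the preparatory constants ($\gamma$, $\delta$, $\bar M$, $\tilde T$, $K$) acquires a dependence on $\tilde t$; the remark following Lemma~\ref{lem:estg} and the $\tilde t$-uniform formulations of Lemmas~\ref{lem:est:diffg} and~\ref{lem:tauttau} have been arranged precisely so that this step is routine.
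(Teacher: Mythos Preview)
Your proposal is correct, and your second (``alternatively'') approach is precisely the paper's argument: observe that every constant appearing in the proof of Theorem~\ref{thm:shortstab} depends only on upper bounds and hence on the uniform constant $\bar M$, invoke Lemma~\ref{lem:G2}~(\textit{iv}) to obtain a $\tilde t$-independent time window confining the breaking set to $\kappa_{1-\gamma}$, and then re-run the estimates from $\tilde t$. Your first approach, applying Theorem~\ref{thm:shortstab} as a black box via the restart/semigroup property, is an equivalent repackaging; the restart verification you sketch is correct but is exactly the bookkeeping the paper avoids by phrasing the argument as a direct re-inspection of the proof.
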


\begin{proof}
 A close inspection of the proof of Theorem \ref{thm:shortstab} shows that all our estimates use upper bounds, and hence we can replace the constants therein depending on $\bar M$  by the $\bar M$ in the statement of the present theorem. Since $X(t)$ belongs to $B_{\bar M}$ for all $t\in[0,T]$, we can apply Lemma~\ref{lem:G2} (\textit{iv}) which tells us that for any initial time $\tilde t$ we can find for any $\gamma$ a time interval such that all points enjoying wave breaking are contained in $\kappa_{1-\gamma}$ and this time interval is independent of the initial time $\tilde t$. Hence, after this observation we can follow the proof of Theorem \ref{thm:shortstab}.
\end{proof}

\begin{theorem}
 For any time $T>0$ there exists a constant $K$ only depending on $M$ and $T$ such that 
\begin{equation}
 \sup_{t\in[0,T]}d_\Real (X(t),\tilde X(t))\leq Kd_\Real(X_0,\tilde X_0)
\end{equation}
for any solutions $X(t)$ and $\tilde X(t)$ in $B_{\bar M}$, $t\in[0,T]$, with initial data $X_0$ and $\tilde X_0$, respectively, in $B_M$.
\end{theorem}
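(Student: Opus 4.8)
The plan is to bootstrap the uniform short-time estimate of Theorem~\ref{thm:shortstab2} over finitely many time steps of a fixed length. First I would invoke Theorem~\ref{th:global} together with Lemma~\ref{lem:globest}: the two solutions $X(t)$ and $\tilde X(t)$ exist globally, lie in $\G$, and remain in $B_{\bar M}$ on all of $[0,T]$ for some $\bar M$ depending only on $M$ and $T$. This fixes $\bar M$ once and for all, so that every subsequent application of Theorem~\ref{thm:shortstab2} is made with the \emph{same} $\bar M$, and the constants produced there do not degrade.

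Next, apply Theorem~\ref{thm:shortstab2} to obtain a constant $K_0=K_0(\bar M)$ and a step length $\tilde T=\tilde T(\bar M)>0$, both independent of the base time, such that for every $\tilde t\in[0,T]$
\begin{equation*}
 \sup_{t\in[\tilde t,\tilde t+\tilde T]\cap[0,T]} d_\Real(X(t),\tilde X(t))\le K_0\, d_\Real(X(\tilde t),\tilde X(\tilde t)).
\end{equation*}
I would then set $N=\lceil T/\tilde T\rceil$ and $t_j=j\tilde T$ for $0\le j\le N-1$, $t_N=T$, so that $[t_j,t_{j+1}]\subset[t_j,t_j+\tilde T]\cap[0,T]$. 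Applying the displayed inequality successively at the base times $t_0,t_1,\dots,t_{N-1}$ and chaining the estimates — at each stage evaluating the local bound at the right endpoint $t_j$ to get $d_\Real(X(t_j),\tilde X(t_j))\le K_0\,d_\Real(X(t_{j-1}),\tilde X(t_{j-1}))$ — gives, for any $t\in[t_j,t_{j+1}]$,
\begin{equation*}
 d_\Real(X(t),\tilde X(t))\le K_0\, d_\Real(X(t_j),\tilde X(t_j))\le K_0^{\,j+1}\, d_\Real(X_0,\tilde X_0).
\end{equation*}
Taking the supremum over $[0,T]$ yields $\sup_{t\in[0,T]}d_\Real(X(t),\tilde X(t))\le K_0^{\,N}\,d_\Real(X_0,\tilde X_0)$, and $K:=K_0^{\,N}$ depends only on $M$ and $T$ (through $\bar M$, $\tilde T$ and $N$), which is the claim.

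The only delicate point — already settled upstream — is that the step length $\tilde T$ in Theorem~\ref{thm:shortstab2} can be chosen uniformly in the base time $\tilde t$; this is exactly what keeps $N$ finite and the number of multiplications by $K_0$ bounded. That uniformity rests in turn on Lemma~\ref{lem:G2}(iv), which provides, for each $\gamma$, a time window independent of the initial time within which every point undergoing wave breaking already lies in $\kappa_{1-\gamma}$, and on the fact that the $B_{\bar M}$ bound on both solutions is propagated over all of $[0,T]$ by Lemma~\ref{lem:globest}. Consequently no compactness or limiting argument is needed here: the global Lipschitz estimate follows purely from finitely many iterations of the short-time one, and this concatenation is the whole content of the proof.
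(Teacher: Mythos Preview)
Your proposal is correct and follows essentially the same approach as the paper: fix $\bar M$ via the global a priori bounds, then invoke the uniform short-time stability estimate of Theorem~\ref{thm:shortstab2} to obtain a fixed step length and iterate over a finite partition of $[0,T]$, picking up one factor of $K_0$ per step. Your discussion of why the step length is uniform in the base time (via Lemma~\ref{lem:G2}(iv) and Lemma~\ref{lem:globest}) is a helpful elaboration of exactly the point the paper takes for granted.
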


\begin{proof}
 There exists $\bar M$ only depending on $M$ and $T$ such that $X(t)$ and $\tilde X(t)$ belong to $B_{\bar M}$ for all $t\in[0,T]$, cf.~Theorem~\ref{th:global}. From the short time stability result Theorem~\ref{thm:shortstab2} we know that there exist constants $K$ and $\bar T$ depending only on  $\bar M$ such that 
\begin{equation}
 d_\Real(X(t), \tilde X(t))\leq Kd_\Real(X(\tilde t),\tilde X(\tilde t))
\end{equation}
for any $t\in[0,T]\cap [\tilde t, \tilde t+\bar T]$. To obtain global stability we therefore split up the interval $[0,T]$ into smaller time intervals where the last inequality is valid. For any $T>0$ there exists $N\in\mathbb{N}$ such that $T\leq (N+1)\bar T$ and accordingly we define $t_0=0$, $t_1=\bar T$,$\dots$, $t_N=N\bar T$, and $t_{N+1}=T$.
Hence $d_\Real(X(t),\tilde X(t))\leq Kd_\Real(X(t_i),\tilde X(t_i))$ for all $t\in[t_i,t_{i+1}]$, due to the last lemma. Hence we finally obtain 
\begin{equation}
 d_\Real(X(t),\tilde X(t))\leq K^{N+1}d_\Real(X_0,\tilde X_0),
\end{equation}
which proves the claim.
\end{proof}

In addition to the last stability result, one can also show the Lipschitz continuity of every solution with respect to time.

\begin{lemma} \label{lem:Lip}
Given $M>0$ and $T>0$, then we have for any solution $X(t)$ with initial data $X_0\in B_M$,  
\begin{equation}
 d_\Real(X(t),X(\bar t))\leq C_T(M)\abs{\bar t-t}, \quad t,\bar t \leq T,
\end{equation}
for a constant $C_{T}(M)$ which only depends on $M$ and $T$.
\end{lemma}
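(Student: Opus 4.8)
The plan is as follows. By Theorem~\ref{th:global} the solution $X(t)$ stays in a ball $B_{\bar M}$ for all $t\in[0,T]$, where $\bar M$ depends only on $M$ and $T$; throughout, $C_T(M)$ denotes a generic constant depending only on $M$ and $T$. We may assume $\bar t\le t$ and estimate the three terms in $d_\Real(X(t),X(\bar t))$ separately. The term $\kappa(X(t),X(\bar t))$ vanishes: since $r_t=0$, the set $\{\xi\mid r(t,\xi)=0\}$ is independent of $t$, so the set appearing in \eqref{eq:defk} is empty. For $\norm{X(t)-X(\bar t)}_V$ I would read off from \eqref{eq:sysdiss} that the time derivative of each component is bounded in the relevant norm uniformly on $[0,T]$: $\zeta_t=U$ with $\norm{U}_{L^\infty_TL^\infty}\le C_T(M)$; $\bar U_t=-Q(X)-cU\chi'\circ y$ with $\norm{Q(X)}_{L^\infty_TE}\le C_T(M)$ by Lemma~\ref{lem:PQ} and $\norm{\chi'\circ y}_{L^2}\le C_T(M)$ by \eqref{est:chiprime}; $v_t=\chi_{\{\tau>t\}}w$, $h_t=\chi_{\{\tau>t\}}2(U^2+\frac12k^2-P(X))w$ and $\bar r_t=-\chi_{\{\tau>t\}}kw$, all bounded in $L^2$ by $C_T(M)$ via $\norm{P(X)-U^2-\frac12k^2}_{L^\infty_TE}\le C_T(M)$ (Lemma~\ref{lem:PQ}) and $\norm{w}_{L^\infty_TL^2}\le\bar M$; and $w_t=\chi_{\{\tau>t\}}(\frac12h+(U^2+\frac12k^2-P(X))q+k\bar r)$, bounded in $L^2$ after writing $q=1+v$ and using $P(X)-U^2-\frac12k^2\in E$; while $c_t=k_t=0$. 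Integrating in $t$ and using Minkowski's inequality for integrals yields $\norm{X(t)-X(\bar t)}_V\le C_T(M)(t-\bar t)$.

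The remaining term $\norm{g(X(t))-g(X(\bar t))}_{L^2}$ is the main obstacle, because $g$ switches between $g_1$ and $g_2$. I would split $\Real$ according to the value of $r(\xi)$, which is time independent. If $r(\xi)\ne0$ the curve $X(\cdot,\xi)$ never meets $\{x_7+x_8x_4=0\}$, hence stays in $\Omega_3$, so $g(X(s,\xi))=y_\xi(s,\xi)+h(s,\xi)$ for all $s$ and on this set the estimate follows from the bound on $\norm{v(t)-v(\bar t)}_{L^2}+\norm{h(t)-h(\bar t)}_{L^2}$ obtained above (recall $q=v+1$). If $r(\xi)=0$, note first that $\Omega_1\subset\{x_6\ge x_4\}$, so on $\Omega_1\cap B_{\bar M}$ one has $h\ge y_\xi$ and therefore $h\ge\frac1{2\bar M}$; consequently, using $\norm{h(s)}_{L^1}\le C_T(M)$, the set $\mathcal{N}=\{\xi\mid X(\bar t,\xi)\in\Omega_1\ \text{or}\ X(t,\xi)\in\Omega_1\}$ has $\meas(\mathcal{N})\le C_T(M)$. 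For $\xi\notin\mathcal{N}$ with $r(\xi)=0$ both $X(\bar t,\xi)$ and $X(t,\xi)$ lie in $\Omega_2$ (they lie in $\Omega_1\cup\Omega_2\cup\Omega_3$, not in $\Omega_3$ since $r=0$, and not in $\Omega_1$), so $g=y_\xi+h$ at both endpoints and this contribution is again controlled by the previous step.

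It remains to bound $g(X(t,\cdot))-g(X(\bar t,\cdot))$ on $\mathcal{N}$, and here I would establish the pointwise estimate $|g(X(t,\xi))-g(X(\bar t,\xi))|\le C_T(M)(t-\bar t)$. The curve $s\mapsto X(s,\xi)$ is $C_T(M)$-Lipschitz in $s$ (all components of $X_t$ are bounded pointwise on $B_{\bar M}$, using $qh=w^2+\bar r^2$ to bound $w$ and $h$ pointwise), and $g_1,g_2$ are Lipschitz on $B_{\bar M}$. Along a trajectory with $r(\xi)=0$, a transition between $\Omega_1$ and $\Omega_2$ can occur only while $y_\xi>0$, since once wave breaking has occurred the state has moved into $\Omega_1$ and stays there; and at such a transition point the constraint $qh=w^2+\bar r^2$ together with $r=0$ forces the common boundary of $\Omega_1$ and $\Omega_2$ to lie in $\{g_1=g_2\}$, so $g$ is continuous there. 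A connectedness argument on $[\bar t,t]$ (the closed sets of $s$ on which $X(\cdot,\xi)$ lies in $\overline{\Omega_1}$, resp.\ $\overline{\Omega_2}$, cover $[\bar t,t]$ and hence meet) then yields the pointwise bound. Multiplying by $\meas(\mathcal{N})^{1/2}\le C_T(M)$ gives $\norm{g(X(t))-g(X(\bar t))}_{L^2(\mathcal{N})}\le C_T(M)(t-\bar t)$, and adding the three contributions completes the proof. The delicate point, as highlighted, is precisely the behaviour of $g$ across the $\Omega_1/\Omega_2$ interface and through wave breaking, which is confined to the finite-measure set $\mathcal{N}$.
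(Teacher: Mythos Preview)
Your proof is correct, and the treatment of $\kappa$ (zero, since $r_t=0$) and of $\norm{X(t)-X(\bar t)}_V$ agrees with the paper's. The handling of the $g$-term is where your argument genuinely diverges from the paper. You localize to a finite-measure set $\mathcal{N}$ (using that $\Omega_1$ forces $h\ge y_\xi$, hence $h\ge\frac{1}{2\bar M}$), run a pointwise Lipschitz estimate there, and multiply by $\meas(\mathcal{N})^{1/2}$. The paper avoids this localization entirely: it observes that along any trajectory the smooth functions $G_1(s)=-U_\xi-2k\bar r+2y_\xi$ and $G_2(s)=y_\xi+h$ have time derivatives bounded in $L^2(\Real)$ uniformly in $s$, so the pointwise bound
\[
\abs{g(X(t,\xi))-g(X(\bar t,\xi))}\le\int_{\bar t}^{t}\bigl(\abs{G_1'(s,\xi)}+\abs{G_2'(s,\xi)}\bigr)\,ds
\]
(valid after finding an intermediate time with $g_1=g_2$, exactly as you argue) feeds directly into Minkowski's inequality and gives the $L^2$ bound with no splitting. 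The paper's route is shorter; yours makes the geometry of the $\Omega_1/\Omega_2$ interface more explicit, and your observation that on the constraint manifold $\{qh=w^2+\bar r^2,\ r=0,\ q>0\}$ the boundary of $\Omega_1$ lies in $\{g_1=g_2\}$ is in fact more carefully stated than in the paper.

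One small tightening: the connectedness argument as you phrase it only gives a meeting point $s_0$ with $X(s_0,\xi)\in\overline{\Omega_1}\cap\overline{\Omega_2}$, and in principle this could be the wave-breaking time $\tau(\xi)$, where $q=0$ and $g_1=0\neq h=g_2$. This is ruled out because a Taylor expansion (using $q(\tau)=w(\tau)=0$ and $w_t(\tau)=\tfrac12 h(\tau)>0$) shows $w<0$ just before $\tau$, so the trajectory is already in $\Omega_1$ there and cannot be approached from $\Omega_2$ at $\tau$. A cleaner formulation is to take $s_3=\inf\{s\in[\bar t,t]:X(s,\xi)\in\Omega_1\}$ when the endpoints lie in different regions; then $s_3<\tau(\xi)$ necessarily, so $q(s_3)>0$ and $g_1(X(s_3))=g_2(X(s_3))$, whence the splitting $\abs{G(t)-G(\bar t)}\le\abs{G_1(t)-G_1(s_3)}+\abs{G_2(s_3)-G_2(\bar t)}$ goes through.
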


\begin{proof}
 We already know that for any $T>0$, we have $\norm{X(t,\dott)}_{\bar V}+\norm{g(X(t,\dott))-1}_{L^2}$ for $t\in[0,T]$, can be bounded by some constant $C_T(M)$ depending on $M$ and $T$. Then we have 
 \begin{subequations}
\begin{align}
  \norm{\zeta(t,\dott)-\zeta(\bar t,\dott)}_{L^\infty}&\leq \int_{\bar t}^t \norm{U(t',\dott)}_{L^\infty}dt'\leq C_T(M)\vert \bar t-t\vert,\\
  \norm{\bar U(t,\dott)-\bar U(\bar t,\dott)}_{L^2}&\leq \int_{\bar t}^t \norm{-Q(X)(t',\dott)-c\chi'\circ  y(t',\dott)\bar U(t',\dott)}_{L^2}dt'\\
  &\leq C_T(M)\vert \bar t-t\vert,  \nn\\
  \norm{Z(t,\dott)-Z(\bar t,\dott)}_{W}&\leq \int_{\bar t}^t \norm{F(X)(t',\dott)Z(t',\dott)}_{L^2}dt'\leq C_T(M)\vert \bar t-t\vert.
\end{align}
\end{subequations}

Hence 
\begin{equation}
 \norm{X(t,\dott)-X(\bar t,\dott)}_V\leq C_T(M)\vert \bar t-t\vert.
\end{equation}

It remains to estimate $\norm{g(X(\bar t,\xi))-g(X(t,\xi))}_{L^2}$.
If for a given $\xi\in\Real$, $X(t,\xi)\in \Omega_1$ and $X(\bar t,\xi)\in\Omega_1$, we have 
\begin{equation}\label{g1}
 \vert g(X(t,\xi))-g(X(\bar t,\xi))\vert= \int_{\bar t}^t \vert g_{1,t}(X(t',\xi))\vert dt',
\end{equation}
where, slightly abusing the notation, $g_{1,t}$ denotes the time derivative of $-U_\xi-2k\bar r+2y_\xi$.
Similarly if $X(t,\xi)\in\Omega_1^c$ and $X(\bar t,\xi)\in\Omega_1^c$, we get 
\begin{equation}
 \vert g(X(t,\xi))-g(X(\bar t,\xi))\vert= \int_{\bar t}^t \vert g_{2,t}(X(t',\xi))\vert dt',
\end{equation}
where $g_{2,t}$ denotes the time derivative of $y_\xi+h$.
If $X(t,\xi)\in \Omega_1$ and $X(\bar t,\xi)\in\Omega_1^c$ (the case $X(t,\xi)\in\Omega_1^c$, $X(\bar t,\xi)\in\Omega_1$ can be treated in much the same way), we can find a $\tilde t\in(\bar t,t)$ such that $g_1(X(\tilde t,\xi))=g_2(X(\tilde t,\xi))$ and therefore 
\begin{align}\label{g2}
 \vert g(X(t,\xi))-g(X(\bar t,\xi))\vert &\leq \vert g_1(X(t,\xi))-g_1(X(\tilde t,\xi))\vert +\vert g_2(X(\tilde t,\xi))-g_2(X(\bar t,\xi))\vert\\ \nn
& \leq \int_{\bar t}^{\tilde t} \vert g_{2,t}(X(t',\xi))\vert dt'+\int_{\tilde t}^t \vert g_{1,t}(X(t',\xi))\vert dt'\\ \nn
& \leq \int_{\bar t}^t \vert g_{1,t}(X(t',\xi))\vert +\vert g_{2,t}(X(t',\xi))\vert dt'.
\end{align}
Since $\norm{g_{1,t}(X(t',\dott))}_{L^2}$ and $\norm{g_{2,t}(X(t',\dott))}_{L^2}$, can be uniformly bounded by a constant $C_T(M)$ for all $t'\in[\bar t,t]$, we get after using \eqref{g1}-\eqref{g2} together with applying Minkowski's inequality for integrals, that 
\begin{align}
 \norm{g(X(t,\xi))-g(X(\bar t,\xi))}_{L^2}&\leq \int_{\bar t}^t \norm{g_{1,t}(X(t',\dott))}_{L^2}+\norm{g_{2,t}(X(t',\dott))}_{L^2} dt'\\ \nn 
&\leq C_T(M)\vert \bar t-t\vert.
\end{align}

\end{proof}

\section{From Eulerian to Lagrangian variables and vice versa}

So far the derivation of our system of ordinary differential equations \eqref{eq:sysdiss} in Lagrangian coordinates is only valid for initial data $u_0$ in Eulerian coordinates, where no concentration of mass takes place. However, it is well known that in the case of conservative solutions, concentration of mass is linked to wave breaking. Since our description of dissipative solutions in Lagrangian variables until wave breaking occurs, coincides with the one used in \cite{GHR4} for conservative solutions, one might hope that the sets of Eulerian and Lagrangian coordinates can be described in much the same way, and that the mappings from Eulerian to Lagrangian coordinates and vice versa can be defined using the same ideas. It turns out that we can do so. For the sake of completeness we summarize these results here. We start by introducing the set of Eulerian and Lagrangian coordinates together with the set of relabeling functions which allows us to identify equivalence classes in the Lagrangian variables.

\begin{definition} [Eulerian coordinates]\label{def:D} The set $\D$ is
  composed of all triplets $(u,\rho,\mu)$ such that
  $u\in H_{0,\infty}(\Real)$, $\rho\in L^2_{\rm
    const}(\Real)$ and $\mu$ is a positive finite
  Radon measure whose absolutely continuous part,
  $\muac$, satisfies
\begin{equation}
\label{eq:abspart}
\muac=(u_x^2+\bar\rho^2)\,dx.
\end{equation}
\end{definition}

\begin{definition}[Relabeling functions]\label{def:G1}
 We denote by $G$ the subgroup of the group of homeomorphisms from $\Real$ to $\Real$ such that 
\begin{subequations}
\label{eq:Gcond}
 \begin{align}
  \label{eq:Gcond1}
  f-\id \text{ and } f^{-1}-\id &\text{ both belong to } W^{1,\infty}(\Real), \\
  \label{eq:Gcond2}
  f_\xi-1 &\text{ belongs to } L^2(\Real),
 \end{align}
\end{subequations}
where $\id$ denotes the identity function. Given $\kappa>0$, we denote by $G_\kappa$ the subset  of $G$ defined by 
\begin{equation}
 G_\kappa=\{ f\in G\mid  \norm{f-\id}_{W^{1,\infty}}+\norm{f^{-1}-\id}_{W^{1,\infty}}\leq\kappa\}. 
\end{equation}
\end{definition}

\begin{definition}[Lagrangian coordinates]
The subsets $\F$ and $\F_\kappa$ of $\G$ are defined as
 \begin{equation*}
\mathcal{F}_\kappa=\{X=(y,U,h,r)\in\mathcal{G}\mid  y+H\in G_\kappa\},
\end{equation*}
and
\begin{equation*}
\mathcal{F}=\{X=(y,U,h,r)\in\mathcal{G}\mid  y+H\in G\},
\end{equation*}
where $H(t,\xi)$ is defined by 
\begin{equation*}
 H(t,\xi)=\int_{-\infty}^\xi h(t,\tilde\xi)d\tilde\xi.
\end{equation*}
\end{definition}

Note that $H(t,\xi)$ is finite, since from \eqref{eq:lagcoord6}, we have $h=U_\xi^2+\bar r^2-\zeta_\xi h$ and therefore $h\in L^1(\Real)$. In addition it should be pointed out that the condition on $y+H$ is closely linked to $\norm{\frac{1}{y_\xi+h}}_{L^\infty}$ as the following lemma shows.

\begin{lemma}[{\cite[Lemma 3.2]{HolRay:07}}] 
\label{lem:charH}
Let $\kappa\geq0$. If $f$ belongs to $G_\kappa$,
then $1/(1+\kappa)\leq f_\xi\leq 1+\kappa$ almost
everywhere. Conversely, if $f$ is absolutely
continuous, $f-\id\in W^{1,\infty}(\Real)$, $f$ satisfies
\eqref{eq:Gcond2} and there exists $d\geq 1$ such
that $1/d\leq f_\xi\leq d$ almost everywhere, then
$f\in G_\kappa$ for some $\kappa$ depending only
on $d$ and $\norm{f-\id}_{W^{1,\infty}}$.
\end{lemma}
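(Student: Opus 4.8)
The statement to prove is Lemma~\ref{lem:charH}, a characterization of the relabeling subgroup $G_\kappa$ in terms of pointwise bounds on $f_\xi$.

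\medskip

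The plan is to prove the two implications separately. For the forward direction, suppose $f \in G_\kappa$. Then both $f - \id$ and $f^{-1} - \id$ lie in $W^{1,\infty}(\Real)$ with norms bounded by $\kappa$, so in particular $f$ is Lipschitz, hence absolutely continuous, and $f_\xi$ exists almost everywhere with $f_\xi = 1 + (f - \id)_\xi$. Since $f$ is a homeomorphism of $\Real$ onto $\Real$ that is increasing (being in the identity component, or more directly: $f - \id$ bounded forces $f(\xi) \to \pm\infty$, and a monotone homeomorphism of $\Real$ with these asymptotics is increasing), we have $f_\xi \geq 0$ a.e. To get the upper bound $f_\xi \leq 1 + \kappa$, I would use that $f^{-1}$ is also Lipschitz with constant controlled by $\kappa$: for a.e. point where $f$ is differentiable with $f_\xi > 0$, the inverse function relation gives $(f^{-1})_\eta(f(\xi)) = 1/f_\xi(\xi)$, and since $(f^{-1})_\eta \geq 1/(1+\kappa)$ is not quite immediate, I would instead argue directly from the Lipschitz bounds. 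Concretely: from $\|f - \id\|_{W^{1,\infty}} \leq \kappa$ we get $|f(\xi) - f(\eta) - (\xi - \eta)| \leq \kappa|\xi - \eta|$ is too crude; rather, differentiating, $f_\xi = 1 + g_\xi$ with $\|g_\xi\|_{L^\infty} \leq \kappa$ would only give $f_\xi \leq 1 + \kappa$ directly, but that is exactly one of the desired bounds. Wait --- indeed $\|f-\id\|_{W^{1,\infty}} \leq \kappa$ gives $|f_\xi - 1| \leq \kappa$ a.e., hence $1 - \kappa \leq f_\xi \leq 1 + \kappa$; but this is only useful when $\kappa < 1$. The sharper lower bound $1/(1+\kappa)$ must come from the inverse: apply the same reasoning to $f^{-1}$ to get $|(f^{-1})_\eta - 1| \leq \kappa$ a.e., so $(f^{-1})_\eta \leq 1 + \kappa$ a.e.; then by the change-of-variables / inverse function theorem for Lipschitz homeomorphisms (using that $f$ maps null sets to null sets and vice versa, which holds since both $f$ and $f^{-1}$ are Lipschitz), $f_\xi(\xi) \cdot (f^{-1})_\eta(f(\xi)) = 1$ for a.e. $\xi$, whence $f_\xi \geq 1/(1+\kappa)$ a.e. Combining, $1/(1+\kappa) \leq f_\xi \leq 1 + \kappa$ a.e.

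\medskip

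For the converse, suppose $f$ is absolutely continuous, $f - \id \in W^{1,\infty}(\Real)$, condition \eqref{eq:Gcond2} holds (i.e. $f_\xi - 1 \in L^2$), and $1/d \leq f_\xi \leq d$ a.e. for some $d \geq 1$. Since $f_\xi \geq 1/d > 0$ a.e. and $f$ is absolutely continuous, $f$ is strictly increasing; combined with $f - \id$ bounded this forces $f(\xi) \to \pm\infty$ as $\xi \to \pm\infty$, so $f$ is a bijection of $\Real$ onto $\Real$, and its inverse $f^{-1}$ is continuous and strictly increasing, hence a homeomorphism. The inverse is Lipschitz: for $x = f(\xi)$, $y = f(\eta)$ with $\xi < \eta$, $x - y = -\int_\xi^\eta f_\zeta\,d\zeta$ has $|x - y| \geq (1/d)|\xi - \eta|$, so $|f^{-1}(x) - f^{-1}(y)| \leq d|x - y|$; thus $f^{-1}$ is locally absolutely continuous with $(f^{-1})_\eta = 1/(f_\xi \circ f^{-1}) \in [1/d, d]$ a.e. We must check $f^{-1} - \id \in W^{1,\infty}$: boundedness of $f^{-1} - \id$ follows since $f^{-1}(x) - x = -(f(\eta) - \eta)$ where $\eta = f^{-1}(x)$, so $\|f^{-1} - \id\|_{L^\infty} = \|f - \id\|_{L^\infty}$; and $(f^{-1})_\eta - 1$ is bounded by $d - 1$ in $L^\infty$. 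Finally, for \eqref{eq:Gcond2} for $f^{-1}$: $(f^{-1})_\eta(x) - 1 = \frac{1 - f_\xi(f^{-1}(x))}{f_\xi(f^{-1}(x))}$, and squaring, $\int_\Real |(f^{-1})_\eta - 1|^2\,dx \leq d^2 \int_\Real |1 - f_\xi(f^{-1}(x))|^2\,dx = d^2 \int_\Real |1 - f_\xi(\xi)|^2 f_\xi(\xi)\,d\xi \leq d^3 \|f_\xi - 1\|_{L^2}^2 < \infty$, using the substitution $\xi = f^{-1}(x)$ and $f_\xi \leq d$. Hence $f^{-1} - \id \in L^2$ as well, wait that is the derivative; in any case $(f^{-1})_\eta - 1 \in L^2$, which is \eqref{eq:Gcond2}. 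Therefore $f, f^{-1}$ satisfy all conditions in \eqref{eq:Gcond}, so $f \in G$, and $\kappa := \|f - \id\|_{W^{1,\infty}} + \|f^{-1} - \id\|_{W^{1,\infty}} \leq 2\|f - \id\|_{L^\infty} + (d - 1) + \max(\|f_\xi - 1\|_{L^\infty}, d-1)$ depends only on $d$ and $\|f - \id\|_{W^{1,\infty}}$, so $f \in G_\kappa$.

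\medskip

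The main obstacle is the rigorous justification of the inverse function differentiation formula $f_\xi \cdot (f^{-1})_\eta \circ f = 1$ a.e. for merely Lipschitz (not $C^1$) homeomorphisms, and the attendant change-of-variables manipulations. The key facts needed are: a Lipschitz homeomorphism of $\Real$ with Lipschitz inverse maps Lebesgue-null sets to null sets in both directions; an absolutely continuous strictly monotone function satisfies the change-of-variables formula; and at a.e. point the composition rule for derivatives is valid. Since this is quoted as \cite[Lemma 3.2]{HolRay:07}, I would present the argument at the level of detail above and reference that source for the measure-theoretic technicalities, noting that the only genuinely new ingredient here versus standard $W^{1,\infty}$ homeomorphism theory is tracking how the $L^2$ condition \eqref{eq:Gcond2} transfers to $f^{-1}$, which the substitution estimate above handles cleanly using $f_\xi \leq d$.
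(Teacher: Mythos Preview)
Your argument is correct. Note, however, that the paper does not actually prove this lemma: it is stated with a citation to \cite[Lemma 3.2]{HolRay:07} and no proof is given in the present paper. So there is no ``paper's own proof'' to compare against.

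A couple of minor points worth tightening. First, in the converse direction you did not need to verify $(f^{-1})_\eta - 1 \in L^2$: condition \eqref{eq:Gcond2} in Definition~\ref{def:G1} is only imposed on $f$, not on $f^{-1}$, so membership in $G$ requires only $f-\id,\,f^{-1}-\id\in W^{1,\infty}$ together with $f_\xi-1\in L^2$. Your substitution estimate is a nice bonus but not needed. Second, the ``wait'' asides should be cleaned up; the upper bound $f_\xi\leq 1+\kappa$ is immediate from $\|(f-\id)_\xi\|_{L^\infty}\leq\kappa$, and the lower bound $f_\xi\geq 1/(1+\kappa)$ comes, as you say, from applying the same to $f^{-1}$ and invoking the a.e.\ relation $f_\xi(\xi)\,(f^{-1})_\eta(f(\xi))=1$. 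Your justification of this relation (both $f$ and $f^{-1}$ Lipschitz, hence both map null sets to null sets, so the chain rule holds at a.e.\ point) is the standard one and is fine.
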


An immediate consequence of \eqref{eq:lagcoord2} is therefore the following result.

\begin{lemma}\label{lem:Fpres}
 The space $\mathcal{G}$ is preserved by the governing equations \eqref{eq:sysdiss}.
\end{lemma}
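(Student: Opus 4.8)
The assertion is exactly the invariance of $\G$ under the solution operator $S_t$ of \eqref{eq:sysdiss}, which is already contained in Theorem~\ref{th:global}: for $X_0\in\G$ the unique global solution lies in $C(\Real_+,\G)$. The plan is therefore to fix $X_0\in\G$, set $X(t)=S_t(X_0)$, and record condition by condition why each of the six requirements \eqref{eq:lagcoord1}--\eqref{eq:lagcoord6} of Definition~\ref{def:G} propagates in time, so that membership $X(t)\in\G$ for all $t\ge0$ follows without any estimate beyond those already established.

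By Lemma~\ref{lem:globest}, for every $T_0>0$ there is an $M$ with $X(t)\in B_M$ on $[0,T_0]$; in particular $X(t)\in\bar V$, which is \eqref{eq:lagcoord1}, and, as noted right after Definition~\ref{def:G}, \eqref{eq:lagcoord2} (that $g(X)-1\in E$) is automatic once $X\in\bar V$. The two genuinely dynamical constraints are \eqref{eq:lagcoord3} and \eqref{eq:lagcoord6}. They follow from Lemma~\ref{lem:G}(\textit{i}): the differential identity $(qh-w^2-\bar r^2)_t=0$ together with $q_0h_0=w_0^2+\bar r_0^2$ yields \eqref{eq:G2}, i.e.\ $y_\xi h=U_\xi^2+\bar r^2$ for all $t$, while the definition of $\tau$ forces $y_\xi=q\ge0$ and then $h\ge0$ through \eqref{eq:G2}; here I use the identifications $y_\xi=q$ and $U_\xi=w$ secured in the proof of Theorem~\ref{th:short0}. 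For the non-degeneracy \eqref{eq:lagcoord5} I invoke \eqref{eq:G3sol} of Lemma~\ref{lem:G2}, which bounds $\norm{\frac{1}{y_\xi+h}(t,\dott)}_{L^\infty}$ on each $[0,T]$ and thus keeps $y_\xi+h$ away from zero.

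Finally, for the left asymptotics \eqref{eq:lagcoord4} I would write $\zeta(t,\xi)=\zeta(0,\xi)+\int_0^tU(t',\xi)\,dt'$ and let $\xi\to-\infty$ by dominated convergence, using $\lim_{\xi\to-\infty}\zeta(0,\xi)=0$ together with $\bar U\in H^1(\Real)$, so that $U(t',\xi)=\bar U(t',\xi)\to0$ for $\xi\le-\norm{\zeta}_{L^\infty_TL^\infty}$, exactly as in the proof of Theorem~\ref{th:short0}. Collecting \eqref{eq:lagcoord1}--\eqref{eq:lagcoord6} gives $S_t(\G)\subseteq\G$, which is the claimed preservation. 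The only places where preservation could a priori fail are the algebraic constraint \eqref{eq:lagcoord6} and the lower bound \eqref{eq:lagcoord5}; these are the crux of the matter and are precisely what the conserved identity and the Gronwall estimate of Lemma~\ref{lem:G} deliver, so the remaining verifications are pure bookkeeping built into the global solvability already proved.
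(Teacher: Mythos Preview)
Your proposal is correct and takes essentially the same approach as the paper. The paper treats this lemma as an immediate consequence of the already-established results (stating only ``An immediate consequence of \eqref{eq:lagcoord2} is therefore the following result''), since Theorem~\ref{th:global} already asserts $X\in C(\Real_+,\G)$; your write-up simply unpacks this by checking \eqref{eq:lagcoord1}--\eqref{eq:lagcoord6} one by one via Lemma~\ref{lem:G}, Lemma~\ref{lem:globest}, and the argument from the proof of Theorem~\ref{th:short0}, which is exactly the bookkeeping the paper leaves implicit.
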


For the sake of simplicity, for any $X=(y, U, h,r)\in \mathcal{F}$ and any function
$f\in\Gr$, we denote $(y\circ f, U\circ f, h\circ ff_\xi, r\circ f f_\xi)$ by $X\circ f$.

\begin{proposition} The map 
from $\Gr\times\F$ to $\F$ given by $(f,X)\mapsto
X\circ f$ defines an action of the group $\Gr$ on
$\F$.
\end{proposition}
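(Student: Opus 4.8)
The plan is to verify the two defining axioms of a group action directly: that the identity element of $\Gr$ acts as the identity on $\F$, and that the action is compatible with the group operation in $\Gr$, i.e.\ $X\circ(f\circ g)=(X\circ f)\circ g$. Before either of these, I first need to check that the map is well defined, namely that $X\circ f$ actually lies in $\F$ whenever $X\in\F$ and $f\in\Gr$; this is really the substantive part of the proposition, since the two axioms themselves are formal.

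\emph{Well-definedness.} Given $X=(y,U,h,r)\in\F$ and $f\in\Gr$, I must show $X\circ f=(y\circ f,U\circ f,h\circ f\,f_\xi,r\circ f\,f_\xi)\in\F$, that is, $X\circ f\in\G$ together with $(y\circ f)+H^f\in G$, where $H^f(\xi)=\int_{-\infty}^\xi h(f(\eta))f_\xi(\eta)\,d\eta$. By the change of variables $\eta\mapsto f(\eta)$ one sees $H^f=H\circ f$. Hence $(y\circ f)+H^f=(y+H)\circ f$, and since $y+H\in G$ and $G$ is a group (Definition~\ref{def:G1}), the composition $(y+H)\circ f$ again belongs to $G$; so the second condition is immediate once we know $X\circ f\in\G$. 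To check $X\circ f\in\G$ one goes through the conditions in Definition~\ref{def:G}: writing $y(\xi)=\zeta(\xi)+\xi$ and $f(\xi)=\zeta_f(\xi)+\xi$ with $\zeta_f\in W^{1,\infty}$ and $(\zeta_f)_\xi\in L^2$, the composition $y\circ f$ has the form $\tilde\zeta+\xi$ with $\tilde\zeta=\zeta\circ f+\zeta_f$, and the chain rule gives $(y\circ f)_\xi=y_\xi\circ f\cdot f_\xi$. The pointwise sign and product conditions \eqref{eq:lagcoord3}, \eqref{eq:lagcoord6} are preserved because multiplying $y_\xi, h, U_\xi, \bar r$ by the common nonnegative factor $f_\xi$ (respectively evaluating $U_\xi, \bar r$ along $f$ and multiplying) respects $y_\xi h=U_\xi^2+\bar r^2$ and nonnegativity; the limit \eqref{eq:lagcoord4} is preserved since $f(\xi)\to-\infty$ as $\xi\to-\infty$; and the norm/integrability conditions \eqref{eq:lagcoord1}, \eqref{eq:lagcoord5} follow from Lemma~\ref{lem:charH}, which gives $1/(1+\kappa)\le f_\xi\le 1+\kappa$ when $f\in G_\kappa$, so composition with $f$ and multiplication by $f_\xi$ map $L^2\cap L^\infty$ boundedly into itself, and $\frac{1}{(y_\xi+h)\circ f\cdot f_\xi}$ stays in $L^\infty$. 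This is essentially the content already invoked for $\F_\kappa$ in the paper, and is also the place where the analogous statement for the conservative case in \cite{GHR4, HolRay:07} is used.

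\emph{The two axioms.} The identity $e=\id$ of $\Gr$ acts trivially: $X\circ\id=(y\circ\id,U\circ\id,h\circ\id\cdot 1,r\circ\id\cdot 1)=X$. For the compatibility, take $f,g\in\Gr$ and compute each component of $(X\circ f)\circ g$. The first two components are $(y\circ f)\circ g=y\circ(f\circ g)$ and $(U\circ f)\circ g=U\circ(f\circ g)$. For the third, the rule gives $(h\circ f\,f_\xi)\circ g\cdot g_\xi=(h\circ f\circ g)(f_\xi\circ g)g_\xi=(h\circ(f\circ g))\,(f\circ g)_\xi$ by the chain rule $(f\circ g)_\xi=(f_\xi\circ g)g_\xi$; the fourth component is identical with $h$ replaced by $r$. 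Hence $(X\circ f)\circ g=X\circ(f\circ g)$, which is exactly the group-action compatibility condition, so the map $(f,X)\mapsto X\circ f$ is a right action of $\Gr$ on $\F$.

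\textbf{Main obstacle.} The formal axioms are routine; the real work is the well-definedness step, and within it the only nontrivial point is verifying that the quantitative conditions \eqref{eq:lagcoord1} and \eqref{eq:lagcoord5} survive composition, which rests on the two-sided bound $1/(1+\kappa)\le f_\xi\le 1+\kappa$ from Lemma~\ref{lem:charH} (and on $f^{-1}-\id\in W^{1,\infty}$ to control $f$ itself). Everything else is bookkeeping with the chain rule and a single change of variables $H\circ f=H^f$.
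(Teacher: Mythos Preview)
The paper gives no proof of this proposition; it is stated without argument, relying implicitly on the analogous results in \cite{GHR4,HolRay:07}. Your proposal supplies precisely the standard argument those references contain: first verify $X\circ f\in\F$, then check the identity and compatibility axioms via the chain rule and the identity $H^f=H\circ f$. The axioms and almost all of the well-definedness verification are handled correctly.

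There is one genuine gap in your treatment of condition~\eqref{eq:lagcoord4}. You claim it is preserved ``since $f(\xi)\to-\infty$ as $\xi\to-\infty$'', but the new $\zeta$ is $\zeta^{\mathrm{new}}=(y\circ f)-\id=\zeta\circ f+(f-\id)$, and while $\zeta\circ f\to 0$, Definition~\ref{def:G1} does \emph{not} force $f-\id$ to vanish at $-\infty$: for instance $f(\xi)=\xi+\sin(\log|\xi|)$ for large negative $\xi$, smoothed globally, satisfies $f-\id\in W^{1,\infty}$, $f_\xi-1\in L^2$ and $f^{-1}-\id\in W^{1,\infty}$, yet $f-\id$ has no limit at $-\infty$. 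So as literally stated, \eqref{eq:lagcoord4} can fail after relabeling by a general $f\in G$. This is more a wrinkle in the paper's setup than a flaw in your overall strategy: condition~\eqref{eq:lagcoord4} is a normalization preserved by the flow $S_t$ and used in the definition of $L$, but the map $M$ of Theorem~\ref{th:umudef} does not use it, so the quotient $\F/G$ and all subsequent constructions are unaffected. You can address it by noting that the only relabeling actually applied in the paper is $f=(y+H)^{-1}$ in the definition of $\Gamma$, and for that choice one has $(y\circ f)+H^f=\id$, hence $\zeta^{\mathrm{new}}=-H^f=-H\circ(y+H)^{-1}\to 0$ at $-\infty$, so \eqref{eq:lagcoord4} is recovered on $\F_0$.
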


Since $\Gr$ is acting on $\F$, we can consider the
quotient space $\quot$ of $\F$ with respect to the
action of the group $G$. The equivalence relation
on $\F$ is defined as follows: For any
$X,X'\in\F$, we say that $X$ and $X'$ are equivalent if there
exists a relabeling function $f\in\Gr$ such that $X'=X\circ f$. We
denote by $\Pi(X)=[X]$ the projection of $\F$ into the
quotient space $\quot$, and introduce the mapping
$\Gamma\colon\F\rightarrow\F_0$ given by
\begin{equation*}
\Gamma(X)=X\circ (y+H)\inv
\end{equation*}
for any $X=(y,U,h,r)\in\F$. We have $\Gamma(X)=X$ when
$X\in\F_0$. It is not hard to prove that $\Gamma$ is
invariant under the action of $\Gr$, that is,
$\Gamma(X\circ f)=\Gamma(X)$ for any $X\in\F$ and
$f\in\Gr$. Hence, there corresponds to $\Gamma$ a
mapping $\tilde\Gamma$ from the quotient space $\quot$ to
$\F_0$ given by $\tilde\Gamma([X])=\Gamma(X)$ where
$[X]\in\quot$ denotes the equivalence class of
$X\in\F$. For any $X\in\F_0$, we have
$\tilde\Gamma\circ\Pi(X)=\Gamma(X)=X$. Hence, $\tilde\Gamma\circ
\Pi|_{\F_0}=\id|_{\F_0}$. Any topology defined on
$\F_0$ is naturally transported into $\quot$ by
this isomorphism. We equip $\F_0$ with the metric
induced by the $E$-norm, i.e.,
$d_{\F_0}(X,X')=d_\Real(X,X')$ for all
$X,X'\in\F_0$. Since $\F_0$ is closed in $E$, this
metric is complete. We define the metric on
$\quot$ as
\begin{equation*}
d_\quot([X],[X'])=d_\Real(\Gamma(X),\Gamma(X')),
\end{equation*}
for any $[X],[X']\in\quot$. Then, $\quot$ is
isometrically isomorphic with $\F_0$ and the
metric $d_\quot$ is complete. As in
\cite{HolRay:07}, we can prove the following
lemma.
\begin{lemma}
\label{lem:picont} Given $\alpha\geq 0$.
The restriction of $\Gamma$ to $\F_\alpha$ is a
continuous mapping from $\F_\alpha$ to $\F_0$.
\end{lemma}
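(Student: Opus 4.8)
The plan is to mimic the argument in \cite{HolRay:07} for the corresponding statement, adapting it to our slightly larger set of Lagrangian coordinates (the two extra components $c$ and $k$ are constant and therefore cause no difficulty). Recall that for $X\in\F_\alpha$ we have $f:=y+H\in G_\alpha$, and by Lemma~\ref{lem:charH} this gives the two-sided bound $1/(1+\alpha)\le f_\xi\le 1+\alpha$ almost everywhere, together with $\norm{f-\id}_{W^{1,\infty}}\le\alpha$. The heart of the proof is to show that the map $X\mapsto X\circ f\inv=\Gamma(X)$ is continuous from $\F_\alpha$ (with the metric $d_\Real$) to $\F_0$. So the first step is to establish continuity of the inversion map $f\mapsto f\inv$ on $G_\alpha$: if $f_n\to f$ in $G_\alpha$ in the relevant topology (i.e.\ $\norm{f_n-f}_{L^\infty}\to0$ and $\norm{f_{n,\xi}-f_\xi}_{L^2}\to0$, which is what $d_\Real$-convergence of the $X_n$ forces on the $f_n=y_n+H_n$), then $\norm{f_n\inv-f\inv}_{L^\infty}\to0$ and $\norm{(f_n\inv)_\xi-(f\inv)_\xi}_{L^2}\to0$; here the uniform bounds on $f_{n,\xi}$ from below and above are exactly what makes the change of variables controllable.

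The second step is to push this through the composition. Write $\bar f=f\inv$. For the "pointwise" components one uses that $\norm{U\circ\bar f - \tilde U\circ\tilde{\bar f}}_{L^\infty}\le \norm{U-\tilde U}_{L^\infty}+\norm{\tilde U}_{\mathrm{Lip-type\ bound}}\norm{\bar f-\tilde{\bar f}}_{L^\infty}$, but since $U=\bar U+c\chi\circ y$ with $\bar U\in H^1$, one does not have a uniform Lipschitz bound; instead one argues as in \cite{HolRay:07} by approximating $\bar U$ by smooth compactly supported functions, using that translation is continuous in $L^2$ and that $\norm{\bar U_n'}_{L^2}$ is controlled, together with the $L^2$-continuity of $\bar f\mapsto \bar U\circ\bar f$ which again relies on the uniform bounds $1/(1+\alpha)\le f_\xi\le 1+\alpha$ to convert $L^2(\Real)$-norms after composition into $L^2(\Real)$-norms before composition up to the constant $1+\alpha$. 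For the components that pick up a Jacobian factor, namely $h\circ\bar f\,\bar f_\xi$ and $r\circ\bar f\,\bar f_\xi$, one splits
\begin{align*}
 h\circ\bar f\,\bar f_\xi-\tilde h\circ\tilde{\bar f}\,\tilde{\bar f}_\xi
 &=(h\circ\bar f-\tilde h\circ\tilde{\bar f})\,\bar f_\xi
 +\tilde h\circ\tilde{\bar f}\,(\bar f_\xi-\tilde{\bar f}_\xi),
\end{align*}
estimates the first term in $L^2$ and $L^1$ using the uniform bound $\bar f_\xi\le 1+\alpha$ and the same approximation-and-translation argument as above, and the second term using $\norm{\tilde h\circ\tilde{\bar f}}_{L^\infty}\le\norm{\tilde h}_{L^\infty}$ together with $\norm{\bar f_\xi-\tilde{\bar f}_\xi}_{L^2}\to0$ from Step~1. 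The function $g$ in the metric $d_\Real$ is handled componentwise the same way (its building blocks are $U_\xi$, $y_\xi$, $h$, $\bar r$, $k$), and the discrete term $\kappa(X,\tilde X)$ is controlled because $r=0$ is preserved under composition with a homeomorphism, so $\kappa(\Gamma(X),\Gamma(\tilde X))=\kappa(X,\tilde X)$.

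The main obstacle is the absence of a uniform Lipschitz bound on the $U$- and $h$-components: continuity of composition $v\mapsto v\circ\bar f$ in $L^\infty$ or $L^2$ is not automatic for merely $H^1$ or $L^2$ functions, and must be obtained by the standard density argument (approximate by smooth compactly supported functions, use uniform continuity of the approximants together with the uniform two-sided Jacobian bound and the $L^2$-continuity of translations). Once that lemma is in place, the rest is a bookkeeping exercise term by term. Concretely I would: (1) record the uniform bounds on $f_\xi$ and $\norm{f-\id}_{W^{1,\infty}}$ on $\F_\alpha$ via Lemma~\ref{lem:charH}; (2) prove continuity of $f\mapsto f\inv$ on $G_\alpha$ in the $(L^\infty,\dot H^1_{L^2})$-topology; (3) prove the composition-continuity lemma for $L^2$- and $L^\infty$-functions against a sequence of homeomorphisms with uniformly bounded Jacobians; (4) combine (1)--(3) to estimate each of the eight components of $\Gamma(X)-\Gamma(\tilde X)$ plus the $g$- and $\kappa$-terms, concluding $d_\Real(\Gamma(X),\Gamma(\tilde X))\to0$ as $d_\Real(X,\tilde X)\to0$ within $\F_\alpha$. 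This is exactly the scheme of \cite{HolRay:07}, and since $c_t=k_t=0$ the extra coordinates contribute only trivially, so the adaptation is routine.
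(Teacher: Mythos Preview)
Your proposal is correct and matches the paper's approach: the paper gives no proof of this lemma at all, merely stating that it can be proved ``as in \cite{HolRay:07}'', and you have correctly outlined that argument together with the routine adaptations needed for the extra components $c$, $k$ and the new terms $g$ and $\kappa$ in the metric $d_\Real$. One small point worth making explicit in your write-up is that $g$ transforms like a density under relabeling, i.e., $g(X\circ f)(\xi)=f_\xi(\xi)\,g(X)(f(\xi))$ (because the $\Omega_1$-membership condition is homogeneous of degree one in $(y_\xi,U_\xi,h,\bar r)$ and $k$ is unchanged), so the $g$-term can be handled by exactly the same splitting you use for $h\circ\bar f\,\bar f_\xi$.
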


\begin{remark}
The mapping $\Gamma$ is not continuous from $\F$ to
$\F_0$. The spaces $\F_\alpha$ were precisely
introduced in order to make the mapping $\Gamma$
continuous.
\end{remark}

We denote by $S\colon\F\times [0,\infty)\rightarrow \F$
the continuous semigroup which to any initial
data $X_0\in \F$ associates the solution $X(t)$
of the system of differential equations
\eqref{eq:sysdiss} at time $t$. As indicated earlier,
the two-component Camassa--Holm system is invariant with
respect to relabeling.  More precisely, using our
terminology, we have the following result.

\begin{theorem} 
\label{th:sgS} 
For any $t>0$, the mapping $S_t\colon\F\rightarrow\F$
is $\Gr$-equivariant, that is,
\begin{equation}
\label{eq:Hequivar}
S_t(X\circ f)=S_t(X)\circ f
\end{equation}
for any $X\in\F$ and $f\in\Gr$. Hence, the mapping
$\tilde S_t$ from $\quot$ to $\quot$ given by
\begin{equation*}
\tilde S_t([X])=[S_tX]
\end{equation*}
is well-defined. It generates a continuous
semigroup.
\end{theorem}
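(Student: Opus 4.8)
The plan is to establish the equivariance \eqref{eq:Hequivar} at the level of the Lagrangian system \eqref{eq:sysdiss} and then to descend to the quotient.

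\emph{Step 1: the nonlocal terms behave as scalars along characteristics.} Fix $f\in\Gr$; by Lemma~\ref{lem:charH} there is $\alpha\ge0$ with $1/(1+\alpha)\le f_\xi\le 1+\alpha$ a.e., and, using the change of variables $\eta=f(\eta')$ together with $(y\circ f)_\xi+h\circ f\,f_\xi=((y_\xi+h)\circ f)\,f_\xi$, one checks that composition with $f$ maps $\F$ to $\F$ and any $B_M$ into some $B_{M'}$ with $M'=M'(M,\alpha)$. The crucial identity is
\begin{equation*}
 P(X\circ f)(t,\xi)=P(X)(t,f(\xi)),\qquad Q(X\circ f)(t,\xi)=Q(X)(t,f(\xi)).
\end{equation*}
It follows from \eqref{eq:Plag3}--\eqref{eq:Qlag3} by the substitution $\eta=f(\eta')$: each factor $y_\xi$, $h$, or $\bar r$ in the integrand supplies the Jacobian $f_\xi$ converting $d\eta$ into $d\eta'$, the kernel $e^{-|y(t,\xi)-y(t,\eta)|}$ becomes $e^{-|y(t,f(\xi))-y(t,f(\eta'))|}$, $\sgn(\xi-\eta)$ becomes $\sgn(\xi-\eta')$ because $f$ is increasing, and the purely local terms ($2c\chi(y)\bar U$, $\bar U^2$, the $c^2(\chi'^2+\chi\chi'')(y)$ and $c^2\chi\chi'(y)$ contributions) are manifestly composed with $f$.

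\emph{Step 2: the breaking time is equivariant, and the relabeled curve solves the system.} Let $\tau$ and $\tau_f$ denote the breaking times attached to $X$ and $X\circ f$ by the construction preceding Lemma~\ref{lem:2.3}. Since the $q$-variable of $X\circ f$ is $(y\circ f)_\xi=(y_\xi\circ f)\,f_\xi$ and $f_\xi$ is bounded above and below, the sets $A_t$ of that construction satisfy $A_t^{X\circ f}=f^{-1}(A_t^X)$, hence $\tau_f^n=\tau^n\circ f$ for every $n$ and therefore $\tau_f=\tau\circ f$; in particular $\chi_{\{\tau_f(\xi)>t\}}=\chi_{\{\tau(f(\xi))>t\}}$, which makes the domains $\{\eta\mid\tau_f(\eta)>t\}$ in \eqref{eq:Plag3}--\eqref{eq:Qlag3} the $f$-preimages of $\{\eta\mid\tau(\eta)>t\}$, consistently with Step~1. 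Now put $X(t)=S_tX_0$ and $Y(t)=X(t)\circ f$. Since $f$ is $t$-independent, $y\circ f$ satisfies $(y\circ f)_t=U\circ f$ and $U\circ f$ satisfies $(U\circ f)_t=-Q(X)\circ f=-Q(Y)$ by Step~1; for each component $\phi\in\{h,r\}$ (and likewise for $y_\xi$, $U_\xi$) one has $\partial_t\big((\phi\circ f)\,f_\xi\big)=f_\xi\,(\phi_t\circ f)$, and inserting Steps~1--2 shows that $Y$ satisfies \eqref{eq:sysdiss} with $Y(0)=X_0\circ f$. By uniqueness, Theorem~\ref{th:global}, $Y(t)=S_t(X_0\circ f)$, which is \eqref{eq:Hequivar}. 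I expect this step — in particular the rigorous handling of the discontinuous indicator $\chi_{\{\tau(\xi)>t\}}$ under relabeling in $C([0,T],B_M)$ — to be the main obstacle; everything else amounts to changes of variables.

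\emph{Step 3: descent to the quotient.} If $X'=X\circ f$ then $[S_tX']=[S_t(X\circ f)]=[S_t(X)\circ f]=[S_tX]$, so $\tilde S_t([X]):=[S_tX]$ is well defined on $\quot$, and $\tilde S_0=\id$, $\tilde S_{t+s}=\tilde S_t\circ\tilde S_s$ are inherited from $S$. For continuity we use $d_\quot([X],[X'])=d_\Real(\Gamma(X),\Gamma(X'))$ and $[\Gamma(X)]=[X]$ to write $\tilde S_t[X]=[S_t\Gamma(X)]=\Pi\big(\Gamma(S_t\Gamma(X))\big)$, so that
\begin{equation*}
 d_\quot\big(\tilde S_t[X],\tilde S_t[X']\big)=d_\Real\big(\Gamma(S_t\Gamma(X)),\Gamma(S_t\Gamma(X'))\big).
\end{equation*}
For classes in a bounded set the representatives $\Gamma(X)$ lie in a bounded subset of $\F_0\subset\F_\alpha$ for a fixed $\alpha$ (the $B_M$-bounds control $\|1/(y_\xi+h)\|_{L^\infty}$ and $\|X\|_{\bar V}$, hence via Lemma~\ref{lem:charH} keep $y+H$ in a fixed $G_\alpha$), and the solutions $t\mapsto S_t\Gamma(X)$ stay in a bounded set on $[0,T]$; on that set $S_t$ is Lipschitz for $d_\Real$, uniformly on $[0,T]$, by the global stability result (the theorem preceding Lemma~\ref{lem:Lip}), while $\Gamma|_{\F_\alpha}$ is continuous by Lemma~\ref{lem:picont}. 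Hence $\Gamma\circ S_t\circ\Gamma$ is continuous on this bounded set, giving continuity of $\tilde S_t$. Finally, the time-Lipschitz bound of Lemma~\ref{lem:Lip} for $S_t$ descends to $d_\quot(\tilde S_t[X],\tilde S_{\bar t}[X])\le C_T(M)\,|t-\bar t|$, so $(t,[X])\mapsto\tilde S_t[X]$ is a continuous semigroup.
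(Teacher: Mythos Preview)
Your proof is correct and follows the standard route expected from the cited literature (in particular \cite{HolRay:07} and \cite{GHR4}); note that the paper itself does not supply a proof for this theorem, treating it as a known consequence of the Lagrangian framework. Your three steps---scalar behavior of $P,Q$ under change of variables, equivariance of $\tau$ together with uniqueness from Theorem~\ref{th:global}, and descent via Lemma~\ref{lem:picont} and the global stability theorem---are exactly the ingredients one would assemble.

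One minor point worth making explicit in Step~3: continuity of $\tilde S_t$ requires that a $d_\Real$-convergent sequence $X_n\to X$ in $\F_0$ is eventually contained in a fixed $B_M$, since the stability theorem is stated for initial data in $B_M$ (defined via the $\bar V$-norm, not the $V$-norm used in $d_\Real$). This holds because for $X_n\in\F_0$ one has $y_{n,\xi}+h_n=1$ a.e., forcing $0\le y_{n,\xi},h_n\le 1$ and hence $U_{n,\xi}^2+\bar r_n^2=y_{n,\xi}h_n\le\tfrac14$; combined with the Sobolev-type estimate for $\norm{\bar U}_{L^\infty}$ from the proof of Lemma~\ref{lem:globest}, this yields uniform $\bar V$-bounds from the $V$-bounds implicit in $d_\Real$-convergence. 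You allude to this (``the $B_M$-bounds control\ldots''), but spelling it out would close the loop.
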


We have the
following diagram:
\begin{equation}
\label{eq:diag}
\xymatrix{
\F_0\ar[r]^{\Pi}&\quot\\
\F_\alpha\ar[u]^{\Gamma}&\\
\F_0\ar[u]^{S_t}\ar[r]^\Pi&\quot\ar[uu]_{\tilde S_t}
}
\end{equation}

Next we describe the correspondence
between Eulerian coordinates (functions in $\D$)
and Lagrangian coordinates (functions in
$\quot$). In order to do so, we have to take into account the fact that the set $\D$ allows the energy
density to have a singular part and a positive
amount of energy can concentrate on a set of
Lebesgue measure zero.

We first define the mapping $L$ from $\D$ to $\F_0$ which to
any initial data in $\D$ associates an initial
data for the equivalent system in $\F_0$.

\begin{theorem} 
\label{th:Ldef}
For any $(u,\rho,\mu)$ in $\D$, let
\begin{subequations}
\label{eq:Ldef}
\begin{align}
\label{eq:Ldef1}
y(\xi)&=\sup\left\{y\ |\ \mu((-\infty,y))+y<\xi\right\},\\
\label{eq:Ldef2}
h(\xi)&=1-y_\xi(\xi),\\
\label{eq:Ldef3}
U(\xi)&=u\circ{y(\xi)},\\
\label{eq:Ldef4}
r(\xi)&=\rho\circ{y(\xi)}y_\xi(\xi).
\end{align}
\end{subequations}
Then $(y,U,h,r)\in\F_0$. We denote by $L\colon \D\rightarrow \F_0$ the mapping which to any element $(u,\rho,\mu)\in\D$ associates $X=(y,U,h,r)\in \F_0$ given by \eqref{eq:Ldef}.  
\end{theorem}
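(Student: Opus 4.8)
The plan is to verify, one by one, all the defining conditions of $\G$ in Definition~\ref{def:G} together with the single extra requirement $y+H=\id$ that distinguishes $\F_0$ inside $\G$. Throughout, write $u=\bar u+c\chi$ with $\bar u\in H^1(\Real)$ and $\rho=\bar\rho+k$ with $\bar\rho\in L^2(\Real)$, set $m=u_x^2+\bar\rho^2$ (the density of $\muac$, cf.~\eqref{eq:abspart}), and note $\mu(\Real)<\infty$, $m\in L^1(\Real)$.

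\textbf{Regularity of $y$ and $y+H=\id$.} First I would introduce the nondecreasing, left-continuous function $v(x)=\mu((-\infty,x))+x$, and record $v(x')-v(x)\ge x'-x$ for $x'>x$ (since $\mu\ge0$), so $v$ is strictly increasing with $v(x)\to\pm\infty$; hence $y$ in \eqref{eq:Ldef1} is well defined and nondecreasing on all of $\Real$. A direct comparison argument---for $\xi<\xi'$ and any $y(\xi)<x<x'<y(\xi')$ one has $\xi\le v(x)$ and $v(x')<\xi'$, whence $x'-x\le v(x')-v(x)<\xi'-\xi$---gives $0\le y(\xi')-y(\xi)\le\xi'-\xi$. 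Thus $y$ is Lipschitz, $0\le y_\xi\le1$ a.e., so $h=1-y_\xi\in[0,1]$ satisfies $y_\xi+h=1$, which settles \eqref{eq:lagcoord5} (indeed $1/(y_\xi+h)\equiv1$) and gives \eqref{eq:lagcoord3}. The two-sided bound $\mu((-\infty,y(\xi)))\le\xi-y(\xi)\le\mu((-\infty,y(\xi)])$ (read off from $v(x)<\xi$ for $x<y(\xi)$ and $v(x)\ge\xi$ for $x>y(\xi)$) shows $\xi-y(\xi)\to0$ as $\xi\to-\infty$; hence $h$ is integrable near $-\infty$, $H(\xi)=\int_{-\infty}^\xi h\,d\tilde\xi=\xi-y(\xi)$, so $y+H=\id$ and $H(+\infty)=\mu(\Real)<\infty$. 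In particular $h\in L^1(\Real)\cap L^\infty(\Real)\subset E$, $\zeta:=y-\id=-H\in L^\infty(\Real)$ with $\lim_{\xi\to-\infty}\zeta(\xi)=0$ (this is \eqref{eq:lagcoord4}), $\zeta_\xi=-h\in E$, and $y+H=\id\in G_0$, so $X\in\F_0$ as soon as $X\in\G$.

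\textbf{Identity \eqref{eq:lagcoord6} and the $E$-bounds.} This is the heart of the matter. Split $\Real$ (the $\xi$-line) into $B=\{\xi\mid y_\xi(\xi)\text{ exists and is }>0\}$ and its complement. By the change-of-variables formula for the monotone absolutely continuous function $y$, $U(\xi)=u(y(\xi_0))+\int_{\xi_0}^{\xi}u_x(y)\,y_\xi\,d\eta$, so $U=\bar U+c\chi\circ y$ is absolutely continuous with $U_\xi=u_x\circ y\,y_\xi$ a.e., and similarly $\bar r=\bar\rho\circ y\,y_\xi$ is well defined a.e.\ (on $B$ the map $y$ is locally bi-Lipschitz, hence pushes Lebesgue measure to an absolutely continuous measure, while on $B^c$ the vanishing factor $y_\xi$ renders the value immaterial). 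On $B^c$, $y_\xi=0$ forces $U_\xi=\bar r=0$, so $y_\xi h=0=U_\xi^2+\bar r^2$ there. On $B$, since $y$ is the generalized inverse of $v$, $v'=1+m$ a.e., and $\{\xi\in B\mid y(\xi)\in N\}$ is null for every null $N\subset\Real$ (area formula, using $y_\xi>0$ on $B$), one gets $y_\xi=1/(1+m\circ y)$ for a.e.\ $\xi\in B$; hence $h=1-y_\xi=m\circ y\,y_\xi$ and
\[
y_\xi h=(m\circ y)\,y_\xi^2=(u_x\circ y\,y_\xi)^2+(\bar\rho\circ y\,y_\xi)^2=U_\xi^2+\bar r^2,
\]
which proves \eqref{eq:lagcoord6} a.e. The same formula gives $|U_\xi|=\tfrac{|u_x\circ y|}{1+(u_x\circ y)^2+(\bar\rho\circ y)^2}\le\tfrac12$ and $|\bar r|\le\tfrac12$ on $B$, so $U_\xi,\bar r\in L^\infty(\Real)$. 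Changing variables $x=y(\xi)$, $d\xi=(1+m(x))\,dx$ on $B$ yields $\int_B U_\xi^2\,d\xi=\int\tfrac{u_x^2}{1+m}\,dx\le\|u_x\|_{L^2}^2$, $\int_B\bar r^2\,d\xi\le\|\bar\rho\|_{L^2}^2$, and $\int_B\bar U^2\,d\xi=\int\bar u^2(1+m)\,dx\le\|\bar u\|_{L^2}^2+\|\bar u\|_{L^\infty}^2\mu(\Real)$; on $B^c$, which up to a null set is a countable union of intervals each collapsed by $y$ onto a single atom $x_j$ of $\mu$ of length $\mu(\{x_j\})$, one has $\int_{B^c}\bar U^2\,d\xi=\sum_j\bar u(x_j)^2\mu(\{x_j\})\le\|\bar u\|_{L^\infty}^2\mu(\Real)$ and $U_\xi=\bar r=0$. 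Together with $\bar U=\bar u\circ y\in L^\infty(\Real)$ (as $\bar u\in H^1\hookrightarrow L^\infty$), this gives $\bar U,U_\xi,h,\bar r\in E$ and $\zeta\in L^\infty$, so all components of $X=(\zeta,\bar U,c,\zeta_\xi,U_\xi,h,\bar r,k)$ lie in the factors of $\bar V$ from \eqref{eq:barV}; i.e.\ \eqref{eq:lagcoord1} holds, \eqref{eq:lagcoord2} follows automatically as noted after Definition~\ref{def:G}, and the decompositions $U=\bar U+c\chi\circ y$, $r=\bar r+ky_\xi$ agree with \eqref{eq:Ubar} and \eqref{eq:rbar}. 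Hence $X=(y,U,h,r)\in\G$, and with $y+H=\id$ we conclude $X\in\F_0$.

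\textbf{Main obstacle.} I expect the delicate point to be the two pointwise identities on $B$---the chain rule $U_\xi=u_x\circ y\,y_\xi$ for the composition with the merely absolutely continuous (but monotone) inner map $y$, and $y_\xi=1/(1+m\circ y)$---which both hinge on careful measure-theoretic bookkeeping: that $y$ maps $\xi$-null sets to $x$-null sets, and, conversely, that $\{\xi\in B\mid y(\xi)\in N\}$ is null for $x$-null $N$, so that for a.e.\ $\xi\in B$ the point $y(\xi)$ avoids both the jump set of $v$ and the exceptional set where $v'\neq 1+m$. Everything else is an elementary property of the generalized inverse $y$ or a routine change-of-variables estimate, exactly as in \cite{HolRay:07}.
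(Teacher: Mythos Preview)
The paper states Theorem~\ref{th:Ldef} without proof, deferring to the earlier references \cite{HolRay:07,GHR4}; your argument follows exactly the standard route of those papers, and is correct in all essential points.

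One small inaccuracy: your description of $B^c$ as ``up to a null set a countable union of intervals each collapsed by $y$ onto a single atom $x_j$ of $\mu$'' is not true in general. If $\mu$ has a nontrivial singular \emph{continuous} part (which Definition~\ref{def:D} allows), then $v$ is continuous and strictly increasing, $y=v^{-1}$ has no flat pieces, yet $y_\xi=0$ on a set of positive measure which is totally disconnected (the preimage under $y^{-1}$ of the support of $\mu_{\rm sing}$). Fortunately your estimate survives unchanged: since $h=1-y_\xi=1$ on $B^c$, one has directly
\[
\int_{B^c}\bar U^2\,d\xi\le\norm{\bar u}_{L^\infty}^2\,\meas(B^c)=\norm{\bar u}_{L^\infty}^2\int_{B^c}h\,d\xi\le\norm{\bar u}_{L^\infty}^2\,\mu(\Real),
\]
so simply drop the intermediate equality $\sum_j\bar u(x_j)^2\mu(\{x_j\})$ and the rest of your proof goes through verbatim.
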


On the other hand, to any element in $\F$ there
corresponds a unique element in $\D$ which is
given by the mapping $M$ defined below.

\begin{theorem}
\label{th:umudef} 
Given any element $X=(y,U,h,r)\in\F$. Then, the
measure $y_{\#}(\bar r(\xi)\,d\xi)$ is absolutely
continuous, and we define $(u,\rho,\mu)$ as follows, for any $\xi$ such that  $x=y(\xi)$, 
\begin{subequations}
\label{eq:umudef}
\begin{align}
\label{eq:umudef1}
&u(x)=U(\xi),\\
\label{eq:umudef2}
&\mu=y_\#(h(\xi)\,d\xi),\\
\label{eq:umudef3}
&\bar \rho(x)\,dx=y_\#(\bar r(\xi)\,d\xi),\\
\label{eq:umudef4}
&\rho(x)=k+\bar\rho(x).
\end{align}
\end{subequations}
We have that $(u,\rho,\mu)$ belongs to $\D$. We
denote by $M\colon \F\rightarrow\D$ the mapping
which to any $X$ in $\F$ associates the element $(u, \rho,\mu)\in \D$ as given
by \eqref{eq:umudef}. In particular, the mapping $M$ is
invariant under relabeling.
\end{theorem}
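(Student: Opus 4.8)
The plan is to verify each claim of the theorem in turn: first that the push-forward $y_{\#}(\bar r(\xi)\,d\xi)$ is absolutely continuous, then that the triplet $(u,\rho,\mu)$ produced by \eqref{eq:umudef} is well-defined (independent of the choice of $\xi$ with $y(\xi)=x$), then that it actually lies in $\D$, and finally that $M$ is relabeling-invariant. Since the construction and the set $\G$ here coincide, up to wave breaking, with the conservative setting of \cite{GHR4}, the cleanest route is to follow the corresponding argument there and point out that the only new ingredients — the asymptotics encoded in $c$ and $k$ and the decomposition $U=\bar U+c\chi\circ y$, $r=\bar r+ky_\xi$ — cause no trouble because $\bar U\in E$, $\bar r\in L^2$, and $y+H\in G$.

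First I would establish absolute continuity of $y_\#(\bar r\,d\xi)$. The key point is \eqref{eq:lagcoord6}, $y_\xi h=U_\xi^2+\bar r^2$, together with \eqref{eq:lagcoord5}, $\frac{1}{y_\xi+h}\in L^\infty$. On the set where $y_\xi=0$ one has $\bar r=0$, so $\bar r\,d\xi$ is supported on $\{y_\xi>0\}$; there the Radon--Nikodym density of $y_\#(\bar r\,d\xi)$ with respect to Lebesgue measure is $\bar r/y_\xi$, and $|\bar r/y_\xi|^2=\bar r^2/y_\xi^2\le (U_\xi^2+\bar r^2)/(y_\xi(y_\xi+h))\cdot\frac{y_\xi+h}{y_\xi}$, hmm — more simply, $\bar r^2/y_\xi^2 = h/y_\xi\cdot \bar r^2/(U_\xi^2+\bar r^2)\le h/y_\xi\le (h+y_\xi)/y_\xi$, which need not be bounded pointwise but is locally integrable because $\int (\bar r^2/y_\xi)\,d\xi = \int (\bar r^2/y_\xi^2) y_\xi\,d\xi$ and $\bar r^2/y_\xi^2\cdot y_\xi\le h$, and $h\in L^1(\Real)$ by the remark following the definition of $\F$. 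Thus $\bar\rho:=\bar r/y_\xi$ (extended by $0$) satisfies $\bar\rho\circ y\, y_\xi=\bar r$ and $\bar\rho\in L^1_{\rm loc}$; a Cauchy--Schwarz argument using $y_\xi\le C$ (from Lemma~\ref{lem:charH}, since $y+H\in G$) upgrades this to $\bar\rho\in L^2(\Real)$, hence $\rho=k+\bar\rho\in L^2_{\rm const}(\Real)$. This is where I expect the main technical care to be needed, namely in checking integrability and that the definition does not depend on the representative $\xi$ in a fiber $y^{-1}(x)$ — on such a fiber (an interval of constancy of $y$) one has $y_\xi=0$ a.e., hence $\bar r=0$ and $U_\xi=0$ a.e., so $U$ and the density are constant there, making $u(x)=U(\xi)$ and $\bar\rho(x)$ unambiguous.

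Next I would verify the remaining requirements for membership in $\D$: that $u\in H_{0,\infty}(\Real)$, that $\mu=y_\#(h\,d\xi)$ is a positive finite Radon measure, and that $\muac=(u_x^2+\bar\rho^2)\,dx$. Positivity and finiteness of $\mu$ are immediate from $h\ge0$ and $h\in L^1(\Real)$ with total mass $\|h\|_{L^1}$. For $u$, writing $U=\bar U+c\chi\circ y$ and using that $y_\xi$ is bounded above and bounded below away from $0$ on compact sets (Lemma~\ref{lem:charH}), one shows $\bar u:=\bar U\circ y^{-1}\in H^1(\Real)$ and $u=\bar u+c\chi\in H_{\infty}(\Real)$ with $u_{-\infty}=0$, i.e.\ $u\in H_{0,\infty}(\Real)$. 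The identity for the absolutely continuous part $\muac$ follows by decomposing $\Real$ into the set where $y$ is strictly increasing (there $u_x\circ y\, y_\xi = U_\xi$ a.e., and $h=U_\xi^2/y_\xi+\bar r^2/y_\xi$ by \eqref{eq:lagcoord6}, so the push-forward density of $h\,d\xi$ equals $(u_x)^2+\bar\rho^2$) and the set where $y_\xi=0$ (which maps to a Lebesgue-null set carrying only the singular part of $\mu$); this is exactly the computation carried out in \cite{GHR4} and I would invoke it.

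Finally, relabeling invariance of $M$: given $f\in\Gr$ and $X\in\F$, I would compute $M(X\circ f)$ directly from \eqref{eq:umudef} using $X\circ f=(y\circ f,U\circ f,h\circ f\,f_\xi,r\circ f\,f_\xi)$. Since $f$ is an increasing homeomorphism, $(y\circ f)(\xi)=x$ iff $y(f(\xi))=x$, so $u$ is unchanged; and for any bounded continuous $\varphi$, $\int\varphi(y(f(\xi)))h(f(\xi))f_\xi(\xi)\,d\xi=\int\varphi(y(\eta))h(\eta)\,d\eta$ by the change of variables $\eta=f(\xi)$, so $y_\#((h\circ f)f_\xi\,d\xi)=y_\#(h\,d\xi)=\mu$, and likewise $y_\#((\bar r\circ f)f_\xi\,d\xi)=y_\#(\bar r\,d\xi)=\bar\rho\,dx$, hence $\rho$ is unchanged. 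This gives $M(X\circ f)=M(X)$. The only genuine obstacle in the whole proof is the first step — absolute continuity of $y_\#(\bar r\,d\xi)$ and the attendant $L^2$-bound on $\bar\rho$ — everything else is either a direct change-of-variables computation or a citation to the parallel conservative argument in \cite{GHR4}.
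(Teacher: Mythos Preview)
The paper does not supply a proof of this theorem: it is stated in Section~5 as part of the summary ``For the sake of completeness we summarize these results here'', with the proof deferred to the conservative companion paper \cite{GHR4}. Your proposal is therefore not competing with an in-paper argument but is essentially reconstructing the \cite{GHR4} proof, and in that respect it is on target: the decomposition into the set $\{y_\xi>0\}$ versus the plateaus $\{y_\xi=0\}$, the use of \eqref{eq:lagcoord6} to get $\bar r^2/y_\xi\le h$ and $U_\xi^2/y_\xi\le h$, the push-forward computation for $\muac$, and the change-of-variables verification of relabeling invariance are exactly the expected steps.

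One inaccuracy to flag: when you argue $\bar u\in H^1(\Real)$ you invoke ``$y_\xi$ \ldots\ bounded below away from $0$ on compact sets (Lemma~\ref{lem:charH})''. That is false here --- $y_\xi$ can and does vanish on sets of positive measure (this is precisely wave breaking); Lemma~\ref{lem:charH} applied to $y+H\in G$ only gives $y_\xi+h$ bounded above and below. Fortunately your argument does not actually need a lower bound on $y_\xi$: for $\bar u\in L^2$ you only need $y_\xi\le y_\xi+h\le C$, and for $u_x\in L^2$ the estimate $\int u_x^2\,dx=\int_{\{y_\xi>0\}} U_\xi^2/y_\xi\,d\xi\le\int h\,d\xi<\infty$ (and similarly for the $c\chi'\circ y$ correction) uses \eqref{eq:lagcoord6} in place of any lower bound. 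Once you correct this justification, the sketch is sound.
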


Finally, one has to declare the connection between the equivalence classes in Lagrangian coordinates and the set of Eulerian coordinates.

\begin{theorem}\label{th:LMinv}The mappings $M$ and
  $L$ are invertible. We have
\begin{equation*}
L\circ M=\id_\quot\text{ and }M\circ L=\id_\D.
\end{equation*}
\end{theorem}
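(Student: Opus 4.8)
The plan is to establish the two identities $L\circ M=\id_\quot$ and $M\circ L=\id_\D$ separately, treating $M\circ L=\id_\D$ first since it is the more concrete of the two. Given $(u,\rho,\mu)\in\D$, form $X=L(u,\rho,\mu)=(y,U,h,r)\in\F_0$ via \eqref{eq:Ldef}, and then compute $M(X)=(\tilde u,\tilde\rho,\tilde\mu)$ via \eqref{eq:umudef}. The key observation is that $y$ defined by \eqref{eq:Ldef1} is the generalized inverse of the nondecreasing function $x\mapsto \mu((-\infty,x))+x$; consequently $y_\#(d\xi)$ pushes Lebesgue measure forward to $dx+\mu$, and more precisely $y_\#(y_\xi\,d\xi)=dx$ while $y_\#(h\,d\xi)=y_\#((1-y_\xi)\,d\xi)=\mu$. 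This immediately gives $\tilde\mu=\mu$. For $\tilde u$: since $U=u\circ y$ and $x=y(\xi)$, we get $\tilde u(x)=U(\xi)=u(y(\xi))=u(x)$ wherever $y$ is a genuine (left-continuous) inverse; the measure-zero set of $x$ where $y$ jumps is exactly where $\mu$ concentrates, but $u\in H_{0,\infty}(\Real)$ is continuous there and one checks the definitions match. For $\tilde\rho$: from $r=\rho\circ y\, y_\xi$ and the decomposition $\rho=\bar\rho+k$, one has $\bar r = r-k y_\xi = \bar\rho\circ y\, y_\xi$, so $y_\#(\bar r\,d\xi)=y_\#(\bar\rho\circ y\, y_\xi\,d\xi)=\bar\rho\,dx$ by the change of variables for $y_\#(y_\xi\,d\xi)=dx$; hence $\tilde{\bar\rho}=\bar\rho$ and $\tilde\rho=k+\bar\rho=\rho$. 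The compatibility condition \eqref{eq:abspart}, $\muac=(u_x^2+\bar\rho^2)\,dx$, is what guarantees that the absolutely continuous part recovered by $M$ agrees with the original; this is where the structure of $\D$ is used essentially.

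For the reverse identity $L\circ M=\id_\quot$, the subtlety is that $M$ is defined on all of $\F$ (it is relabeling-invariant, as stated in Theorem~\ref{th:umudef}) while $L$ lands in $\F_0$. So what must be shown is: for $X=(y,U,h,r)\in\F$, if $(u,\rho,\mu)=M(X)$ and $X_0=(y_0,U_0,h_0,r_0)=L(u,\rho,\mu)\in\F_0$, then $[X_0]=[X]$ in $\quot$, i.e., there is a relabeling $f\in\Gr$ with $X=X_0\circ f$. The natural candidate is $f=y+H$ where $H(\xi)=\int_{-\infty}^\xi h\,d\tilde\xi$; indeed the definition $\Gamma(X)=X\circ(y+H)\inv$ together with $\Gamma|_{\F_0}=\id$ suggests that $X_0=\Gamma(X)$, equivalently $X=X_0\circ(y+H)$. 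So the real content is the identity $\Gamma(X)=L(M(X))$ for $X\in\F$, which by relabeling-invariance of $M$ reduces to proving $L(M(X))=X$ for $X\in\F_0$. For $X\in\F_0$ we have $y+H=\id$, so $y_\xi+h=1$ a.e., and one must verify that feeding $(u,\mu,\rho)=M(X)$ through the formulas \eqref{eq:Ldef} returns $X$. The function $x\mapsto\mu((-\infty,x))+x$ is here exactly the inverse of $y$ (using $\mu=y_\#(h\,d\xi)$ and $y_\#(y_\xi\,d\xi)=dx$, so $\mu((-\infty,y(\xi)))+y(\xi)=\xi$ on the set where $y_\xi>0$, and constancy of $y$ on the complementary intervals matches the jumps); hence \eqref{eq:Ldef1} recovers $y$, then \eqref{eq:Ldef2} recovers $h=1-y_\xi$, \eqref{eq:Ldef3} recovers $U$, and \eqref{eq:Ldef4} recovers $r$ using again $\rho\circ y\,y_\xi=\bar\rho\circ y\,y_\xi+k y_\xi=\bar r+ky_\xi=r$.

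The main obstacle is handling the set where $y$ is flat — equivalently where $\mu$ has point masses or, in Eulerian terms, where energy has concentrated — and checking that the left/right-continuous conventions in the ``$\sup$'' of \eqref{eq:Ldef1} are compatible with the conventions implicit in $M$ at a point $x=y(\xi)$ for the (non-unique) choice of $\xi$. One has to argue that $u$, being continuous (it lies in $H^1_{\rm loc}$), takes a well-defined value across such a flat piece, that $\bar\rho\,dx$ assigns no mass to it (so \eqref{eq:umudef3} is unambiguous), and that the pushforward identities $y_\#(y_\xi\,d\xi)=dx$ and $y_\#(h\,d\xi)=\mu$ hold with the correct accounting of the singular part. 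This is essentially the same bookkeeping already carried out for the conservative case in \cite{HolRay:07, GHR4}, so the proof can be short: one cites those references for the measure-theoretic lemmas on generalized inverses and pushforwards, and then checks that the only new ingredient — the presence of the constant $k$ in $\rho=\bar\rho+k$ and of the shift $c\chi$ in $u=\bar u+c\chi$ — passes transparently through all formulas, since $L$ and $M$ only ever see $\bar r=r-ky_\xi$ and $\bar\rho$, while $k$ and $c$ are carried along untouched. Thus the plan is: (i) recall the generalized-inverse/pushforward facts; (ii) prove $M\circ L=\id_\D$ by the change-of-variables computation above, using \eqref{eq:abspart}; (iii) prove $L\circ M=\id$ on $\F_0$ by the same computation run backwards; (iv) upgrade to $L\circ M=\id_\quot$ on $\F$ via relabeling-invariance of $M$ and the identity $\Gamma(X)=X\circ(y+H)\inv$, concluding that $L(M(X))=\Gamma(X)$ represents the class $[X]$.
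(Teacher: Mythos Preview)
Your proposal is correct and follows the standard approach. The paper itself provides no proof of this theorem; it is stated as a summary result in a section whose opening paragraph explicitly says the Eulerian/Lagrangian sets and the mappings $L,M$ coincide with those in \cite{GHR4}, so the proofs carry over verbatim from \cite{HolRay:07,GHR4}. Your sketch is precisely the argument given in those references: the pushforward identities $y_\#(y_\xi\,d\xi)=dx$ and $y_\#(h\,d\xi)=\mu$ coming from the generalized-inverse construction \eqref{eq:Ldef1}, the reduction of $L\circ M=\id_\quot$ to the case $X\in\F_0$ via relabeling invariance of $M$, and the verification that on $\F_0$ the relation $y+H=\id$ makes $\xi\mapsto y(\xi)$ the inverse of $x\mapsto\mu((-\infty,x))+x$. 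Your remark that the only new ingredients here relative to \cite{HolRay:07} are the constants $k$ and $c$, and that these pass through the formulas untouched, is exactly the point.
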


\section{Continuous semigroup of solutions}

In the last section we defined the connection between Eulerian and Lagrangian coordinates, which is the main tool when defining weak solutions of the 2CH system. Also stability results will heavily depend on this relation since we want to measure distances between solutions of the 2CH system by measuring the distance in Lagrangian coordinates rather than in Eulerian coordinates. 

Accordingly, we define $T_t$ as
\begin{equation*}
  T_t=M\circ S_t\circ L.
\end{equation*}

The metric $d_\D$ is defined as 
\begin{equation}\label{eq:meter}
  d_D((u_1,\rho_1,\mu_1),(u_2,\rho_2,\mu_2))=d_{\F_0}(L(u_1,\rho_1,\mu_1),L(u_2,\rho_2,\mu_2)).
\end{equation}

\begin{definition}
  \label{eq:defweakconssol}
  Assume that $u\colon[0,\infty)\times\Real \to \Real$  and $\rho\colon [0,\infty) \times\Real \to\Real$ satisfy \\
  (i) $u\in L^\infty_{\rm{loc}}([0,\infty), H_\infty(\Real))$ and $\rho\in L^\infty_{\rm{loc}}([0,\infty), L^2_{\rm const}(\Real))$, \\
  (ii) the equations
  \begin{multline}\label{eq:weak1}
    \iint_{[0,\infty)\times\Real}\Big[
    -u(t,x)\phi_t(t,x)
    +\big(u(t,x)u_x(t,x)+P_x(t,x)\big)\phi(t,x)\Big]dxdt\\
    =\int_\Real u(0,x)\phi(0,x)dx,
  \end{multline}
  \begin{equation}\label{eq:weak2}
    \iint_{[0,\infty)\times\Real} \Big[(P(t,x)-u^2(t,x)-\frac{1}{2}u_x^2(t,x)-\frac{1}{2}\rho^2(t,x))\phi(t,x)+P_x(t,x)\phi_x(t,x)\Big]dxdt=0,
  \end{equation}
  and
  \begin{equation}
   \label{eq:weak3}
   \iint_{[0,\infty)\times\Real}\Big[ -\rho(t,x)\phi_t(t,x)-u(t,x)\rho(t,x)\phi_x(t,x)\Big]dxdt=\int_\Real \rho(0,x)\phi(0,x)dx,
  \end{equation}
  hold for all $\phi\in
  C^\infty_0([0,\infty)\times\Real)$. Then we say that
  $(u,\rho)$ is a weak global solution of the two-component
  Camassa--Holm system. 
\end{definition}

\begin{theorem} \label{th:mainX}
  The mapping $T_t$ is a continuous
  semigroup of solutions with respect to the metric
  $d_\D$.  Given any initial data
  $(u_0,\rho_0,\mu_0)\in\D$, let
  $(u(t,\dott),\rho(t,\dott),\mu(t,\dott))=T_t(u_0,\rho_0,\mu_0)$. Then
  $(u,\rho)$ is a weak solution to
  \eqref{eq:sysdiss}
and $(u,\rho,\mu)$ is a weak solution to 
\begin{equation}\label{eq:weak5}
 (u^2+\mu+\rho^2-\bar \rho^2)_t+(u(u^2+\mu+\rho^2-\bar\rho^2))_x=(u^3-2Pu)_x.
\end{equation}
The function
\begin{equation}
F(t)=\int_\Real d\mu(t,x)-\int_\Real \big(u_x^2(t,x)+\bar \rho^2(t,x) \big)dx
\end{equation}
which is an increasing function, equals the
amount of energy that has concentrated at sets of
measure zero up to time $t$.  
Moreover, for every $t\in [0,\infty)$, we clearly have 
\begin{equation}\label{eq:weak4}
 \int_\Real \big(u_x^2(t,x)+\bar\rho^2(t,x)\big)dx =\int_\Real d\mu_{ac}(t,x)\leq \int_\Real d\mu(t,x).
\end{equation}
\end{theorem}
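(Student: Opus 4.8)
The plan is to assemble this final theorem from the machinery already developed in Sections 3--6, treating each assertion in turn. First I would establish the semigroup property: since $S_t$ is a continuous semigroup on $\F$ (Theorem \ref{th:global} gives global existence and uniqueness, and Lemma \ref{lem:Lip} together with the stability results gives continuity), and since $M\circ L=\id_\D$ and $L\circ M=\id_\quot$ (Theorem \ref{th:LMinv}), the composition $T_t=M\circ S_t\circ L$ inherits the semigroup law $T_{t+s}=T_t\circ T_s$ directly, because $L\circ M$ inserted between successive applications of $S$ acts as the identity at the level of equivalence classes and $S_t$ respects relabeling (Theorem \ref{th:sgS}). Continuity with respect to $d_\D$ is immediate from the definition \eqref{eq:meter} of $d_\D$ as the pullback of $d_{\F_0}$ under $L$, combined with the Lipschitz/stability estimates for $S_t$ in the metric $d_\Real$ (Theorems \ref{thm:shortstab}, \ref{thm:shortstab2} and their global consequence) and the continuity of $\Gamma$ on $\F_\alpha$ (Lemma \ref{lem:picont}); one has to check that the $\alpha$ controlling $\F_\alpha$ stays uniformly bounded along the flow, which follows from the bounds on $\norm{1/(y_\xi+h)}_{L^\infty}$ in Lemma \ref{lem:globest} via Lemma \ref{lem:charH}.

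Next I would verify that $(u,\rho)$ is a weak solution in the sense of Definition \ref{eq:defweakconssol}. This is the computational core: starting from a representative $X(t)=S_t(L(u_0,\rho_0,\mu_0))\in\F_0$ and the Eulerian functions recovered via $M$ as in \eqref{eq:umudef}, one inserts a test function $\phi\in C_0^\infty([0,\infty)\times\Real)$, changes variables $x=y(t,\xi)$ in the integrals \eqref{eq:weak1}--\eqref{eq:weak3}, and uses the system \eqref{eq:sysdiss}. The key identities are $y_t=U$, $U_t=-Q(X)$, $(u\circ y)_t=U_t$ hence $u_t+uu_x=-P_x$ pointwise along characteristics where $y_\xi>0$, and $r_t=0$ which encodes $\rho_t+(u\rho)_x=0$; the elliptic identity \eqref{eq:weak2} follows from the definitions \eqref{eq:Plag1}/\eqref{eq:Qlag1} of $P$ and $Q$ after undoing the convolution with $\frac12 e^{-|x-z|}$. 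The subtle point here, compared with the conservative case, is that on the wave-breaking set $\{\xi:\tau(\xi)\le t\}$ one has $y_\xi=0$, so that set contributes nothing to the $dx$-integrals after the change of variables, and the cutoff $\chi_{\{\tau(\xi)>t\}}$ in \eqref{eq:sysdiss} is exactly what makes the pushforward computations consistent; I would invoke the relation $y_\#(\bar r\,d\xi)$ absolutely continuous from Theorem \ref{th:umudef} to handle the $\rho$-equation. For \eqref{eq:weak5} one repeats the argument with the conservation law \eqref{eq:conslaw} (with $\kappa=0$, $\eta=1$) transported to Lagrangian variables: $h_t=2(U^2+\tfrac12 k^2-P)U_\xi$ on $\{\tau(\xi)>t\}$ gives the evolution of $\mu=y_\#(h\,d\xi)$, and the combination $u^2+\mu+\rho^2-\bar\rho^2$ corresponds to $U^2 y_\xi+h+(2k\bar r+k^2 y_\xi)$ pushed forward, whose time derivative matches the right-hand side $(u^3-2Pu)_x$ after integration by parts.

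Finally I would treat the energy function $F(t)$. By \eqref{eq:abspart} and Theorem \ref{th:umudef}, $\int d\mu_{ac}(t,x)=\int(u_x^2+\bar\rho^2)\,dx$, and since $\mu=\mu_{ac}+\mu_s$ with $\mu_s\ge0$ we get \eqref{eq:weak4} for free. Monotonicity of $F(t)=\int d\mu(t,x)-\int(u_x^2+\bar\rho^2)(t,x)\,dx$ is where dissipativity enters: in Lagrangian variables $\int d\mu=\int h\,d\xi$ is \emph{not} conserved because $h_t=\chi_{\{\tau(\xi)>t\}}2(U^2+\tfrac12k^2-P)U_\xi$ vanishes after wave breaking while the subtracted term $\int(u_x^2+\bar\rho^2)\,dx=\int_{\{y_\xi>0\}} h\,d\xi$ drops whenever a positive-measure set of characteristics (in Eulerian terms) collides; more precisely, on $\{\xi:\tau(\xi)\le t\}$ we have $h(t,\xi)=h(\tau(\xi),\xi)$ frozen but that mass no longer appears in $\int(u_x^2+\bar\rho^2)\,dx$ since there $y_\xi=0$, so $F(t)=\int_{\{\xi:\tau(\xi)\le t\}} h(\tau(\xi),\xi)\,d\xi$, which is manifestly nondecreasing in $t$ because the set $\{\tau(\xi)\le t\}$ grows with $t$ and $h\ge0$. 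I expect the main obstacle to be the weak-formulation change-of-variables argument: one must justify the change of variables $x=y(t,\xi)$ despite $y(t,\cdot)$ not being strictly increasing everywhere (it is only nondecreasing, degenerating on wave-breaking sets), and carefully account for the fact that $u_x(t,\cdot)$ is only defined a.e.\ and need not equal $U_\xi/y_\xi$ in any naive pointwise sense on the breaking set --- this is handled exactly as in \cite{GHR4} for conservative solutions using $U_\xi=0$ wherever $y_\xi=0$ (from \eqref{eq:lagcoord6}), so the argument is structurally the same but requires the bookkeeping of the cutoffs to be done with care.
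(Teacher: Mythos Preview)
Your proposal is correct and follows essentially the same route as the paper: the semigroup property is assembled from Theorems~\ref{th:global}, \ref{th:sgS}, \ref{th:LMinv} and the stability estimates, and the weak-formulation identities \eqref{eq:weak1}--\eqref{eq:weak3} and \eqref{eq:weak5} are verified by the change of variables $x=y(t,\xi)$ together with the observation that the integrands vanish on $\{y_\xi=0\}$ thanks to \eqref{eq:lagcoord6}. Your explicit Lagrangian formula $F(t)=\int_{\{\tau(\xi)\le t\}}h(\tau(\xi),\xi)\,d\xi$, from which monotonicity is immediate, is in fact sharper than what the paper spells out---the paper only records the inequality \eqref{eq:weak4} and leaves the monotonicity of $F$ as a stated claim.
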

\begin{remark} (i) Equation \eqref{eq:weak5} is also valid in the conservative 
case, cf.~\cite[Thm.~5.2]{GHR4}.  However, note that there is a difference in the definition of the quantity $P$ in the two cases. \\
(ii) An example that illustrates this theorem is given by the symmetric peakon--antipeakon collision in case of the CH equation. Consider the case of $n=2$ in 
and let $p_1(0)=-p_2(0)$, and
$q_1(0)=-q_2(0)<0$. Then  the solution $u$ will vanish
pointwise at a collision time $t^*$ when
$q_1(t^*)=q_2(t^*)$, that is, $u(t^*,x)=0$ for all
$x\in \Real$. At time $t=t^*$, the total energy, $\mu(\Real)$, has concentrated at the origin. Using our mapping from Eulerian to Lagrangian coordinates, we obtain after collision, that is, for $t>t^*$, 
that $y_\xi(t,\xi)=1$ for $\xi\in\Real\setminus[0,\mu(\Real)]$ and $y_\xi(t,\xi)=0$ for $\xi\in [0,\mu(\Real)]$. Going back from Lagrangian to Eulerian coordinates we have $u(t,x)=0$, while the whole energy  still is concentrated at the origin. 
\end{remark}

\begin{proof}
Recall that $P(t,x)-u^2(t,x)-\frac12 k^2$ is defined by \eqref{rep:Peul} and since $\meas(\{x\in\Real\mid \mu(t,\{x\})\not=0\})=0$, we get 
\begin{align}\label{rep:Peul2}
 P(t,x)&-u^2(t,x)-\frac12 k^2\\ \nn
 &\quad =-2c\chi(x)\bar u(t,x)-\bar u^2(t,x)\\ \nn
& \qquad+\frac12\int_{\{x\in\Real\mid \mu(t,\{x\})=0\}} e^{-\vert x-z\vert}(2c\chi\bar u+\bar u^2+\frac12 u_x^2+\frac12 \bar \rho^2+k\bar\rho)(t,z)dz\\ \nn
& \qquad +\frac12\int_\Real e^{-\vert x-z\vert }2c^2(\chi^{\prime 2}+\chi\chi'')(z)dz.
\end{align}
Thus setting $x=y(t,\xi)$ and performing the change of variables $z=y(t,\eta)$ yields that $P(t,\xi)-U^2(t,\xi)-\frac12 k^2=P(t,y(t,\xi))-u^2(t,y(t,\xi))-\frac12 k^2$ coincides with \eqref{eq:Plag1}. Similar considerations yield that $Q(t,\xi)=P_x(t,y(t,\xi))$ coincides with \eqref{eq:Qlag1}.

 We will only prove \eqref{eq:weak1} and \eqref{eq:weak5} here since \eqref{eq:weak2} and \eqref{eq:weak3} can be shown in much the same way. Using the change of variables $x=y(t,\xi)$ and $U_\xi=u_x\circ y  y_\xi$ we get 
\begin{align}
 &\iint_{\Real_+\times \Real} (-u\phi_t+uu_x\phi)(t,x) dxdt\\ \nn 
&\qquad = \iint_{\Real_+\times\Real} (-Uy_\xi(t,\xi)\phi_t(t,y(t,\xi))+UU_\xi(t,\xi)\phi(t,y(t,\xi)))d\xi dt\\ \nn 
& \qquad=\iint_{\Real_+\times\Real}(-Uy_\xi(t,\xi)(\phi(t,y(t,\xi)))_t \\\nn
&\qquad\qquad\qquad+ U^2y_\xi(t,\xi)\phi_x(t,y(t,\xi))+UU_\xi(t,\xi)\phi(t,y(t,\xi)))d\xi dt\\ \nn 
& \qquad= \int_{\Real}Uy_\xi(0,\xi)\phi(0,y(0,\xi))d\xi-\iint_{\Real_+\times\Real}Qy_\xi(t,\xi)\phi(t,y(t,\xi))d\xi dt\\ \nn 
& \qquad=\int_\Real u(0,x)\phi(0,x)dx-\iint_{\Real_+\times\Real}P_x(t,x)\phi(t,x)dx.
\end{align}
Note that we do not have to restrict the domain in Lagrangian coordinates to $\Real_+\times \{\xi\in\Real\mid y_\xi(t,\xi)\not =0\}$, since the integrand  vanishes whenever $y_\xi(t,\xi)=0$, and therefore integrating in addition over the set $\Real_+\times \{\xi\in\Real\mid y_\xi(t,\xi)=0\}$ has no influence on the value of the integral. 

We now turn to the proof of \eqref{eq:weak5}, which is equivalent to showing 
\begin{equation}\label{eq:weak6}
 (u^2+\mu+k^2+2k\bar \rho)_t+(u(u^2+\mu+k^2+2k\bar\rho))_x=(u^3-2Pu)_x,
\end{equation}
in the sense of distributions. Using the change of variables $x=y(t,\xi)$ and $U_\xi=u_x\circ y y_\xi$ we get 
\begin{align}\label{eq:iint1}
 &\iint_{\Real_+\times\Real}u^2\phi_t(t,x)dx dt \\ \nn 
& \qquad = \iint_{\Real_+\times\Real} [(\phi(t,y(t,\xi)))_t-\phi_x(t,y(t,\xi))U(t,\xi)] U^2(t,\xi)y_\xi(t,\xi)d\xi dt\\ \nn 
& \qquad = -\iint_{\Real_+\times\Real} (U^2(t,\xi)y_{t,\xi}(t,\xi)+2U(t,\xi)U_t(t,\xi)y_\xi(t,\xi))\phi(t,y(t,\xi))d\xi dt\\ \nn 
& \qquad \quad-\iint_{\Real_+\times \Real} U^3(t,\xi)\phi_\xi(t,y(t,\xi))d\xi dt\\ \nn 
& \qquad =-\iint_{\Real_+\times\Real} (U^2(t,\xi)U_\xi(t,\xi)-2U(t,\xi)Q(t,\xi)y_\xi(t,\xi))\phi(t,y(t,\xi))d\xi dt\\ \nn 
& \qquad \quad-\iint_{\Real_+\times\Real} U^3(t,\xi)\phi_\xi(t,y(t,\xi))d\xi dt \\ \nn 
& \qquad = -\iint_{\Real_+\times\Real}u^2(t,x)u_x(t,x)\phi(t,x)dxdt -\iint_{\Real_+\times\Real}u^3(t,x)\phi_x(t,x)dx dt\\ \nn 
& \qquad \quad+\iint_{\Real_+\times\Real} 2U(t,\xi)P_\xi(t,\xi)\phi(t,y(t,\xi))d\xi dt.
\end{align}
Next 
\begin{align}\label{eq:iint2}
 & \iint_{\Real_+\times\Real} \phi_t(t,x)d\mu(t,x)dt= \iint_{\Real_+\times\Real}\phi_t(t,y(t,\xi))h(t,\xi)d\xi dt\\ \nn 
& \qquad =\iint_{\Real_+\times\Real} [(\phi(t,y(t,\xi)))_t-\phi_x(t,y(t,\xi))y_t(t,\xi)]h(t,\xi)d\xi dt\\ \nn 
& \qquad =-\iint_{\Real_+\times\Real}h_t(t,\xi)\phi(t,y(t,\xi))d\xi dt-\iint_{\Real_+\times\Real}U(t,\xi)h(t,\xi)\phi_x(t,y(t,\xi))d\xi dt\\ \nn 
& \qquad =-\iint_{\Real_+\times\Real} 2(U^2(t,\xi)+\frac12 k^2-P(t,\xi))U_\xi(t,\xi)\phi(t,y(t,\xi))d\xi dt\\ \nn 
& \qquad \quad-\iint_{\Real_+\times\Real}U(t,\xi)h(t,\xi)\phi_x(t,y(t,\xi))d\xi dt\\ \nn 
& \qquad = -\iint_{\Real_+\times\Real}2(u^2+\frac12 k^2-P)(t,x)u_x(t,x)\phi(t,x)dxdt\\ \nn 
& \qquad \quad -\iint_{\Real_+\times\Real} u(t,x)\phi_x(t,x)d\mu(t,x).
\end{align}
Since $k(t)=k(0)$ we get
\begin{equation}\label{eq:iint3}
 \iint_{\Real_+\times\Real} k^2\phi_t(t,x) dx dt=0.
\end{equation}
Finally
\begin{align}\label{eq:iint4}
& \iint_{\Real_+\times\Real} 2k\bar \rho(t,x)\phi_t(t,x) dx dt \\ \nn
& \qquad = \iint_{\Real_+\times\Real} 2k\bar r(t,\xi)\phi_t(t,y(t,\xi)) d\xi dt\\ \nn 
& \qquad = \iint_{\Real_+\times\Real} 2k\bar r(t,\xi)[(\phi(t,y(t,\xi)))_t-\phi_x(t,y(t,\xi))U(t,\xi)]d\xi dt\\ \nn 
& \qquad = -\iint_{\Real_+\times\Real} 2k\bar r_t(t,\xi)\phi(t,y(t,\xi))d\xi dt-\iint_{\Real_+\times\Real} 2k\bar r(t,\xi)U(t,\xi)\phi_x(t,y(t,\xi))d\xi dt\\ \nn 
& \qquad = \iint_{\Real_+\times\Real} 2k^2U_\xi(t,\xi)\phi(t,y(t,\xi))d\xi dt -\iint_{\Real_+\times\Real}2k\bar\rho(t,x)u(t,x)\phi_x(t,x)dx dt\\ \nn 
& \qquad =-\iint_{\Real_+\times\Real} 2k^2U(t,\xi)\phi_\xi(t,y(t,\xi))d\xi dt -\iint_{\Real_+\times\Real}2k\bar\rho(t,x)u(t,x)\phi_x(t,x)dx dt\\ \nn 
& \qquad = -\iint_{\Real_+\times\Real}2k^2u(t,x)\phi_x(t,x)dx dt -\iint_{\Real_+\times\Real}2k\bar\rho(t,x)u(t,x)\phi_x(t,x)dx dt. 
\end{align}
Adding up the equalities \eqref{eq:iint1}--\eqref{eq:iint4} proves \eqref{eq:weak6}.

As far as  \eqref{eq:weak4} is concerned, we observe that 
\begin{align*}
 \int_\Real (u_x^2+\bar\rho^2)(t,x)dx& = \int_{\{x\in\Real\mid \mu(t,\{x\})=0\}} (u_x^2+\bar\rho^2)(t,x)dx=\int_{\{\xi\in\Real\mid y_\xi(t,\xi)\not =0\}} h(t,\xi)d\xi\\ 
& \leq \int_\Real h(t,\xi)d\xi=\int_\Real d\mu(t,x)dx.
\end{align*}

\end{proof}

\section{Stability with respect to the initial data} \label{sec:stabil}

Already in \cite{GHR4} we showed  that $\rho$ has a regularizing effect and that wave breaking is closely linked to the initial value of $\rho$. In particular, we showed that discontinuities travel at finite speed although the 2CH system has an infinite speed of propagation, see \cite{henry:09}. Moreover, we obtained that wave breaking can only occur at points $x$ which satisfy $\rho_0(x)=0$. For completeness and to motivate assumptions that were made throughout this paper, we recall this result here. 
 
Given $(u,\rho,\mu)\in\D$, $p\in\mathbb{N}$ and an
open set $I$, we say that $(u,\rho,\mu)$ is
$p$-regular on an open set $I$ if
\begin{equation*}
  u\in W^{p,\infty}(I),\ \rho\in W^{p-1,\infty}(I)\ \text{ and } \muac=\mu\text{ on }I.
\end{equation*}
By notation, we set
$W^{0,\infty}(I)=L^\infty(I)$.
\begin{theorem}\cite[Theorem 6.1]{GHR4}
  \label{th:presreg}
  We consider the initial data
  $(u_0,\rho_0,\mu_0)$. Assume that
  $(u_0,\rho_0,\mu_0)$ is p-regular on a given
  interval $(x_0,x_1)$ and
  \begin{equation}
    \label{eq:rhopos}
    \rho_0(x)^2\geq c>0
  \end{equation}
  for $x\in (x_0,x_1)$. Then, for any
  $t\in\Real_+$, $(u,\rho,\mu)(t,\cdot)$ is
  p-regular on the interval $(y(t,\xi_0),
  y(t,\xi_1))$, where $\xi_0$ and $\xi_1$ satisfy
  $y(0,\xi_0)=x_0$ and $y(0,\xi_1)=x_1$ and are
  defined as
  \begin{equation*}
    \xi_0=\sup\{\xi\in\Real\ |\ y(0,\xi)\leq x_0\} \text{ and }
    \xi_1=\inf\{\xi\in\Real\ |\ y(0,\xi)\geq x_1\}.
  \end{equation*}
\end{theorem}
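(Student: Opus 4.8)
The statement is Theorem 6.1 of \cite{GHR4}, so strictly speaking it is quoted rather than proved here; but let me sketch how the proof goes, since the mechanism is exactly what motivates the set $\kappa_{1-\gamma}$ and the hypothesis $r_0(\xi)=0$ used throughout. The key observation is that positivity of $\rho_0$ translates, in Lagrangian coordinates, into a strict lower bound on $y_\xi$ along the corresponding characteristics, and hence rules out wave breaking there. First I would pass to Lagrangian variables via the map $L$ of Theorem~\ref{th:Ldef}, so that $X_0=(y_0,U_0,h_0,r_0)=L(u_0,\rho_0,\mu_0)\in\F_0$. On the $\xi$-interval $J=(\xi_0,\xi_1)$ corresponding to $(x_0,x_1)$, the hypothesis $\muac=\mu$ gives $h_0=U_{0,\xi}^2+\bar r_0^2$ with $y_{0,\xi}=1$, and $\rho_0^2\ge c>0$ forces $\bar r_0(\xi)+k_0=\rho_0(y_0(\xi))\ne 0$, so in particular $r_0(\xi)=\bar r_0(\xi)+k_0 y_{0,\xi}(\xi)\ne 0$ on $J$. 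By Theorem~\ref{th:presreg}'s own mechanism (see the paragraph after \eqref{eq:defKgamma2} and \cite[Thm.~6.1]{GHR4}), since $r_t=0$, the set $\{\xi\mid r(\xi)\ne 0\}$ is invariant, and on it wave breaking cannot occur: indeed $\tau(\xi)=\infty$ for $\xi\in J$.

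The crucial quantitative step is a lower bound on $y_\xi(t,\xi)$ for $\xi\in J$. From the identity \eqref{eq:lagcoord6}, $y_\xi h=U_\xi^2+\bar r^2$, together with $\bar r(t,\xi)=-k_0 y_\xi(t,\xi)+r(t,\xi)$ and $r(t,\xi)=r_0(\xi)$, one gets $y_\xi h\ge \bar r^2\ge (|r_0(\xi)|-|k_0| y_\xi)^2$, which, combined with the bound $h(t,\xi)\le C$ coming from $X(t)\in B_M$ (Lemma~\ref{lem:globest}), yields that $y_\xi(t,\xi)$ stays bounded away from zero on $J$ uniformly on $[0,T_0]$ for any $T_0$. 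Concretely, if $y_\xi(t,\xi)$ were to approach zero then so would $\bar r$, contradicting $|\bar r - k_0 y_\xi|=|r_0(\xi)|\ge\sqrt c\,y_{0,\xi}=\sqrt c>0$. Hence $1/y_\xi$ is locally bounded on $J\times[0,T_0]$, and $y(t,\cdot)$ is a bi-Lipschitz change of variables from $J$ onto $(y(t,\xi_0),y(t,\xi_1))$.

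Given that lower bound, the regularity transfers back. I would differentiate the Lagrangian ODE system \eqref{eq:sysdiss} in $\xi$ repeatedly on $J$ — where, because $\tau(\xi)=\infty$, the indicator $\chi_{\{\tau(\xi)>t\}}$ equals $1$ and the system is smooth — obtaining a closed system for $(y_{\xi\xi},\dots)$ up to order $p$ with right-hand side controlled using $\|1/y_\xi\|_{L^\infty(J)}$, $\|P(X)\|$, $\|Q(X)\|$, and the smoothness of $\chi$. Gronwall then propagates finiteness of these higher derivatives from $t=0$ (where they are finite by $p$-regularity of the initial data and the formulas \eqref{eq:Ldef}) to all $t\in[0,T_0]$. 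Undoing the change of variables $x=y(t,\xi)$, using $u=U\circ y^{-1}$, $\bar\rho\,dx=y_\#(\bar r\,d\xi)$ and $\mu=y_\#(h\,d\xi)=y_\#((U_\xi^2+\bar r^2)\,d\xi)=\muac$ on the image interval, gives $u\in W^{p,\infty}$, $\rho\in W^{p-1,\infty}$ and $\mu=\muac$ on $(y(t,\xi_0),y(t,\xi_1))$, which is the claim.

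\textbf{Main obstacle.} The delicate point is the uniform-in-time lower bound on $y_\xi$ restricted to $J$: one must carefully exploit the conserved quantity $r_0(\xi)$ together with \eqref{eq:lagcoord6} and the a priori bounds of Lemma~\ref{lem:globest}, and check that the argument is genuinely local in $\xi$ (the constants depend on $c$, on $\dist$ of $y_0(J)$-data, and on $M_0,T_0$, but not on behaviour outside $J$). Once that bound is in hand, the higher-order bootstrap is routine differentiation and Gronwall.
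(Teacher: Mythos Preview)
You are right that the paper does not prove this statement; it is quoted verbatim from \cite{GHR4} as motivation for the definition of $\kappa_{1-\gamma}$ and the role of the condition $r_0(\xi)=0$, so there is no proof here to compare against. Your sketch captures the mechanism used in \cite{GHR4}: the conservation $r_t=0$ together with $r_0(\xi)\neq 0$ on $J$ forces $y_\xi$ to stay bounded away from zero (hence $\tau(\xi)=\infty$ on $J$), after which the higher-order regularity is propagated by differentiating the Lagrangian system and applying Gronwall.

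Two small technical slips in your write-up are worth flagging. First, under the map $L$ of Theorem~\ref{th:Ldef} one has $y_{0,\xi}+h_0=1$, not $y_{0,\xi}=1$; on $J$ the absolute continuity of $\mu_0$ gives $y_{0,\xi}=(1+u_{0,x}^2+\bar\rho_0^2)^{-1}\circ y_0>0$, but it is not identically one, so the bound $|r_0(\xi)|\ge\sqrt{c}\,y_{0,\xi}(\xi)$ does not reduce to $\sqrt{c}$. Second, the relation is $r=\bar r+k y_\xi$, so the contradiction argument should read: if $y_\xi\to 0$ then $\bar r\to 0$ (from $y_\xi h=U_\xi^2+\bar r^2$ and $h$ bounded), hence $r_0=\bar r+k_0 y_\xi\to 0$, contradicting $|r_0(\xi)|\ge\sqrt{c}\,y_{0,\xi}(\xi)>0$. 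These are cosmetic; the structure of your argument is the one used in \cite{GHR4}.
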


In the case of conservative solutions we have been able to prove in
\cite[Theorem 6.3]{GHR4} that any conservative solution $(u,\mu)$ of
the CH equation with initial data $(u_0,\mu_0)$ such that
$\mu_0=\mu_{0,\rm ac}$ can be approximated by smooth conservative
solutions $(u_n,\rho_n,\mu_n)$ of the 2CH system with $\rho_n(0,x)\not
=0$ for all $x\in\Real$. Observe that the approximate solutions of the
2CH system do not experience wave breaking.

In the context of dissipative solutions we cannot hope that we can approximate dissipative solutions
of the CH equation by solutions of the 2CH system which do not enjoy wave breaking according to the
definition of our metric in Lagrangian coordinates.  This is illustrated in Figure
\ref{fig:compsol}.  Note that, in \cite{GHR4}, we show the regularizing effect of strictly positive
$\rho$. This effect is also present in the example shown in the figure because, for strictly
positive $\rho_0$, dissipative and conservative solutions coincide and the second solution (dashed
line) is $C^\infty(\Real)$.  However, $\rho_0$ is small and we also have numerical errors, so that
the solution appears as if it contains peaks.  In that way, it looks very much like the conservative
solution of the scalar CH equation with initial data $u_0$. The numerical scheme used to compute
these solutions is an adaptation of the scheme studied in \cite{cohenray} and the code is
available at \cite{github}.

\begin{figure}[h]
  \centering
  \includegraphics[width=6cm]{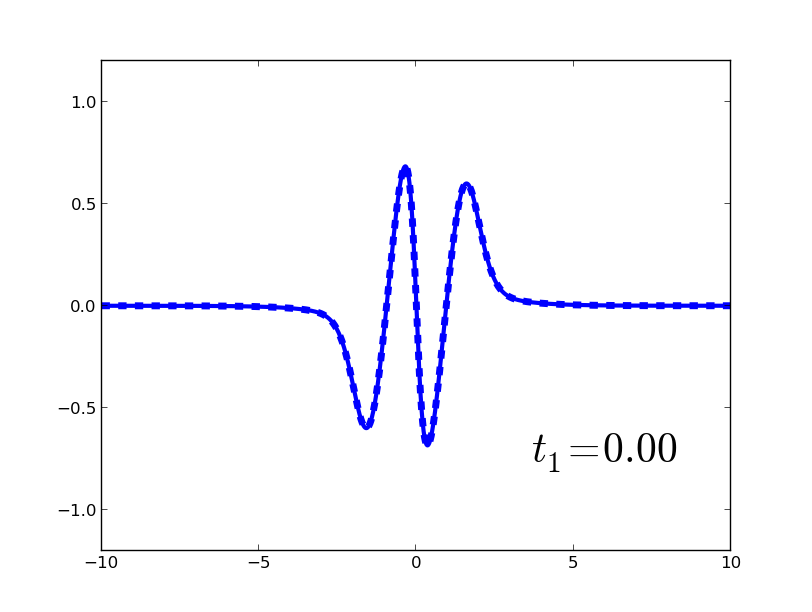}
  \includegraphics[width=6cm]{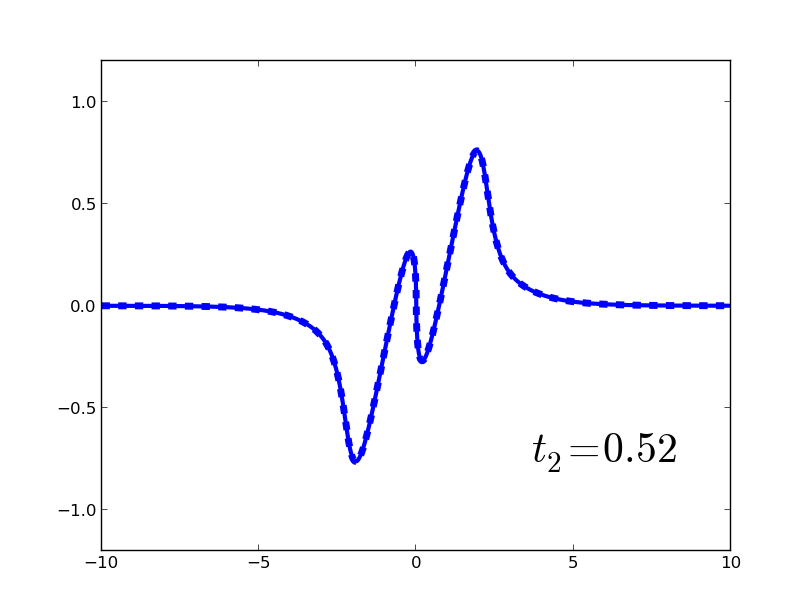}\\
  \includegraphics[width=6cm]{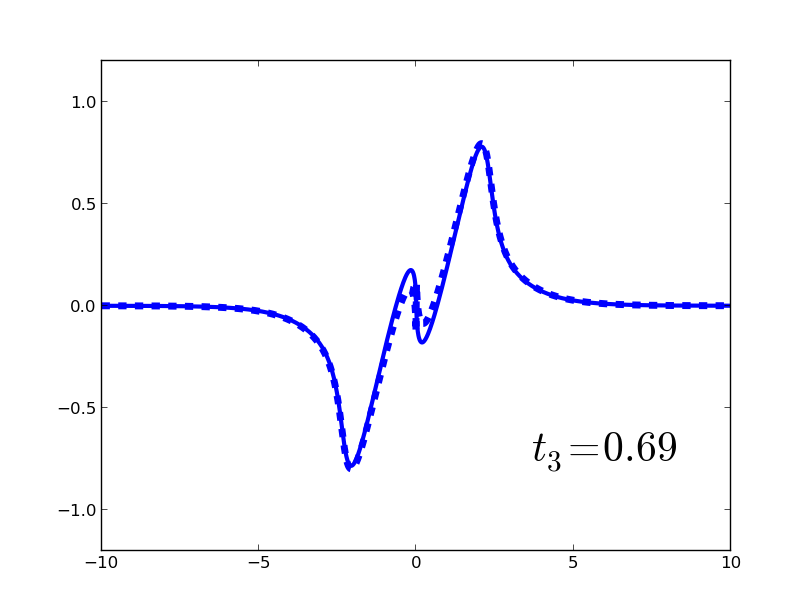}
  \includegraphics[width=6cm]{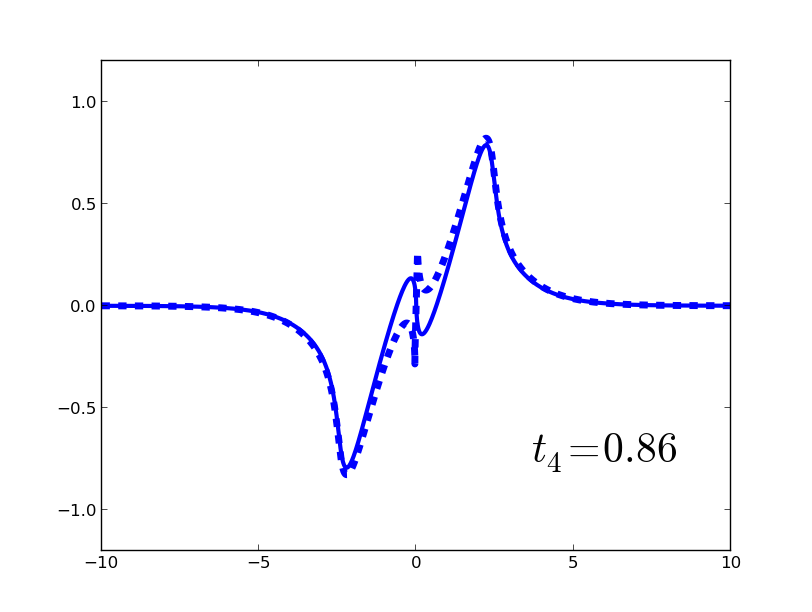}\\
  \includegraphics[width=6cm]{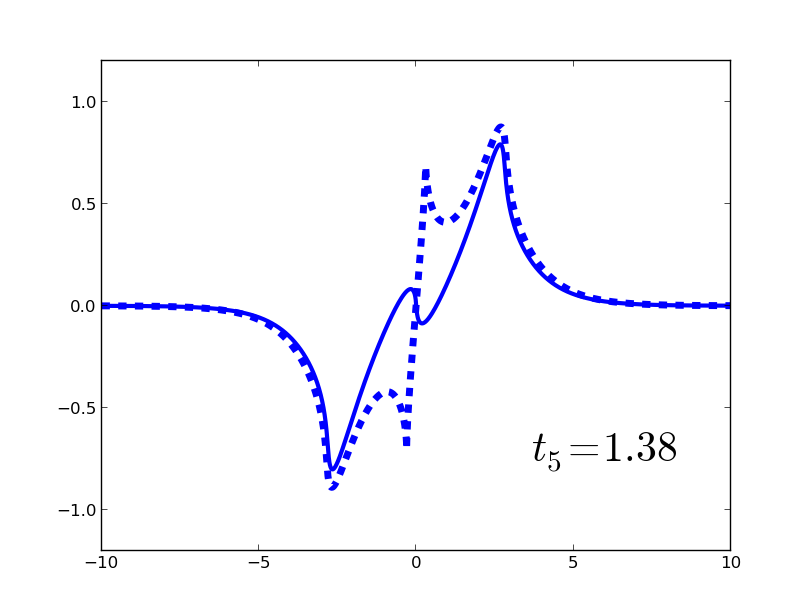}
  \includegraphics[width=6cm]{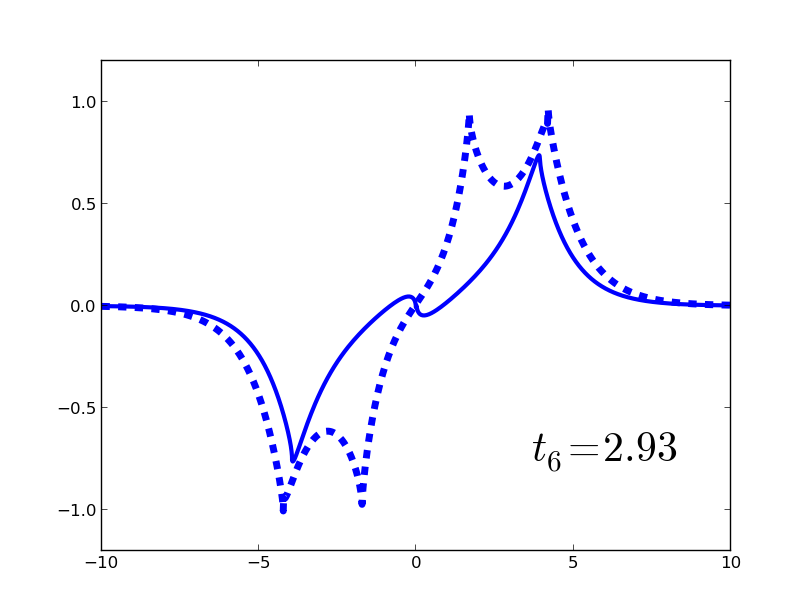}
  \caption{Computed dissipative solutions for initial data $u_0(x)=\alpha e^{-x^2}x(x - 1)(x +1)$
    and $\rho_0(x)=\epsi e^{-x^2/10}$ for $\epsi=0$ (solid line) and $\epsi=0.01$ (dashed line). The
    figures display the function $u$ only. When $\epsi=0$, the function $u$ solves the dissipative
    CH equation as $\rho$ will remain identically zero. In the case with $\epsi>0$, the fact that
    $\rho_0$ is strictly positive implies that no dissipation of energy takes place. In contrast, in
    the case $\epsi=0$, dissipation starts occurring between the times $t_1$ and $t_2$. Prior to
    that, $\rho_0$ is so small that we cannot see the difference between the two solutions. However,
    the solutions look very different for $t\geq t_4$, after the first solution has experienced
    dissipation.  This example show that the semigroup of dissipative solution is not continuous
    with respect to $\rho$ in a standard sense. This fact is reflected in the construction of our
    metric, which completely separates initial data for which $\rho_0$ vanishes and initial data for
    which $\rho_0$ is strictly positive in the same region.}
  \label{fig:compsol}
\end{figure}

\begin{lemma}\label{lem:urhog}

Given $(u,\rho)\in H_{0,\infty}(\Real)\times L^2_{\rm const}(\Real)$, let us denote by $X_{e}(x)$ the following vector 
\begin{equation}\label{defXe}
X_e(x)=(x,\bar u(x), c,1, u_x(x), u_x^2(x)+\bar \rho^2(x), \bar \rho(x), k),
\end{equation}
then for $g$ as in Definition~\ref{eq:defg}, we have $g(X_e(\dott))-1\in L^1(\Real)$ and the following holds:

(\textit{i}) Given two elements $(u_{1},\rho_1)$ and $(u_2,\rho_2)$ in $H_{0,\infty}(\Real)\times L^2_{\rm const}(\Real)$ such that 
\begin{equation}\label{measure}
\meas(\{x\in\Real\mid (\rho_1(x)=0 \text{ and }\rho_2(x)\not=0)\text{ or }(\rho_1(x)\not =0 \text{ and }\rho_2(x)=0)\})=0,
\end{equation}
then 
\begin{equation}\label{urhog}
\norm{g(u_1,\rho_1)-g(u_2,\rho_2)}_{L^1 }\leq C(\norm{u_{1,x}-u_{2,x}}_{L^2 }+\norm{\bar\rho_1-\bar\rho_2}_{L^2 }),
\end{equation}
where $C$ denotes a constant dependent on the $L^2(\Real)$-norm of $u_{1,x}$, $u_{2,x}$, $\bar\rho_1$, and $\bar\rho_2$.

(\textit{ii}) Given any sequence 
 $(u_n,\rho_n)$ in $H_{0,\infty}(\Real)\times L^2_{\rm const}(\Real)$ which converges to $(u,\rho)$ in $H_{0,\infty}(\Real)\times L^2_{\rm const}(\Real)$ such that for all $n\in\mathbb{N}$,
$$\meas(\{x\in\Real\mid (\rho_0(x)=0 \text{ and }\rho_n(x)\not=0)\text{ or }(\rho_0(x)\not =0 \text{ and }\rho_n(x)=0)\})=0,$$
 then $g(u_n,\rho_n)-1$ converges to $g(u,\rho)-1$ in $L^1(\Real)$.
\end{lemma}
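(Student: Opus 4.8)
The plan is to work pointwise in $x$ and exploit the explicit piecewise formula \eqref{eq:defg} for $g$. First I would record what $g(X_e(x))$ actually is: since the fourth component of $X_e$ is $x_4=1$ and $x_7+x_8x_4 = \bar\rho(x)+k = \rho(x)$, the point $X_e(x)$ lies in $\Omega_1\cup\Omega_2$ precisely when $\rho(x)=0$, and in $\Omega_3$ precisely when $\rho(x)\neq 0$. Hence for a.e.\ $x$ with $\rho(x)\neq 0$ we have $g(X_e(x)) = g_2(X_e(x)) = x_4+x_6 = 1 + u_x^2(x)+\bar\rho^2(x)$, so that $g(X_e(x))-1 = u_x^2(x)+\bar\rho^2(x) \in L^1(\Real)$ since $u_x,\bar\rho\in L^2(\Real)$; and when $\rho(x)=0$, so $\bar\rho(x)=-k$, the value $g(X_e(x))$ is either $g_1$ or $g_2$ evaluated at $X_e(x)$, and in either case $g(X_e(x)) - 1 - (u_x^2(x)+\bar\rho^2(x))$ is bounded by $|g_1(X_e(x)) - g_2(X_e(x))| \le |x_5| + 2|x_7x_8| \le |u_x(x)| + 2|k|\,|\bar\rho(x)|$, which is controlled in $L^1$ on any set of finite measure; since $\{\rho=0\}$ need not have finite measure one must argue more carefully here, but on $\{\rho=0\}$ we have $\bar\rho=-k$ constant, hence $u_x\in L^2$ forces $\{\rho=0\}\cap\{|u_x|>\epsi\}$ to have finite measure for each $\epsi>0$, and on $\{\rho=0, u_x=0\}$ one checks directly from the definition of $\Omega_1$ (where $|x_5|+2|x_7x_8|+2x_4 \le x_4+x_6$) that $g=g_1$ there with $g_1(X_e(x))-1 = 2|k\bar\rho(x)|+1 = 2k^2+1$, while $g_2 - 1 = 1+\bar\rho^2 = 1+k^2$, so $g-1$ is again controlled; combining, $g(X_e(\cdot))-1\in L^1(\Real)$.

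For part~(\textit{i}), the hypothesis \eqref{measure} is exactly what is needed to guarantee that for a.e.\ $x$ the two points $X_{e,1}(x)$ and $X_{e,2}(x)$ lie \emph{on the same side} of the $\Omega_3$-boundary, i.e.\ either both have $\rho=0$ (so both in $\Omega_1\cup\Omega_2$) or both have $\rho\neq 0$ (so both in $\Omega_3$). This is the structural analogue of the argument already carried out in the proof of Lemma~\ref{lem:estg}, where the discrete metric term $\iota$ (equivalently $\kappa$) was used to force $X_0,\tilde X_0$ onto the same side of $\Omega_3$. With this in hand I would split $\Real$ into $\{\rho_1\neq 0\}=\{\rho_2\neq 0\}$ (up to null sets) and its complement. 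On the first set both $g$-values equal $1+u_{i,x}^2+\bar\rho_i^2$, so the difference is $|u_{1,x}^2 - u_{2,x}^2| + |\bar\rho_1^2 - \bar\rho_2^2| \le (|u_{1,x}|+|u_{2,x}|)|u_{1,x}-u_{2,x}| + (|\bar\rho_1|+|\bar\rho_2|)|\bar\rho_1-\bar\rho_2|$, and Cauchy--Schwarz gives the bound $C(\|u_{1,x}-u_{2,x}\|_{L^2}+\|\bar\rho_1-\bar\rho_2\|_{L^2})$ with $C$ depending only on the listed $L^2$-norms. On the complement, where $\rho_1=\rho_2=0$ a.e., both $\bar\rho_i$ equal the respective constants $-k_i$; but note $\bar\rho_1-\bar\rho_2 = k_2-k_1$ is constant there, and if that set has positive measure then $\bar\rho_1-\bar\rho_2\in L^2$ forces $k_1=k_2$, hence $\bar\rho_1=\bar\rho_2$ there; one then checks using the explicit forms of $g_1,g_2$ on $\Omega_1$ versus $\Omega_2$ (the only freedom is whether $X_{e,i}(x)\in\Omega_1$ or $\Omega_2$, governed by the sign of $u_{i,x}$ and the inequality defining $\Omega_1$) that $|g(X_{e,1}(x))-g(X_{e,2}(x))|$ is bounded, up to constants depending on $|k|$, by $|u_{1,x}(x)-u_{2,x}(x)|$ plus terms vanishing with $\bar\rho_1-\bar\rho_2$; integrating and applying Cauchy--Schwarz on the relevant finite-measure subsets (as above, $\{u_{i,x}\neq 0\}$ restricted to this set has finite measure) yields \eqref{urhog}.

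Part~(\textit{ii}) is then immediate: apply part~(\textit{i}) with $(u_1,\rho_1)=(u_n,\rho_n)$ and $(u_2,\rho_2)=(u,\rho)$, whose hypothesis \eqref{measure} is guaranteed by the stated condition on $\rho_n$ versus $\rho_0=\rho$; since $u_{n,x}\to u_x$ in $L^2(\Real)$ and $\bar\rho_n\to\bar\rho$ in $L^2(\Real)$ by the convergence in $H_{0,\infty}(\Real)\times L^2_{\rm const}(\Real)$, and the constant $C$ stays bounded along the sequence (the $L^2$-norms of $u_{n,x},\bar\rho_n$ converge, hence are bounded), the right-hand side of \eqref{urhog} tends to zero, giving $g(u_n,\rho_n)-1 \to g(u,\rho)-1$ in $L^1(\Real)$. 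The main obstacle is the bookkeeping on the set $\{\rho=0\}$: one must carefully handle the $\Omega_1$ versus $\Omega_2$ dichotomy and the fact that this set can have infinite measure, reducing to the finite-measure pieces $\{\rho=0,\ |u_x|>\epsi\}$ and treating the residual set $\{\rho=0,\ u_x=0\}$ by the explicit constant values of $g_1$ and $g_2$ there; everything else is a routine Cauchy--Schwarz estimate modeled on Lemma~\ref{lem:estg}.
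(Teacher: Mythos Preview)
Your overall strategy---split according to whether the points lie in $\Omega_3$ or in $\Omega_1\cup\Omega_2$, handle $\Omega_3$ with a direct Cauchy--Schwarz, and deduce (ii) from (i)---is the same as the paper's, and your treatment of the $\Omega_3$ part and of part~(ii) is fine. The difficulties are all in the bookkeeping on $\{\rho=0\}$, and here your proposal contains two genuine errors.

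First, in the $g(X_e)-1\in L^1$ argument you claim that on $\{\rho=0,\,u_x=0\}$ one has $g=g_1$. This is false: there $g_1(X_e)=2|k\bar\rho|+2=2k^2+2$ and $g_2(X_e)=1+\bar\rho^2=1+k^2$, so $g_1>g_2$ always and hence $X_e\in\Omega_2$, giving $g-1=k^2$. If $k\neq 0$ this is a nonzero constant, which would be a problem on an infinite-measure set---but $\bar\rho=-k$ on $\{\rho=0\}$ and $\bar\rho\in L^2$ force that set to have finite measure; if $k=0$ the constant is zero. Either way the conclusion holds, but not for the reason you give.

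Second, in part~(i) you assert that positive measure of $\{\rho_1=\rho_2=0\}$ together with $\bar\rho_1-\bar\rho_2\in L^2$ forces $k_1=k_2$. This is wrong: a nonzero constant on a set of \emph{finite} positive measure is perfectly compatible with $L^2$, so in general $k_1\neq k_2$. Beyond this, the actual pointwise estimate of $|g(X_{e,1})-g(X_{e,2})|$ on $\{\rho_1=\rho_2=0\}$ is where all the work lies, and you wave it away (``one then checks\ldots''). The paper handles this by a cleaner observation that you are missing: on $\Omega_1^i$ the defining inequality, rewritten using $\bar\rho_i=-k_i$, reads $|u_{i,x}|+\bar\rho_i^2+1\le u_{i,x}^2$, which forces $|u_{i,x}|\ge 1$ and hence $\meas(\Omega_1^i)\le\|u_{i,x}\|_{L^2}^2$. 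This gives the finite-measure piece directly, with no appeal to $k_i$ or to the size of $\{\rho_i=0\}$. The paper then performs a five-case split according to whether $x$ lies in $\Omega_1^j$ or in $\Omega_1^{j,c}$ (the latter further subdivided into $\Omega_{2-},\Omega_{2+},\Omega_{2*},\Omega_3$), and in each mixed case uses the $\Omega_1$/$\Omega_2$ defining inequalities to replace the awkward $g_1$ expression by $g_2$-type quantities, reducing everything to differences $u_{1,x}^2-u_{2,x}^2$, $\bar\rho_1^2-\bar\rho_2^2$, and $u_{1,x}-u_{2,x}$ that Cauchy--Schwarz handles. Your plan would need exactly this case analysis to be complete; the finite-measure fact about $\Omega_1$ is the missing key.
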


\begin{proof}
 We will only prove (\textit{i}) since (\textit{ii}) is an immediate consequence of (\textit{i}). We will have to consider different cases and therefore we introduce the following sets:
\begin{align*}
 \Omega_1& =\{x\in\Real\mid \vert u_x(x)\vert +2\vert k\bar \rho(x)\vert +2\leq 1+u_x^2(x)+\bar \rho^2(x), u_x(x)\leq 0 \text{, and } \rho(x)=0\},\\ \nn
 \Omega_{2-}& = \{x\in\Real\mid 1+u_x^2(x)+\bar \rho^2(x)\leq \vert u_x(x)\vert +2\vert k\bar \rho(x)\vert +2, u_x(x)\leq 0 \text{, and }\rho(x)=0\},\\ \nn
  \Omega_{2+}& = \{x\in\Real\mid  1+u_x^2(x)+\bar \rho^2(x)\leq \vert u_x(x)\vert +2\vert k\bar \rho(x)\vert +2, 0< u_x(x) \text{, and }\rho(x)=0\},\\ \nn
  \Omega_{2*}& = \{x\in\Real\mid   \vert u_x(x)\vert +2\vert k\bar \rho(x)\vert +2\leq 1+u_x^2(x)+\bar \rho^2(x), 0< u_x(x) \text{, and }\rho(x)=0\},\\ \nn
  \Omega_3& = \{x\in\Real\mid \rho(x)\not =0\}. 
\end{align*}
Note that $\Omega_{2-}\cup\Omega_{2+}\cup\Omega_{2*}=\Omega_{2}$. In addition we will denote the sets which correspond to $(u_1,\rho_1)$ and $(u_2,\rho_2)$ by superscripts $1$ and $2$, respectively.

Observe that we have by the definition of $\Omega_1$ for $x\in\Omega_1^i$, $i=1,2$, since $0=\bar \rho_i+k_i$, 
\begin{equation*}
 \vert u_{i,x}(x)\vert +\bar\rho_i^2(x)+1\leq u_{i,x}^2(x), \quad i=1,2,
\end{equation*}
which implies that $\vert u_{i,x}(x)\vert\geq 1$ for $i=1,2$ and therefore 
\begin{equation}\label{est:omega1}
 \meas(\{x\in \Omega_1^1\})+\meas(\{x\in\Omega_1^2\})\leq \norm{u_{1,x}}_{L^2 }^2+\norm{u_{2,x}}_{L^2 }^2<\infty.
\end{equation}

\noindent(\textit{i}) If $x\in \Omega_1^{1,c}\cap \Omega_1^{2,c}=I_1$, then 
\begin{align*}
 \vert g(u_1,\rho_1)(x)& -g(u_2,\rho_2)(x)\vert\\ 
&  =\vert (u_{1,x}+u_{2,x})(u_{1,x}-u_{2,x})(x)+(\bar\rho_1+\bar\rho_2)(\bar\rho_1-\bar\rho_2)(x)\vert.
\end{align*}
Thus, using the Cauchy--Schwarz inequality, we obtain 
\begin{align}\label{est:estI1} 
 \int_{I_1}& \vert g(u_1,\rho_1)(x) -g(u_2,\rho_2)(x)\vert dx \\ \nn 
& \leq (\norm{u_{1,x}}_{L^2 }+\norm{u_{2,x}}_{L^2 })\norm{u_{1,x}-u_{2,x}}_{L^2 }+(\norm{\bar\rho_1}_{L^2 }+\norm{\bar\rho_2}_{L^2 })\norm{\bar\rho_1-\bar\rho_2}_{L^2 }. 
\end{align}

\noindent(\textit{ii}) If $x\in\Omega_1^1\cap \Omega_1^2=I_2$, then we have
\begin{equation*}
 \vert g(u_1,\rho_1)(x) -g(u_2,\rho_2)(x)\vert =\vert (u_{2,x}-u_{1,x})(x)+2(\bar\rho_1+\bar\rho_2)(\bar\rho_1-\bar\rho_2)(x)\vert,
\end{equation*}
and accordingly
\begin{align}
 \int_{I_2}&\vert g(u_1,\rho_1)(x)-g(u_2,\rho_2)(x)\vert dx\\ \nn
   &\leq \int_{I_2}\abs{(u_{1,x}-u_{2,x})(x)}dx
   +2(\norm{\bar\rho_1}_{L^2 }+\norm{\bar\rho_2}_{L^2 })
   \norm{\bar\rho_1-\bar\rho_2}_{L^2 } \\ \nn
   &\leq \meas(I_2)^{1/2}\norm{u_{1,x}-u_{2,x}}_{L^2 }
   +2(\norm{\bar\rho_1}_{L^2 }+\norm{\bar\rho_2}_{L^2 })
   \norm{\bar\rho_1-\bar\rho_2}_{L^2 } \\ \nn
    &\leq (\norm{u_{1,x}}_{L^2 }+\norm{u_{2,x}}_{L^2 })\norm{u_{1,x}-u_{2,x}}_{L^2 }+2(\norm{\bar\rho_1}_{L^2 }+\norm{\bar\rho_2}_{L^2 })\norm{\bar\rho_1-\bar\rho_2}_{L^2 },
\end{align}
using \eqref{est:omega1}.

\noindent (\textit{iii}) $x\in (\Omega_1^1\cap \Omega_{2-}^2)\cup(\Omega_{2-}^1\cap\Omega_1^2)=I_3$: Without loss of generality, we assume $x\in  \Omega_1^1\cap \Omega_{2-}^2$ (the other case follows similarly).
Then, 
\begin{align*}
 \vert g(u_1,\rho_1)(x)&-g(u_2,\rho_2)(x)\vert\\ 
 & = \vert -u_{1,x}(x)+2\bar\rho_1^2(x)+2-1-u_{2,x}^2(x)-\bar\rho_2^2(x)\vert \\ 
& \leq \vert (u_{1,x}^2-u_{2,x}^2)(x)+(\bar\rho_1^2-\bar\rho_2^2)(x)\vert +\abs{-u_{1,x}^2(x)-u_{1,x}(x)+\bar \rho_1^2(x)+1}\\
& = \vert (u_{1,x}^2-u_{2,x}^2)(x)+(\bar\rho_1^2-\bar\rho_2^2)(x)\vert +u_{1,x}^2(x)+u_{1,x}(x)-\bar \rho_1^2(x)-1\\
& \leq \vert (u_{1,x}^2-u_{2,x}^2)(x)+(\bar\rho_1^2-\bar\rho_2^2)(x)\vert \\
& \quad +u_{1,x}^2(x)+u_{1,x}(x)-\bar \rho_1^2(x)-1+1+\bar\rho_2^2(x)-u_{2,x}(x)-u_{2,x}^2(x)\\
& \leq 2\vert (u_{1,x}^2-u_{2,x}^2)(x)\vert +2\vert (\bar \rho_1^2-\bar\rho_2^2)(x)\vert+\vert u_{1,x}(x)-u_{2,x}(x)\vert,
\end{align*}
and hence, using \eqref{est:omega1}, we obtain 
\begin{align}
 \int_{I_3} & \vert g(u_1,\rho_1)(x)-g(u_2,\rho_2)(x)\vert dx\\ \nn
& \leq 3(\norm{u_{1,x}}_{L^2 }+\norm{u_{2,x}}_{L^2 })\norm{u_{1,x}-u_{2,x}}_{L^2 }+2(\norm{\bar\rho_1}_{L^2 }+\norm{\bar\rho_2}_{L^2 })\norm{\bar\rho_1-\bar\rho_2}_{L^2 }.
\end{align}

\noindent (\textit{iv}) $x\in (\Omega_1^1\cap \Omega_{2+}^2)\cup(\Omega_{2+}^1\cap\Omega_1^2)=I_4$: Without loss of generality, we assume $x\in  \Omega_1^1\cap \Omega_{2+}^2$ (the other case follows similarly). By assumption we have $-u_{1,x}(x)+2\bar\rho_1^2(x)+2\leq 1+u_{1,x}^2(x)+\bar\rho_1^2(x)$ and $1+u_{2,x}^2(x)+\bar\rho_2^2(x)\leq u_{2,x}(x)+2\bar\rho_2^2(x)+2$, which implies that either
\begin{align*}
 0&\leq g(u_1,\rho_1)(x) -g(u_2,\rho_2)(x)\\
 &=-u_{1,x}(x)+2\bar\rho_1^2(x)+2-1-u_{2,x}^2(x)-\bar\rho_2^2(x)\\ 
& \leq (u_{1,x}^2-u_{2,x}^2)(x)+(\bar\rho_1^2-\bar\rho_2^2)(x)\\ 
& \leq (u_{1,x}+u_{2,x})(u_{1,x}-u_{2,x})(x)+(\bar\rho_1+\bar\rho_2)(\bar\rho_1-\bar\rho_2)(x), \\
\intertext{or} 
 0&\leq g(u_2,\rho_2)(x) -g(u_1,\rho_1)(x)\\
 &=1+u_{2,x}^2(x)+\bar\rho_2^2(x)+u_{1,x}(x)-2\bar\rho_1^2(x)-2\\ 
& \leq u_{2,x}(x)+2\bar\rho_2^2(x)+u_{1,x}(x)-2\bar \rho_1^2(x)\\ 
& \leq (u_{2,x}-u_{1,x})(x)+2(\bar\rho_1+\bar\rho_2)(\bar\rho_2-\bar\rho_1)(x),
\end{align*}
where we used in the last step that $0\leq -u_{1,x}(x)$ since $x\in\Omega_1^1$.
Thus applying \eqref{est:omega1} yields
\begin{align}
 \int_{I_4}& \vert g(u_1,\rho_1)(x)-g(u_2,\rho_2)(x)\vert dx\\ \nn 
& \leq  (\norm{u_{1,x}}_{L^2 }+\norm{u_{2,x}}_{L^2 })\norm{u_{1,x}-u_{2,x}}_{L^2 }+2(\norm{\bar\rho_1}_{L^2 }+\norm{\bar\rho_2}_{L^2 })\norm{\bar\rho_1-\bar\rho_2}_{L^2 }.
\end{align}

\noindent(\textit{v}) $x\in (\Omega_1^1\cap \Omega_{2*}^2)\cup(\Omega_{2*}^1\cap\Omega_1^2)=I_5$: Without loss of generality, we assume $x\in  \Omega_1^1\cap \Omega_{2*}^2$ (the other case follows similarly). By assumption we have 
$-u_{1,x}(x)+2\bar\rho_1^2(x)+2\leq 1+u_{1,x}^2(x)+\bar\rho_1^2(x)$ and $u_{2,x}(x)+2\bar\rho_2^2(x)+2\leq 1+u_{2,x}^2(x)+\bar\rho_2^2(x)$, which implies that either 
\begin{align*}
 0&\leq g(u_1,\rho_1)(x) -g(u_2,\rho_2)(x)\\
 &\leq -u_{1,x}(x)+2\bar\rho_1^2(x)+2-1-u_{2,x}^2(x)-\bar\rho_2^2(x)\\
& \leq (u_{1,x}^2-u_{2,x}^2)(x)+(\bar\rho_1^2-\bar\rho_2^2)(x)\\ 
& =(u_{1,x}+u_{2,x})(u_{1,x}-u_{2,x})(x)+(\bar\rho_1+\bar\rho_2)(\bar\rho_1-\bar\rho_2)(x), \\
\intertext{or} 
 0&\leq g(u_2,\rho_2)(x)-g(u_1,\rho_1)(x)\\ 
 &=1+u_{2,x}^2(x)+\bar\rho_2^2(x)+u_{1,x}(x)-2\bar\rho_1^2(x)-2\\
& \leq u_{2,x}^2(x)+\bar\rho_2^2(x)-\bar\rho_1^2(x)\\ \nn 
& \leq u_{2,x}(u_{2,x}-u_{1,x})(x)+(\bar\rho_1+\bar\rho_2)(\bar\rho_2-\bar\rho_1)(x),
\end{align*}
where we used in the last step that $0\leq -u_{1,x}u_{2,x}(x)$. Thus we get
\begin{align}\label{est:estI5}
 \int_{I_5} &  \vert(g(u_1,\rho_1)-g(u_2,\rho_2))(x)\vert dx\\ \nn
& \leq (\norm{u_{1,x}}_{L^2 }+\norm{u_{2,x}}_{L^2 })\norm{u_{1,x}-u_{2,x}}_{L^2 }+(\norm{\bar\rho_1}_{L^2 }+\norm{\bar\rho_2}_{L^2 })\norm{\bar\rho_1-\bar\rho_2}_{L^2 }.
\end{align}
Finally adding \eqref{est:estI1}--\eqref{est:estI5}, we end up with \eqref{urhog}. 
\end{proof}

\begin{lemma} \label{lemma:kont}
Given a sequence
 $(u_n,\rho_n)$ in $H_{0,\infty}(\Real)\times L^2_{\rm const}(\Real)$ which converges to $(u,\rho)$ in $H_{0,\infty}(\Real)\times L^2_{\rm const}(\Real)$ such that for all $n\in\mathbb{N}$,
$$\meas(\{x\in\Real\mid (\rho_0(x)=0 \text{ and }\rho_n(x)\not=0)\text{ or }(\rho_0(x)\not =0 \text{ and }\rho_n(x)=0)\})=0,$$
 then $(u_n,\rho_n, (u_{n,x}^2+\bar\rho_n^2)dx)$ converges to $(u,\rho, (u_x^2+\bar \rho^2)dx)$ in $\D$.
\end{lemma}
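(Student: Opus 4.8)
The plan is to unravel the definition of $d_\D$ and reduce the claim to a statement about the map $L$ inside $\F_0$, handling the three contributions to $d_\Real$ separately. Put $\mu_n=(u_{n,x}^2+\bar\rho_n^2)\,dx$, $\mu=(u_x^2+\bar\rho^2)\,dx$, and $X_n=L(u_n,\rho_n,\mu_n)$, $X=L(u,\rho,\mu)$. By \eqref{eq:meter} it suffices to prove $d_\Real(X_n,X)\to0$, i.e.\ $\norm{X_n-X}_V\to0$, $\norm{g(X_n)-g(X)}_{L^2(\Real)}\to0$ and $\kappa(X_n,X)=0$. Since $\mu_n,\mu$ are absolutely continuous, \eqref{eq:Ldef} takes a simple form: with $m_n=u_{n,x}^2+\bar\rho_n^2$ and $\psi_n(x)=x+\int_{-\infty}^x m_n\,dz$ (a strictly increasing continuous bijection of $\Real$ with $\psi_n'\ge1$) one has $y_n=\psi_n^{-1}$, $y_{n,\xi}=1/(1+m_n\circ y_n)\in(0,1]$, $h_n=1-y_{n,\xi}$, $U_n=u_n\circ y_n$, $\bar r_n=\bar\rho_n\circ y_n\,y_{n,\xi}$, and similarly for the limit; in particular $X_n\in\F_0$, so $g_2(X_n)=y_{n,\xi}+h_n\equiv1$.

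First I would dispose of $\norm{X_n-X}_V$. Convergence in $H_{0,\infty}(\Real)\times L^2_{\rm const}(\Real)$ gives $c_n\to c$, $k_n\to k$, $\bar u_n\to\bar u$ in $H^1(\Real)$, $u_{n,x}\to u_x$ in $L^2(\Real)$ and $\bar\rho_n\to\bar\rho$ in $L^2(\Real)$; by Cauchy--Schwarz $m_n\to m:=u_x^2+\bar\rho^2$ in $L^1(\Real)$, hence $\norm{\psi_n-\psi}_{L^\infty}\le\norm{m_n-m}_{L^1}\to0$, and since $\psi^{-1}$ is $1$-Lipschitz this forces $\norm{y_n-y}_{L^\infty}\to0$. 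The components $\bar U_n$, $y_{n,\xi}$ (equivalently $h_n$), $U_{n,\xi}$ and $\bar r_n$ are pull-backs under $y_n$ of fixed $L^2$-type functions built from $(u_n,\rho_n)$; their $L^2(\Real)$-convergence follows from $\norm{y_n-y}_{L^\infty}\to0$, the bound $y_{n,\xi}\le1$, the convergence $\norm{h_n}_{L^1}=\norm{m_n}_{L^1}\to\norm{m}_{L^1}=\norm{h}_{L^1}$ (so that Scheff\'e's lemma upgrades a.e.\ convergence to $L^1$-convergence), and the routine change-of-variables/density arguments used for the map $L$ in \cite{HolRay:07,GHR4}. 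This yields $\norm{X_n-X}_V\to0$ and, in passing, $h_n\to h$ in $L^1(\Real)\cap L^2(\Real)$.

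The heart of the matter is the $g$-term, and this is where \eqref{measure} enters, through Lemma~\ref{lem:urhog}. Since $g_1,g_2$ are positively homogeneous of degree one in the block $(y_\xi,U_\xi,h,\bar r)$ and the conditions defining $\Omega_1$ are invariant under the attendant scaling, the formulas \eqref{eq:Ldef} give the pointwise identity
\[
g(X_n)(\xi)=y_{n,\xi}(\xi)\,g(u_n,\rho_n)(y_n(\xi)),
\]
where $g(u_n,\rho_n)(\dott)=g(X_{e,n}(\dott))$ is exactly the function of Lemma~\ref{lem:urhog} ($X_{e,n}(x)$ being \eqref{defXe} formed from $(u_n,\rho_n)$). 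Writing $G_n=g(u_n,\rho_n)-1\in L^1(\Real)$ and $G=g(u,\rho)-1\in L^1(\Real)$, we get $g(X_n)-g(X)=[\,y_{n,\xi}(G_n\circ y_n)-y_\xi(G\circ y)\,]-(h_n-h)$. The map $F\mapsto(F\circ y_n)\,y_{n,\xi}$ is an isometry of $L^1(\Real)$, so $\norm{y_{n,\xi}(G_n\circ y_n)-y_{n,\xi}(G\circ y_n)}_{L^1}=\norm{G_n-G}_{L^1}\to0$ by Lemma~\ref{lem:urhog}\,(\textit{ii}), while $y_{n,\xi}(G\circ y_n)\to y_\xi(G\circ y)$ in $L^1(\Real)$ after approximating the fixed $G$ in $L^1$ by $C_c(\Real)$-functions and using $\norm{y_n-y}_{L^\infty}\to0$, $y_{n,\xi}\le1$ and $y_{n,\xi}\to y_\xi$ in $L^1(\Real)$. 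Together with $h_n\to h$ in $L^1(\Real)$ this shows $g(X_n)\to g(X)$ in $L^1(\Real)$. Finally, on $\F_0$ one has $0\le g\le1$ (indeed $g=g_2\equiv1$ off $\Omega_1$, and $0\le g_1\le g_2=1$ on $\Omega_1$), so $\norm{g(X_n)-g(X)}_{L^2}^2\le\norm{g(X_n)-g(X)}_{L^1}\to0$.

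It remains to observe $\kappa(X_n,X)=0$: since $y_{n,\xi},y_\xi>0$ almost everywhere, $r_n(\xi)=0\iff\rho_n(y_n(\xi))=0$ and $r(\xi)=0\iff\rho(y(\xi))=0$, and because $y_n,y$ and their inverses are absolutely continuous they map Lebesgue-null sets to Lebesgue-null sets, so the hypothesis \eqref{measure} makes $\{\xi\mid r_n(\xi)=0\}$ and $\{\xi\mid r(\xi)=0\}$ coincide up to a null set; hence $\kappa(X_n,X)=0$. Collecting the three parts gives $d_\D\bigl((u_n,\rho_n,\mu_n),(u,\rho,\mu)\bigr)\to0$. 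The main obstacle is the $g$-term: one must transport the Eulerian $L^1$-estimate of Lemma~\ref{lem:urhog} into $\F_0$ across the relabelings $y_n$, which do not converge to $y$ in any label-fixing topology, and then turn $L^1$-convergence into $L^2$-convergence by exploiting the normalization $g_2\equiv1$ that holds on $\F_0$.
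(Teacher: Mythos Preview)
Your treatment of the $V$-norm and of the $g$-term follows the paper's approach; your decomposition $g(X_n)-g(X)=[y_{n,\xi}(G_n\circ y_n)-y_\xi(G\circ y)]-(h_n-h)$ with $G_n=g(X_{e,n})-1$ is in fact somewhat cleaner than the paper's, which instead writes $g_n\circ y_n\,y_{n,\xi}-g\circ y\,y_\xi$ as $\bigl(g_n\circ y_n\,(b\circ y+1)-g\circ y\,(b_n\circ y_n+1)\bigr)y_\xi y_{n,\xi}$ (with $b=u_x^2+\bar\rho^2$) and expands from there. Both routes reach $L^1$-convergence via density of $C_c(\Real)$-functions and the change of variables $x=y_n(\xi)$, and then upgrade to $L^2$ via the uniform bound on $g$ on $\F_0$. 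The paper does not spell out the $\kappa$-term at all; it refers everything except the $g$-convergence back to the conservative analogue in \cite{GHR4}, where no $\kappa$ appears in the metric.

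There is, however, a genuine gap in your argument for $\kappa(X_n,X)=0$. You claim that since $y_n,y$ and their inverses preserve null sets, the hypothesis forces $\{\xi\mid r_n(\xi)=0\}$ and $\{\xi\mid r(\xi)=0\}$ to coincide up to a null set. But these sets are $y_n^{-1}(\{\rho_n=0\})$ and $y^{-1}(\{\rho=0\})$, preimages under \emph{different} maps. Null-set preservation only gives $y_n^{-1}(\{\rho_n=0\})=y_n^{-1}(\{\rho=0\})$ up to a null set; it says nothing about comparing $y_n^{-1}(A)$ with $y^{-1}(A)$ for a fixed set $A$ of positive measure. Concretely, take $\rho_n=\rho$ with $\{\rho=0\}$ a nondegenerate interval: as soon as $u_n\neq u$ (hence $y_n\neq y$), the two preimage intervals differ on a set of positive measure, and your reasoning would produce $\kappa(X_n,X)=1$, not $0$. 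This step therefore needs a different justification --- and since the paper's own proof is silent here, the issue is not resolved by the reference to \cite{GHR4} either.
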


\begin{proof} First of all note that according to Lemma~\ref{lem:urhog} the function $g(X_{e,n})-1$ converges to $g(X_e)-1$ in $L^1(\Real)$. Since the set of Eulerian and Lagrangian coordinates and the mappings between them coincide with the ones used in \cite{GHR4}, one can prove everything  as in \cite[Lemma 6.4]{GHR4}, except  $g(X_n)\to g(X) \in L^2(\Real)$. Thus we will only show that $g(X_n)\to g(X)$ in $L^2(\Real)$. 

Let $X_n=(y_n,U_n,h_n,r_n)$ and $X=(y,U,h,r)$ be the representatives in $\F_0$ given by \eqref{eq:Ldef} of $L(u_n,\rho_n,(u_{n,x}^2+\bar\rho_n^2)dx)$ and $L(u,\rho,(u_x^2+\bar\rho^2)dx)$, respectively and assume that $X_n\to X$ in $V$. Abbreviate by $b_n=u_{n,x}^2+\bar \rho_n^2$ and $b=u_x^2+\bar\rho^2$ and note that $b_n\to b$ in $L^1(\Real)$. Since the measures $(u_{n,x}^2+\bar\rho_n^2)dx$ and $(u_x^2+\bar\rho^2)dx$ are purely absolutely continuous, we obtain that $y_\xi(\xi)>0$ almost everywhere and in particular 
\begin{equation}
y_\xi=\frac{1}{b\circ y+1} \quad \text{ and }\quad y_{n,\xi}=\frac{1}{b_n\circ y_n+1}.
\end{equation}
This implies in particular that $g_n\circ y_ny_{n,\xi}:=g(X_{e,n})\circ y_ny_{n,\xi}=g(X_n)$ almost everywhere and $g\circ yy_\xi:=g(X_e)\circ yy_\xi=g(X)$ almost everywhere so that 
\begin{align}\label{eq:stability3}
 g(X_n)-g(X)& =g_n\circ y_ny_{n,\xi}-g\circ yy_\xi\\ \nn 
& = (g_n\circ y_n(b\circ y+1)-g\circ y(b_n\circ y_n+1))y_\xi y_{n,\xi}\\ \nn 
& =(g_n\circ y_n-g\circ y)y_\xi y_{n,\xi}\\ 
& \quad +\big(g_n\circ y_n (b\circ y-b_n\circ y_n)+(g_n\circ y_n-g\circ y)b_n\circ y_n\big)y_\xi y_{n,\xi}. \nn
\end{align}
We will study the first term on the right-hand side in detail and explain afterwards how the other terms can be treated similarly.
We have 
\begin{equation}\label{eq:stability1}
 (g_n\circ y_n-g\circ y)y_\xi y_{n,\xi}= (g_n-g)\circ y_n y_\xi y_{n,\xi}+(g\circ y_n-g\circ y)y_\xi y_{n,\xi}.
\end{equation}
Using now the change of variables $x=y_n(\xi)$, since $y_\xi(\xi)\leq 1$, we get
\begin{equation}\label{eq:stability2}
 \norm{(g_n-g)\circ y_ny_\xi y_{n,\xi}}_{L^1}\leq \norm{(g_n-g)\circ y_ny_{n,\xi}}_{L^1}\leq \norm{g_n-g}_{L^1}.
\end{equation}
Since $g\in L^1(\Real)$, we can find to any $\varepsilon>0$ a continuous function $l$ with compact support such that $\norm{g-l}_{L^1}\leq\varepsilon/3$. Hence we can decompose the second term on the right-hand side of \eqref{eq:stability1} into 
\begin{align}
 (g\circ y_n-g\circ y)y_\xi y_{n,\xi}& = (g\circ y_n-l\circ y_n)y_\xi y_{n,\xi}\\ \nn 
&\quad + (l\circ y_n-l\circ y)y_\xi y_{n,\xi}+(l\circ y-g\circ y) y_\xi y_{n,\xi}.
\end{align}
By arguing as in \eqref{eq:stability2}, one can show that 
\begin{equation}
 \norm{(g\circ y_n-l\circ y_n)y_\xi y_{n,\xi}}_{L^1}+\norm{(g\circ y-l\circ y)y_\xi y_{n,\xi}}_{L^1}\leq \frac{2}{3}\varepsilon.
\end{equation}
Moreover, since $y_n\to y$ in $L^\infty(\Real)$ and $l$ is continuous with compact support, we obtain by applying the Lebesgue dominated convergence theorem, that  $l\circ y_n\to l\circ y$ in $L^1(\Real)$ and thus we can choose $n$ big enough so that
\begin{equation}
 \norm{(l\circ y_n-l\circ y)y_\xi y_{n,\xi}}\leq \norm{l\circ y_n-l\circ y}_{L^1}\leq \frac{\varepsilon}{3}. 
\end{equation}
Hence, we get that $\norm{(g\circ y_n-g\circ y)y_\xi y_{n,\xi}}\leq \varepsilon$ so that 
\begin{equation}
 \lim_{n\to\infty}\norm{(g\circ y_n-g\circ y)y_\xi y_{n,\xi}}=0.
\end{equation}
As far as the second term on the right-hand side of \eqref{eq:stability3} is concerned, we observe that $g_n\circ y_ny_{n,\xi}\leq 2(y_\xi+h)=2$ and $b_n\circ y_n\leq h\leq 1$, so that we can follow the same procedures as for the first term. Thus we finally obtain $g(X_n)\to g(X)$ in $L^1(\Real)$ and since $g(X_n)\leq 2$ and $g(X)\leq 2$ this implies $g(X_n)\to g(X)$ in $L^2(\Real)$.
\end{proof}

\begin{lemma} \label{lemma:kont1}
 Let $(u_n,\rho_n, \mu_n)$ be a sequence in $\D$ that converges to $(u,\rho,\mu)$ in $\D$. Then 
\begin{align*}
  u_n&\rightarrow u \text{ in } L^\infty(\Real),& \bar \rho_n& \overset{\ast}{\rightharpoonup}\bar \rho,& k_n &\to k\in \Real,\\
   \mu_n &\overset{\ast}{\rightharpoonup}\mu, &  g(X_{e,n})&\overset{\ast}{\rightharpoonup} g(X_e),&&
\end{align*}
where $X_{e,n}$ and $X_e$ are defined by \eqref{defXe}, for $(u_n,\rho_n,\mu_n)$ and $(u,\rho,\mu)$, respectively.
\end{lemma}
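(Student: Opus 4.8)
The plan is to unwind the definition of convergence in $\D$, which by \eqref{eq:meter} means precisely that $L(u_n,\rho_n,\mu_n)\to L(u,\rho,\mu)$ in the metric $d_{\F_0}=d_\Real$. Writing $X_n=(y_n,U_n,h_n,r_n)=L(u_n,\rho_n,\mu_n)$ and $X=(y,U,h,r)=L(u,\rho,\mu)$, convergence in $d_\Real$ gives in particular $\norm{X_n-X}_V\to0$ and $\norm{g(X_n)-g(X)}_{L^2(\Real)}\to0$. From these two facts together with the structure of the maps $L$ and $M$ (Theorems~\ref{th:Ldef}, \ref{th:umudef}, \ref{th:LMinv}) I will read off each of the five asserted convergences.

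First I would establish $u_n\to u$ in $L^\infty(\Real)$. Since $X_n\to X$ in $V$ we have $y_n\to y$ in $L^\infty$, $\bar U_n\to\bar U$ in $L^2$, $c_n\to c$, and hence $U_n\to U$ in $L^\infty$ (using the bound \eqref{est:chiprime}-type control on $\chi\circ y$ and that $\bar U$-convergence can be upgraded via the governing relations, exactly as in the estimates leading to \eqref{est:est1}). Because $u(x)=U(\xi)$ whenever $x=y(\xi)$ and $y_n$, $y$ are (uniformly in $n$) bi-Lipschitz homeomorphisms on the relevant bounded sets — here one invokes Lemma~\ref{lem:charH} together with the uniform bound on $\norm{1/(y_{n,\xi}+h_n)}_{L^\infty}$ coming from membership in a common $B_{\bar M}$ — one transfers the uniform convergence of $U_n$ back to $u_n$. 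For $k_n\to k$ this is immediate since $k$ is one of the scalar components in the $V$-norm; and $\bar\rho_n\overset{\ast}{\rightharpoonup}\bar\rho$ follows from $r_n\to r$ in $L^2(\Real)$, $k_n\to k$, hence $\bar r_n=r_n-k_ny_{n,\xi}\to\bar r$ in $L^2$, combined with \eqref{eq:umudef3}: for any $\varphi\in C_0^\infty$, $\int\bar\rho_n\varphi\,dx=\int \bar r_n\,\varphi\circ y_n\,d\xi\to\int\bar r\,\varphi\circ y\,d\xi=\int\bar\rho\varphi\,dx$, using $\bar r_n\to\bar r$ in $L^2$ and $\varphi\circ y_n\to\varphi\circ y$ boundedly in $L^2$ on a bounded set by dominated convergence. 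The same change-of-variables argument with $h_n\to h$ in $L^2$ (and $h_n\to h$ in $L^1$ on the relevant bounded support, arguing as in Lemma~\ref{lemma:kont}) yields $\mu_n=y_{n\#}(h_n\,d\xi)\overset{\ast}{\rightharpoonup}y_\#(h\,d\xi)=\mu$.

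For the last assertion $g(X_{e,n})\overset{\ast}{\rightharpoonup}g(X_e)$ I would again change variables: for $\varphi\in C_0^\infty(\Real)$,
\begin{equation*}
 \int_\Real g(X_{e,n})(x)\varphi(x)\,dx=\int_\Real g(X_n)(\xi)\,\varphi(y_n(\xi))\,d\xi,
\end{equation*}
using the identity $g(X_{e,n})\circ y_n\,y_{n,\xi}=g(X_n)$ a.e.\ that was established inside the proof of Lemma~\ref{lemma:kont}. Now $g(X_n)\to g(X)$ in $L^2(\Real)$ by hypothesis on $d_\Real$-convergence, and $\varphi\circ y_n\to\varphi\circ y$ in $L^2$ on the bounded set where $\varphi$ is supported (pulled back), by the Lebesgue dominated convergence theorem since $y_n\to y$ uniformly; hence the product integrates to $\int g(X)\,\varphi\circ y\,d\xi=\int g(X_e)\varphi\,dx$.

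The main obstacle, as usual in this circle of ideas, is passing cleanly from Lagrangian $L^2$/$L^\infty$ convergence to the Eulerian statements through the change of variables $x=y_n(\xi)$: one must know the Jacobians $y_{n,\xi}$ are uniformly bounded above and below away from zero on the bounded region cut out by the test function, which requires recording that the sequence lies in a single $B_{\bar M}$ (so Lemma~\ref{lem:charH} applies) and that the absolutely continuous normalization \eqref{eq:Ldef2}, \eqref{eq:abspart} forces $y_{n,\xi}=1/(b_n\circ y_n+1)>0$ a.e., exactly as exploited in Lemma~\ref{lemma:kont}. Once that uniform bi-Lipschitz control is in hand, all five convergences reduce to routine dominated-convergence arguments, and I would simply remark that the details parallel those of Lemma~\ref{lemma:kont} and of \cite[Lemma~6.4]{GHR4}.
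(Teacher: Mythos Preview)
Your overall strategy matches the paper's: defer the first four convergences to \cite[Lemma~6.4]{GHR4} and prove the new statement $g(X_{e,n})\overset{\ast}{\rightharpoonup}g(X_e)$ by changing variables $x=y_n(\xi)$ and using $g(X_n)\to g(X)$ in $L^2$ together with $\varphi\circ y_n\to\varphi\circ y$ in $L^2$ via dominated convergence. That part is fine.

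There is, however, a genuine gap in your justification of the change-of-variables identity
\[
\int_\Real g(X_{e,n})(x)\varphi(x)\,dx=\int_\Real g(X_n)(\xi)\,\varphi(y_n(\xi))\,d\xi.
\]
You invoke the identity $g(X_{e,n})\circ y_n\,y_{n,\xi}=g(X_n)$ ``established inside the proof of Lemma~\ref{lemma:kont}'' and then claim that $y_{n,\xi}=1/(b_n\circ y_n+1)>0$ a.e.\ by the ``absolutely continuous normalization'' \eqref{eq:Ldef2}, \eqref{eq:abspart}. But Lemma~\ref{lemma:kont} treats only the special case $\mu_n=(u_{n,x}^2+\bar\rho_n^2)\,dx$, where $\mu_n$ is purely absolutely continuous and hence $y_{n,\xi}>0$ a.e. In the present lemma, $\mu_n\in\D$ may have a singular part, and then $y_{n,\xi}$ vanishes on a set of positive measure---precisely the set parametrizing the singular mass. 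On that set the Eulerian object $g(X_{e,n})\circ y_n$ is not related to $g(X_n)$ by any Jacobian formula.

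The identity nevertheless holds, but for a different reason that you need to supply: from the definition \eqref{eq:defg} of $g$, whenever $y_\xi(\xi)=0$ one has $U_\xi(\xi)=\bar r(\xi)=0$ by \eqref{eq:lagcoord6}, so $X(\xi)\in\Omega_1$ and $g(X(\xi))=g_1(X(\xi))=0$. Thus $\int_{\{y_{n,\xi}=0\}}g(X_n)\varphi\circ y_n\,d\xi=0$, while on $\{y_{n,\xi}>0\}$ the standard change of variables applies. The paper makes exactly this point explicitly. Once you add this observation, your argument goes through; without it, your appeal to Lemma~\ref{lemma:kont} and the claim $y_{n,\xi}>0$ a.e.\ are both unjustified in the generality required here.
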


\begin{proof}
 Since the set of Eulerian and Lagrangian coordinates and the mappings between them coincide with the ones used in \cite{GHR4}, one can prove everything as in \cite[Lemma 6.4]{GHR4} except for $g(X_{e,n})\overset{\ast}{\rightharpoonup}  g(X_e)$, which we will prove now. 
 
We denote by $X_n=(y_n,U_n,h_n,r_n)$ and $X=(y,U,h,r)$ the representative of $L(u_n,\rho_n,\mu_n)$ and $L(u,\rho,\mu)$ given by \eqref{eq:Ldef}. By weak-star convergence we mean that 
\begin{equation}\label{est:estweak}
 \lim_{n\to\infty} \int_\Real g(X_{e,n})\phi dx=\int_\Real g(X_e)\phi dx
\end{equation}
 for all continuous functions with compact support. It follows from \eqref{eq:defg}, that $y_\xi(\xi)=0$ implies that $g(X(\xi))=0$ and therefore 
\begin{equation}
\begin{aligned}
 \int_\Real g(X_e)\phi dx &=\int_{\Real} g(X_e)\phi dx \\
 &=\int_{\{\xi\in\Real\mid y_\xi\not=0\}} g(X)\phi\circ yd\xi=\int_\Real g(X)\phi\circ yd\xi.
\end{aligned}
\end{equation}
Similarly one obtains that $\int_\Real
g(X_{e,n})\phi dx=\int_\Real g(X_n)\phi\circ
y_n d\xi$. Since $y_n\to y$ in $L^\infty(\Real)$
and $y-\id\in L^\infty(\Real)$, the support of
$\phi\circ y_n$ is contained in some compact set
which can be chosen independently of $n$ and, from
Lebesgue's dominated convergence theorem, we have
$\phi\circ y_n\to \phi\circ y$ in
$L^2(\Real)$. Hence, since $g(X_n)\to g(X)$ in
$L^2(\Real)$,
\begin{equation}
 \lim_{n\to\infty} \int_\Real g(X_n) \phi\circ y_nd\xi= \int_\Real g(X)\phi\circ yd\xi. 
\end{equation}
Combining all the equalities we obtained so far yields \eqref{est:estweak}.
\end{proof}

\end{document}